\theoremstyle{plain}
\newtheorem{thm}{Theorem}[subsection]
\newtheorem{lem}[thm]{Lemma}
\newtheorem{cor}[thm]{Corollary}
\newtheorem{prop}[thm]{Proposition}
\theoremstyle{definition}
\newtheorem{defn}[thm]{Definition}
\newtheorem{rem}[thm]{Remark}
\newtheorem{constr}[thm]{Construction}
\theoremstyle{plain} 
\newcommand{\thistheoremname}{}
\newtheorem{genericthm}[thm]{\thistheoremname}
\newenvironment{custom}[1]
  {\renewcommand{\thistheoremname}{#1}%
   \begin{genericthm}}
  {\end{genericthm}}
\theoremstyle{remark}
\DeclareMathOperator{\Hom}{Hom}
\DeclareMathOperator{\iHom}{\underline{Hom}}
\newcommand*{\from}{\colon}
\newcommand*{\defword}[1]{\emph{#1}}
\newcommand*{\defined}{\coloneqq}
\mathchardef\mhyphen="2D
\newcommand*{\xto}[1]{\xrightarrow{#1}}
\DeclareMathOperator*{\colim}{colim}
\DeclareMathOperator{\coker}{coker}
\DeclareMathOperator{\id}{id}
\DeclareMathOperator{\im}{im}
\DeclareMathOperator{\pr}{pr}
\DeclareMathOperator{\ev}{ev}
\def\mydefbb#1{\expandafter\def\csname bb#1\endcsname{\mathbb{#1}}}
\def\mydefallbb#1{\ifx#1\mydefallbb\else\mydefbb#1\expandafter\mydefallbb\fi}
\newcommand*{\injto}{\hookrightarrow}
\newcommand*{\zl}{\mathbb{Z}_{\ell}}
\newcommand*{\solid}{\mathsmaller{\square}}
\newcommand*{\zld}{\mathbb{Z}_{\ell,\mathrm{disc}}}
\newcommand*{\cond}[1]{\underline{#1}}
\newcommand*{\Mod}{\mathrm{Mod}}
\DeclareMathOperator{\spa}{Spa}
\newcommand*{\geompt}{\spa(C,\mathcal{O}_C)}
\newcommand*{\lis}{\mathrm{lis}}
\newcommand*{\lotimes}{\otimes^{\bbL}}
\newcommand*{\rhom}{\mathrm{RHom}}
\newcommand*{\gm}{\mathbb{G}_m}
\DeclareMathOperator{\spec}{Spec}
\DeclareMathOperator{\res}{res}
\DeclareMathOperator{\fun}{Fun}
\DeclareMathOperator{\rep}{Rep}
\newcommand*{\bun}[1]{\mathrm{Bun}_{#1}}
\newcommand*{\lheck}{\overleftarrow{h}}
\newcommand*{\rheck}{\overrightarrow{h}}
\newcommand*{\heck}{\mathrm{Hck}}
\newcommand*{\Div}{\mathrm{Div}}
\DeclareMathOperator{\spd}{\mathrm{Spd}}
\newcommand*{\lsotimes}{\stackrel{\solid}{\otimes}\!\!{}^\mathbb{L}}
\newcommand*{\oo}{\mathcal{O}}
\newcommand*{\gr}{\mathrm{Gr}}
\newcommand*{\rshhom}{\mathrm{R}\mathscr{H}om}
\newcommand*{\et}{\mathrm{\acute{e}t}}
\newcommand*{\riHom}{\mathrm{R}\underline{\mathrm{Hom}}}
\newcommand*{\localheck}{\mathcal{H}\mathrm{ck}}
\DeclareMathOperator{\sat}{Sat}
\newcommand*{\cind}{c\text{-ind}}
\DeclareMathOperator{\perf}{Perf}
\DeclareMathOperator{\qcoh}{QCoh}
\DeclareMathOperator{\coh}{Coh}
\DeclareMathOperator{\nilp}{Nilp}
\newcommand{\op}{{\mathrm{op}}}
\DeclareMathOperator{\End}{End}
\DeclareMathOperator{\ind}{ind}
\DeclareMathOperator{\coind}{coind}
\DeclareMathOperator{\nm}{Nm}
\DeclareMathOperator{\maps}{Maps}
\DeclareMathOperator{\tr}{tr}
\DeclareMathOperator{\LMod}{LMod}
\DeclareMathOperator{\D}{\mathcal{D}}
\DeclareMathOperator{\LEnd}{LEnd}
\newcommand{\qc}{\mathrm{qc}}
\DeclareMathOperator{\exc}{Exc}
\newcommand{\bcenter}{\mathcal{Z}}
\newcommand\widecheck[1]{%
\savestack{\tmpbox}{\stretchto{%
  \scaleto{%
    \scalerel*[\widthof{\ensuremath{#1}}]{\kern-.6pt\bigwedge\kern-.6pt}%
    {\rule[-\textheight/2]{1ex}{\textheight}}
  }{\textheight}%
}{0.5ex}}%
\stackon[1pt]{#1}{\scalebox{-1}{\tmpbox}}%
}
\DeclareRobustCommand{\gobblefive}[5]{}
\newcommand*{\SkipTocEntry}{\addtocontents{toc}{\gobblefive}}
\def\subsection{\@startsection{subsection}{2}%
  \z@{.5\linespacing\@plus.7\linespacing}{.3\linespacing}%
  {\normalfont\bfseries}}
\author{Konrad Zou}
\title{The categorical form of Fargues' conjecture for tori}
\begin{document}	
	\begin{abstract}
		We prove the main conjecture of \cite[]{geometrization} for integral coefficients in the case of tori and prove that it is \(t\)-exact.
		Along the way we prove that the spectral action as constructed in that manuscript is compatible with the action of the excursion algebra and preserves the grading by \(\pi_1(G)_Q\) on both sides.
		We additionally develop a (non-solidified) version of condensed group (co)homology and show that many constructions from classical group (co)homology extend to that case.
	\end{abstract}
	\maketitle
	\tableofcontents
	\section{Introduction}
Fix a reductive group \(G\) over a non-archimedian local field \(E\) with residue field \(\bbF_q\).
The local Langlands correspondence seeks to find a map of sets 
\begin{equation*}
    \{\text{irreducible smooth representations }G(E)\}\to\{L\text{-parameters }W_E\to\widehat{G}(\bbC)\}
\end{equation*}
satisfying some compatiblity conditions.
Finding the correct conditions to pin down the map is still an open area of research.
Recently Laurent Fargues and Peter Scholze have published the manuscript \cite[]{geometrization}.
They develop a sheaf theory for the stack of \(G\)-bundles on the Fargues-Fontaine curve \(\bun{G}\) for non-torsion coefficients, giving rise to the stable \(\infty\)-category \(\D_{\lis}(\bun{G})\).
This category contains the derived category of smooth representations of \(G(E)\).
It replaces the set of irreducible smooth representations of \(G(E)\).
On the other side, they consider the moduli space (or rather moduli stack) of \(L\)-parameters \([Z^1(W_E,\widehat{G})/\widehat{G}]\).
The set of \(L\)-parameters is then replaced with the stable \(\infty\)-category \(\D^{b,\qc}_{\coh,\nilp}([Z^1(W_E,\widehat{G})/\widehat{G}])\) which is a certain subcategory of the bounded derived category of coherent sheaves.
Additionally they construct an action of \(\perf([Z^1(W_E,\widehat{G})/\widehat{G}])\) on \(\D_{\lis}(\bun{G},\Lambda)\) called the spectral action using Hecke operators, inspired from geometric Langlands theory.
The compatiblity condition is formulated as an equivariance condition regarding this action.
Up to some small technical details, the main conjecture of \cite[]{geometrization} with all decorations reads as:
\begin{custom}{Conjecture}[{\cite[Conjecture I.10.2.]{geometrization}}]
    Let \(\ell\) be a prime coprime to \(q\), \(\Lambda\) the the ring of integers of a algebraic field extension \(L/\bbQ_{\ell}\) and fix \(\sqrt{q}\in\Lambda\).
    Assume that \(G\) is quasi-split.
    Choose a Whittaker datum consisting of a Borel \(B\subset G\) and a generic character \(\psi\from U(E)\to\Lambda^\times\).
    This gives rise to the Whittaker representation \(\cind_{U(E)}^{G(E)}\psi\), which in turn gives rise to a sheaf \(\mathcal{W}\) on \(\bun{G}\).
    There is an equivalence of stable \(\infty\)-categories 
    \begin{equation*}
        \D(\bun{G},\Lambda)^{\omega}\simeq\D^{b,\qc}_{\coh,\nilp}([Z^1(W_E,\widehat{G})/\widehat{G}])
    \end{equation*}
    that is linear over \(\perf([Z^1(W_E,\widehat{G})/\widehat{G}])\), mapping the structure sheaf \(\oo\) to \(\mathcal{W}\).
\end{custom}
In this paper we prove the conjecture when \(G\) is an arbitrary torus over \(E\). In this case, the choice of \(\sqrt{q}\) is not necessary and we can even prove the conjecture for \(\Lambda=\zl\).
\SkipTocEntry
\section*{Outline of the proof}
When \(G=T\) is a torus it turns out that \(\D^{b,\qc}_{\coh,\nilp}([Z^1(W_E,\widehat{T})/\widehat{T}]=\perf^{\qc}([Z^1(W_E,\widehat{T})/\widehat{T}])\).
This means that the spectral action uniquely pins down the equivalence.
For tori the Whittaker datum is unique and if we call \(\mathcal{W}\) the associated sheaf on \(\bun{T}\) then we need to check that \(-*\mathcal{W}\) is an equivalence.

For \(\D_{\lis}(\bun{T},\zl)^{\omega}\), we observe that \(\bun{T}\cong\coprod_{b\in B(T)}[*/\cond{T(E)}]\), thus we get 
\begin{equation*}
    \D_{\lis}(\bun{T},\zl)^{\omega}\simeq\bigoplus_{b\in B(T)}\D(T(E)\mhyphen\Mod_{\zl})^{\omega}
\end{equation*}.
Meanwhile we have  \(\D(T(E)\mhyphen\Mod_{\zl})^{\omega}=\bigcup_{K\subset T(E)}\perf(\zl[T(E)/K])\) where \(K\) runs over open pro-\(p\) subgroups.

On the other side, if we let \(F/E\) be a finite Galois-extension splitting \(T\) with Galois group \(Q\), then \([Z^1(W_E,\widehat{T})/\widehat{T}]\) is a gerbe over \(\Hom(T(E),\gm)\), the space assigning a ring \(R\) the set of continous maps \(T(E)\to\gm(R)\) (equipping \(R\) with the discrete topology).
This eventually boils down to showing that \(H_1(W_{F/E},X_*(T))\cong T(E)\) as condensed abelian groups, where the left hand side needs to be interpreted as a suitable condensed group homology.
This theory will be developed in \cref{condensed group (co)homology}.
The non-condensed version was proven by Langlands in \cite[]{langlandstori} and our proof will proceed by reducing to the non-condensed version.
These things will be checked in \cref{geometric incarnation of Langlands duality for tori}.
Furthermore one observes that this gerbe is banded by \(B\widehat{T}^Q\).
It follows that \(\perf^{\qc}([Z^1(W_E,\widehat{T})/\widehat{T}])=\bigoplus_{\chi\in X^*(\widehat{T}^Q)}\perf^{\qc}([Z^1(W_E,\widehat{T})/\widehat{T}])_{\chi}\).
Here the subscript means that \(\widehat{T}\) acts through \(\chi\) on the cohomology sheaves.
Additionally 
\begin{equation*}
    \perf^{\qc}([Z^1(W_E,\widehat{T})/\widehat{T}])_{0}=\perf^{\qc}(\Hom(T(E),\gm))=\D(T(E)\mhyphen\Mod_{\zl})^{\omega}
\end{equation*}
This holds more generally for gerbes over algebraic spaces banded by diagonalizale groups, as discussed in \cref{sheaves on gerbes banded by diagonalizable groups}.
As remarked in \cite[Remark X.1.5.]{geometrization}, the spectral action matches up the decomposition on both sides under the identification \(B(T)=\pi_1(T)_{Q}\cong X^*(Z(\widehat{T})^{Q})=X^*(\widehat{T}^Q)\).
This works more generally for any reductive group \(G\) and is discussed in \cref{compatibility spectral action with grading}.

There is a line bundle \(\oo[\chi]\) such that \(\oo[\chi]\otimes -\) induces an equivalence 
\begin{equation*}
    \perf^{\qc}([Z^1(W_E,\widehat{T})/\widehat{T}])_{-\chi}\simeq \perf^{\qc}(\Hom(T(E),\gm))
\end{equation*}
Thus we get \(\bigoplus_{\chi\in X^*(B\widehat{T}^Q)}\perf^{\qc}([Z^1(W_E,\widehat{T})/\widehat{T}])_{\chi}=\bigoplus_{\chi\in X^*(B\widehat{T}^Q)}\perf^{\qc}(\Hom(T(E),\gm))\).
Acting with \(\oo[\chi]*-\) induces an equivalence the \(-\chi\)-indexed and \(0\)-indexed summand of \(\D_{\lis}(\bun{T},\zl)^{\omega}\).
Thus we reduce to degree 0.
This is discussed in \cref{reduction to degree 0}.

In \cref{matching of union} we reduce to commutative algebra (meaning symmetric monoidal stable \(\infty\)-categories of the form \(\perf(R)\) for \(R\) some \(E_{\infty}\)-ring spectrum).
Namely a general computation shows that 
\begin{equation*}
    \perf^{\qc}(\Hom(T(E),\gm))\simeq\D(T(E)\mhyphen\Mod_{\zl})^{\omega}
\end{equation*}
\(\Hom(T(E),\gm)\) is a filtered union of open and closed affine subschemes \(\Hom(T(E)/K,\gm)\) with \(K\) running over compact open pro-\(p\) groups of \(T(E)\).
Therefore we can write \(\perf^{\qc}(\Hom(T(E),\gm))\) as a similar filtered union and this matches up with the equality \(\bigcup_{K\subset T(E)}\perf(\zl[T(E)/K])\).
Then one can show that under the spectral action this is compatible with the same filtered union on \(\D_{\lis}(\bun{T}^1,\zl)^{\omega}\).
Additionally, one can check that \(e_K\oo*\mathcal{W}=\zl[T(E)/K]\), where \(e_K\) is the idempotent in \(\oo(Z^1(T(E),\gm))\) cutting out \(Z^1(T(E)/K,\gm)\).
Finally we reduce to the following setting:

We have \(R\) and \(S\) commutative rings (concentrated in degree 0), an exact action \(\perf(R)\) on \(\perf(S)\) and a functor given by \(-*S\).
In \cref{equivalence criterion} we show that such functors are equivalences when they induce isomorphisms on the center.
In \cite[Proposition IX.6.5.]{geometrization} the map from the excursion algebra on the usual Bernstein center \(\mathfrak{Z}(T)\) is computed and one sees that it is an isomorphism.
This map is also constructed using Hecke operators and excursion operators, and the excursion algebra is isomorphic to \(\oo(Z^1(W_E,\widehat{T}))^{\widehat{T}}\).
Thus we finish the proof if we show that the map \(\oo(Z^1(W_E,\widehat{T}))^{\widehat{T}}\to\mathfrak{Z}(T)\) induced by the spectral action agrees with the map constructed via excursion operators.
This is checked in \cref{compatiblity with the excursion algebra}.
\SkipTocEntry
\section*{Acknowledgements}
This work would not have been possible without my advisor Peter Scholze, who suggested this problem to me and whose advice was invaluable.
I would also like to thank Johannes Ansch\"utz for vastly improving the exposition and structure of this paper.
I like to thank both of them for reading preliminaries versions of this paper and pointing out the mistakes in them.
Additionally I am thankful for the Max Planck Institute for Mathematics for their hospitiality.
The author was supported by the DFG Leibniz Preis through Peter Scholze.
\SkipTocEntry
\subsection*{Notation}
\begin{itemize}
    \item \(E\) a non-archimedian local field with residue field \(\bbF_q\). 
    \item \(\ell\) a prime number coprime to \(q\)
    \item \(G\) a reductive group over \(E\), \(T\) a torus over \(E\)
    \item \(F\) a finite Galois extension of \(E\) such that the action of \(W_F\) on \(\widehat{G}\) or \(\widehat{T}\) (it will be clear from the context which situation is discussed) is trivial 
    \item \(Q\defined \mathrm{Gal}(F/E)\) the Galois group of the field extension \(F/E\)
    \item \(C=\widehat{\overline{E}}\) a completed algebraic closure of \(E\), with ring of integers \(\oo_{C}\)
\end{itemize}
Additionally for a stable \(\infty\)-category \(\mathcal{C}\) we will use \(\bcenter(\mathcal{C})\) for \(\End(\id_{\mathcal{C}})\) when considered as an \(E_1\)-ring spectrum.
Taking \(\pi_0\) recovers the usage in \cite[]{geometrization}.
	\section{Recollections from {\cite[]{geometrization}}}
Let \(\Lambda\) be a \(\zl\)-algebra.
\subsection{The stack of \texorpdfstring{$G$}{G}-bundles and \texorpdfstring{$\D_{\lis}(\bun{G},\Lambda)$}{Dlis(BunG,Lambda)}}
\begin{defn}
    Let \(\bun{G}\) be the stack on the category \(\perf_{\overline{\bbF_q}}\) of perfectoid spaces over \(\overline{\bbF_q}\) that sends a perfectoid space \(S\) to the groupoid of \(G\)-bundles on the Fargues-Fontaine curve \(X_S\).
\end{defn}
\begin{thm}[{\cite[Theorem 1.1.]{viehmann}}]
    We have a natural homeomorphism \(|B(G)|\cong |\bun{G}|\) where \(B(G)\) denotes the Kottwitz set topologized via the map \(B(G)\to (X_*(T)_{\bbQ}^+)^Q\times\pi_1(G)_{Q}\) given by the Newton point and the Kottwitz point, where the first factor gets the topology induced by the opposite of the natural poset structure and the second factor the discrete topology.
\end{thm}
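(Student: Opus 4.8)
Since this is Viehmann's theorem (\cite[]{viehmann}), I will only sketch the argument. It falls into a set-theoretic part and a topological part. For the bijection of sets one invokes the classification of \(G\)-bundles on the Fargues--Fontaine curve over an algebraically closed perfectoid field: by Fargues, extending the Fargues--Fontaine and Kedlaya--Liu classification for \(\mathrm{GL}_n\) to arbitrary reductive \(G\) via the Tannakian formalism together with reduction to a Levi and the basic case, the isomorphism classes of \(G\)-bundles on \(X_C\) are canonically parametrized by \(B(G)\), and these are precisely the points of \(|\bun{G}|\). Composing with Kottwitz's injection \(B(G)\hookrightarrow (X_*(T)_{\bbQ}^+)^Q\times\pi_1(G)_Q\) via the Newton point and the Kottwitz point gives the asserted bijection of underlying sets. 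First I would split off the second factor: the decomposition \(\bun{G}=\coprod_{\kappa\in\pi_1(G)_Q}\bun{G}^{\kappa}\) into open-and-closed substacks indexed by \(\pi_0(\bun{G})=\pi_1(G)_Q\) matches the discrete factor, so it suffices to fix \(\kappa\) and show that \(|\bun{G}^{\kappa}|\) is homeomorphic to the set of \(b\) with that Kottwitz point, topologized via the opposite of the dominance order on Newton points.

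\textbf{Continuity of the Newton map.} Next I would prove that \(b\mapsto\nu_b\) is continuous, i.e.\ that the Newton point is upper semicontinuous: under specialization a \(G\)-bundle can only become more unstable, so \(\nu_b\) can only increase in the dominance order. By the Tannakian formalism \(\nu_b\) is determined by the Harder--Narasimhan polygons of the vector bundles attached to \(b\) by a finite generating set of representations of \(G\), so this reduces to the semicontinuity of Harder--Narasimhan polygons of vector bundles in families over the Fargues--Fontaine curve, which is available in \cite[]{geometrization} (building on Kedlaya--Liu). Concretely this exhibits each \(\{b:\nu_b\leq\mu\}\) as an open substack \(\bun{G}^{\leq\mu}\), shows the Harder--Narasimhan strata \(\bun{G}^b\) are locally closed with \(\overline{\bun{G}^b}\subseteq\coprod_{\nu_{b'}\geq\nu_b}\bun{G}^{b'}\), and, via the Fargues--Scholze identification \(\bun{G}^b\cong\bun{M_b}^{b\text{-ss}}\cong[*/\widetilde{G}_b]\) with \(M_b\) the centralizer of \(\nu_b\), that each \(|\bun{G}^b|\) is a single point.

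\textbf{The closure relations (the crux).} The heart of the proof is the reverse inclusion: for every \(b'\) with \(\kappa(b')=\kappa(b)\) and \(\nu_{b'}\geq\nu_b\), the point \(\bun{G}^{b'}\) lies in \(\overline{\bun{G}^b}\). Granting this, the specialization order on \(|\bun{G}^{\kappa}|\) is exactly ``\(b\leadsto b'\) iff \(\nu_{b'}\geq\nu_b\)'', which is the specialization order of the poset topology attached to the opposite of the dominance order; since the minimal open neighbourhood of \(b\) is the \emph{finite} set \(\{b':\nu_{b'}\leq\nu_b\}\), the space is Alexandrov with precisely that topology, the Newton map is a homeomorphism onto its image, and one reassembles over \(\pi_0\). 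To prove the reverse inclusion I would realize explicit degenerations, reducing by transitivity to an elementary (covering) step in the dominance order. For \(G=\mathrm{GL}_n\) this is the classical statement that a vector bundle with a given Harder--Narasimhan polygon degenerates to any bundle with a larger polygon: one deforms the extension class in the relevant ``wrong-way'' \(\mathrm{Ext}^1\), which is a non-trivial Banach--Colmez space and hence carries an actual family over a perfectoid ball whose generic member is the more semistable bundle and whose central member is the split, more unstable one. For general \(G\) one either reduces to \(\mathrm{GL}_n\) through a faithful representation or works directly with \(P_b\)-reductions and the Fargues--Scholze charts \(\pi_b\from\mathcal{M}_b\to\bun{G}\), whose source is a cohomologically smooth diamond with image an open neighbourhood of \(\bun{G}^b\), and reads off which Harder--Narasimhan strata \(\pi_b\) meets.

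\textbf{Main obstacle.} The decisive difficulty is exactly this reverse closure inclusion for general \(G\): for \(\mathrm{GL}_n\) it is a concrete computation with extensions and Banach--Colmez spaces, but for an arbitrary reductive group one needs either a Tannakian reduction faithful enough to detect all Newton specializations, or a precise handle on the Fargues--Scholze charts and on the \(M_b\)-structure of the strata, and then the combinatorial bookkeeping that the dominance order matches the geometric closure order uniformly in \(G\).
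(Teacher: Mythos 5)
The paper does not prove this statement; it is quoted verbatim as a recollection and attributed entirely to Viehmann (the citation \cite[Theorem 1.1.]{viehmann}), so there is no ``paper's own proof'' to compare against. Your sketch is nonetheless a fair and essentially accurate account of how the argument actually goes: the set-theoretic bijection is Fargues's classification of $G$-bundles on the Fargues--Fontaine curve over an algebraically closed perfectoid field; the decomposition by $\pi_0|\bun{G}|\cong\pi_1(G)_Q$ handles the discrete factor; upper semicontinuity of the Newton point and the identification $\bun{G}^b\simeq[*/\widetilde{G}_b]$ (so each stratum is a single point of $|\bun{G}|$) are supplied by Fargues--Scholze, building on Kedlaya--Liu for $\mathrm{GL}_n$; and you correctly isolate the genuinely new input of Viehmann's theorem, namely the reverse closure inclusion $\overline{\bun{G}^b}\supseteq\{b':\kappa(b')=\kappa(b),\ \nu_{b'}\geq\nu_b\}$, which for $\mathrm{GL}_n$ comes from realizing degenerations through nontrivial $\mathrm{Ext}^1$'s (Banach--Colmez spaces) and for general $G$ requires either a careful Tannakian reduction or a direct analysis of the charts $\pi_b\colon\mathcal{M}_b\to\bun{G}$. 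Two small caveats: the Alexandrov-space observation hinges on the finiteness of $\{b':\nu_{b'}\leq\nu_b,\ \kappa(b')=\kappa(b)\}$, which is true but worth flagging explicitly; and the reduction to an elementary covering step in the dominance order is where the combinatorics for general $G$ actually bites, so ``reduces by transitivity'' is doing real work there. As a blind reconstruction of a result the paper merely cites, this is a sound outline, and your identification of the crux is exactly right.
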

\begin{cor}[{\cite[Corollary IV.1.23.]{geometrization}}]
    The Newton point induces a bijection 
    \begin{equation*}
        \kappa\from\pi_0|\bun{G}|\xto{\cong}\pi_1(G)_{Q}
    \end{equation*}
\end{cor}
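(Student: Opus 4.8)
The plan is to reduce to Viehmann's theorem and then compute $\pi_0$ of the Kottwitz set by hand. By the preceding theorem we have a homeomorphism $|B(G)|\cong|\bun{G}|$, so it suffices to show that the map to the second factor $\pi_1(G)_Q$ in Viehmann's description of the topology---the Kottwitz point $\kappa$---factors through $\pi_0|B(G)|$ and induces a bijection there. That it factors through $\pi_0$ is immediate: by a theorem of Kottwitz the pair $(\nu,\kappa)$ of Newton point and Kottwitz point defines an injection
\[
(\nu,\kappa)\colon B(G)\hookrightarrow (X_*(T)_{\bbQ}^+)^Q\times\pi_1(G)_Q ,
\]
which, by the definition of the topology on $B(G)$, is a homeomorphism onto its image; composing with the projection onto the discrete factor $\pi_1(G)_Q$ gives a continuous, hence locally constant, map.

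Surjectivity of $\pi_0|B(G)|\to\pi_1(G)_Q$ is classical: every $c\in\pi_1(G)_Q$ equals $\kappa(b_c)$ for the unique basic element $b_c\in B(G)$. For injectivity it suffices to prove that each fiber $B(G)_c=\kappa^{-1}(c)$ is connected; being the preimage of a point of a discrete space it is clopen, so it is then a single connected component and the fibers of $\pi_0|B(G)|\to\pi_1(G)_Q$ are singletons. Via the first projection, $B(G)_c$ is a subspace of $(X_*(T)_{\bbQ}^+)^Q$ equipped with the poset topology attached to the (opposite of the) dominance order. By Kottwitz's description of the partial order on $B(G)$, the basic element $b_c$ is the unique minimum of the fiber $B(G)_c$ for that order; in particular $b_c$ is comparable to every element of $B(G)_c$, so the comparability graph of this subposet is connected---it is a star with center $b_c$. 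Since a subspace of an Alexandrov (poset) space again carries the Alexandrov topology of the restricted order, and an Alexandrov space is connected as soon as its comparability graph is, the fiber $B(G)_c$ is connected. Combining this with Viehmann's homeomorphism, $\kappa$ induces a bijection $\pi_0|\bun{G}|\to\pi_1(G)_Q$, as desired. (Within each component the basic representative is then singled out as the one with minimal, i.e.\ central, Newton point, which matches the way the statement phrases things in terms of the Newton point.)

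The one point that deserves care is the topological bookkeeping around the infinite poset $(X_*(T)_{\bbQ}^+)^Q$: one must check that the topology ``induced by the opposite of the natural poset structure'' really is the Alexandrov topology---so that arbitrary intersections of open sets are open, that subspaces inherit the Alexandrov topology of the restricted order, and that the comparability-graph criterion for connectedness applies. Once this is settled (and it is formal), the rest is combinatorial, and everything else reduces either to Viehmann's theorem or to the classical structure theory of $B(G)$ due to Kottwitz.
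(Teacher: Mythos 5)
The paper does not actually prove this statement: it is quoted verbatim as {\cite[Corollary IV.1.23.]{geometrization}} and no argument is reproduced, so there is nothing in the source to compare your proof against. Judged on its own, your argument is correct and is, I believe, essentially the one given in \cite{geometrization} (preceding their Corollary IV.1.23). The three moving parts are all sound: continuity of $\kappa$ (immediate once you know the topology is pulled back along $(\nu,\kappa)$ and the second factor is discrete), surjectivity via the unique basic element in each $\kappa$-fiber, and connectedness of each fiber via the fact that the basic element is a genuine minimum -- not merely minimal -- of the fiber in the Kottwitz order, a standard fact going back to Kottwitz and Rapoport--Richartz. Your topological reduction (subspaces of Alexandrov posets are Alexandrov, and an Alexandrov space is connected iff its comparability graph is) is accurate, and you correctly flag that this hinges on reading ``the topology induced by the opposite of the natural poset structure'' as the Alexandrov topology where opens are the sets stable under generalization; this is indeed how \cite{geometrization} set things up. One small stylistic alternative that avoids the star-graph lemma: since the unique basic $b_c$ is a minimum of $B(G)_c$, the singleton $\{b_c\}$ is open in $|\bun{G}|$ (it is the intersection of the clopen fiber $\kappa^{-1}(c)$ with the open semistable locus), and its closure is precisely the up-set $\{b : b \geq b_c\} = B(G)_c$; the closure of a point is always connected, so the fiber is connected. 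This is the same input (the minimum property) packaged a bit more directly.
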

Given \(b\in B(G)\) let us write \(\bun{G}^b\) for \(\{b\}\times_{|\bun{G}|}\bun{G}\).
Also let \(G_b\) denote the automorphisms of the \(G\)-isocrystal attached to \(b\).
If \(G\) is quasi-split \(G_b\) is an inner form of a Levi subgroup of \(G\) and an inner form of \(G\) if and only if \(b\) is basic.
Generally \(G_b\) is an inner form of a Levi of the quasi-split inner form of \(G\).
Regarding the geometry of the semistable locus, corresponding to the basic \(b\in B(G)\), we have 
\begin{thm}[{\cite[Theorem III.4.5.]{geometrization}}]
    The semistable locus 
    \begin{equation*}
        \bun{G}^{\mathrm{ss}}\subset\bun{G}
    \end{equation*}
    is an open substack and decomposes into open and closed substacks 
    \begin{equation*}
        \bun{G}^{\mathrm{ss}}=\coprod_{b\in B(G)_{\mathrm{basic}}}\bun{G}^b
    \end{equation*}
    and for basic \(b\) we have 
    \begin{equation*}
        [*/\cond{G_b(E)}]\cong\bun{G}^b
    \end{equation*}
\end{thm}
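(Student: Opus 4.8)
The plan is to establish the three assertions in order — that $\bun{G}^{\mathrm{ss}}$ is open, that it decomposes as $\coprod_{b\in B(G)_{\mathrm{basic}}}\bun{G}^b$ into open-closed substacks, and that $\bun{G}^b\cong[*/\cond{G_b(E)}]$ for basic $b$ — the first two being essentially bookkeeping and the last carrying the content. For openness I would use the Harder--Narasimhan formalism on the Fargues--Fontaine curve. Over a geometric point $\mathrm{Spa}(C,\oo_C)$ every $G$-bundle on $X_C$ admits a canonical reduction recorded by an invariant in $(X_*(T)_{\bbQ}^+)^Q$, and the bundle is semistable precisely when this invariant is central, equivalently when the corresponding class $b\in B(G)$ is basic. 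The essential input is that this invariant is upper semicontinuous in families over perfectoid $S$: for vector bundles this is semicontinuity of the Newton polygon (Kedlaya--Liu), and one transports it to general $G$ via a faithful representation $G\hookrightarrow\mathrm{GL}_n$, using that $\mathcal E$ is semistable if and only if its adjoint bundle $\mathrm{ad}(\mathcal E)$ is. The locus where the invariant is minimal is then open, and equals $\bun{G}^{\mathrm{ss}}$.

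For the decomposition, the corollary recalled above identifies $\pi_0|\bun{G}|$ with $\pi_1(G)_Q$, so $\bun{G}=\coprod_{\alpha\in\pi_1(G)_Q}\bun{G}^\alpha$ into open-closed substacks. Under $|\bun{G}|\cong|B(G)|$ the semistable points are exactly the basic classes, and Kottwitz's description of $B(G)$ says that $\kappa$ restricts to a bijection between $B(G)_{\mathrm{basic}}$ and $\pi_1(G)_Q$; hence each component $\bun{G}^\alpha$ contains a single basic point $b_\alpha$, and $\bun{G}^{\mathrm{ss}}\cap\bun{G}^\alpha$ has underlying topological space $\{b_\alpha\}$, i.e. equals $\bun{G}^{b_\alpha}$. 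Being the intersection of the open substack $\bun{G}^{\mathrm{ss}}$ with the open-closed $\bun{G}^\alpha$, each $\bun{G}^{b_\alpha}$ is open in $\bun{G}$, and it is closed in $\bun{G}^{\mathrm{ss}}$ since its complement there is the union of the remaining pieces. This gives $\bun{G}^{\mathrm{ss}}=\coprod_{b\in B(G)_{\mathrm{basic}}}\bun{G}^b$.

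Now fix basic $b$ and let $\mathcal E_b$ denote the $G$-bundle on $X_S$ attached to the $G$-isocrystal of $b$; choosing it as a point gives a map $f\colon*\to\bun{G}^b$. Since $b$ is basic, the adjoint (equivalently, endomorphism) bundle of $\mathcal E_b$ has central, hence vanishing, Newton point, so it is semistable of slope $0$; consequently the automorphism group scheme of $\mathcal E_b$ over $X_S$ is the constant group scheme $G_b$, and taking global sections over a geometric point (using $H^0(X_C,\oo)=E$) and sheafifying yields $\underline{\mathrm{Aut}}_{X_S}(\mathcal E_b)=\cond{G_b(E)}$. Thus $f$ factors through a map $[*/\cond{G_b(E)}]\to\bun{G}^b$, and it suffices to show $f$ is a v-cover: then the \v{C}ech nerve of $f$ reads $*\times_{\bun{G}^b}*=\underline{\mathrm{Aut}}_{X_S}(\mathcal E_b)=\cond{G_b(E)}$, and v-descent identifies $\bun{G}^b$ with $[*/\cond{G_b(E)}]$. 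Surjectivity of $f$ on geometric points is precisely Fargues' classification of $G$-bundles on the Fargues--Fontaine curve by $B(G)$, which tells us that a semistable $G$-bundle of type $b$ over $\mathrm{Spa}(C,\oo_C)$ is isomorphic to $\mathcal E_b$.

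The main obstacle is upgrading this last statement to the v-local one needed for $f$ to be a v-cover: one must show that an \emph{arbitrary} family of semistable $G$-bundles of type $b$ is, v-locally — say after a strictly totally disconnected cover — isomorphic to the constant family $\mathcal E_b$, not merely isomorphic on geometric fibres. This is a rigidity phenomenon, driven by the vanishing of $H^1(X_S,\mathrm{ad}(\mathcal E_b))$, which holds because $\mathrm{ad}(\mathcal E_b)$ is semistable of slope $0$ and $H^1(X_C,\oo)=0$ on the curve, so that bundles of type $b$ carry no deformations, infinitesimal or obstructed. Turning this into honest v-local triviality — rather than a merely formal statement — requires the apparatus of Artin v-stacks, the cohomological smoothness of $\bun{G}$, and Kedlaya--Liu v-descent for the relative curve, which also underlies the legitimacy of the descent step above. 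Granting that input, the remaining ingredients — polygon semicontinuity, the Kottwitz bookkeeping, and the cohomology computations on the curve — are formal.
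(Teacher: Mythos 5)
This theorem is recalled in the paper as \cite[Theorem III.4.5.]{geometrization} and is not proved there, so there is no in-paper argument to compare against; what you have produced is a reconstruction of the Fargues--Scholze proof. The overall skeleton is right: openness and the open-closed decomposition are indeed bookkeeping (the decomposition follows from the recalled homeomorphism $|B(G)|\cong|\bun{G}|$ together with Kottwitz's bijection $B(G)_{\mathrm{basic}}\cong\pi_1(G)_Q$, so your semicontinuity argument for openness is more than is needed once the Viehmann result is in hand), and the real work is the v-local triviality making $*\to\bun{G}^b$ a $\cond{G_b(E)}$-torsor, which you have correctly isolated.

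Where the sketch goes wrong is in what drives that v-local triviality. You attribute it to the vanishing of $H^1(X_S,\mathrm{ad}(\mathcal E_b))$, i.e.\ to deformation theory, with Kedlaya--Liu and cohomological smoothness of $\bun{G}$ brought in afterwards to ``turn the formal statement into an honest one.'' That gets the causal arrow backwards, and is in fact close to circular: over a geometric point $\mathrm{Spa}(C,\oo_C)$ one indeed has $H^1(X_C,\mathrm{ad}(\mathcal E_b))=0$ because the slope-$0$ semistable bundle $\mathrm{ad}(\mathcal E_b)$ is \emph{already} $\oo^n$, but over a general affinoid $S$ the bundle is only \emph{fiberwise} of this form, and computing $H^1(X_S,\mathrm{ad}(\mathcal E_b))$ presupposes exactly the global triviality you are trying to establish. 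Deformation-theoretic unobstructedness also does not globalize to v-local triviality on its own. The genuine input in the Fargues--Scholze proof is the relative slope filtration theorem of Kedlaya--Liu, specifically the statement that a vector bundle on $X_S$ whose fiberwise Newton polygon is constant is, pro-\'etale locally on $S$, isomorphic to the constant model; this is then bootstrapped from $\mathrm{GL}_n$ (slope $0$) to general $G$ by a twisting/$z$-extension argument. Invoking the cohomological smoothness of $\bun{G}$ as an input is extraneous here and risks a circularity in the development, since the charts $\bun{G}^b\cong[*/\cond{G_b(E)}]$ for basic $b$ are one of the basic structural facts used downstream. So: right structure, right identification of the hard step, but the mechanism you offer for the hard step would not go through as written.
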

For non-basic \(b\in B(G)\) we still get 
\begin{thm}[{\cite[Proposition III.5.1.]{geometrization}},{\cite[Proposition III.5.3.]{geometrization}}]
    We have 
    \begin{equation*}
        \bun{G}^b=[*/\widetilde{G}_b]
    \end{equation*}
    where \(\widetilde{G}_b\) is a group diamond which is an extension of a ``unipotent'' group diamond with \(\pi_0\widetilde{G}_b\cong G_b(E)\).
\end{thm}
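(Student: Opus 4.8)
The plan is to realise \(\bun{G}^b\) as the classifying stack of the automorphism \(v\)-sheaf of the \(G\)-bundle \(\mathcal{E}_b\) attached to \(b\), and then to understand this automorphism group via the canonical Harder--Narasimhan reduction. First I would check that the reduced locally closed substack \(\bun{G}^b\) has a single geometric point and that \(\mathcal{E}_b\) is \(v\)-locally the unique bundle lying in this stratum; \(v\)-descent then yields \(\bun{G}^b\cong[*/\widetilde{G}_b]\) for \(\widetilde{G}_b\defined\underline{\mathrm{Aut}}(\mathcal{E}_b)\), the \(v\)-sheaf sending \(S\) to \(\mathrm{Aut}_{X_S}(\mathcal{E}_b|_{X_S})\). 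Since \(b\) is non-basic there are a proper standard Levi \(M\subset G\), a standard parabolic \(P\) with Levi \(M\), and a basic \(b_M\in B(M)\) mapping to \(b\); the bundle \(\mathcal{E}_b\) carries a canonical reduction \(\mathcal{E}_P\) to \(P\), its HN filtration, characterised by a regular dominant slope vector. Uniqueness of this reduction forces every automorphism of \(\mathcal{E}_b\) to preserve it, so \(\widetilde{G}_b=\underline{\mathrm{Aut}}_P(\mathcal{E}_P)\). Projecting along \(P\to M\) produces a semistable \(M\)-bundle \(\mathcal{E}_M\) (semistable because \(b_M\) is basic) and a short exact sequence of \(v\)-sheaves \(1\to\widetilde{G}_b^{>0}\to\widetilde{G}_b\to\underline{\mathrm{Aut}}_M(\mathcal{E}_M)\to 1\), where \(\widetilde{G}_b^{>0}\) consists of the automorphisms acting trivially on \(\mathcal{E}_M\), i.e.\ essentially \(\underline{H^0}\) of the vector bundle \(\mathcal{E}_P\times^P\mathfrak{u}\) with \(\mathfrak{u}=\mathrm{Lie}\,R_uP\). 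Applying the basic case (the theorem on \(\bun{G}^{\mathrm{ss}}\) recalled above) to \((M,b_M)\) gives \(\bun{M}^{b_M}=[*/\cond{M_{b_M}(E)}]\), hence \(\underline{\mathrm{Aut}}_M(\mathcal{E}_M)\cong\cond{M_{b_M}(E)}=\cond{G_b(E)}\), using \(G_b\cong M_{b_M}\) as groups over \(E\).

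Next I would pin down the kernel \(\widetilde{G}_b^{>0}\). Filtering \(\mathfrak{u}\) by its descending central series and using that every HN-slope occurring in \(\mathcal{E}_P\times^P\mathfrak{u}\) is strictly positive --- which is precisely the defining property of the canonical reduction --- each graded piece has \(\underline{H^0}\) a positive Banach--Colmez space: a connected, cohomologically smooth spatial diamond with vanishing \(H^1\). Hence \(\widetilde{G}_b^{>0}\) is an iterated extension of such, i.e.\ a ``unipotent'' group diamond, and since it is connected while \(\cond{G_b(E)}\) is totally disconnected, the sequence above gives \(\pi_0\widetilde{G}_b\cong G_b(E)\). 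For \(G=\mathrm{GL}_n\) all of this can be made completely explicit using the computation of \(\underline{\mathrm{Hom}}(\mathcal{O}(\lambda),\mathcal{O}(\mu))\) --- a positive Banach--Colmez space for \(\lambda<\mu\), locally profinite for \(\lambda=\mu\), and zero for \(\lambda>\mu\) --- applied to the isoclinic decomposition of \(\mathcal{E}_b\); the general case then either follows Tannakially from this or runs directly as above with the canonical parabolic reduction.

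The main obstacle is the analysis of the kernel \(\widetilde{G}_b^{>0}\): proving representability and exactness of the displayed sequence purely at the level of \(v\)-sheaves, and establishing the positivity and \(H^1\)-vanishing statements that make \(\widetilde{G}_b^{>0}\) a genuine iterated extension of positive Banach--Colmez spaces with no unexpected \(\pi_0\) or higher cohomology. The preliminary reduction \(\bun{G}^b\cong[*/\underline{\mathrm{Aut}}(\mathcal{E}_b)]\) (the surjection from a point together with \(v\)-descent) and the passage from \(\mathrm{GL}_n\) to general reductive \(G\) via the canonical parabolic reduction also require some care, but they are formal once the curve-theoretic input on Banach--Colmez spaces is in place.
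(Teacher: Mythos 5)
This theorem is stated in the paper as a recollection citing \cite[Proposition III.5.1.]{geometrization} and \cite[Proposition III.5.3.]{geometrization}; the paper does not give a proof of its own, so there is nothing in this manuscript against which to compare your argument. That said, your sketch is a faithful reconstruction of the Fargues--Scholze argument: realising $\bun{G}^b$ as $[*/\underline{\mathrm{Aut}}(\mathcal{E}_b)]$ via $v$-descent, using the uniqueness of the canonical (Harder--Narasimhan) reduction $\mathcal{E}_P$ to a parabolic $P$ with Levi $M$ to produce the exact sequence $1\to\widetilde{G}_b^{>0}\to\widetilde{G}_b\to\underline{\mathrm{Aut}}_M(\mathcal{E}_M)\to 1$, applying the basic/semistable case to identify the quotient with $\cond{G_b(E)}$, and filtering the unipotent radical to exhibit $\widetilde{G}_b^{>0}$ as an iterated extension of positive Banach--Colmez spaces (connected, with vanishing $H^1$), which gives $\pi_0\widetilde{G}_b\cong G_b(E)$. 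You also correctly flag the genuine technical burden: the representability and exactness of the displayed sequence of $v$-sheaves, and the slope-positivity and $H^1$-vanishing inputs on the curve needed to control $\widetilde{G}_b^{>0}$. There is no gap at the level of strategy; the remaining work is exactly the curve-cohomology bookkeeping carried out in \cite[\S III.5]{geometrization}.
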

One big difficulty is to develop a formalism of \(\D(\bun{G},\Lambda)\) where \(\Lambda\) is not torsion.
For this Fargues and Scholze define categories of lisse and solid sheaves \(\D_{\lis}(\bun{G},\Lambda)\subset \D_{\solid}(\bun{G},\Lambda)\) and more generally for Artin v-stacks.
Solid sheaves admit a 5-functor formalism, with \(f_{\natural}\) taking the role of \(Rf_!\). 
See \cite[Section VII.3.]{geometrization} for further details on \(f_{\natural}\) and \cite[Chapter VII]{geometrization} for a discussion of the solid formalism in general.
For lisse sheaves we have:
\begin{prop}[{\cite[Proposition VII.7.1.]{geometrization}}]
    There is an equivalence of stable \(\infty\)-categories 
    \begin{equation*}
        \D_{\lis}(\bun{G}^b,\Lambda)\simeq\D(G_b(E)\mhyphen\Mod_{\Lambda})
    \end{equation*}
\end{prop}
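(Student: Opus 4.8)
The plan is, in all cases, to reduce the statement to the computation of \(\D_{\lis}([*/\cond{G_b(E)}],\Lambda)\), and then to identify that category with the derived category of smooth \(\Lambda\)-representations of \(G_b(E)\) by \(v\)-descent along the cover \(*\to[*/\cond{G_b(E)}]\).

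For the reduction I would use the structure of \(\widetilde G_b\). When \(b\) is basic there is nothing to do, as \(\bun{G}^b\cong[*/\cond{G_b(E)}]\) already. For general \(b\) we have \(\bun{G}^b=[*/\widetilde G_b]\) with \(\widetilde G_b\) an extension
\begin{equation*}
    1\longrightarrow U_b\longrightarrow \widetilde G_b\longrightarrow \cond{G_b(E)}\longrightarrow 1,
\end{equation*}
where \(U_b\defined\ker(\widetilde G_b\to\cond{G_b(E)})\) is the ``unipotent'' part, an iterated extension of positive Banach--Colmez spaces. Pulled back to any perfectoid base, such spaces are \(\Lambda/\ell^n\)-cohomologically trivial — they are, \(v\)-locally, perfectoid affine spaces in characteristic \(p\), and \(\ell\neq p\) — hence \(\Lambda\)-cohomologically connected and simply connected. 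Consequently pullback along \(U_b\to *\) is an equivalence on lisse sheaves, and a descent argument along \(*\to[*/U_b]\) (the associated cosimplicial diagram of categories is constant) promotes this to an equivalence \(\D_{\lis}(*,\Lambda)\xto{\sim}\D_{\lis}([*/U_b],\Lambda)\). Applying this to the fibres of \(q\from[*/\widetilde G_b]\to[*/\cond{G_b(E)}]\) — i.e.\ checking via the projection formula that \(q_*\Lambda=\Lambda\) and that \(q\) is ``\(\Lambda\)-cohomologically \(1\)-connected'', so that \(q^*\) is fully faithful and essentially surjective on lisse sheaves — yields \(\D_{\lis}(\bun{G}^b,\Lambda)\simeq\D_{\lis}([*/\cond{G_b(E)}],\Lambda)\).

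It then remains to compute \(\D_{\lis}([*/\cond{G_b(E)}],\Lambda)\). Using that \(\D_{\lis}\) satisfies \(v\)-descent on the relevant Artin \(v\)-stacks, the cover \(*\to[*/\cond{G_b(E)}]\), whose associated simplicial diamond is \([n]\mapsto\cond{G_b(E)}^{\times n}\), gives
\begin{equation*}
    \D_{\lis}([*/\cond{G_b(E)}],\Lambda)\;\simeq\;\varprojlim_{[n]\in\Delta}\D_{\lis}\bigl(\cond{G_b(E)}^{\times n},\Lambda\bigr).
\end{equation*}
Since \(G_b(E)\) is locally profinite, each \(G_b(E)^n\) is a disjoint union of profinite sets; writing a profinite set as a cofiltered limit of finite sets and using that \(\D_{\lis}\) sends a finite set to a finite product of copies of \(\D_{\lis}(*,\Lambda)=\D(\Lambda)\), one identifies \(\D_{\lis}(\cond{S},\Lambda)\) for \(S\) locally profinite with the derived category of modules over the algebra \(\mathcal{C}(S,\Lambda)\) of continuous \(\Lambda\)-valued functions on \(S\). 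Feeding this back in, the totalization becomes the standard cosimplicial diagram of module categories whose limit computes the derived category of comodules over the function algebra \(\mathcal{C}(G_b(E),\Lambda)\); as such comodules are precisely smooth \(\Lambda[G_b(E)]\)-representations, the limit is \(\D(G_b(E)\mhyphen\Mod_{\Lambda})\). (On compact objects this identifies perfect-lisse sheaves with perfect complexes of smooth representations.)

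The step I expect to be the main obstacle is the careful handling of \(\D_{\lis}\) itself. One must work with the ``correct'' definition of \(\D_{\lis}\), so that \(\D_{\lis}(*,\Lambda)=\D(\Lambda)\) is the honest unbounded derived category rather than its \(\ell\)-completion — this is precisely what makes the right-hand side \emph{all} smooth representations and not just the \(\ell\)-complete ones, and it rules out the naive definition \(\varprojlim_n\D(-,\Lambda/\ell^n)\). One must also verify \(v\)-descent for \(\D_{\lis}\) on these stacks, control \(\D_{\lis}\) of (products of) locally profinite sets together with the attendant completion issues, and check that the comodule/module comparison and the totalization are equivalences of stable \(\infty\)-categories, not merely of homotopy categories. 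By contrast the geometric input on Banach--Colmez spaces is soft, being essentially the \(\ell\neq p\) acyclicity of perfectoid affine space.
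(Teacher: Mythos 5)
This proposition is quoted verbatim from Fargues--Scholze (Proposition VII.7.1) and is not reproved in the present paper, so the relevant comparison is against their argument. Your overall two-step shape — kill the unipotent part of \(\widetilde G_b\) using the \(\ell\)-acyclicity of positive Banach--Colmez spaces, then identify \(\D_{\lis}([*/\cond{G_b(E)}],\Lambda)\) with \(\D(G_b(E)\mhyphen\Mod_\Lambda)\) — matches the structure of their proof, and the geometric input about Banach--Colmez spaces is exactly the one they use. But the route you take for the second step is genuinely different. Fargues--Scholze do not run a \(v\)-descent/Barr--Beck argument here; they work inside the solid formalism, identifying \(\D_{\solid}([S/\cond{K}],\Lambda)\) with modules over the solid group algebra \(\Lambda_{\solid}[K]\) for \(K\) profinite and \(S\) strictly totally disconnected, and then cut out the lisse subcategory and pass from compact to locally profinite groups by a generation/compact-object argument. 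Your comodule totalization is a cleaner-looking packaging, but it pushes all the work into the two hypotheses you flag at the end, and those are precisely the points where the blunt approach risks failure rather than mere tedium.

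Concretely, the two places that worry me are the ones you name but perhaps underestimate. First, \(\D_{\lis}\) is \emph{not} a \(v\)-sheaf by construction: it is carved out of \(\D_{\solid}\) by a condition (roughly, lying in the thick subcategory generated by constant sheaves), and whether this condition descends along a \(v\)-cover is exactly the kind of statement Fargues--Scholze have to prove case by case (they get it for \(\ell\)-cohomologically smooth covers). For the \(\cond{G_b(E)}\)-torsor \(\mathrm{Spd}(C)\to[\mathrm{Spd}(C)/\cond{G_b(E)}]\) this should be salvageable (it is a pro-\'etale torsor), but you can't invoke generic \(v\)-descent; you have to cite or prove the specific descent statement. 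Second, the identification \(\D_{\lis}(\cond{S},\Lambda)\simeq\D(\mathcal{C}(S,\Lambda))\) for \(S\) locally profinite but not compact is delicate: \(\mathcal{C}(G_b(E),\Lambda)\) is not a Hopf algebra (comultiplication does not land in the tensor product), so ``comodules over the function algebra'' needs to be replaced by the actual cosimplicial limit, and the Beck--Chevalley conditions making that limit computable are nontrivial. Neither issue is obviously fatal, but together they mean your sketch is not shorter than the Fargues--Scholze argument once the details are supplied — it simply relocates the difficulty from solid algebra to descent theory.
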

\begin{prop}[{\cite[Proposition VII.7.3.]{geometrization}}]
    We have a(n infinite) semi-orthogonal decomposition of \(\D_{\lis}(\bun{G},\Lambda)\) into \(\D_{\lis}(\bun{G}^b,\Lambda)\simeq\D(G_b(E)\mhyphen\Mod_{\Lambda})\).
    In particular we have 
    \begin{equation*}
        \D_{\lis}(\bun{G},\Lambda)\simeq\prod_{\alpha\in \pi_1(G)_{Q}}\D_{\lis}(\bun{G}^{c_1=\alpha},\Lambda)
    \end{equation*}
    Additionally, pullback induces in equivalence 
    \begin{equation*}
        \D_{\lis}(\bun{G},\Lambda)\simeq\D_{\lis}(\bun{G}\times\geompt,\Lambda)
    \end{equation*}
\end{prop}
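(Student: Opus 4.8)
The plan is to transport the Harder--Narasimhan stratification of \(\bun{G}\) to the level of lisse sheaves. First I would dispose of the product decomposition: by \cite[Corollary IV.1.23.]{geometrization} the Kottwitz map identifies \(\pi_0|\bun{G}|\) with \(\pi_1(G)_{Q}\), so \(\bun{G}=\coprod_{\alpha\in\pi_1(G)_{Q}}\bun{G}^{c_1=\alpha}\) is a disjoint union of open and closed substacks; since lisse sheaves on such a disjoint union form the product of the categories of lisse sheaves on the pieces, this gives \(\D_{\lis}(\bun{G},\Lambda)\simeq\prod_{\alpha}\D_{\lis}(\bun{G}^{c_1=\alpha},\Lambda)\). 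It then remains to build the semi-orthogonal decomposition inside a fixed connected component \(\bun{G}^{c_1=\alpha}\).

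Here I would use that, by the topology of \cite[Theorem 1.1.]{viehmann}, the closed subsets of \(|\bun{G}^{c_1=\alpha}|\) are exactly the up-sets for the dominance order on Newton points; in particular the basic stratum is open, which recovers the openness part of \cite[Theorem III.4.5.]{geometrization}, and since every stratum \(\bun{G}^b\) lies in a finite union of strata which is still open (equivalently, \(\{b':\nu_{b'}\leq\nu_b\}\) is finite), one can exhaust \(\bun{G}^{c_1=\alpha}\) by an increasing sequence of open substacks \(\emptyset=U_0\subset U_1\subset U_2\subset\cdots\) with each \(U_{n+1}\) obtained from the open substack \(U_n\) by gluing in a single closed stratum \(\bun{G}^{b_n}\). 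For each such pair I would feed the open immersion \(j_n\from U_n\hookrightarrow U_{n+1}\) and the complementary closed immersion \(i_n\from\bun{G}^{b_n}\hookrightarrow U_{n+1}\) into the solid six-functor formalism of \cite[Chapter VII]{geometrization} to obtain the recollement \((j_n)_{\natural}\dashv j_n^{*}\dashv(j_n)_{*}\), \((i_n)^{*}\dashv(i_n)_{*}\dashv(i_n)^{!}\), restrict it to lisse sheaves, and identify the \(n\)-th piece with \(\D(G_{b_n}(E)\mhyphen\Mod_{\Lambda})\) by \cite[Proposition VII.7.1.]{geometrization}. Passing to the limit \(\D_{\lis}(\bun{G}^{c_1=\alpha},\Lambda)\simeq\lim_n\D_{\lis}(U_n,\Lambda)\) then assembles the finite recollements into the asserted infinite semi-orthogonal decomposition.

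For the last statement I would observe that the whole stratification is stable under the base change \((-)\times\geompt\): \(\bun{G}^b\times\geompt\) is the corresponding stratum of \(\bun{G}\times\geompt\), with the same incidence relations, and on each stratum \(\D_{\lis}(\bun{G}^b\times\geompt,\Lambda)\simeq\D(G_b(E)\mhyphen\Mod_{\Lambda})\simeq\D_{\lis}(\bun{G}^b,\Lambda)\), because lisse \(\Lambda\)-sheaves on the classifying stack of a locally profinite group see neither the geometric base point (\(\D_{\lis}(\geompt,\Lambda)\) being just \(\D(\Lambda)\)) nor the pro-unipotent part of \(\widetilde{G}_b\), only \(\pi_0\widetilde{G}_b\cong G_b(E)\). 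Since pullback intertwines the recollement functors of the two stratifications term by term, it is an equivalence.

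The main obstacle, and where one really has to work, is the second paragraph. The six operations are a priori only available for solid sheaves, so the real content is checking that on these particular stacks the functors \(j_{\natural},j^{*},j_{*},i^{*},i_{*},i^{!}\) preserve lisse sheaves and glue compatibly, and that the resulting infinite semi-orthogonal decomposition behaves well in the colimit over the (in general countably many) strata. This is precisely where the explicit descriptions \(\bun{G}^b=[*/\widetilde{G}_b]\) with \(\pi_0\widetilde{G}_b\cong G_b(E)\) and \(\widetilde{G}_b\) an extension of a ``unipotent'' group diamond that is invisible to lisse sheaves (\cite[Proposition III.5.1.]{geometrization}, \cite[Proposition III.5.3.]{geometrization}), together with the semistability theorems quoted above, have to be brought in.
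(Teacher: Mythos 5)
This proposition is quoted in the paper as a recollection from \cite[Proposition VII.7.3.]{geometrization} with no proof given here, so there is no internal argument to compare against; what you have written is a reconstruction of the proof in the cited reference, and it matches that proof's strategy: split by the Kottwitz invariant, exhaust each component \(\bun{G}^{c_1=\alpha}\) by open substacks \(U_n\) adding one Harder--Narasimhan stratum at a time (using Viehmann's identification of the topology on \(|\bun{G}|\) and the finiteness of \(\{b':\nu_{b'}\leq\nu_b,\ \kappa(b')=\alpha\}\)), glue via recollement, and identify the graded pieces with \(\D(G_b(E)\mhyphen\Mod_\Lambda)\) through \cite[Proposition VII.7.1.]{geometrization}. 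You also correctly isolate the one non-formal step: the solid six operations do not trivially restrict to \(\D_{\lis}\), and the actual work in \cite{geometrization} is in verifying that \(j_\natural\) and \(i_*\) preserve lisse objects on these particular stacks (using the explicit presentation \(\bun{G}^b\simeq[*/\widetilde{G}_b]\) and the invisibility of the pro-unipotent kernel of \(\widetilde{G}_b\to G_b(E)\) to lisse sheaves), after which the recollements assemble into the infinite semi-orthogonal decomposition and the stratum-by-stratum comparison with \(\bun{G}\times\geompt\) gives the last statement. Two small cautions: the openness of the semistable locus is an input to, not a consequence of, the homeomorphism \(|\bun{G}|\cong|B(G)|\), so you should not present it as being ``recovered'' from the topology; and the passage \(\D_{\lis}(\bun{G}^{c_1=\alpha},\Lambda)\simeq\lim_n\D_{\lis}(U_n,\Lambda)\) deserves a word, since \(\D_{\lis}\) is not a priori a sheaf of categories --- here it holds because one is taking a filtered union of opens and lisse objects need not have finite support (only the compact ones do).
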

\begin{cor}\label{cor:vanishing on components by vanishing on pullback}
    Let \(s_b\from\geompt\to\bun{G}\times\geompt\) denote the map corresponding to some geometric point \(b\in B(G)\cong|\bun{G}|\).
    Then the restriction of some \(A\in\D_{\lis}(\bun{G}\times\geompt,\Lambda)\) to a connected component \(\bun{G}^{c_1=\alpha}\) vanishes if and only if \(s_b^*A=0\) for all \(b\in B(G)\) such that \(\kappa(b)=\alpha\).
\end{cor}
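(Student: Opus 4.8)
The plan is to deduce the statement from the semi-orthogonal decomposition of \(\D_{\lis}(\bun{G},\Lambda)\) along the Newton strata, together with the observation that pulling a lisse sheaf back to a geometric point of a single stratum is a conservative operation.

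First, by \cite[Proposition VII.7.3]{geometrization} the category \(\D_{\lis}(\bun{G}\times\geompt,\Lambda)\) splits as a product over \(\pi_1(G)_Q\) with factors \(\D_{\lis}(\bun{G}^{c_1=\alpha}\times\geompt,\Lambda)\), and restriction to the component \(\bun{G}^{c_1=\alpha}\) is precisely the projection onto the corresponding factor. So we may fix \(\alpha\) and work inside \(\D_{\lis}(\bun{G}^{c_1=\alpha}\times\geompt,\Lambda)\); writing \(B_\alpha=\{b\in B(G):\kappa(b)=\alpha\}\), we must show that such an object vanishes if and only if \(s_b^*A=0\) for all \(b\in B_\alpha\).

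Second, \cite[Proposition VII.7.3]{geometrization} furnishes a semi-orthogonal decomposition of \(\D_{\lis}(\bun{G}^{c_1=\alpha}\times\geompt,\Lambda)\) indexed by \(B_\alpha\) with the Newton order, whose graded pieces \(\D_{\lis}(\bun{G}^b\times\geompt,\Lambda)\) are cut out by the restriction functors \(j_b^*\) to the locally closed substacks \(\bun{G}^b\). Consequently the family \((j_b^*)_{b\in B_\alpha}\) is jointly conservative, i.e.\ \(A|_{\bun{G}^{c_1=\alpha}}=0\) if and only if \(j_b^*A=0\) for all \(b\in B_\alpha\). For a stratification into finitely many strata this is the standard recollement induction via the triangles \(j_!j^*\to\id\to i_*i^*\); here \(B_\alpha\) is in general infinite, so one genuinely invokes the cited semi-orthogonal decomposition rather than a naive induction — this bookkeeping around infinitely many strata is the one nontrivial ingredient, the remaining steps being formal.

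Third, \(s_b\) factors through \(\bun{G}^b\times\geompt\), whence \(s_b^*A\simeq s_b^*(j_b^*A)\). Under \(\bun{G}^b\cong[*/\widetilde{G}_b]\) from \cite[Proposition III.5.1]{geometrization} and \(\D_{\lis}(\bun{G}^b,\Lambda)\simeq\D(G_b(E)\mhyphen\Mod_{\Lambda})\) from \cite[Proposition VII.7.1]{geometrization}, pullback along a geometric point of \(\bun{G}^b\) corresponds to the functor forgetting the \(G_b(E)\)-action (composed with the harmless base change \(\geompt\to *\)), and is therefore conservative. Hence \(s_b^*(j_b^*A)=0\) if and only if \(j_b^*A=0\); combining with the second step gives that \(A|_{\bun{G}^{c_1=\alpha}}=0\) if and only if \(s_b^*A=0\) for all \(b\in B_\alpha\). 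The "only if" direction is of course immediate, since \(\bun{G}^b\subset\bun{G}^{c_1=\alpha}\) for \(b\in B_\alpha\), so it is really the conservativity together with the semi-orthogonal decomposition that do the work.
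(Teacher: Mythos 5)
The paper states this corollary without a proof, so there is no direct comparison to make; but your argument is sound and uses precisely the ingredients placed immediately before the statement (\cite[Proposition VII.7.1]{geometrization}, \cite[Proposition VII.7.3]{geometrization}, \cite[Proposition III.5.1]{geometrization}), so it is almost certainly what the author had in mind. The three-step structure — project to a single component via the product decomposition, invoke joint conservativity of the stratum restrictions $i^{b*}$ coming from the semi-orthogonal decomposition indexed by $B_\alpha$, and finally observe that pullback to a geometric point inside a single stratum $\bun{G}^b\cong[*/\widetilde{G}_b]$ identifies with the conservative forgetful functor from $\D(G_b(E)\mhyphen\Mod_\Lambda)$ — is exactly right, and the ``only if'' direction is indeed trivial since $s_b$ factors through $\bun{G}^{c_1=\alpha}\times\geompt$.

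Two small remarks on presentation rather than correctness. First, you gesture at the infinitude of $B_\alpha$ as the nontrivial point without explaining how the cited semi-orthogonal decomposition handles it; since \cite[Proposition VII.7.3]{geometrization} does in fact record joint conservativity of the $i^{b*}$ directly, it would be cleaner to cite that statement explicitly rather than leave the impression that you are rederiving conservativity from an abstract notion of infinite semi-orthogonal decomposition. Second, in the last step the pullback to $\geompt$ is really a composite (pull back along the atlas $*\to[*/\cond{G_b(E)}]$, then base change to $\geompt$), and the conservativity of the second piece — sending a $\Lambda$-module to the corresponding constant sheaf on $\geompt$ — deserves a one-line justification; it is true but not purely formal. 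Neither of these is a gap, only a matter of saying what you are using.
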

Additionally we have compact generation:
\begin{lem}[{\cite[Proposition VII.7.4.]{geometrization}}]
    The stable \(\infty\)-category \(\D_{\lis}(\bun{G},\Lambda)\) is compactly generated.
    Let \(i^b\from\bun{G}^b\to\bun{G}\) denote the inclusion.
    An object \(A\) lies in the subcategory of compact objects \(\D_{\lis}(\bun{G},\Lambda)^{\omega}\) if and only if it has finite support and \(i^{b*}A\in\D_{\lis}(\bun{G}^b,\Lambda)\simeq\D(G_b(E),\Lambda)\) is compact, i.e. lies in the thick stable subcategory generated by \(\cind_K^{G_b(E)}\Lambda\) for compact open pro-\(p\) subgroups \(K\subset G_b(E)\).
\end{lem}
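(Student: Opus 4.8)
The plan is to reduce the lemma, via the Harder--Narasimhan stratification, to the classical compact generation of smooth representations of a \(p\)-adic group, which serves as the base case. Under the equivalence \(\D_{\lis}(\bun{G}^b,\Lambda)\simeq\D(G_b(E)\mhyphen\Mod_{\Lambda})\), and because \(\Lambda\) is a \(\zl\)-algebra so that \(p\in\Lambda^{\times}\), the compact inductions \(\cind_K^{G_b(E)}\Lambda\) for \(K\subset G_b(E)\) a compact open pro-\(p\) subgroup are finitely generated projective generators: the functor \((-)^K\) is exact on smooth \(\Lambda\)-modules since one may average over the finite quotients of \(K\), and \(M^K\neq 0\) for some such \(K\) whenever \(M\neq 0\). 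Hence \(\D(G_b(E)\mhyphen\Mod_{\Lambda})\) is compactly generated, with \(\D(G_b(E)\mhyphen\Mod_{\Lambda})^{\omega}\) the thick stable subcategory they generate, which is \(\bigcup_K\perf(\Lambda[G_b(E)/K])\).

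First I would reduce to finitely many strata. From \(\D_{\lis}(\bun{G},\Lambda)\simeq\prod_{\alpha\in\pi_1(G)_{Q}}\D_{\lis}(\bun{G}^{c_1=\alpha},\Lambda)\), and since a compact object of a product of presentable stable categories vanishes in all but finitely many factors and is compact in the rest, it suffices to treat one component \(\bun{G}^{c_1=\alpha}\). There the open substacks are the \(\bun{G}_D\) attached to down-sets \(D\) of the Newton order, and the down-set \(\{b'\le b\}\) of any \(b\) is finite (the Kottwitz invariant pins the endpoints of the Newton polygon). For a finite down-set \(D\) write \(j_D\from\bun{G}_D\hookrightarrow\bun{G}^{c_1=\alpha}\). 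Since the stalks \(i^{b*}\) are jointly conservative (cf.\ \cref{cor:vanishing on components by vanishing on pullback}) one checks that \(A=\operatorname{colim}_D j_{D,!}j_D^{*}A\) for every \(A\); so if \(A\) is compact it is a retract of some \(j_{D_0,!}j_{D_0}^{*}A\), whence \(i^{b*}A=0\) for \(b\notin D_0\) (the finite-support half of the claim), and, the stalks off \(D_0\) vanishing, the recollement for \(j_{D_0}\) gives \(A=j_{D_0,!}(A')\) with \(A'=j_{D_0}^{*}A\) again compact, as \(j_{D_0,!}\) is fully faithful and cocontinuous. Conversely, if \(A\) has finite support, pick a finite down-set \(D_0\) containing it, so that \(A=j_{D_0,!}(A')\) with the stalks of \(A'\) among those of \(A\); since \(j_{D_0,!}\) preserves compact objects, the question reduces to the finite stratification \(\bun{G}_{D_0}\).

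On a finite \(\bun{G}_D\) I would induct on \(|D|\). For a maximal \(b\in D\) the stratum \(\bun{G}^b\) is closed in \(\bun{G}_D\) with open complement \(\bun{G}_{D\setminus\{b\}}\), giving a recollement with functors \(i^{*}\dashv i_{*}=i_{!}\dashv i^{!}\) and \(j_{!}\dashv j^{*}\dashv j_{*}\). Let \(\mathcal{E}\subseteq\D_{\lis}(\bun{G}_D,\Lambda)\) be the full subcategory of those \(A\) with \(i^{b'*}A\) compact for all \(b'\in D\); it is thick, since each \(i^{b'*}\) is exact and cocontinuous. It contains \(j_{!}\) of the compact objects of \(\D_{\lis}(\bun{G}_{D\setminus\{b\}},\Lambda)\) (by induction, and \(j_{!}\) preserves compacts because \(j^{*}\) is cocontinuous) as well as \(i_{*}\) of the \(\cind_K^{G_b(E)}\Lambda\); these two families generate \(\D_{\lis}(\bun{G}_D,\Lambda)\) (an object killed by \(i^{*}\) and \(j^{*}\) is zero), so the category is compactly generated and \(\D_{\lis}(\bun{G}_D,\Lambda)^{\omega}\subseteq\mathcal{E}\). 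For the reverse inclusion, given \(A\in\mathcal{E}\): \(j^{*}A\) lies in the analogous subcategory for \(D\setminus\{b\}\) hence is compact, so \(j_{!}j^{*}A\) is compact; \(i^{*}A\) is compact, so \(i_{*}i^{*}A\) is compact; and from the fibre sequence \(j_{!}j^{*}A\to A\to i_{*}i^{*}A\) one gets that \(A\) is compact. Assembling this with the reduction above and reassembling over \(\alpha\in\pi_1(G)_{Q}\) proves the statement.

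The one non-formal ingredient, which I expect to be the main obstacle, is the assertion used in the induction that \(i_{*}\) along a closed stratum inclusion preserves compact objects — equivalently that \(i^{!}\) commutes with filtered colimits. Establishing this seems to require leaving \(\D_{\lis}\) for the solid six-functor formalism and using the geometry of \(\bun{G}\): the Harder--Narasimhan strata are cut out, through the ``unipotent'' group diamonds appearing in \(\bun{G}^b=[*/\widetilde{G}_b]\), by cohomologically smooth Banach--Colmez-type spaces, so that each stratum inclusion is a sufficiently well-behaved (purity holds) closed immersion and \(i^{!}\) agrees with \(i^{*}\) up to a shift and a twist. Everything else — the base case on a single stratum, the formal properties of recollements, the finiteness of Newton down-sets, and the joint conservativity of stalks — is either classical or formal.
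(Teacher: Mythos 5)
The paper does not prove this lemma; it is recalled verbatim from \cite[Proposition VII.7.4.]{geometrization}, so there is no in-paper argument to compare against. Judged on its own terms, your reduction scheme is sound and in the same spirit as the original: the product decomposition over \(\pi_1(G)_Q\) reduces to one connected component; compactness forces finite support via \(A=\operatorname{colim}_D j_{D,!}j_D^*A\) (using that each \(i^{b*}\) is cocontinuous and that the stalk functors are jointly conservative, which is \cref{cor:vanishing on components by vanishing on pullback}); and the base case — that \(\cind_K^{G_b(E)}\Lambda\) for \(K\) compact open pro-\(p\) are compact projective generators of \(\D(G_b(E)\mhyphen\Mod_{\Lambda})\) when \(p\in\Lambda^\times\) — is standard and correct.

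The genuine gap, which you flag yourself, is the assertion that for a closed stratum inclusion \(i\colon\bun{G}^b\hookrightarrow\bun{G}_D\) the pushforward \(i_*\) preserves compact objects (equivalently, that \(i^!\) commutes with filtered colimits). This is not formal and it is exactly where all the substance of \cite[Proposition VII.7.4.]{geometrization} lives: both directions of your induction use it — to get that the proposed generators \(i_*(\cind_K^{G_b(E)}\Lambda)\) are actually compact, and to pass from the cofiber sequence \(j_!j^*A\to A\to i_*i^*A\) to compactness of \(A\). Your sketch of how to fill it — ``purity holds, \(i^!\cong i^*\) up to shift and twist'' — is at best an optimistic guess: the strata inclusions here are inclusions of quotient stacks by (infinite-dimensional) group diamonds, not regular closed immersions, and a naive absolute-purity statement is not available. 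The argument in \cite[]{geometrization} runs through the cohomologically smooth charts \(\pi_b\from\mathcal{M}_b\to\bun{G}\) covering the closure \(\bun{G}^{\leq b}\), together with the structure of \(\bun{G}^b\cong[*/\widetilde{G}_b]\) with the unipotent part being a successive extension of positive Banach--Colmez spaces; it establishes compactness of the extension by zero of the specific generators \(\cind_K^{G_b(E)}\Lambda\) rather than proving a blanket ``\(i_*\) preserves compacts.'' So the proposal correctly identifies the load-bearing step and frames the rest of the proof around it in a way consistent with the known argument, but does not establish that step, and the route it gestures at (pointwise purity) is likely not the one that works.
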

\begin{rem}
    For a torus \(T\) over \(E\) we have \(B(T)=\pi_1(T)_{Q}=X_*(T)_{Q}\).
    In particular all \(b\in B(T)\) are basic and we get 
    \begin{equation*}
        \bun{T}=\coprod_{b\in B(T)}[*/\cond{T_b(E)}]
    \end{equation*}
    We have \(T_b(E)\cong T(E)\), so 
    \begin{equation*}
        \D_{\lis}(\bun{T},\Lambda)=\prod_{[\chi]\in X_*(T)_{Q}}\D(T(E)\mhyphen\Mod_{\zl})
    \end{equation*}
\end{rem}
\subsection{Geometric Satake and Hecke operators}
\begin{defn}
    Let \(\Div^d_{X}\defined(\spd(E)/\varphi^{\bbZ})^d/\Sigma_d\).
    \cite[Proposition VI.1.2.]{geometrization} tells us that if \(S\) is a perfectoid space over \(\overline{\bbF_q}\) a map \(S\to\Div^d_{X}\) gives rise to a closed Cartier divisor \(D_S\) on \(X_S\) that is affinoid after localizing in the analytic topology on \(S\). 
    Let \(\mathcal{I}_S\) denote the ideal sheaf corresponding to \(D_S\).
    Assuming \(D_S\) is affinoid let \(B^+_{\Div^d_{X}}(S)\) be the completion of \(\oo_{X_S}\) at \(\mathcal{I}_S\) and let \(B_{\Div^d_X}(S)=B^+_{\Div^d_{X}}(S)[\frac{1}{\mathcal{I}_S}]\).
    Using these we can define the analogues of (positive) loop groups via 
    \begin{equation*}
        L^+_{\Div^d_X}G(S)\defined G(B^+_{\Div^d_{X}}(S)),\qquad L_{\Div^d_X}G(S)\defined G(B_{\Div^d_X}(S))
    \end{equation*}
    Define the local Hecke stack to be
    \begin{equation*}
        \localheck_{G,\Div^d_X}\defined L^+_{\Div^d_X}G\backslash L_{\Div^d_X}G/L^+_{\Div^d_X}G
    \end{equation*}
    where the quotient is taken as \'etale stacks.
    Equivalently these are pairs \(G\)-bundles \(\mathcal{E}_1\) and \(\mathcal{E}_2\) on \(\spec(B_{\Div^d_X}^+(S))\) with an isomorphism between them on \(\spec(B_{\Div^d_X}(S))\).
\end{defn}
Geometric Satake should give us an equivalence of some kind of perverse sheaves on the local Hecke stack and representations of the Langlands dual group.
More precisely we have 
\begin{thm}[{\cite[Theorem VI.11.1.]{geometrization}}]
    For finite sets \(I\) with \(|I|=d\) define \begin{equation*}
        \localheck_{G}^I\defined\localheck
    _{G,\Div^d_X}\times_{\Div^d_X}(\Div^1_X)^I
    \end{equation*}
    Then we can define a certain subcategory (namely the flat-perverse ULA sheaves) 
    \begin{equation*}
        \mathrm{Sat}_G^I(\Lambda)\subset\D_{\et}(\localheck_G^I,\Lambda)
    \end{equation*}
    We can give it a symmetric monoidal structure (the fusion product, which enhances the convolution product from a monoidal to a symmetric monoidal structure) such that
    \begin{equation*}
        \mathrm{Sat}_G^I(\Lambda)\simeq \rep_{W_E\mhyphen\Mod_\Lambda}(\widecheck{G}^I)
    \end{equation*}
    as monoidal categories, where \(\widecheck{G}\) is some group scheme already defined over \(\widehat{\bbZ}^p\) together with a continuous \(W_E\)-action and we are taking representations on finitely generated projective \(\Lambda\)-modules with continuous \(W_E\)-action.
    After base changing to \(\widehat{\bbZ}^p[\sqrt{q}]\) the group \(\widecheck{G}\) becomes \(W_E\)-equivariantly isomorphic to the usual Langlands dual group \(\widehat{G}\).
\end{thm}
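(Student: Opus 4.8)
The plan is to realize $\mathrm{Sat}_G^I(\Lambda)$ as a neutral Tannakian category over $\Lambda$ and to reconstruct $\widecheck{G}$, together with its $W_E$-action, from the Tannakian formalism. The geometric backbone is the $B^+_{\mathrm{dR}}$-affine Grassmannian: for a finite set $I$ one has the Beilinson--Drinfeld Grassmannian $\mathrm{Gr}_G^I\to(\Div^1_X)^I$ parametrizing $G$-bundles on $\spec B^+_{\Div^1_X}(S)$ together with a trivialization away from the divisors attached to $S\to(\Div^1_X)^I$, and $\localheck_G^I$ is the quotient of $\mathrm{Gr}_G^I$ by $L^+_{\Div^1_X}G$. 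First I would establish that $\mathrm{Gr}_G^I$ is an ind-proper ind-(spatial diamond), flat over $(\Div^1_X)^I$, stratified by Schubert cells indexed by dominant coweights up to the relevant Galois twist (reflecting that the divisors live over $E$, not over an algebraically closed field), and I would prove the $p$-adic analogues of the Mirkovi\'c--Vilonen dimension estimates for the Schubert cells and for the semi-infinite orbits attached to $T\subset B\subset G$. With these bounds one defines the perverse $t$-structure on $\D_{\et}(\localheck_G^I,\Lambda)$ and lets $\mathrm{Sat}_G^I(\Lambda)$ be the perverse objects that are universally locally acyclic (ULA) over $(\Div^1_X)^I$ and $\Lambda$-flat; the estimates guarantee this subcategory is stable under perverse truncation, hence abelian.

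I would then build the symmetric monoidal structure by convolution along the correspondence $\localheck_G\times\localheck_G\leftarrow\localheck_G\,\widetilde{\times}\,\localheck_G\to\localheck_G$. The essential point --- and what I expect to be the \emph{main obstacle} --- is that convolution preserves perverse ULA sheaves. I would prove this in fusion form: over the open locus of $(\Div^1_X)^I$ where the divisors are pairwise disjoint, $\mathrm{Gr}_G^I$ factors as an external product, so convolution becomes exterior tensor product and perversity is manifest; perversity over the whole base --- in particular over the diagonal, where the fusion product specializes to honest convolution --- then follows because nearby cycles of a perverse ULA sheaf along a smooth base remain perverse ULA. The same disjoint-support picture, combined with the $\Sigma_d$-action permuting the divisors, produces the commutativity constraint and shows it is independent of the ordering, yielding the symmetric monoidal (fusion) product; this is also what makes the target of the equivalence the $I$-fold product $\widecheck{G}^I$ rather than a single copy. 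Flatness over an arbitrary $\zl$-algebra $\Lambda$ is carried along throughout, since pullback, proper pushforward and nearby cycles all send $\Lambda$-flat ULA sheaves to $\Lambda$-flat ULA sheaves.

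Next I would produce the fiber functor $F=R\Gamma\bigl(\mathrm{Gr}_{G,\Div^1_X}\times_{\Div^1_X}\spd C,\,-\bigr)$, given by cohomology of the geometric $B^+_{\mathrm{dR}}$-Grassmannian. ULA-ness forces $F(A)$ to be a perfect complex of $\Lambda$-modules; the filtration by semi-infinite orbits yields the Mirkovi\'c--Vilonen decomposition, which shows $F$ is $t$-exact, conservative and faithful, and a K\"unneth argument on the disjoint locus shows $F$ is symmetric monoidal for the fusion product. Tannaka duality then produces an affine flat group scheme $\widecheck{G}^I$ --- defined already over $\widehat{\bbZ}^p$, once one checks all constructions at that level and uses $\ell$-adic descent --- with $\mathrm{Sat}_G^I(\Lambda)\simeq\rep(\widecheck{G}^I)$, and the $\varphi$-descent datum on $\Div^1_X=\spd(E)/\varphi^{\bbZ}$ equips $\widecheck{G}$ with its continuous $W_E$-action and refines the equivalence to one of $W_E$-equivariant representations.

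Finally I would identify $\widecheck{G}$ with a form of $\widehat{G}$ and then pin it down. Connectedness and reductivity follow because $\mathrm{Sat}_G$ has no nontrivial tensor subcategory generated by a single Schubert cell and is semisimple after inverting $\ell$, with $F$ a fibre functor. To compute the root datum I would restrict to minuscule and quasi-minuscule coweights and carry out the rank-one computations on $\mathrm{PGL}_2$ and $\mathrm{SL}_2$; these identify the simple coroots of $\widecheck{G}$ with the simple roots of $G$ and conversely, so the based root datum of $\widecheck{G}$ is dual to that of $G$, and the $W_E$-action on it is exactly the one read off from the $W_E$-action on $\Div^1_X$, i.e.\ the standard action on $\widehat{G}$. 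The only remaining ambiguity is a central/pinning twist: $F$ carries a canonical half-Tate-twist because $R\Gamma$ of a Schubert cell $\mathrm{Gr}^{\le\mu}$ acquires a cohomological shift together with a twist by a power of the cyclotomic character governed by $\langle 2\rho,\mu\rangle$, and trivializing this --- equivalently, rigidifying the comparison so that $\widecheck{G}$ becomes $\widehat{G}$ on the nose rather than a cyclotomic form --- requires a square root of the Tate twist, i.e.\ a choice of $\sqrt q$, which is precisely the base change to $\widehat{\bbZ}^p[\sqrt q]$. I expect the genuinely hard inputs to be (i) the perversity of convolution and fusion, which rests on delicate geometry of the $B^+_{\mathrm{dR}}$-Grassmannian and on the behaviour of ULA sheaves under specialization, and (ii) running the whole argument with non-torsion, non-field coefficients $\Lambda$, where semisimplicity is unavailable and one must work with $\Lambda$-flat perverse sheaves and explicit Mirkovi\'c--Vilonen cycles throughout.
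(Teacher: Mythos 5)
This theorem is not proved in the paper at all; it is quoted verbatim from Fargues--Scholze as background (\cite[Theorem VI.11.1.]{geometrization}), and the paper offers no proof of its own. Your sketch is, however, a faithful high-level summary of the strategy actually used in Chapter VI of \cite{geometrization}: Beilinson--Drinfeld Grassmannians over $(\Div^1_X)^I$, Mirkovi\'c--Vilonen-type dimension estimates, $\Lambda$-flat perverse ULA sheaves, fusion via the disjoint locus and specialization (nearby cycles/ULA-ness), the constant-term/hyperbolic-localization fiber functor, Tannakian reconstruction over $\widehat{\bbZ}^p$, pinning down the root datum via minuscule and quasi-minuscule coweights and rank-one calculations, and finally the cyclotomic half-Tate twist on the pinning that forces the passage to $\widehat{\bbZ}^p[\sqrt q]$. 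You also correctly flag the two genuinely hard inputs (perversity/flatness of fusion, and working with non-field $\Lambda$), which is where the bulk of the technical work in \cite{geometrization} actually lives. Since the present paper's treatment of this statement consists solely of the citation, there is no internal proof to compare your approach against; the accurate thing to say is that you have reconstructed the cited proof's architecture, not an alternative to anything in this paper.
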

\begin{rem}
    The isomorphism to the Langlands dual group generally only exists \(W_E\)-equivariantly after adjoining \(\sqrt{q}\) because the action on the pinning, that is isomorphisms \(\psi_a\from \mathrm{Lie}(\widehat{U}_a)\cong\widehat{\bbZ}^p\) for each simple root \(a\), is twisted by a Tate twist.
    For tori there are no roots, so the isomorphism exists before adjoining \(\sqrt{q}\) and we can obtain all results for \(\zl\) already.
    See also \cite[Lemma 5.5.7.]{affine_grassmannian_intro} for further discussions.
\end{rem}
\begin{rem}\label{rem: satake category embedding in lisse}
    Let \(\bbD\) denote the Verdier duality functor on \(\D_{\et}(\localheck^I_G,\Lambda)\) for the morphism 
    \begin{equation*}
        \localheck^I_G\to[(\Div^1_X)^I/L_{(\Div^1_X)^I}G]
    \end{equation*}
    Then we get an embedding
    \begin{equation*}
        \sat_G^I(\Lambda)\injto \D_{\solid}(\localheck^I_G,\Lambda),\qquad A\mapsto\rshhom_{\D_{\solid}(\localheck^I,\Lambda)}(\bbD(A),\Lambda)
    \end{equation*}
    that works well for using objects in the Satake category as convolution kernels.
\end{rem}
\begin{defn}\label{defn: hecke action}
    Let \(\heck^I_G\) parametrize maps \(S\to(\Div^1_X)^I\) giving rise to a closed Cartier divisor \(D_S\) and a pair of \(G\)-bundles \(\mathcal{E}_1\), \(\mathcal{E}_2\) and a meromophic modification \(\mathcal{E}_1|_{X_S\setminus D_S}\xto{\cong}\mathcal{E}_2|_{X_S\setminus D_S}\).
    There is a natural map \(\lheck\from\heck^I_G\to\bun{G}\) picking out the left bundle, \(\rheck\from\heck^I_G\to\bun{G}\times(\Div^1_X)^I\) picking out the right bundle and the map \(S\to(\Div^1_X)^I\) and \(q\from\heck^I_G\to\localheck^I_G\) that formally completes at \(D_S\).
    These fit into the following diagram:
    \begin{equation*}
        \begin{tikzcd}
            & \heck^I_G \arrow[ld, "\lheck"'] \arrow[rd, "\rheck"] \arrow[r, "q"] & \localheck^I_G            \\
    \bun{G} &                                                                     & \bun{G}\times(\Div^1_X)^I
    \end{tikzcd}
    \end{equation*}
    Let \(\mathcal{S}_V\) denote the solid sheaf on \(\localheck^I_G\) attached to a representation \(V\in\rep_{\Lambda}((\widehat{G}\rtimes Q)^I)\) via the embedding \cref{rem: satake category embedding in lisse}.
    Then \(T_V(A)\defined \rheck_{\natural}(\lheck^*\lsotimes_{\Lambda}q^*\mathcal{S}_V)\) defines a functor 
    \begin{equation*}
        T_V\from\D_{\solid}(\bun{G},\Lambda)\to\D_{\solid}(\bun{G}\times(\Div^1_X)^I,\Lambda)
    \end{equation*}
    One can show that pullback using \((\Div^1_X)^I\to[*/\cond{W_E}]^I\) embeds \(\D_{\solid}(\bun{G}\times [*/\cond{W_E}]^I,\Lambda)\simeq\D_{\solid}(\bun{G},\Lambda)^{BW_E^I}\) as a full subcategory of \(\D_{\solid}(\bun{G}\times(\Div^1_X)^I,\Lambda)\) via pullback and that \(T_V\) restricts to
    \begin{equation*}
        T_V\from\D_{\lis}(\bun{G},\Lambda)\to\D_{\lis}(\bun{G},\Lambda)^{BW_E^I}
    \end{equation*}
    This is the \defword{Hecke operator} attached to \(V\).
\end{defn}
\begin{rem}
    Hecke operators give rise to exact \(\rep_{\Lambda}(Q^I)\)-linear monoidal functors 
    \begin{equation*}
        \rep_{\Lambda}((\widehat{G}\rtimes Q)^I)\to\End_{\Lambda}(\D_{\lis}(\bun{G},\Lambda))^{BW_E^I}
    \end{equation*}
    The formation of these functors is functorial in \(I\) and one can replace \(\D_{\lis}(\bun{G},\Lambda)\) by \(\D_{\lis}(\bun{G},\Lambda)^{\omega}\), the subcategory spanned by the compact objects.
\end{rem}
\subsection{The stack of \texorpdfstring{\(L\)}{L}-parameters and the spectral action}
\begin{prop}[{\cite[Theorem VIII.1.3.]{geometrization}}]
    There is a scheme \(Z^1(W_E,\widehat{G})\)  over \(\zl\) whose \(R\)-points parametrize condensed 1-cocycles \(W_E\to\widehat{G}(R)\).
    The condensed structure on \(R\) is given by \(R_{\mathrm{disc}}\otimes_{\zld}\zl\).
    It is a disjoint union of affine schemes and can also be written as an increasing union of \(Z^1(W_E/P,\widehat{G})\), where \(P\) is running over open pro-\(p\) groups of the wild inertia such that the action of \(W_E\) on \(\widehat{G}\) factors over \(W_E/P\).
\end{prop}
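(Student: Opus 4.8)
The plan is to realize $Z^1(W_E,\widehat G)$ as a filtered colimit over the open pro-$p$ subgroups $P$ of the wild inertia $P_E$ (along which the $W_E$-action on $\widehat G$ also factors), so that the work falls into three parts: (i) show that every condensed $1$-cocycle factors through some such $W_E/P$; (ii) represent each $Z^1(W_E/P,\widehat G)$ by an affine scheme of finite type over $\zl$; and (iii) check that the inflation maps $Z^1(W_E/P_1,\widehat G)\to Z^1(W_E/P_2,\widehat G)$ for $P_2\subseteq P_1$ are open and closed immersions, so that the colimit is a disjoint union of affine schemes.

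For (i), I would fix a finite Galois extension $F/E$ over which the action on $\widehat G$ trivializes and restrict a cocycle $c\colon W_E\to\widehat G(R)$ to the open pro-$p$ subgroup $P'\defined P_E\cap W_F$ of $P_E$; since $W_E$ acts trivially on $\widehat G$ over $P'$, the restriction $c|_{P'}$ is an honest continuous homomorphism. Writing $R=\operatorname{colim}_i R_i$ over its finitely generated $\zl$-subalgebras and using that $\widehat G$ is affine of finite presentation, $c|_{P'}$ factors through $\widehat G(\underline{R_i})=\lim_n\widehat G(R_i/\ell^n)$ for some $i$, whose principal congruence subgroup $U$ — the kernel of reduction to $\widehat G(R_i/\ell)$, pulled back along a closed embedding $\widehat G\hookrightarrow\mathrm{GL}_N$ — is open and has all of its continuous finite quotients $\ell$-groups. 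Because $\ell\ne p$ the image of the pro-$p$ group $P'$ meets $U$ trivially, hence is finite, so $c|_{P'}$ has open kernel; intersecting its $W_E$-conjugates inside the compact group $P_E$ produces an open normal $P\subseteq P'\subseteq W_F$ through which $c$ — and, since $P\subseteq W_F$, also the action on $\widehat G$ — factors. This exhibits the functor $R\mapsto\{\text{condensed }1\text{-cocycles }W_E\to\widehat G(R)\}$ as $\operatorname{colim}_P Z^1(W_E/P,\widehat G)$.

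For (ii), I would use that $W_E/P$ is locally profinite and topologically finitely generated: it is topologically generated by a Frobenius lift together with finitely many topological generators of $I_E/P$, an extension of the procyclic tame quotient by the finite $p$-group $P_E/P$. Sending a cocycle to its values on these generators presents $Z^1(W_E/P,\widehat G)$ as a closed subscheme of a finite power of $\widehat G$ cut out by the cocycle identities attached to a topological presentation of $W_E/P$; the point demanding care is that this generators-and-relations functor really computes \emph{condensed} cocycles, and here the precise condensed structure $R_{\mathrm{disc}}\otimes_{\zld}\zl$ on $R$ is essential — it is arranged so that maps out of the (locally) profinite group $W_E/P$ into $\widehat G(\underline R)$ only involve finite quotients together with $\ell$-adic data, which is what makes each piece of finite type and makes the formation of $Z^1$ commute with filtered colimits in $R$.

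For (iii), a cocycle on $W_E/P_2$ inflates from $W_E/P_1$ exactly when its restriction to the finite $p$-group $P_1/P_2$ is the trivial cocycle; after shrinking $P_1$ inside $W_F$ so that the action on $\widehat G$ is trivial there, this restriction is a homomorphism $P_1/P_2\to\widehat G$, and since $\ell\ne p$ the scheme of such homomorphisms is a disjoint union of its (open and closed) conjugacy orbits, the trivial one among them, so inflation is an open and closed immersion. Hence $Z^1(W_E,\widehat G)=\operatorname{colim}_P Z^1(W_E/P,\widehat G)$ is an increasing union of finite-type affine $\zl$-schemes along open-closed immersions, i.e.\ a disjoint union of affine schemes. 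I expect the genuine obstacle to lie in (ii): showing that the evident affine scheme represents condensed cocycles on the nose — neither more nor fewer — which is exactly what forces the delicate choice of condensed structure on $R$ and requires controlling how $Z^1$ interacts with the colimits over $R$ and over $P$.
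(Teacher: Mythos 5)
Your overall architecture matches the paper's: realize $Z^1(W_E,\widehat G)$ as an increasing union of $Z^1(W_E/P,\widehat G)$ over open $P$ in wild inertia along which the action factors, show each piece is affine of finite type, and note that the inflation maps are open and closed immersions because a cocycle on a finite $p$-group valued in $\widehat G$ (with $\ell\neq p$) is trivial on an open and closed locus. Step (iii) is fine, and the pro-$p$ versus pro-$\ell$ idea behind (i) is the right one, although the specific identification $\widehat G(\underline{R_i})=\lim_n\widehat G(R_i/\ell^n)$ is not how the condensed structure $R_{\mathrm{disc}}\otimes_{\zld}\zl$ behaves — for $R_i=\zl[x]$, say, $\underline{R_i}$ is not the $\ell$-adic completion, so that equality should not be taken at face value, even if the conclusion (the image of a pro-$p$ group is finite) can be extracted by a more careful analysis of what a map from a profinite set into $\widehat G(\underline R)$ can look like.

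The genuine gap is in (ii), which you correctly flag as the crux but do not resolve. After quotienting by $P$, the group $W_E/P$ still contains the \emph{profinite} tame quotient $I_E/P_E\cong\widehat{\bbZ}^{(p')}$; it is topologically finitely generated, but a ``topological presentation'' of such a group encodes infinitely many continuity constraints, and ``closed subscheme of $\widehat G^S$ cut out by the cocycle identities'' does not on its face produce a finite-type affine scheme, because one would be imposing infinitely many relations coming from the closure of the discretely generated subgroup. Your closing remark that the condensed structure makes maps ``only involve finite quotients together with $\ell$-adic data'' gestures at the answer but is not an argument. What the paper actually does at this point is use that $\widehat G(\underline R)$ is \emph{quasi-separated} as a condensed set: this guarantees that restriction of cocycles from $W_E/P$ to a dense discrete finitely generated subgroup $W$ (generated by a Frobenius lift, a lift of a tame generator, and the finite group $P_E/P$) is injective, and one then checks it is also an isomorphism, giving $Z^1(W_E/P,\widehat G)\cong Z^1(W,\widehat G)$. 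Since $W$ is a discrete finitely generated group, $Z^1(W,\widehat G)$ is visibly a closed subscheme of a finite power of $\widehat G$, hence affine of finite type. This discretization step is the essential ingredient your proposal is missing.
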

\begin{proof}[Proof Sketch]
    One checks that any condensed 1-cocycle will be trivial on an open subgroup of the wild inertia.
    This shows that \(Z^1(W_E,\widehat{G})\) is a union of the \(Z^1(W_E/P,\widehat{G})\).
    Then, using that \(\widehat{G}(R)\) is quasi-separated, we may discretize the tame inertia in \(W_E/P\) to obtain a discrete subgroup \(W\) surjecting to \(Q\) satisfying \(Z^1(W_E/P,\widehat{G})\cong Z^1(W,\widehat{G})\).
\end{proof}
\begin{defn}
    The \defword{stack of \(L\)-parameters} is the quotient stack \([Z^1(W_E,\widehat{G})/\widehat{G}]\) where \(\widehat{G}\) acts on a condensed 1-cocycle by conjugation.
\end{defn}
\begin{defn}
    The \defword{spectral Bernstein center} is \(\mathcal{Z}^{\mathrm{spec}}(G,\Lambda)\defined\oo(Z^1(W_E,\widehat{G})^{\widehat{G}})\) where \(\widehat{G}\) acts on a 1-cocycle by conjugation.
    Equivalently these are global functions on \([Z^1(W_E,\widehat{G})/\widehat{G}]\).
\end{defn}
\begin{defn}
    Let \(W\) be a group (usually a discretization of \(W_E/P\) for some \(P\)).
    Then the \defword{algebra of excursion operators} is 
    \begin{equation*}
        \mathrm{Exc}(W,\widehat{G})=\colim_{(n,F_n\to W)}\oo(Z^1(F_n,\widehat{G}))^{\widehat{G}}
    \end{equation*}
    for \(F_n\) denoting the free group on \(n\) elements.
    Let \(\mathrm{Exc}(W,\widehat{G})_{\Lambda}\) denote the base change to \(\Lambda\).
\end{defn}
\begin{thm}[{\cite[Theorem VIII.5.1.]{geometrization}}]
    Assume that $\ell$ does not divide \(|\pi_0(Z(G))|\). Then the map
    \begin{equation*}
        \colim_{(n,F_n\to W)} \oo(Z^1(F_n,\hat{G}))\to \oo(Z^1(W,\hat{G}))
    \end{equation*}
    is an isomorphism in the \(\infty\)-category 
    \(\mathrm{Ind}\perf(B\widehat{G})\). Moreover, the \(\infty\)-category \(\perf([Z^1(W,\widehat{G})/\widehat{G}])\) is generated under cones and retracts by \(\perf(B\widehat{G})\), and \(\mathrm{Ind}\perf([Z^1(W,\widehat{G})/\widehat{G}])\) identifies with the \(\infty\)-category of modules over \(\oo(Z^1(W,\hat{G}))\) in \(\mathrm{Ind}\perf(B\widehat{G})\).
\end{thm}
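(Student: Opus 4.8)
The plan is to deduce all three statements from a single geometric fact: that \(Z^1(W,\widehat{G})\) is flat over \(\zl\) and a local complete intersection (lci), the embedding \(Z^1(W,\widehat{G})\hookrightarrow Z^1(F_n,\widehat{G})=\widehat{G}^{n}\) attached to a surjection \(F_n\to W\) being cut out by a Koszul-regular sequence. First I would reduce to the case where \(W\) is finitely presented: the category of pairs \((n,F_n\to W)\) is filtered and cofinally consists of surjections, and --- as in the proof sketch recalled above --- the groups \(W\) of interest (discretizations of \(W_E/P\)) admit finite presentations, e.g.\ \(\langle\sigma,\tau\mid\sigma\tau\sigma^{-1}=\tau^{q}\rangle\). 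For such a presentation \(Z^1(W,\widehat{G})\) is the closed subscheme of \(\widehat{G}^{n}\) where the relations evaluate to the identity; the single relation above, for instance, contributes \(\dim\widehat{G}\) equations in \(\widehat{G}^{2}\), so \(Z^1(W,\widehat{G})\) has the expected relative dimension \(\dim\widehat{G}\) over \(\zl\) precisely when these equations form a regular sequence on each fibre.

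To prove flatness and the lci property I would analyse the cotangent complex of \(Z^1(W,\widehat{G})\) over \(\zl\) at a cocycle \(\rho\). Since \(\widehat{G}^{n}\) is \(\zl\)-smooth, this is the truncated complex of group cochains \(C^{\bullet}(W,\mathrm{Lie}(\widehat{G})_{\rho})\), where \(\mathrm{Lie}(\widehat{G})_{\rho}\) is the adjoint representation with \(W\) acting through \(\mathrm{Ad}\circ\rho\); in particular the Zariski tangent space is \(Z^1(W,\mathrm{Lie}(\widehat{G})_{\rho})\) and the obstructions to smoothness lie in \(H^{2}(W,\mathrm{Lie}(\widehat{G})_{\rho})\). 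As \(W\) has the cohomological behaviour of a local Weil group --- cohomological dimension \(2\), vanishing local Euler characteristic (because \(\ell\neq p\)), and local Tate duality \(H^{2}(W,M)\cong H^{0}(W,M^{\vee}(1))^{\vee}\) --- everything reduces to the uniform estimate \(\dim Z^1(W,\widehat{G})_{\kappa}\le\dim\widehat{G}\) for \(\kappa\in\{\overline{\bbQ_{\ell}},\overline{\bbF_{\ell}}\}\). Over \(\overline{\bbQ_{\ell}}\) this is classical; in characteristic \(\ell\) it is precisely where the very-good-prime hypothesis is indispensable. I would stratify the cocycle scheme by the rank of the obstruction space \(H^{0}(W,\mathrm{Lie}(\widehat{G})_{\rho}^{\vee}(1))\) and bound, stratum by stratum, the dimension of the part of \(Z^1(W,\widehat{G})\) lying over it; here one exploits that for \(\ell\) very good \(\mathrm{Lie}(\widehat{G})\) carries a non-degenerate invariant form and exhibits none of the small-torsion pathologies of bad primes, so that \(\dim H^{0}\) and \(\dim H^{2}\) are simultaneously controlled in every characteristic and the bad strata are too small to raise the total dimension. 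Granting the estimate, regularity (hence Cohen--Macaulayness) of \(\widehat{G}^{n}\) forces the defining equations to form a fibrewise regular sequence, so \(Z^1(W,\widehat{G})\) is \(\zl\)-flat and lci.

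With this in hand the three assertions are essentially bookkeeping. For the colimit statement, \(\zl\)-flatness puts \(\oo(Z^1(W,\widehat{G}))\) and each \(\oo(Z^1(F_n,\widehat{G}))=\oo(\widehat{G})^{\otimes n}\) into \(\mathrm{Ind}\perf(B\widehat{G})\) (their finitely generated subrepresentations are \(\zl\)-flat, hence perfect), and since the maps \(\oo(Z^1(F_n,\widehat{G}))\to\oo(Z^1(W,\widehat{G}))\) are surjective along the cofinal system of surjections \(F_n\to W\), the filtered (hence exact) colimit computed in \(\mathrm{Ind}\perf(B\widehat{G})\) is \(\oo(Z^1(W,\widehat{G}))\), the Koszul-regular presentation guaranteeing the absence of higher Tor. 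For the generation statement, a closed \(\widehat{G}\)-equivariant embedding of \(\widehat{G}^{n}\) --- and hence of \(Z^1(W,\widehat{G})\) --- into the affine space underlying a finite-dimensional \(\widehat{G}\)-representation \(V\) (realise \(\widehat{G}\) as a closed subscheme of such a space using a faithful representation and the standard closed embedding \(\mathrm{GL}_N\hookrightarrow\mathrm{End}(\zl^{N})\oplus\zl\)) reduces, by the self-intersection formula along these regular immersions (the conormal sheaves being vector bundles of the expected ranks, by lci-ness), to the statement that \(\perf([V/\widehat{G}])\) is generated under cones and retracts by the pullback of \(\perf(B\widehat{G})\) --- which is elementary, perfect complexes on \([V/\widehat{G}]\) being built from the free modules \(\oo_{V}\otimes W\), \(W\in\perf(B\widehat{G})\), by finite resolutions over \(\mathrm{Sym}(V^{\vee})\). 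Finally, the morphism \([Z^1(W,\widehat{G})/\widehat{G}]\to B\widehat{G}\) is affine, so Barr--Beck identifies \(\mathrm{Ind}\perf([Z^1(W,\widehat{G})/\widehat{G}])\) with modules over the pushforward algebra \(\oo(Z^1(W,\widehat{G}))\) in \(\mathrm{Ind}\perf(B\widehat{G})\).

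The main obstacle is the geometric input of the second paragraph, and within it the dimension bound \(\dim Z^1(W,\widehat{G})_{\overline{\bbF_{\ell}}}\le\dim\widehat{G}\) in characteristic \(\ell\): once \(Z^1(W,\widehat{G})\) is known to be \(\zl\)-flat and lci everything else is formal, but this bound genuinely requires the very-good-prime hypothesis together with a delicate stratification controlling \(H^{2}(W,\mathrm{Lie}(\widehat{G})_{\rho})\) through the invariant form on the adjoint representation modulo \(\ell\). (When \(\widehat{G}=\widehat{T}\) is a torus --- the only case needed here --- every prime is very good, \(Z^1(W,\widehat{T})\) is an explicit product of copies of \(\gm\) with finite flat \(\zl\)-schemes, and all three assertions can be checked directly; this is the form relevant to the present paper.)
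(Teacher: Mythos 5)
This theorem is cited from \cite[Theorem VIII.5.1]{geometrization}; the paper does not prove it, so there is no internal proof to compare against (the closest result here is \cref{lem: every prime ok for tori}, which treats the torus case by reducing to \cite[Theorem VIII.5.8]{geometrization}, a criterion phrased in terms of closed $\widehat{G}$-orbits and their stabilizers, not via flatness). Examined on its own, your proposal places the very-good-prime hypothesis in the wrong part of the argument: $\zl$-flatness and the lci property of $Z^1(W,\widehat{G})$ hold for \emph{every} $\ell\neq p$ — this is \cite[Theorem VIII.2.11]{geometrization} (and also a theorem of Dat--Helm--Kurinczuk--Moss), with no hypothesis on $\ell$. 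The cotangent-complex and stratification argument you sketch would thus prove something unconditional and already known, while the step that genuinely uses the hypothesis is what you label ``essentially bookkeeping.''

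The genuine content is the passage from the algebra-level colimit to an isomorphism in $\mathrm{Ind}\perf(B\widehat{G})$, and your sketch does not close this gap. In $\mathrm{Ind}\perf(B\widehat{G})$ the colimit is formal — it satisfies $\mathrm{Hom}(P,\colim_n M_n)=\colim_n\mathrm{Hom}(P,M_n)$ for $P$ perfect — so it agrees with the naive colimit of $\widehat{G}$-representations only when $R\Gamma(B\widehat{G},-)$ commutes with these filtered colimits, i.e.\ when the trivial $\widehat{G}$-module is compact in the derived category of $\widehat{G}$-representations over the residue field. This can fail for bad $\ell$, and it is exactly what ``very good'' is designed to rule out and what Fargues--Scholze establish through the orbit-and-stabilizer criteria of Theorem VIII.5.8. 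Koszul-regularity of the defining equations controls one derived base change, not the compatibility of the colimit with $\widehat{G}$-cohomology, so ``absence of higher Tor'' is a non-sequitur here. Your concluding parenthetical — that for tori every prime is very good and everything is explicit — is correct, and is precisely what \cref{lem: every prime ok for tori} proves, following the Fargues--Scholze route rather than yours.
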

Recall that we had Hecke operators inducing \(\rep_{\Lambda}(Q^I)\)-linear monoidal functors 
\begin{equation*}
    \rep_{\Lambda}((\widehat{G}\rtimes Q)^I)\to\End_{\Lambda}(\D_{\lis}(\bun{G},\Lambda)^{\omega})^{BW_E^I}
\end{equation*}
functorial in \(I\).
\begin{prop}[{\cite[Proposition IX.5.1.]{geometrization}}]
    Letting \(P\) run over the open pro-\(p\)-subgroups of the wild inertia, we have
    \begin{equation*}
        \D_{\lis}(\bun{G},\Lambda)^{\omega}=\bigcup_{P\subset W_E}\D_{\lis}^P(\bun{G},\Lambda)^{\omega}
    \end{equation*}
    where \(\D_{\lis}^P(\bun{G},\Lambda)^{\omega}\) denotes the subcategory of \(\D_{\lis}(\bun{G},\Lambda)^{\omega}\) consisting of those objects \(A\) such that for all Hecke operators \(T_V\) the \(W_E^I\)-action on \(T_VA\) factors through \((W_E/P)^I\).
    Letting \(W\) be a subgroup of \(W_E/P\) discretizing the tame inertia we get 
    \begin{equation*}
        (\D_{\lis}^P(\bun{G},\Lambda)^{\omega})^{B(W_E/P)^I}\simeq (\D_{\lis}^P(\bun{G},\Lambda)^{\omega})^{BW^I}
    \end{equation*}
\end{prop}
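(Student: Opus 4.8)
I treat the two displayed claims in turn. For the equality $\D_{\lis}(\bun{G},\Lambda)^{\omega}=\bigcup_P\D_{\lis}^P(\bun{G},\Lambda)^{\omega}$, note first that for each open pro-$p$ subgroup $P$ of the wild inertia $P_E$ the subcategory $\D_{\lis}^P(\bun{G},\Lambda)^{\omega}$ is thick: it is the intersection, over all finite $I$ and all $V\in\rep_\Lambda((\widehat{G}\rtimes Q)^I)$, of the preimages under the exact functors $T_V$ of the (evidently thick) subcategory of $\D_{\lis}(\bun{G},\Lambda)^{BW_E^I}$ of objects on which $P^I$ acts trivially. Moreover $P'\subseteq P$ gives $\D_{\lis}^P(\bun{G},\Lambda)^{\omega}\subseteq\D_{\lis}^{P'}(\bun{G},\Lambda)^{\omega}$, and $P_1\cap P_2$ is again open pro-$p$ in $P_E$, so $\bigcup_P\D_{\lis}^P(\bun{G},\Lambda)^{\omega}$ is itself a thick subcategory of $\D_{\lis}(\bun{G},\Lambda)^{\omega}$; it therefore suffices to show it contains a set of compact generators. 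By the compact generation lemma \cite[Proposition VII.7.4.]{geometrization}, one stratum at a time such generators are objects $A$ supported on a single $\bun{G}^b$ whose restriction $i^{b*}A$ lies in the thick subcategory generated by $\cind_K^{G_b(E)}\Lambda$ for a compact open pro-$p$ subgroup $K\subseteq G_b(E)$; say that such an $A$ has level $K$.

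The crux is then the estimate that for $A$ of level $K$ there is an open pro-$p$ subgroup $P=P(K)\subseteq P_E$ such that for every finite $I$ and every $V$ the $W_E^I$-action on $T_VA$ factors through $(W_E/P)^I$. I would split the monodromy of $T_VA$ into an arithmetic and a geometric part. The arithmetic part — the $W_E$-equivariant structures that geometric Satake builds into the kernel $\mathcal{S}_V$ — factors through the finite quotient $Q=\mathrm{Gal}(F/E)$ up to twists by powers of the cyclotomic character of $E$, which are unramified because $\ell\neq p$; so this part of the monodromy is killed by $(P_E\cap W_F)^I$ (and $P_E\cap W_F$ is open in $P_E$ since $W_F$ is open in $W_E$), uniformly in $V$ and in $I$. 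The geometric part — the partial-Frobenius action on the cohomology of the fibres of $\rheck\from\heck^I_G\to\bun{G}\times(\Div^1_X)^I$ after convolution against $\lheck^{*}A\otimes q^{*}\mathcal{S}_V$ — is controlled by the level: using that $T_V$ does not raise the level of a compact object, and that the cohomology of the local Hecke, resp.\ local shtuka, moduli at level $K$ is ramified only to a depth bounded in terms of $K$ (and $G$, $E$) — the analogue here of the conductor bounds familiar from Lubin--Tate and Drinfeld moduli — one finds this part is killed by $(P_E\cap N)^I$ for an open pro-$p$ subgroup $N$ depending on $K$ alone. Taking $P(K)=P_E\cap W_F\cap N$ then finishes the first claim, the passage between general $I$ and $I=\{*\}$ being handled by the functoriality in $I$ and the compatibility of the Hecke action with exterior tensor products and composition — which is exactly the point at which one needs the level not to grow. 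I expect this estimate, and in particular the statement that Hecke operators preserve the level together with the ramification bound for the level-$K$ moduli, to be the main obstacle; the rest is formal.

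For the second claim, fix $P$. The inertia of $W_E/P$ is an extension of the procyclic, prime-to-$p$ tame inertia by the finite $p$-group $P_E/P$, hence topologically finitely generated, and a lift of Frobenius together with topological generators of this inertia generate a discrete dense subgroup $W\subseteq W_E/P$ — the same $W$ occurring in the identification $Z^1(W_E/P,\widehat{G})\cong Z^1(W,\widehat{G})$ recalled above. Restriction along $W^I\hookrightarrow(W_E/P)^I$ is the comparison functor, and I would show it is an equivalence on $\D_{\lis}^P(\bun{G},\Lambda)^{\omega}$ by checking the two usual properties. For full faithfulness one uses that mapping spectra between compact objects of $\D_{\lis}(\bun{G},\Lambda)$ are $\ell$-complete and carry a continuous action of the inertia of $W_E/P$, that the continuous cohomology of a procyclic group with topological generator $\gamma$ is computed by $\mathrm{fib}(\gamma-1)$ — the same formula as for the dense subgroup $\langle\gamma\rangle\cong\mathbb{Z}$ — and that the finite $p$-part $P_E/P$ has order invertible in $\Lambda$ and already lies in $W$; so the $(W_E/P)^I$- and $W^I$-homotopy fixed points of these mapping spectra coincide. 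For essential surjectivity one checks that the $W^I$-equivariant structures that occur are precisely the continuous, finite-level ones, which by full faithfulness extend uniquely to continuous $(W_E/P)^I$-equivariant structures — this is the same discretization argument, relying on the quasi-separatedness of $\widehat{G}$ and the boundedness of compact objects, that yields $Z^1(W_E/P,\widehat{G})=Z^1(W,\widehat{G})$. In essence this is the categorical form of the classical fact that an $\ell$-adic Weil--Deligne representation of $W_E$ is recovered from its restriction to a dense discrete Weil subgroup.
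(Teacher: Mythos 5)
This proposition is recalled from \cite[Proposition IX.5.1]{geometrization} and is not proved in the present paper; I therefore compare your attempt against that source.

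Your reduction to the generators $i^b_!\cind_K^{G_b(E)}\Lambda$ via thickness of $\bigcup_P\D^P_{\lis}(\bun{G},\Lambda)^\omega$ is correct and mirrors the first step in Fargues--Scholze. However, at what you yourself flag as the crux you do not give a proof; you posit two unproven inputs — that Hecke operators ``do not raise the level'' of compact objects, and a ``conductor bound for level-$K$ local Hecke/shtuka moduli'' — and neither is available off the shelf. The second in particular is roughly equivalent to the proposition being proved, so invoking it is close to circular. The Fargues--Scholze argument does not proceed via a Lubin--Tate-style conductor estimate: their control on the wild ramification is structural, coming from the fact that the relevant Satake kernels and the pieces of the Hecke stack needed to compute $T_VA$ on a fixed generator descend to a finite level independent of $V$, together with ULA-ness. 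Writing ``I expect this estimate to be the main obstacle; the rest is formal'' is honest, but it means the proposed proof of the first claim has a genuine gap precisely where a proof is needed.

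For the second claim there is a concrete error. You assert that mapping spectra between compact objects of $\D_{\lis}(\bun{G},\Lambda)$ are $\ell$-complete; this is false. Already on a single stratum $\bun{T}^b\simeq [*/\cond{T(E)}]$, the endomorphism ring of the compact generator $\cind_K^{T(E)}\Lambda$ is the Hecke algebra $\Lambda[T(E)/K]$, and for $T=\gm$ and $K$ a principal congruence subgroup this is $\Lambda[x^{\pm1}]\otimes\Lambda[F]$ for a finite group $F$, which is not $\ell$-adically complete. Your argument then invokes the identity ``continuous cohomology of a procyclic group $=\mathrm{fib}(\gamma-1)$'' for a topological generator $\gamma$; but this identification with $\mathbb{Z}$-cohomology along a dense $\mathbb{Z}\hookrightarrow\widehat{\mathbb{Z}}^{(p')}$ requires exactly the $\ell$-adic continuity/completeness you have not established (for a discrete trivial $\mathbb{Q}$-module, continuous $H^1$ of $\widehat{\mathbb{Z}}$ vanishes while $H^1(\mathbb{Z},\mathbb{Q})=\mathbb{Q}$, so the claim cannot be true unconditionally). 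The correct mechanism here is different: one shows that for a compact object the continuous action of the compact inertia subgroup of $W_E/P$ necessarily factors through a finite quotient — this is the same quasi-separatedness/boundedness discretization argument used to prove $Z^1(W_E/P,\widehat{G})=Z^1(W,\widehat{G})$, applied to the equivariant structure on a compact sheaf — after which $W^I$- and $(W_E/P)^I$-equivariance agree for elementary reasons. Your last two sentences gesture at this, but they cannot be run through the $\ell$-completeness route you set up, so the argument as written does not go through.
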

Thus the Hecke operators give us \(\rep_{\Lambda}(Q^I)\)-linear exact monoidal functors 
\begin{equation*}
    \rep_{\Lambda}((\widehat{G}\rtimes Q)^I)\to\End_{\Lambda}(\D_{\lis}^P(\bun{G},\Lambda)^{\omega})^{BW^I}
\end{equation*}
functorial in finite sets \(I\).
Generally, replacing \(\D_{\lis}^P(\bun{G},\Lambda)^{\omega}\) with an idempotent complete stable \(\infty\)-category \(\mathcal{C}\), we get an action of the excursion algebra on \(\mathcal{C}\) and an action of \(\perf([Z^1(W,\widehat{G})/\widehat{G}])\) on \(\mathcal{C}\). 
More precisely:
\begin{thm}[{\cite[Theorem VIII.4.1.]{geometrization}}]
    Let \(\mathcal{C}\) be a small stable \(\Lambda\)-linear \(\infty\)-category.
    The data of \(\rep_{\Lambda}(Q^I)\)-linear functors 
    \begin{equation*}
        \rep_{\Lambda}((\widehat{G}\rtimes Q)^I)\to\End_{\Lambda}(\mathcal{C})^{BW^I}
    \end{equation*}
    functorial in \(I\) such that \(T_{V\otimes W}\cong T_V\circ T_W\) gives rise to a morphism of rings 
    \begin{equation*}
        \exc(W,\widehat{G})\to\pi_0\End(\id_{\mathcal{C}})
    \end{equation*}
\end{thm}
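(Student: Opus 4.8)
The plan is to carry out V.~Lafforgue's excursion operator construction in this abstract setting. Since \(\exc(W,\widehat{G})=\colim_{(n,F_n\to W)}\oo(Z^1(F_n,\widehat{G}))^{\widehat{G}}\) is a filtered colimit of rings, it is enough to produce, for each object \((n,\ (\gamma_1,\dots,\gamma_n)\colon F_n\to W)\) of the indexing category, a ring homomorphism \(\oo(Z^1(F_n,\widehat{G}))^{\widehat{G}}\to\pi_0\End(\id_{\mathcal{C}})\), and then to check compatibility with the transition maps. Write \(I=\{1,\dots,n\}\). The first ingredient needed is a representation-theoretic fact (going back to Lafforgue): \(\oo(Z^1(F_n,\widehat{G}))^{\widehat{G}}\) is spanned over \(\Lambda\) by \emph{excursion functions} \(\phi_{V,x,\xi}\), indexed by a finitely generated projective \(V\in\rep_{\Lambda}((\widehat{G}\rtimes Q)^{I})\) together with maps \(x\colon\mathbf{1}\to V|_{\Delta\widehat{G}}\) and \(\xi\colon V|_{\Delta\widehat{G}}\to\mathbf{1}\) of representations of the diagonally embedded \(\widehat{G}\), and the relations among these functions are generated by bilinearity in \((x,\xi)\), functoriality in \(V\), and multiplicativity \(\phi_{V\otimes V',\,x\otimes x',\,\xi\otimes\xi'}=\phi_{V,x,\xi}\cdot\phi_{V',x',\xi'}\) for the tensor product of \((\widehat{G}\rtimes Q)^{I}\)-representations. (Over \(\zl\) this statement requires some care; for tori it is elementary.)

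To such a datum I attach the operator
\begin{equation*}
\Phi_{V,x,\xi,(\gamma_i)}\ \colon\quad \id_{\mathcal{C}}\ \xrightarrow{\ x\ }\ T^{(I)}_{V}\ \xrightarrow{\ (\gamma_1,\dots,\gamma_n)\ }\ T^{(I)}_{V}\ \xrightarrow{\ \xi\ }\ \id_{\mathcal{C}},
\end{equation*}
read as follows. By functoriality of the given functors along the unique map \(I\to\{*\}\), restricting the \(W^{I}\)-equivariant endofunctor \(T^{(I)}_{V}\) along the diagonal \(W\hookrightarrow W^{I}\) identifies it with \(T^{(\{*\})}_{V|_{\Delta\widehat{G}}}\); applying \(T^{(\{*\})}\) to \(x\) and to \(\xi\) and using the unit identification \(T_{\mathbf{1}}\cong\id_{\mathcal{C}}\) then produces the outer two arrows as natural transformations of underlying endofunctors, while the middle arrow is the self-equivalence of \(T^{(I)}_{V}\) given by evaluating its \(W^{I}\)-equivariant structure at \((\gamma_1,\dots,\gamma_n)\in W^{I}\). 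I define the image of \(\phi_{V,x,\xi}\) to be the class of this composite in \(\pi_0\End(\id_{\mathcal{C}})\). Passing to \(\pi_0\) is what turns the monoidal and functoriality coherences, which \emph{a priori} hold only up to coherent homotopy, into strict identities --- and it is why the target is \(\pi_0\End(\id_{\mathcal{C}})\) and not the whole \(E_1\)-ring \(\bcenter(\mathcal{C})\).

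It then remains to check that \(\phi_{V,x,\xi}\mapsto\Phi_{V,x,\xi,(\gamma_i)}\) is well defined and a ring homomorphism, and is compatible with the colimit. Bilinearity in \((x,\xi)\) is immediate from additivity of the functors; functoriality in \(V\) follows from naturality of \(T(f)\), for \(f\colon V\to V'\), with respect to the \(W^{I}\)-equivariant structures, so that precomposition with the \((\gamma_i)\)-action commutes with \(f\); and multiplicativity --- hence the ring-map property --- uses exactly the hypothesis \(T_{V\otimes V'}\cong T_{V}\circ T_{V'}\) together with the compatibility of the \(W^{I}\)-equivariant structures with composition of endofunctors. Compatibility with the transition maps of the colimit reduces to three elementary moves on the generating data: adjoining a generator mapping to an arbitrary element of \(W\) (handled via functoriality along an inclusion \(I\hookrightarrow I'\) and \(T_{\mathbf{1}}\cong\id_{\mathcal{C}}\)); identifying two generators with the same image in \(W\) (handled via functoriality along the associated partial-diagonal map of index sets); and replacing a generator \(\gamma_j\) by a product \(\gamma_k\gamma_l\) (handled via the fact that \(W^{I}\) acts by a genuine group action, together with fusion on the Satake side). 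I expect this last package --- the Lafforgue relations --- together with the integral representation-theoretic input from the first paragraph to be the main obstacle; once both are in place the maps factor through \(\exc(W,\widehat{G})\) and give the desired ring homomorphism.
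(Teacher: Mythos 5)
Your approach is Lafforgue's excursion machine, and the operator you attach to a triple $(V,x,\xi)$ and a tuple $(\gamma_1,\dots,\gamma_n)\in W^n$ is exactly the one the paper uses. The divergence --- and the gap --- is in how you argue that this operator depends only on the excursion \emph{function} $\phi_{V,x,\xi}\in\oo(Z^1(F_n,\widehat{G}))^{\widehat{G}}$. You invoke an integral presentation of $\oo(Z^1(F_n,\widehat{G}))^{\widehat{G}}$ by the $\phi_{V,x,\xi}$ modulo Lafforgue's relations, define the map on generators, and check it kills the relations; you yourself flag that this presentation ``requires some care'' over $\zl$ and is only ``elementary'' for tori. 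But the theorem is stated for arbitrary $G$ and arbitrary $\zl$-algebras $\Lambda$, so this input is not at your disposal, and as written your proof has a hole exactly there.

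The paper never invokes that presentation. For each $f\in\oo(\widehat{G}\backslash(\widehat{G}\rtimes Q)^I/\widehat{G})$ it uses a \emph{canonical} triple $(V_f,\alpha,\beta)$: $V_f\subset\oo((\widehat{G}\rtimes Q)^I/\widehat{G})$ is the subrepresentation generated by $f$, $\alpha\from 1\to V_f|_{\rep_{\Lambda}(\widehat{G})}$ is induced by $f$, and $\beta\from V_f|_{\rep_{\Lambda}(\widehat{G})}\to 1$ is evaluation at the identity. Since $V_f$ lives inside one fixed representation and $\alpha$ depends $\Lambda$-linearly on $f$, the assignment $f\mapsto\Theta^I(f)\in\mathrm{Maps}(W^I,\pi_0\End_{\Lambda}(\mathcal{C}))$ is visibly $\Lambda$-linear with no presentation required; functoriality in the finite set $I$ disposes of your three ``elementary moves'' on transition maps in one stroke; and the ring-map property is, as you correctly identify, the hypothesis $T_{V\otimes V'}\simeq T_V\circ T_{V'}$. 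There is also a small bookkeeping step you omit: $\Theta^I$ naturally takes values in maps out of $W^I$ for the double quotient, whereas the excursion algebra is built from $\oo((\widehat{G}\rtimes Q)^n/\!\!/\widehat{G})$, and the paper passes between the two via the identification $\oo(\widehat{G}\backslash(\widehat{G}\rtimes Q)^{\{0,\dots,n\}}/\widehat{G})\otimes_{\oo(Q^{\{0,\dots,n\}})}\oo(Q^{\{1,\dots,n\}})\cong\oo((\widehat{G}\rtimes Q)^n/\!\!/\widehat{G})$, using the extra index $0$ to carry the trivial element of $W$. In short: same operators and same multiplicativity argument, but where you lean on an integral excursion presentation you cannot supply, the paper substitutes the canonical $V_f$ together with functoriality in $I$, and that substitution is what makes the statement hold integrally.
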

\begin{proof}[Proof Sketch]
    By definition it suffices to construct maps \(\oo(Z^1(F_n,\widehat{G}))^{\widehat{G}}\to\pi_0\End(\id_{\mathcal{C}})\) for each \((n,F_n\to W_E)\) functorial in such pairs.
    Given \(f\in\oo(\widehat{G}\backslash (\widehat{G}\rtimes Q)^I/\widehat{G})\), we consider the \((\widehat{G}\rtimes Q)^I\)-representation \(V_f\) that is the subrepresentation of \(\oo((\widehat{G}\rtimes Q)^I/\widehat{G})\) generated by \(f\).
    There are natural maps \(\alpha\from 1\to V_f|_{\rep_{\Lambda}(\widehat{G})}\) induced by \(f\) and \(\beta\from V_f|_{\rep_{\lambda}(\widehat{G})}\to 1\) given by evaluation at the identity.
    If we are given \((g_i)_{i\in I}\in W^I\), we get an element in \(\pi_0\End(\id_(\mathcal{C}))\) by considering the natural transformation 
    \begin{equation*}
        \id_{\mathcal{C}}=T_1\xto{\alpha}T_{V_f}\xto{(g_i)_{i\in I}}T_{V_f}\xto{\beta}T_1=\id_{\mathcal{C}}
    \end{equation*}
    This is the same as giving maps of \(\Lambda\)-modules 
    \begin{equation*}
        \Theta^I\from \oo(\widehat{G}\backslash(\widehat{G}\rtimes Q)^I/\widehat{G})\to \mathrm{Maps}(W^I,\pi_0\End_{\Lambda}(\mathcal{C}))
    \end{equation*}
    These are functorial in \(I\) (since the Hecke operators are) and are algebra maps (since \(T_{V\otimes W}\cong T_V\circ T_W\)).
    Then one identifies
    \begin{equation*}
        \oo(\widehat{G}\backslash(\widehat{G}\rtimes Q)^{\{0,\dots,n\}}/\widehat{G})\otimes_{\oo(Q^{\{0,\dots,n\}})}\oo(Q^{\{1,\dots,n\}})\cong\oo((\widehat{G}\rtimes Q)^{n}/\!\!/\widehat{G})
    \end{equation*}
    via pullback \((g_1,\dots,g_n)\mapsto (1,g_1,\dots,g_n)\).
    One has a similar story for \(\mathrm{Maps}(W^{\{0,\dots,n\}},\pi_0\End_{\Lambda}(\id_{\mathcal{C}}))\) and \(\mathrm{Maps}(W^n,\pi_0\End_{\Lambda}(\id_{\mathcal{C}}))\).
    Thus one gets algebra maps 
    \begin{equation*}
        \Theta_n\from \oo((\widehat{G}\rtimes Q)^{n}/\!\!/\widehat{G})\to \mathrm{Maps}(W^n,\pi_0\End_{\Lambda}(\id_{\mathcal{C}}))
    \end{equation*}
    Such maps are equivalent to maps \(\oo(Z^1(F_n,\widehat{G}))^{\widehat{G}}\to\pi_0\End_{\Lambda}(\id_{\mathcal{C}})\).
\end{proof}
\begin{thm}[{\cite[Theorem X.0.2.]{geometrization}}]
    Let \(\mathcal{C}\) be a small stable idempotent-complete \(\Lambda\)-linear \(\infty\)-category.
    Assume that \(\ell\) is invertible in \(\Lambda\) or that \(\ell\) does not divide \(|\pi_0(Z(G))|\).
    The anima of \(\rep_{\Lambda}(Q^I)\)-linear exact monoidal functors 
    \begin{equation*}
        \rep_{\Lambda}((\widehat{G}\rtimes Q)^I)\to\End_{\Lambda}(\mathcal{C})^{BW^I}
    \end{equation*}
    functorial in \(I\) is equivalent to the anima of \(\Lambda\)-linear actions of \(\perf([Z^1(W,\widehat{G})/\widehat{G}])\) on \(\mathcal{C}\).
    More precisely the action of \(W\) on the projection map \(p\from[Z^1(W,\widehat{G})/\widehat{G}]\cong\mathrm{Maps}_{BQ}(BW,B(\widehat{G}\rtimes Q))\to B(\widehat{G}\rtimes Q)\)
    gives rise to a functor \(p^*\from\perf(B(\widehat{G}\rtimes Q))\to\perf([Z^1(W,\widehat{G})/\widehat{G}])^{BW}\) that we can extend to functors 
    \begin{equation*}
        p^*_I\from\perf(B(\widehat{G}\rtimes Q)^I)\to\perf([Z^1(W,\widehat{G})/\widehat{G}])^{BW^I}
    \end{equation*}
    via tensor products.
    Then giving a \(\Lambda\)-linear action of  \(\perf([Z^1(W,\widehat{G})/\widehat{G}])\) on \(\mathcal{C}\) 
    \begin{equation*}
        \perf([Z^1(W,\widehat{G})/\widehat{G}]) \to\End_{\Lambda}(\mathcal{C})
    \end{equation*}
    gives rise to 
    \begin{equation*}
        \rep_{\Lambda}((\widehat{G}\rtimes Q)^I)\to\End_{\Lambda}(\mathcal{C})^{BW^I}
    \end{equation*}
    by precomposing with \(p_I^*\).
\end{thm}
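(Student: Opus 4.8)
The plan is to construct the two maps of anima in opposite directions and then show they are mutually inverse, all naturally in \(\mathcal{C}\). The direction from a \(\Lambda\)-linear action to the Hecke data is pure precomposition, and I expect it to be entirely formal: using the tautological \(BW\)-family of evaluation maps on \(\mathrm{Maps}_{BQ}(BW,B(\widehat{G}\rtimes Q))\cong[Z^1(W,\widehat{G})/\widehat{G}]\) one builds the symmetric monoidal functors \(p_I^*\from\perf(B(\widehat{G}\rtimes Q)^I)\to\perf([Z^1(W,\widehat{G})/\widehat{G}])^{BW^I}\) of the statement, functorial in \(I\), and then sets \(T_{(-)}=\rho\circ p_I^*\), read off along the \(BW^I\)-decoration, for a given action \(\rho\from\perf([Z^1(W,\widehat{G})/\widehat{G}])\to\End_{\Lambda}(\mathcal{C})\). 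Functoriality in \(I\), \(\rep_{\Lambda}(Q^I)\)-linearity, exactness, monoidality and \(T_{V\otimes V'}\cong T_V\circ T_{V'}\) all come for free from the corresponding properties of \(p_I^*\) and \(\rho\).

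The substantive direction is building the action out of the Hecke data. First I would pass to \(\mathrm{Ind}\)-categories: the single-variable functor \(\rep_{\Lambda}(\widehat{G}\rtimes Q)\to\End_{\Lambda}(\mathcal{C})^{BW}\) already exhibits \(\mathrm{Ind}(\mathcal{C})\) as a module over \(\mathrm{Ind}\perf(B(\widehat{G}\rtimes Q))\) carrying a compatible \(W\)-equivariance, and the functoriality in \(I\) together with the \(BW^I\)-decorations should be recognized as precisely the cosimplicial datum \([n]\mapsto B(\widehat{G}\rtimes Q)^{W^n}\) whose totalization is \(\mathrm{Maps}_{BQ}(BW,B(\widehat{G}\rtimes Q))\). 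Then I would invoke \cite[Theorem VIII.5.1.]{geometrization}: since \(\mathrm{Ind}\perf([Z^1(W,\widehat{G})/\widehat{G}])=\mathrm{Mod}_{\oo(Z^1(W,\widehat{G}))}(\mathrm{Ind}\perf(B\widehat{G}))\) with \(\oo(Z^1(W,\widehat{G}))=\colim_{(n,F_n\to W)}\oo(Z^1(F_n,\widehat{G}))\), it suffices to produce, on top of the \(\mathrm{Ind}\perf(B(\widehat{G}\rtimes Q))\)-module structure (with the \(\rep_{\Lambda}(Q^I)\)-linearity accounting for the twist), a compatible \(E_{\infty}\)-algebra map out of \(\oo(Z^1(W,\widehat{G}))\) into the endomorphisms of the unit of that structure. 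Because each \(Z^1(F_n,\widehat{G})\) is an \(n\)-fold product of copies of \(\widehat{G}\) with a twisted conjugation action, \(\oo(Z^1(F_n,\widehat{G}))\) is assembled from the convolution algebras \(\oo((\widehat{G}\rtimes Q)^{n}/\!\!/\widehat{G})\), and feeding tuples from \(W^n\) into the \(I\)-functorial Hecke operators — exactly as in the proof of \cite[Theorem VIII.4.1.]{geometrization} — should produce this map; restricting to \(\widehat{G}\)-invariants and \(\pi_0\) recovers the excursion-algebra map \(\mathrm{Exc}(W,\widehat{G})\to\pi_0\End(\id_{\mathcal{C}})\) already available. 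Unwinding then gives the desired action \(\perf([Z^1(W,\widehat{G})/\widehat{G}])\to\End_{\Lambda}(\mathcal{C})\).

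To finish I would check the two constructions are inverse; by a naturality argument in \(\mathcal{C}\) this reduces to checking compatibility on the tautological self-action of \(\mathcal{C}=\perf([Z^1(W,\widehat{G})/\widehat{G}])\), where it is a diagram chase with the symmetric monoidality of \(p^*\), the fact that the identification of \cite[Theorem VIII.5.1.]{geometrization} is implemented by the evaluation maps, and the known formula for the induced map on \(\pi_0\End(\id)\). The hypothesis that \(\ell\) be invertible in \(\Lambda\) or very good for \(\widehat{G}\) enters only through \cite[Theorem VIII.5.1.]{geometrization}: it is what makes \(\colim_{(n,F_n\to W)}\oo(Z^1(F_n,\widehat{G}))\to\oo(Z^1(W,\widehat{G}))\) an equivalence in \(\mathrm{Ind}\perf(B\widehat{G})\) and \(\perf(B\widehat{G})\) a generator of \(\perf([Z^1(W,\widehat{G})/\widehat{G}])\) under cones and retracts, hence what makes \(\mathrm{Ind}\perf\) of the totalization above genuinely a module category over \(\oo(Z^1(W,\widehat{G}))\). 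I expect the main obstacle to be exactly this last identification — matching the \(I\)-functorial Hecke data against the bar presentation of \(\mathrm{Maps}_{BQ}(BW,B(\widehat{G}\rtimes Q))\) at the level of \(\infty\)-categorical coherence — which is also why the structural input of \cite[Theorem VIII.5.1.]{geometrization}, and with it the restriction on \(\ell\), cannot be avoided.
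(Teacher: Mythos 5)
Your proposal takes a genuinely different route through the same key input. The paper's proof is organized around a formal sifted Kan extension: one defines a functor \(S\mapsto\perf(\mathrm{Maps}_{BQ}^{\Sigma}(S,B(\widehat{G}\rtimes Q)))\) by left Kan extending \(I\mapsto\perf(\mathrm{Maps}_{BQ}(I,B(\widehat{G}\rtimes Q)))\) from finite sets to all anima along sifted colimits. The equivalence between \(I\)-functorial Hecke data and actions of \(\perf^{\Sigma}(\mathrm{Maps}_{BQ}(BW,\dots))\) is then pure abstract nonsense (Yoneda and adjunctions), with \emph{all} coherence absorbed by the universal property of the Kan extension; the entire mathematical content is concentrated in \cite[Propositions X.3.3, X.3.4]{geometrization}, which compare \(\perf^{\Sigma}\) with the honest \(\perf\) of the mapping space and are where \cite[Theorem VIII.5.1]{geometrization} and the hypothesis on \(\ell\) enter. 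You instead try to construct the action directly by hand, passing through the module-category presentation \(\mathrm{Ind}\perf([Z^1(W,\widehat{G})/\widehat{G}])\simeq\mathrm{Mod}_{\oo(Z^1(W,\widehat{G}))}(\mathrm{Ind}\perf(B\widehat{G}))\) of \cite[Theorem VIII.5.1]{geometrization} and then trying to supply an \(E_\infty\)-algebra map out of \(\oo(Z^1(W,\widehat{G}))\).

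The place where your proposal is not yet a proof is precisely the step you yourself flag: producing that algebra map with the required higher coherence. You point to the proof of \cite[Theorem VIII.4.1]{geometrization}, but that construction only yields a map of ordinary rings \(\exc(W,\widehat{G})\to\pi_0\End(\id_{\mathcal{C}})\); lifting it to a fully coherent \(E_\infty\)-map (or even to an \(E_1\)- or \(E_2\)-map landing in the actual ring spectrum \(\End(\id)\)) from the Hecke data would require precisely the kind of \(\infty\)-categorical bookkeeping you call ``the main obstacle.'' You suggest this obstacle ``cannot be avoided,'' but the paper in fact sidesteps it: by defining \(\perf^{\Sigma}\) as a symmetric monoidal Kan extension one never has to write down the coherence by hand, since the universal property packages the entire tower of functoriality in \(I\) into the definition. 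You also gloss over the reconciliation between modules over \(\mathrm{Ind}\perf(B\widehat{G})\) in \cite[Theorem VIII.5.1]{geometrization} and the \(\mathrm{Ind}\perf(B(\widehat{G}\rtimes Q))\)-module structure your Hecke data provide, attributing it to ``\(\rep_{\Lambda}(Q^I)\)-linearity accounting for the twist''; this is a real mismatch that needs to be addressed explicitly. In short: your diagnosis of where \cite[Theorem VIII.5.1]{geometrization} and the \(\ell\)-restriction enter is correct, and your direction from actions to Hecke data via \(p_I^*\) matches the paper, but the paper's Kan extension device is not incidental---it is what makes the other direction rigorous without redoing the coherence arguments you would otherwise face.
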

\begin{proof}[Proof Sketch]
    Let \(S\to\perf(\mathrm{Maps}_{BQ}^{\Sigma}(S,B(\widehat{G}\rtimes Q)))\) be the extension of the functor 
    \begin{equation*}
        I\to \perf(\mathrm{Maps}_{BQ}(I,B(\widehat{G}\rtimes Q)))
    \end{equation*} from finite sets \(I\) over \(BQ\) to \(\Lambda\)-linear symmetric monoidal small stable idempotent-complete \(\infty\)-categories to all anima via sifted colimits.
    Then this functor commutes with sifted colimits in \(S\), and using Yoneda's lemma and adjunctions we see that the datum of \(\rep_{\Lambda}(Q^I)\)-linear exact monoidal functors 
    \begin{equation*}
        \rep_{\Lambda}((\widehat{G}\rtimes Q)^I)\to\End_{\Lambda}(\mathcal{C})^{S^I}
    \end{equation*}
    functorial in \(I\) is equivalent to \(\Lambda\)-linear exact actions of \(\perf(\mathrm{Maps}_{BQ}^{\Sigma}(S,B(\widehat{G}\rtimes Q)))\) on \(\mathcal{C}\).
    There is a natural functor 
    \begin{equation*}
        \perf(\mathrm{Maps}_{BQ}^{\Sigma}(S,B(\widehat{G}\rtimes Q)))\to \perf(\mathrm{Maps}_{BQ}(S,B(\widehat{G}\rtimes Q)))
    \end{equation*}
    that is an embedding for \(S=BF_n\) (\cite[Proposition X.3.3.]{geometrization}) and an equivalence for \(S=BW\) (\cite[Proposition X.3.4.]{geometrization}, this is where we need \(\ell\) not divide \(|\pi_0(Z(G))|\) or that \(\ell\) is invertible in \(\Lambda\)).
\end{proof}
\begin{rem}
    To formulate the main conjecture of \cite[]{geometrization} we need to impose a certain support condition called ``nilpotent singular support'' on the coherent sheaves on \([Z^1(W_E,\widehat{G})/\widehat{G}]\).
    This is discussed in \cite[Section VIII.2.2.]{geometrization}.
    For the purposes of this paper note that the nilpotent cone for tori is \(\{0\}\subset\mathfrak{t}^*\), thus by \cite[Theorem VIII.2.9]{geometrization} we have 
    \begin{equation*}
        \D_{\coh,\nilp}^{b,\qc}([Z^1(W_E,\widehat{T})/\widehat{T}])\simeq\perf^{\qc}([Z^1(W_E,\widehat{T})/\widehat{T}]).
    \end{equation*}
\end{rem}

	\section{Condensed group (co)homology}\label{condensed group (co)homology}
In this subsection we develop the necessary amount of condensed group (co)homology to formulate what transfer is in the condensed setting.
For this entire subsection, let \(G\) be a condensed group, \(H\) a condensed subgroup, \(M\) a condensed abelian group with a linear \(G\)-action and \(S\) an extremally disconnected set.
\begin{defn}
    Define the \defword{condensed group homology} respectively \defword{condensed group cohomology} to be 
    \begin{align*}
        H_i(G,M)&\defined\pi_i(\bbZ\lotimes_{\bbZ[G]}M)\\
        \shortintertext{respectively}
        H^i(G,M)&\defined\pi_{-i}\riHom_{\bbZ[G]}(\bbZ,M)
    \end{align*}
    where \(\bbZ[G]\) is the sheafification of the presheaf \(S\mapsto \bbZ[G(S)]\).
    We will also write \(M_G\) for \(H_0(G,M)\) and \(M^G\) for \(H^0(G,M)\).
\end{defn}
\begin{rem}
    Note that 
    \begin{align*}
        \bbZ\lotimes_{\bbZ[H]}M&\cong \bbZ[G/H]\lotimes_{\bbZ[G]}M\\    
        \shortintertext{and}
        \riHom_{\bbZ[H]}(\bbZ,M)&\cong\riHom_{\bbZ[G]}(\bbZ[G/H],M)
    \end{align*}
    In particular, if \(G/H\) is a condensed group, the condensed group (co)homology carries a residual \(G/H\)-action.
\end{rem}
\begin{rem}\label{rem: underlying set condensed homology is ordinary homology}
    Since evaluation on extremally disconnected sets is strong monoidal and exact we see that
    \begin{equation*}
        (\bbZ\lotimes_{\bbZ[G]}M)(S)=\bbZ(S)\lotimes_{\bbZ[G](S)}M(S)
    \end{equation*}
    so in particular \(H_i(G,M)(*)=H_i(G(*),M(*))\).
\end{rem}
\begin{constr}
    Let \(P_\bullet(S)\to\bbZ(S)\) be the bar resolution for \(G(S)\).
    Then the sheafification of \(S\mapsto P_\bullet(S)\) is a flat resolution of \(\bbZ\) with \(P_n=\bbZ[G][G^n]\), which we will also call the bar resolution.
\end{constr}
\begin{rem}\label{rem: bar resolution not projective}
    The bar resolution is not generally a projective resolution.
    Therefore it is generally unsuited to compute condensed group cohomology.
\end{rem}
\begin{rem}
    If \(G\) is a discrete abstract group, then the bar resolution is just the ordinary bar resolution with the discrete condensed structure.
    It follows that 
    \begin{equation*}
        H_i(G,M)=\cond{H_i(G(*),M(*))}
    \end{equation*}
    whenever \(M\) is a discrete \(G\)-module.
\end{rem}
\begin{defn}
    Let \(N\) be a condensed \(H\)-module.
    Define the \defword{coinduction} respectively \defword{induction} by
    \begin{align*}
        \ind_H^G(N)\defined \bbZ[G]\otimes_{\bbZ[H]}N
        \shortintertext{respectively}
        \coind_H^G(N)\defined\iHom_{\bbZ[H]}(\bbZ[G],N)
    \end{align*}
    The coinduction is naturally a right \(G\)-module, but we will consider it as a left \(G\)-module by precomposing with the inversion on \(G\).
\end{defn}
\begin{lem}
    We have adjunctions
    \begin{equation*}
        \ind_H^G\dashv\res_H^G\dashv\coind_H^G
    \end{equation*}
\end{lem}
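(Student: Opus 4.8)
The plan is to establish the adjunction chain $\ind_H^G\dashv\res_H^G\dashv\coind_H^G$ by the same mechanism as in classical group (co)homology, exploiting the tensor--hom adjunction for condensed abelian groups together with the flatness/duality properties of $\bbZ[G]$ as a $\bbZ[H]$-bimodule. Concretely, for the right adjunction $\res_H^G\dashv\coind_H^G$, I would note that for a condensed $G$-module $M$ and a condensed $H$-module $N$ we have natural isomorphisms
\begin{align*}
    \Hom_{\bbZ[H]}(\res_H^G M, N)&\cong\Hom_{\bbZ[H]}(M, \iHom_{\bbZ[H]}(\bbZ[G],N))\\
    &=\Hom_{\bbZ[G]}(M,\coind_H^G N),
\end{align*}
where the first isomorphism is the standard tensor--hom adjunction $\Hom_{\bbZ[H]}(\bbZ[G]\otimes_{\bbZ[G]}M,N)\cong\Hom_{\bbZ[H]}(M,\iHom_{\bbZ[H]}(\bbZ[G],N))$ applied with $M$ viewed via restriction, and the second uses that a $\bbZ[H]$-linear map out of a $G$-module into an internal-hom object $\iHom_{\bbZ[H]}(\bbZ[G],N)$ (with its left $G$-structure coming from the right $\bbZ[G]$-action on $\bbZ[G]$) is the same as a $\bbZ[G]$-linear map. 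Dually, for $\ind_H^G\dashv\res_H^G$ one has
\begin{align*}
    \Hom_{\bbZ[G]}(\ind_H^G N, M)&=\Hom_{\bbZ[G]}(\bbZ[G]\otimes_{\bbZ[H]}N,M)\\
    &\cong\Hom_{\bbZ[H]}(N,\res_H^G M),
\end{align*}
again by tensor--hom adjunction, now using the left $\bbZ[G]$- right $\bbZ[H]$-bimodule structure on $\bbZ[G]$.

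The steps I would carry out, in order, are: first, set up the monoidal/closed structure on condensed abelian groups and record that $\bbZ[G]$ is naturally a $(\bbZ[G],\bbZ[H])$- and $(\bbZ[H],\bbZ[G])$-bimodule, with the left/right actions given by multiplication; second, spell out carefully the $G$-module structures on $\ind_H^G N$ and $\coind_H^G N$, in particular checking that the ``precompose with inversion'' convention in the definition of $\coind$ is exactly what is needed to turn the a priori right action into a left action compatible with the adjunction; third, invoke the internal tensor--hom adjunction in the symmetric monoidal closed category $\mathrm{Mod}_{\bbZ[H]}(\mathrm{Cond}(\mathrm{Ab}))$ and relate $\bbZ[H]$-linear maps into an internal hom $\iHom_{\bbZ[H]}(\bbZ[G],-)$ to $\bbZ[G]$-linear maps, which is the key bookkeeping identity; fourth, check naturality in both variables so that we genuinely obtain adjunctions of functors, not merely bijections of hom-sets. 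Since the statement is phrased for the underived functors (abelian-category-level $\ind,\coind,\res$), no derived machinery is needed here, though one should remark that the adjunctions pass to the derived level because $\res$ and $\ind$ are exact (the latter as $\bbZ[G]$ is flat over $\bbZ[H]$, a point worth a sentence) and $\coind$ admits a derived right adjoint.

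The main obstacle I expect is purely one of care with the module structures and the role of inversion: the internal-hom object $\iHom_{\bbZ[H]}(\bbZ[G],N)$ carries several a priori distinct actions (from the left $G$-action on $\bbZ[G]$, from the right $H$-action, and from any $H$-action on $N$), and one must track which of these becomes the left $G$-module structure after composing with $g\mapsto g^{-1}$, and verify it is compatible with the $\bbZ[H]$-linear restriction map. In the classical setting this is routine, but in the condensed setting one should confirm that all these structures are defined sheaf-theoretically and that sheafification does not disturb the adjunction — this follows because $\Hom$, $\iHom$, $\otimes$ and sheafification all behave well (sheafification is a left adjoint strong monoidal functor), so no genuine new phenomenon arises; nonetheless it is the step where an error would most easily creep in. A secondary, minor point is to make sure $\bbZ[G]$ and $\bbZ[H]$ are understood as the sheafifications as in the preceding definition, so that $\bbZ[G]\otimes_{\bbZ[H]}N$ really is the sheafified relative tensor product; with that convention fixed, the adjunctions follow formally.
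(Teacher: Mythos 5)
Your proposal is correct and follows essentially the same route as the paper: both rely on the formal tensor--hom adjunction, with the paper simply noting that the adjunctions hold at the presheaf level and can be sheafified (which works because sheafification is a strong monoidal left adjoint, and the internal hom $\iHom_{\bbZ[H]}(\bbZ[G],N)$ is already a sheaf when $N$ is). You spell out the bimodule bookkeeping and the role of the inversion convention more explicitly, but no new idea is added or missing.
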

\begin{proof}
    Clearly there is an adjunction on the presheaf versions of these functors.
    One can just sheafify that.
\end{proof}
\begin{lem}\label{lem: Z[G] as Z[H] module}
    Assume that \(G/H\) is a discrete set.
    A set-theoretic section \(s\) of \(G(*)\to G/H(*)\) gives us isomorphisms \(\bbZ[H][G/H]\cong \bbZ[G]\) and \(\bbZ[H][H\backslash G]\cong \bbZ[G]\) as right respectively left \(\bbZ[H]\)-modules.
\end{lem}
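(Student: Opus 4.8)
The plan is to reduce the statement to an isomorphism of condensed \emph{sets} and then apply the free condensed abelian group functor \(\bbZ[-]\), which is strong symmetric monoidal and (being a left adjoint) sends coproducts to direct sums. Concretely, I want to build from the section \(s\) an isomorphism of condensed sets \(G/H\times H\cong G\), equivariant for the right-translation action of \(H\) on the second factor of the source and on \(G\); applying \(\bbZ[-]\) and using that \(G/H\) is discrete then gives \(\bbZ[G]\cong\bbZ[G/H\times H]=\bbZ[\coprod_{G/H}H]=\bigoplus_{G/H}\bbZ[H]=\bbZ[H][G/H]\) as right \(\bbZ[H]\)-modules.

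First I would upgrade the given set section \(s\from G(*)\to G/H(*)\) to a section \(\sigma\from G/H\to G\) of the projection \(\pi\from G\to G/H\) in condensed sets. Since \(G/H\) is a discrete condensed set it agrees with its own discretisation \(\cond{G/H(*)}\), so \(\sigma\) can be taken to be \(\cond{s}\) followed by the counit \(\cond{G(*)}\to G\); it is a section of \(\pi\) because \(s\) is a section of \(\pi(*)\). The one thing to keep straight here is not to confuse \(G\) with its discretisation \(\cond{G(*)}\) --- this is the only place where discreteness of \(G/H\) (as opposed to of \(G\) itself) enters the construction. Then set \(\phi\from G/H\times H\to G,\ (c,h)\mapsto\sigma(c)\cdot h\), that is, \(\sigma\times\iota_H\) composed with multiplication, where \(\iota_H\) is the inclusion. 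This is a morphism of condensed sets and is right-\(H\)-equivariant by associativity.

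The heart of the matter is that \(\phi\) is an isomorphism, and here I would just write down the candidate inverse \(g\mapsto(\pi(g),\sigma(\pi(g))^{-1}g)\), which is again a morphism of condensed sets. The only nonformal point is that the second coordinate really factors through \(H\), i.e.\ that \(H\) is the fibre \(\pi^{-1}(\bar e)\) over the base coset \(\bar e\in G/H(*)\) --- equivalently, that \(\pi\) is an \(H\)-torsor for the (free, as \(H\) is a subgroup) right-translation action. This standard fact about quotients of condensed groups is the single input I would invoke rather than reprove; granting it, \(\pi(\sigma(\pi(g))^{-1}g)=\sigma(\pi(g))^{-1}\cdot\pi(g)=\bar e\) since \(\sigma\) splits \(\pi\) and \(G\) acts on \(G/H\) from the left, and both composites \(\phi\circ(\text{inverse})\) and \((\text{inverse})\circ\phi\) are the identity because \(H\) fixes \(\bar e\). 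Applying \(\bbZ[-]\) then yields the first isomorphism, right \(\bbZ[H]\)-linearly. For the second isomorphism I would transport along the inversion \(g\mapsto g^{-1}\) of \(G\): it is an isomorphism of condensed sets carrying \(G/H\) onto \(H\backslash G\) via \(gH\mapsto Hg^{-1}\) and, on \(\bbZ[G]\), interchanging the left and right \(\bbZ[H]\)-module structures; equivalently one reruns the argument with a right-coset section. I do not expect any real obstacle beyond the two bookkeeping points already flagged (the discretisation subtlety and the torsor fact).
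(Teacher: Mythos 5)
Your proof is correct and essentially coincides with the paper's: both reduce to exhibiting an \(H\)-equivariant isomorphism of condensed sets \(G/H\times H\cong G\) by promoting the set-theoretic section to a condensed-set section (using that \(G/H=\cond{G/H(*)}\)) and writing down the same pair of mutually inverse maps \((c,h)\mapsto\sigma(c)h\) and \(g\mapsto(\pi(g),\sigma(\pi(g))^{-1}g)\), then applying \(\bbZ[-]\). You are a bit more explicit than the paper in isolating the torsor fact that the second coordinate of the inverse lands in \(H\), but this is the same argument.
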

\begin{proof}
    We do the case for right \(\bbZ[H]\)-modules.
    The other case is similar.
    We have \(\bbZ[H][G/H]\cong\bbZ[H\times G/H]\) as right \(\bbZ[H]\)-modules, where \(H\) acts on \(H\times G/H\) naturally on the first factor and trivially on the second.
    Thus we need to check that \(H\times G/H\cong G\) as condensed sets with \(H\)-action.
    By adjunction the map \(s\from G/H(*)\to G(*)\) gives rise to a morphism of condensed sets \(s\from\cond{G/H(*)}\to G\).
    By assumption \(\cond{G/H(*)}=G/H\).
    We define \((H\times G/H)(S)\to G(S)\) by \((h,\overline{g})\to s(\overline{g}h)\) and \(G(S)\to (H\times G/H)(S)\) by \(g\mapsto (s(\overline{g}^{-1})g,\overline{g})\) where \(\overline{g}\) is the image of \(g\) under \(G\to G/H\).
    These are inverse maps and \(H\)-linear.
\end{proof}
\begin{lem}[Shapiro's lemma]
    We have a canonical isomorphism
    \begin{align*}
        H_*(H,N)\xto{\cong} H_*(G,\ind_H^G N)
    \end{align*}
    If \(\bbZ[G]\) is projective over \(\bbZ[H]\) (for example when \(G/H\) comes from a finite discrete set, by \cref{lem: Z[G] as Z[H] module}), then
    \begin{equation*}
        H^*(G,\coind_H^G N)\xto{\cong}H^*(H,N)
    \end{equation*}
\end{lem}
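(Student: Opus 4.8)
The plan is to reduce both statements to standard adjunction nonsense combined with the description of homology and cohomology as derived functors built from the bar resolution, using the remarks already established above. For the homological statement, I would argue as follows. By \cref{rem: underlying set condensed homology is ordinary homology} and the fact that evaluation on extremally disconnected sets $S$ is exact and strong monoidal, it suffices to produce a natural isomorphism of complexes after evaluating on each $S$. Concretely, unwind the definitions:
\begin{equation*}
    \bbZ\lotimes_{\bbZ[G]}\ind_H^G(N) \cong \bbZ\lotimes_{\bbZ[G]}\bigl(\bbZ[G]\otimes_{\bbZ[H]}N\bigr) \cong \bbZ\lotimes_{\bbZ[H]}N,
\end{equation*}
where the last isomorphism is the obvious associativity/cancellation of the (underived) tensor product $\bbZ[G]\otimes_{\bbZ[H]}-$ against $\bbZ\lotimes_{\bbZ[G]}-$. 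The only thing to check is that this underived cancellation does not break the derived tensor product, i.e. that $\ind_H^G$ sends flat $\bbZ[H]$-modules to flat $\bbZ[G]$-modules (so that applying $\ind_H^G$ to a flat resolution of $N$ computes $\bbZ\lotimes_{\bbZ[H]}N$ after $\bbZ\lotimes_{\bbZ[G]}-$); this follows since $\bbZ[G]\otimes_{\bbZ[H]}-$ is exact on the level of presheaves and sheafification is exact, and a flat resolution of $N$ stays flat under it. Equivalently, and perhaps more cleanly, one invokes the first displayed isomorphism in the Remark after the definition of condensed group (co)homology, $\bbZ\lotimes_{\bbZ[H]}M\cong\bbZ[G/H]\lotimes_{\bbZ[G]}M$, together with the projection formula / adjunction $\res_H^G\dashv\coind_H^G$ dualized to $\ind_H^G\dashv\res_H^G$.

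For the cohomological statement, the point is exactly the hypothesis that $\bbZ[G]$ is projective over $\bbZ[H]$. By the Remark after the definition, $\riHom_{\bbZ[H]}(\bbZ,N)\cong\riHom_{\bbZ[G]}(\bbZ[G/H],N)$, and $\coind_H^G(N)=\iHom_{\bbZ[H]}(\bbZ[G],N)$, so what we want is the derived adjunction
\begin{equation*}
    \riHom_{\bbZ[G]}\bigl(\bbZ,\iHom_{\bbZ[H]}(\bbZ[G],N)\bigr)\cong\riHom_{\bbZ[H]}(\bbZ,N).
\end{equation*}
On underived $\Hom$ this is just the tensor-hom/restriction adjunction $\res_H^G\dashv\coind_H^G$ from the Lemma above. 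To see it passes to derived functors, I would resolve $\bbZ$ by a complex of projective $\bbZ[G]$-modules $P_\bullet$; then $\res_H^G P_\bullet$ is a complex of $\bbZ[H]$-modules which is projective precisely because $\bbZ[G]$ — and hence every projective $\bbZ[G]$-module, being a retract of a free one — is projective over $\bbZ[H]$ by hypothesis. Since $\coind_H^G=\iHom_{\bbZ[H]}(\bbZ[G],-)$ is exact (as $\bbZ[G]$ is projective, hence flat, over $\bbZ[H]$) it computes the derived coinduction, and the underived adjunction applied levelwise to $P_\bullet$ gives the claimed isomorphism of complexes, hence of cohomology groups.

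The main obstacle — the one place where the condensed setting genuinely differs from the classical one — is controlling the interaction of sheafification with flatness and projectivity, since as \cref{rem: bar resolution not projective} warns, the naive bar resolution is flat but not projective, and $\bbZ[G]$ need not be projective over $\bbZ[H]$ in general (which is exactly why the cohomological half carries a hypothesis). For the homology statement one should therefore be careful to phrase everything in terms of flat resolutions and the exactness of $\ind_H^G$ on presheaves, so that no projectivity is needed; for the cohomology statement one must genuinely use the hypothesis, and the key lemma to invoke is \cref{lem: Z[G] as Z[H] module}, which shows that when $G/H$ is a finite discrete set a section of $G(*)\to (G/H)(*)$ trivializes $\bbZ[G]$ as a free $\bbZ[H]$-module of finite rank, so that projectivity (indeed freeness) holds and $\coind_H^G\cong\ind_H^G$ as well. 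I expect the write-up to be short once one commits to the flat-resolution formulation of homology and the projective-resolution formulation of cohomology, with the two Remarks after the main definition doing most of the bookkeeping.
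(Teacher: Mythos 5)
Your proposal follows essentially the same route as the paper: for homology, cancel $\ind_H^G=\bbZ[G]\otimes_{\bbZ[H]}-$ against $\bbZ\lotimes_{\bbZ[G]}-$ using flatness of $\bbZ[G]$ over $\bbZ[H]$; for cohomology, use the derived restriction--coinduction adjunction with projectivity as the input.

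The one step I would not let stand as written is \emph{``every projective $\bbZ[G]$-module, being a retract of a free one, is projective over $\bbZ[H]$.''} In condensed abelian groups the compact projective generators of $\bbZ[G]$-modules are the $\bbZ[G][S]$ with $S$ extremally disconnected, not direct sums of copies of $\bbZ[G]$. If $\bbZ[G]$ is merely a retract of some $\bigoplus_j\bbZ[H][T_j]$, then $\bbZ[G][S]$ is a retract of $\bigoplus_j\bbZ[H][T_j\times S]$, and $T_j\times S$ is compact Hausdorff but in general no longer extremally disconnected, so there is no reason for $\bbZ[G][S]$ to be $\bbZ[H]$-projective. The paper sidesteps this by not resolving $\bbZ$ at all: projectivity of $\bbZ[G]$ over $\bbZ[H]$ is used solely to identify $\coind_H^G N=\iHom_{\bbZ[H]}(\bbZ[G],N)$ with $\riHom_{\bbZ[H]}(\bbZ[G],N)$, after which
\begin{equation*}
\riHom_{\bbZ[G]}\bigl(\bbZ,\riHom_{\bbZ[H]}(\bbZ[G],N)\bigr)\cong\riHom_{\bbZ[H]}(\bbZ,N)
\end{equation*}
is the derived restriction--coinduction adjunction, which needs no further hypotheses. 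In the one case the lemma is actually invoked — $G/H$ finite discrete, where $\bbZ[G]\cong\bbZ[H][G/H]$ is finite free and $\bbZ[G][S]\cong\bbZ[H][(H\backslash G)\times S]$ with $(H\backslash G)\times S$ still extremally disconnected — your restriction-preserves-projectives step is fine, but for the general hypothesis of the lemma you should adopt the paper's phrasing.
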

\begin{proof}
    This is clear for homology, since
    \begin{equation*}
        H_*(G,\ind_{H}^G N)\cong H_*(P_{\bullet}\otimes_{\bbZ[G]}\bbZ[G]\otimes_{\bbZ[H]}M)\cong H_*(P_{\bullet}\otimes_{\bbZ[H]}N)
    \end{equation*}
    where \(P_{\bullet}\) is the bar resolution for \(\bbZ\) as a \(\bbZ[G]\)-module.
    Note that \(\bbZ[G]\) is flat as a \(\bbZ[H]\)-module, as this is clearly true on the level of presheaves.
    If \(\bbZ[G]\) is projective over \(\bbZ[H]\), then 
    \begin{equation*}
        \riHom_{\bbZ[G]}(\bbZ,\coind_H^GN)\cong\riHom_{\bbZ[G]}(\bbZ,\riHom_{\bbZ[H]}(\bbZ[G],N))\cong\riHom_{\bbZ[H]}(\bbZ,N)
    \end{equation*}
    which implies the claim for cohomology.
\end{proof}
\begin{lem}\label{lem: ambidexterity condensed group representation}
    Assume that \(G/H\) is a finite discrete set.
    Then we have a \(G\)-equivariant isomorphism
    \begin{equation*}
        \ind_H^G N\cong\coind_H^G N
    \end{equation*}
\end{lem}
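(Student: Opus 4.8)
The plan is to reduce the statement to the classical fact that for a finite-index subgroup the induction and coinduction modules coincide, and then to sheafify. Concretely, I would first record that since $G/H$ is a finite discrete set, \cref{lem: Z[G] as Z[H] module} provides (after choosing a set-theoretic section $s$ of $G(*)\to G/H(*)$) isomorphisms $\bbZ[G]\cong\bbZ[H][G/H]$ of right $\bbZ[H]$-modules and $\bbZ[G]\cong\bbZ[H][H\backslash G]$ of left $\bbZ[H]$-modules; in particular $\bbZ[G]$ is finitely generated and projective (even free) as a $\bbZ[H]$-module on either side. Using the right-module identification we get
\begin{equation*}
\coind_H^G N=\iHom_{\bbZ[H]}(\bbZ[G],N)\cong\iHom_{\bbZ[H]}(\bbZ[H][G/H],N)\cong\prod_{G/H}N,
\end{equation*}
where the last product is the (finite, hence coinciding with coproduct) product over the discrete set $G/H$; and using the left-module identification,
\begin{equation*}
\ind_H^G N=\bbZ[G]\otimes_{\bbZ[H]}N\cong\bbZ[H][H\backslash G]\otimes_{\bbZ[H]}N\cong\bigoplus_{H\backslash G}N.
\end{equation*}
Since $G/H$ is finite, $\bigoplus$ and $\prod$ agree, and inversion on $G$ identifies $H\backslash G$ with $G/H$, so both sides are abstractly isomorphic as condensed abelian groups.

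The real content is to make this isomorphism $G$-equivariant, not merely an isomorphism of underlying condensed abelian groups. Here I would follow the classical argument: define the map $\ind_H^G N\to\coind_H^G N$ by the formula that on an extremally disconnected $S$ sends $g\otimes n$ (for $g\in G(S)$, $n\in N(S)$) to the function $\bbZ[G](S)\to N(S)$ supported where appropriate, i.e. the $\bbZ[H]$-linear map $g'\mapsto g' g\cdot n$ if $g'g\in H(S)$ and $0$ otherwise — more precisely, one uses the standard averaging-type formula $\phi_{g\otimes n}(g')=\sum_{h}[\,g'gh^{-1}\in H\,]\,\dots$ which in the finite-index case is a finite sum and hence well-defined on presheaves. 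One checks on presheaves, over each $S$, that this is a homomorphism of abstract $G(S)$-modules (this is the classical computation, using finiteness of $[G:H]$ crucially so that the defining sum is finite), that it is natural in $S$, and that it is an isomorphism because after forgetting the $G$-action it recovers the identifications above. Sheafifying a natural isomorphism of presheaves of $\bbZ[G]$-modules yields the desired $G$-equivariant isomorphism of condensed $G$-modules.

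The steps in order: (1) invoke \cref{lem: Z[G] as Z[H] module} to get the one-sided projectivity/freeness of $\bbZ[G]$ over $\bbZ[H]$ and the explicit bases indexed by $G/H$ and $H\backslash G$; (2) write down the classical presheaf-level map $\ind\to\coind$ by the finite-sum formula and verify $G(S)$-equivariance and naturality in $S$ pointwise — this is where finiteness of $G/H$ is used; (3) verify bijectivity on presheaves by comparing with the underlying identifications of step (1); (4) sheafify. I expect step (2) to be the main obstacle, not because the algebra is hard — it is the textbook argument — but because one must be slightly careful that all the relevant operations (the indicator ``$g'gh^{-1}\in H$'', the choice of section $s$) are morphisms of condensed sets, i.e. that one is genuinely working with presheaves on extremally disconnected sets and not accidentally using a non-condensed construction; since $G/H$ is assumed discrete, the section $s$ from \cref{lem: Z[G] as Z[H] module} and the decompositions it furnishes are manifestly condensed, so this is really just bookkeeping, but it is the only place where the condensed setting (as opposed to the classical one) requires any thought.
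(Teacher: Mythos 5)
Your proposal is correct, and it takes a genuinely different route from the paper. The paper's proof builds the isomorphism as a chain of canonical identifications
\begin{equation*}
\ind_H^G N\cong\bbZ[G]\otimes_{\bbZ[H]}N\cong\bbZ[G/H]\otimes_{\bbZ}N\cong\bigoplus_{G/H}N\cong\bigoplus_{H\backslash G}N\cong\iHom_{\bbZ}(\bbZ[H\backslash G],N)\cong\iHom_{\bbZ[H]}(\bbZ[G],N)
\end{equation*}
and then argues for $G$-equivariance not by an explicit formula but by the observation that, although some of the intermediate identifications depend on a choice of set-theoretic section of $G(*)\to G/H(*)$, the total composite is independent of this choice; since conjugating by the $G$-action amounts to re-running the construction with a modified section, independence forces equivariance. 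You instead construct the map directly by the classical averaging formula at the presheaf level and then sheafify, verifying $G(S)$-equivariance pointwise. Each has its advantages: the paper's argument avoids writing down any formula, and makes the role of \cref{lem: Z[G] as Z[H] module} transparent, but pushes the equivariance check into the (somewhat terse) ``independence of choices'' step; your argument is more hands-on and verifies equivariance by direct computation, at the cost of the bookkeeping you correctly flag — one must express the indicator ``$g'g\in H$'' as a decomposition of $S$ into clopen pieces using discreteness of $G/H$, and one should also note that finiteness of $G/H$ makes the presheaf tensor product $\bbZ[G]\otimes^{\mathrm{pre}}_{\bbZ[H]}N\cong\bigoplus_{G/H}N$ already a sheaf, so that the sheafification step is harmless. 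Both are standard and valid approaches to the same fact.
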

\begin{proof}
    We have
    \begin{align*}
        \ind_H^GN&\cong\bbZ[G]\otimes_{\bbZ[H]}N\\
        &\cong\bbZ[G/H]\otimes_{\bbZ} N\\
        &\cong\bigoplus_{s\in G/H}N\\
        &\cong\bigoplus_{s\in H\backslash G}N\\
        &\cong\iHom_{\bbZ}(\bbZ[H\backslash G],N)\\
        &\cong\iHom_{\bbZ[H]}(\bbZ[G],N)
    \end{align*}
    Note that while the third and fifth isomorphism depends on a choice of set-theoretic section to \(G(*)\to G/H(*)\), the entire composite is independent of this choice.
\end{proof}
\begin{defn}\label{def: transfer condensed group (co)homology}
    Assume that \(G/H\) is a finite discrete set.
    We define transfer maps in condensed group (co)homology by setting
    \begin{align*}
        \tr^G_H&\from H_*(G,M)\to H_*(G,\coind^G_H\res^G_HM)\xto{\cong}H_*(G,\ind^G_H\res^G_H M)\xto{\cong}H_*(H,\res^G_H M)\\
        \tr^G_H&\from H^*(H,M)\xto{\cong} H^*(G,\coind^G_H\res^G_HM)\xto{\cong}H^*(G,\ind^G_H\res^G_H M)\to H^*(H,\res^G_H M)
    \end{align*}
    The isomorphisms are Shapiro's lemma and the previous lemma, the other arrows are the unit or counit of the coinduction, restriction, induction adjunction.
\end{defn}
\begin{lem}\label{lem: functoriality of transfer}
    Assume that \(H'\) is a normal subgroup of \(H\) and \(G\).
    Assume that \(M\) comes from a \(G/H'\)-module.
    Then the following diagram commutes:
    \begin{equation*}
        \begin{tikzcd}
            {H_*(G,M)} \arrow[d] \arrow[r, "\tr^G_H"]    & {H_*(H,M)} \arrow[d] \\
            {H_*(G/H',M)} \arrow[r, "\tr_{H/H'}^{G/H'}"] & {H_*(H/H',M)}       
            \end{tikzcd}
    \end{equation*}
\end{lem}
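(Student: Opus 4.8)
The plan is to verify that the square commutes already at the level of the functorial bar resolution. Write $q\from G\to G/H'$ for the projection and abbreviate $\bar G\defined G/H'$, $\bar H\defined H/H'$; since $H'\subseteq H$ there is a canonical identification $G/H\cong\bar G/\bar H$ of finite discrete sets. Fix a set-theoretic section $s$ of $G(*)\to G/H(*)$ as in \cref{lem: Z[G] as Z[H] module}, and use throughout the induced section $\bar s\defined q\circ s$ of $\bar G(*)\to\bar G/\bar H(*)$, so that all occurrences of the isomorphism $\coind\cong\ind$ below are taken with respect to these compatible choices.

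Since the bar resolution $P_\bullet$, $P_n=\bbZ[G][G^n]$, is a functor of the condensed group, $q$ induces a map $q_\bullet\from P^G_\bullet\to P^{\bar G}_\bullet$ of flat resolutions of $\bbZ$, and for every $\bbZ[\bar G]$-module $N$ the left vertical arrow $c_G\from H_*(G,N)\to H_*(\bar G,N)$ is computed by the evident chain map $P^G_\bullet\otimes_{\bbZ[G]}N\to P^{\bar G}_\bullet\otimes_{\bbZ[\bar G]}N$ induced by $q_\bullet$; this is natural in $N$, and likewise with $H$ in place of $G$, giving the right vertical arrow $c_H$. Unwinding \cref{def: transfer condensed group (co)homology}, $\tr^G_H$ factors as $H_*(G,-)$ applied to the $\bbZ[G]$-linear map $\mu^G\from M\to\ind^G_H\res^G_H M$ — the unit of $\res\dashv\coind$ followed by the isomorphism $\coind^G_H\cong\ind^G_H$ — followed by Shapiro's isomorphism $H_*(G,\ind^G_H\res^G_H M)\xto{\cong}H_*(H,\res^G_H M)$, and similarly for $\tr^{\bar G}_{\bar H}$.

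Two compatibilities then suffice. First, let $\iota\from\ind^G_H\res^G_H M\to\ind^{\bar G}_{\bar H}\res^{\bar G}_{\bar H}M$ be the $\bbZ[G]$-linear map induced by $q$ on the $\bbZ[G]$-tensor factor (the target carrying its $\bbZ[G]$-action through $q$); then $\iota\circ\mu^G=\mu^{\bar G}$. This is seen from the explicit formula for $\mu^G$: under the identification of \cref{lem: Z[G] as Z[H] module}, $\mu^G(m)$ is a sum over $H\backslash G$ of terms involving only the coset representatives $s(\bar g)$ and their action on $m$, and both the representatives and the action are carried correctly by $q$ because the $G$-action on $M$ factors through $\bar G$; the same formula over $\bar H\backslash\bar G=H\backslash G$ computes $\mu^{\bar G}$. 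Second, Shapiro's isomorphism is natural with respect to $q$: at the chain level Shapiro for $G$ identifies $P^G_\bullet\otimes_{\bbZ[G]}\ind^G_H N$ with $P^G_\bullet\otimes_{\bbZ[H]}N$ and then, via the quasi-isomorphism $P^H_\bullet\to P^G_\bullet$ coming from functoriality of $P_\bullet$, with $P^H_\bullet\otimes_{\bbZ[H]}N$, and similarly for $\bar G$; the square comparing these along $q_\bullet$ and $\iota$ commutes because $P_\bullet$ is a functor, applied to the commuting square of group homomorphisms built from $H\hookrightarrow G$ and $q$. Granting these, a diagram chase finishes: using naturality of $c$ in the coefficients (so that $c\circ H_*(G,\mu^{\bar G})=H_*(\bar G,\mu^{\bar G})\circ c_G$, legitimate since $\mu^{\bar G}$ is $\bbZ[\bar G]$-linear), the identity $\iota\circ\mu^G=\mu^{\bar G}$, and naturality of Shapiro, one obtains $c_H\circ\tr^G_H=\tr^{\bar G}_{\bar H}\circ c_G$ at the chain level; passing to homology proves the lemma.

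I expect the only genuinely fiddly point to be the variance mismatch noted above: $\coind^G_H$ is contravariant in the group, so there is no map $\coind^G_H\res^G_H M\to\coind^{\bar G}_{\bar H}\res^{\bar G}_{\bar H}M$ over $q$, and one must pass to $\ind$ — hence track the explicit coset formula for the unit — before any comparison makes sense. Beyond that the argument is the classical chain-level manipulation, and it carries over to the condensed setting with no new input: all one uses is that $P_\bullet$ is a functorial flat resolution of $\bbZ$ and that $\bbZ[G]$ is flat over $\bbZ[H]$, both already recorded above.
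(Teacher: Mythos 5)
Your proof is correct and takes essentially the same approach as the paper: both reduce to commutativity of the small squares making up the composite defining $\tr^G_H$, with the Shapiro square handled by functoriality of the bar resolution and the first square handled by comparing $\mu^G$ with $\mu^{\bar G}$. One small inaccuracy in your final paragraph: you assert there is "no map $\coind^G_H\res^G_H M\to\coind^{\bar G}_{\bar H}\res^{\bar G}_{\bar H}M$ over $q$" and that one must therefore pass to $\ind$, but the paper does produce such a map — in fact an isomorphism — not from functoriality in the group (you are right that $\coind$ is contravariant there) but from the adjunction $\iHom_{\bbZ[H]}(\bbZ[G],M)\cong\iHom_{\bbZ[H/H']}(\bbZ[G]\otimes_{\bbZ[H]}\bbZ[H/H'],M)$ together with $\bbZ[G]\otimes_{\bbZ[H]}\bbZ[H/H']\cong\bbZ[G/H']$, which is exactly where the hypotheses that $M$ factors through $G/H'$ and that $H'\trianglelefteq G$ enter. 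Your explicit-coset-formula route around the variance issue is valid and amounts to the same thing; the adjunction perspective is perhaps cleaner because it avoids choosing a section and makes the normality hypothesis visible, whereas your version makes the chain-level mechanism more transparent.
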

\begin{proof}
    We have maps
    \begin{align*}
        \coind^G_HM=\iHom_{\bbZ[H]}(\bbZ[G],M)\cong\iHom_{\bbZ[H/H']}(\bbZ[G]\otimes_{\bbZ[H]}\bbZ[H/H'],M)=\coind_{H/H'}^{G/H'}M\\
        \shortintertext{and}    
        \ind^G_H M=\bbZ[G]\otimes_{\bbZ[H]}M\to\bbZ[G/H']\otimes_{\bbZ[H]}M\cong\bbZ[G/H']\otimes_{\bbZ[H/H']}M=\ind^{G/H'}_{H/H'}M
    \end{align*}
    where \(\bbZ[G/H']\otimes_{\bbZ[H]}M\cong\bbZ[G/H']\otimes_{\bbZ[H/H']}M\) follows from the corresponding statement on the presheaf level.
    These fit into the following commuting diagram:
    \begin{equation*}
        \begin{tikzcd}[column sep=small]
            {H_*(G,M)} \arrow[d] \arrow[r] & {H_*(G,\coind^G_HM)} \arrow[d] \arrow[r]    & {H_*(G,\ind^G_HM)} \arrow[d] \arrow[r] & {H_*(H,M)} \arrow[d] \\
            {H_*(G/H',M)} \arrow[r]        & {H_*(G/H',\coind^{G/H'}_{H/H'}M)} \arrow[r] & {H_*(G,\ind^{G/H'}_{H/H'}M)} \arrow[r] & {H_*(H/H',M)}       
            \end{tikzcd}
    \end{equation*}
    The top horizontal arrows define \(\tr^G_H\) and the bottom horizontal arrows define \(\tr^{G/H'}_{H/H'}\).
\end{proof}
\begin{rem}\label{rem: transfer and G-action}
    Observe that in the setting of the \cref{def: transfer condensed group (co)homology}, that for \(g\in G(*)\) we get commuting triangles
    \begin{equation*}
        \begin{tikzcd}
            & {H_*(G,M)} \arrow[ld, "\tr^G_H"'] \arrow[rd, "\tr^G_{gHg^{-1}}"] &                   \\
            {H_*(H,M)} \arrow[rr, "g.(-)"] &                                                                & {H_*(gHg^{-1},M)}
        \end{tikzcd}
    \end{equation*}
    \begin{equation*}
        \begin{tikzcd}
            & {H^*(G,M)} &                                                                      \\
            {H^*(H,M)} \arrow[ru, "\tr^G_H"] &            & {H^*(gHg^{-1},M)} \arrow[ll, "g.(-)"'] \arrow[lu, "\tr^G_{gHg^{-1}}"']
            \end{tikzcd}
    \end{equation*}
\end{rem}
\begin{lem}\label{lem: homology inclusion is norm map}
    Assume that \(G'\) is a subgroup of \(G\) such that any quotient of the tower \(H\subset G'\subset G\) is finite discrete and \(H\) is a normal subgroup of \(G'\) and \(G\).
    Then the following diagram commutes:
    \begin{equation*}
        \begin{tikzcd}
            {H_*(G',M)} \arrow[d] \arrow[r, "\tr^{G'}_H"] & {H_*(H,M)} \arrow[d, "\sum_{g\in G/G'(*)}g.(-)"] \\
            {H_*(G,M)} \arrow[r, "\tr^G_H"]               & {H_*(H,M)}                                      
            \end{tikzcd}
    \end{equation*}
    Note the implicit choice of coset representatives for \(G/G'(*)\).
\end{lem}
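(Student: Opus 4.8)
The statement is the condensed incarnation of the classical double‑coset (Mackey) formula for transfer, specialised to the situation in which the bottom group $H$ is normal. Because $H\trianglelefteq G$ the double coset space $H\backslash G/G'$ collapses to $G/G'$, and the induction‑and‑conjugation corrections of the general Mackey formula degenerate; this is why the right‑hand vertical map is just a sum of the residual $G/H$‑action applied to $\tr^{G'}_H$. I would prove it by unwinding the transfer at the level of the bar resolution; an alternative is to reduce to the non‑condensed statement, as explained at the end. Write $\iota_*\from H_*(G',M)\to H_*(G,M)$ for the left‑hand vertical map, i.e.\ the pushforward induced by the inclusion $G'\hookrightarrow G$.

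First, fix the bar resolution $P_\bullet\to\bbZ$ of $\bbZ$ over $\bbZ[G]$. Since every quotient of the tower $H\subseteq G'\subseteq G$ is finite discrete, \cref{lem: Z[G] as Z[H] module} shows that $\bbZ[G]$ is free — hence projective and flat — over both $\bbZ[G']$ and $\bbZ[H]$, and that $\bbZ[G']$ is free over $\bbZ[H]$. Thus $P_\bullet$, restricted along $\bbZ[G']\hookrightarrow\bbZ[G]$ or $\bbZ[H]\hookrightarrow\bbZ[G]$, is still a flat resolution of $\bbZ$, so $H_*(G',M)=H_*(P_\bullet\otimes_{\bbZ[G']}M)$ and $H_*(H,M)=H_*(P_\bullet\otimes_{\bbZ[H]}M)$, and $\iota_*$ is computed by the evident quotient $P_\bullet\otimes_{\bbZ[G']}M\twoheadrightarrow P_\bullet\otimes_{\bbZ[G]}M$.

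Next, identify the transfer at this chain level. Unwinding \cref{def: transfer condensed group (co)homology}, the composite of the unit $M\to\coind^G_H\res^G_H M$ with the finite‑index isomorphism $\coind^G_H\cong\ind^G_H$ is the map $m\mapsto\sum_{\overline{g}\in(G/H)(*)}\widetilde{\overline{g}}\otimes\widetilde{\overline{g}}^{-1}m$ into $\bbZ[G]\otimes_{\bbZ[H]}M=\ind^G_H\res^G_HM$, for a choice of lifts $\widetilde{\overline{g}}\in G(*)$ and independent of them; so after Shapiro's isomorphism $H_*(G,\ind^G_H\res^G_HM)=H_*(P_\bullet\otimes_{\bbZ[H]}M)$ the transfer $\tr^G_H$ is represented by the Verlagerung map $p\otimes m\mapsto\sum_{\overline{g}\in(G/H)(*)}p\,\widetilde{\overline{g}}\otimes\widetilde{\overline{g}}^{-1}m$, and likewise $\tr^{G'}_H$ by $p\otimes m\mapsto\sum_{\overline{s}\in(G'/H)(*)}p\,\widetilde{\overline{s}}\otimes\widetilde{\overline{s}}^{-1}m$ on $P_\bullet\otimes_{\bbZ[G']}M$. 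Similarly, for $\overline{g'}\in(G/G')(*)$ with a lift $\widetilde{g'}\in G(*)$, the automorphism $\widetilde{g'}.(-)$ of $H_*(H,M)$ — the residual action of \cref{rem: transfer and G-action}, well defined since $H\trianglelefteq G$ — is represented at chain level by $p\otimes m\mapsto p\,\widetilde{g'}^{-1}\otimes\widetilde{g'}m$ on $P_\bullet\otimes_{\bbZ[H]}M$; this agrees with the conjugation description because right multiplication by $\widetilde{g'}^{-1}$ is a left‑$\bbZ[G]$‑linear chain self‑map of $P_\bullet$ lifting $\id_{\bbZ}$ (so it is homotopic to the identity, while carrying the coefficient twist).

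Finally, since all quotients in the tower are finite discrete, one may choose the coset representatives of $(G/H)(*)$ to be the products $\widetilde{g'}\widetilde{s}$, with $\widetilde{g'}$ a chosen lift of $\overline{g'}\in(G/G')(*)$ and $\widetilde{s}\in G'(*)$ a chosen lift of $\overline{s}\in(G'/H)(*)$. For a cycle $p\otimes m\in P_\bullet\otimes_{\bbZ[G']}M$ one then expands
\[
(\tr^G_H\circ\iota_*)(p\otimes m)=\sum_{\overline{g'},\,\overline{s}}p\,\widetilde{g'}\widetilde{s}\otimes\widetilde{s}^{-1}\widetilde{g'}^{-1}m ,
\]
and, using the $\bbZ[G']$‑coinvariance of $p\otimes m$ and the explicit conjugation action on the bar resolution, rearranges the double sum block‑by‑block in $\overline{g'}$ into $\sum_{\overline{g'}}\widetilde{g'}.\bigl(\sum_{\overline{s}}p\,\widetilde{s}\otimes\widetilde{s}^{-1}m\bigr)=\sum_{\overline{g'}\in(G/G')(*)}\widetilde{g'}.\tr^{G'}_H(p\otimes m)$, which is the required identity. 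This last rearrangement is exactly the content of the classical proof of the Mackey formula in the normal‑subgroup case, and it — together with the chain‑level identification of the abstract transfer in the previous paragraph — is the main obstacle: all the work is in the bookkeeping of coset representatives and of the residual $G/H$‑action. A shortcut avoiding this is to invoke \cref{rem: underlying set condensed homology is ordinary homology}: since evaluation on extremally disconnected sets is exact and strong monoidal, it commutes with all of the above constructions, and the claim becomes the classical double‑coset formula for the abstract groups $H(*)\subseteq G'(*)\subseteq G(*)$ with $H(*)$ normal, which is standard (compare the proof of \cref{lem: functoriality of transfer}).
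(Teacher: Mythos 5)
Your main route — identify the transfer and the residual $G/H$-action on the bar resolution and compare term-by-term — is a legitimate alternative to the paper's argument, which instead uses dimension shifting to reduce to degree~$0$ and then proves the corresponding statement directly as an identity of $\bbZ[G']$-module maps involving $\coind^G_H M = \iHom_{\bbZ[H]}(\bbZ[G'][G/G'],M)$. The paper's reduction is cleaner because the coinduction functor, which is what defines the transfer, is visible as a concrete module; your approach trades that for explicit Verlagerung formulas, which is fine in principle but makes the bookkeeping the whole proof.

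That said, the crucial ``rearrangement'' is stated in a way that is actually false. You claim the double sum
\[
\sum_{\overline{g'},\overline{s}}p\,\widetilde{g'}\widetilde{s}\otimes\widetilde{s}^{-1}\widetilde{g'}^{-1}m
\]
rearranges \emph{block-by-block in $\overline{g'}$} into $\sum_{\overline{g'}}\widetilde{g'}.\bigl(\sum_{\overline{s}}p\widetilde{s}\otimes\widetilde{s}^{-1}m\bigr)$. Unwinding your own formula $\widetilde{g'}.(q\otimes n)=q\widetilde{g'}^{-1}\otimes\widetilde{g'}n$, the right-hand side equals $\sum_{\overline{g'},\overline{s}}p\,\widetilde{s}\widetilde{g'}^{-1}\otimes\widetilde{g'}\widetilde{s}^{-1}m$, and for a \emph{fixed} $\overline{g'}$ the two inner sums do not agree in $P_\bullet\otimes_{\bbZ[H]}M$: the elements $\widetilde{g'}\widetilde{s}H$ and $\widetilde{s}\widetilde{g'}^{-1}H$ run over different subsets of $G/H$ (only the full unions over all $\overline{g'}$ agree). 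What actually saves the calculation is not a block-by-block identification but the observation that $\{\widetilde{s}\widetilde{g'}^{-1} : \overline{s}\in G'/H,\ \overline{g'}\in G/G'\}$ is itself a valid system of coset representatives for $G/H$ (this is exactly where normality of $H$ in $G$ enters: one uses $\widetilde{s}\widetilde{g'}^{-1}H=\widetilde{s}H\widetilde{g'}^{-1}$, and that the right cosets $G'\widetilde{g'}^{-1}$ sweep out $G'\backslash G$), combined with the chain-level independence of $\tr^G_H\circ\iota_*$ from the choice of representatives. You need to say this; without it the rearrangement is just asserted.

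Your proposed shortcut via \cref{rem: underlying set condensed homology is ordinary homology} does not close the argument either: equality of two maps of condensed abelian groups cannot be tested on $*$-points alone, and for a general extremally disconnected $S$ the remark gives $H_i(G,M)(S)=H_i(\bbZ(S)\lotimes_{\bbZ[G](S)}M(S))$, where $\bbZ[G](S)$ is \emph{not} the group ring $\bbZ[G(S)]$ (the presheaf $S\mapsto\bbZ[G(S)]$ fails to be a sheaf already for finite disjoint unions). So the evaluated diagram on $S$ is not the classical double-coset diagram for the abstract groups $H(S)\subset G'(S)\subset G(S)$, and the reduction to the classical statement as you state it only applies to $S=*$, which is insufficient.
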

\begin{proof}
    By dimension shifting we reduce to the case where \(*=0\).
    Then as \(\bbZ[G']\)-modules we have \(\coind^G_H M=\iHom_{\bbZ[H]}(\bbZ[G],M)=\iHom_{\bbZ[H]}(\bbZ[G'][G/G'],M)\), once given a choice of coset representatives \(g_i\in G/G'(*)\).
    Unwinding, 
    we see that the following diagram of \(\bbZ[G']\)-modules commutes:
    \begin{equation*}
        \begin{tikzcd}
            & \coind^{G'}_H M \arrow[dd, "\sum g_i.(-)"] \\
M \arrow[ru] \arrow[rd] &                                            \\
            & \coind^G_HM=\bigoplus_{s\in G/G'(*)} M    
\end{tikzcd}
    \end{equation*}
    From this the claim follows. 
\end{proof}
\begin{rem}
    By functoriality of the Lyndon-Hochschild-Serre spectral sequence we can extend it to condensed group homology and condensed group cohomology.
    Assume that \(H\) is a normal subgroup of \(G\), then we have the following 5-term exact sequence:
    \begin{equation*}
        H_2(G,M)\to H_2(G/H,M_H)\to H_1(H,M)_{G/H}\to H_1(G,M)\to H_1(G/H,M_H)\to 0
    \end{equation*}
\end{rem}
	\section{Sheaves on the stack of Langlands parameters}\label{sheaves on the stack of Langlands parameters}
\subsection{Geometric incarnation of Langlands duality for tori}\label{geometric incarnation of Langlands duality for tori}
\begin{lem}\label{lem: units fpqc locally divisible}
    Let \(\Lambda\) be a ring.
    Then there is a ring \(\Lambda'\) with a faithfully flat ring map \(\Lambda\to\Lambda'\) such that \({\Lambda'}^\times\) is divisible.
\end{lem}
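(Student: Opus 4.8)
The plan is to build $\Lambda'$ as a transfinite composite of faithfully flat extensions, at each stage formally adjoining a square root (or higher root) to every unit, and to check that the colimit stabilizes. First I would reduce the problem to the following single-step construction: given a ring $A$ and a unit $u \in A^\times$, the $A$-algebra $A' = A[t]/(t^n - u)$ is free of rank $n$ as an $A$-module, hence faithfully flat over $A$, and $u$ acquires an $n$-th root in $A'$. More robustly, to handle all units at once, I would form $A_1 = A[\,t_u : u \in A^\times\,]/(t_u^2 - u : u \in A^\times)$; this is a (possibly infinite) tensor product of free rank-$2$ algebras, so it is faithfully flat over $A$, and every unit of $A$ has a square root in $A_1$.

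Next I would iterate: set $\Lambda_0 = \Lambda$ and $\Lambda_{k+1} = (\Lambda_k)_1$ with the construction above, and take $\Lambda' = \operatorname{colim}_k \Lambda_k$. A filtered colimit of faithfully flat ring maps is faithfully flat (flatness is preserved by filtered colimits, and faithfulness — i.e. surjectivity on spectra — is preserved since each transition map is surjective on $\operatorname{Spec}$ and the colimit of spectra is the limit), so $\Lambda \to \Lambda'$ is faithfully flat. To see that $(\Lambda')^\times$ is divisible: given a unit $u \in (\Lambda')^\times$ and $n \geq 1$, factor $n$ into primes; it suffices to extract a square root (iterating handles all $n$ whose only prime factor is $2$, but for odd primes I should instead, in the single-step construction, adjoin $t_u$ with $t_u^{p} = u$ for every prime $p$ simultaneously, or just adjoin an $n$-th root for every $n$ — the tensor product of the free algebras $A[t]/(t^n-u)$ over all $u$ and all $n \ge 1$ is still faithfully flat). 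Then $u$ already lies in some $\Lambda_k$, and its chosen $n$-th root lies in $\Lambda_{k+1} \subset \Lambda'$. Hence $(\Lambda')^\times$ is $n$-divisible for all $n$, i.e. divisible.

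The only mildly delicate point — and the step I would be most careful about — is the faithful flatness of an \emph{infinite} tensor product $\bigotimes_{i} B_i$ of faithfully flat $A$-algebras $B_i$. This is true: each finite subtensor-product is faithfully flat (finite tensor product of faithfully flat maps), and the infinite tensor product is the filtered colimit of these, so it is flat, and it is faithful because $\operatorname{Spec}$ of the colimit surjects onto each $\operatorname{Spec} B_i$ hence onto $\operatorname{Spec} A$. One subtlety is ensuring the $B_i$ are \emph{nonzero} so the tensor product does not collapse — but $B_i = A[t]/(t^n - u)$ is free of positive rank over $A$, hence nonzero whenever $A \neq 0$, and the case $A = 0$ (where $\Lambda = 0$, $\Lambda' = 0$, and $\{0\}^\times = \{0\}$ is vacuously divisible) is trivial. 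I do not expect any real obstacle beyond bookkeeping; the construction is entirely formal.
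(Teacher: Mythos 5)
Your construction — iteratively adjoining $n$-th roots of every unit for every $n$, then taking the sequential colimit — is precisely the one the paper uses, and your faithful-flatness argument (filtered colimit of free, hence faithfully flat, extensions) is a valid if slightly less explicit variant of the paper's, which exhibits each $\Lambda_i\to\Lambda_{i+1}$ as base-changed from a split free inclusion of polynomial rings and then checks $\Lambda'\otimes_\Lambda N\neq 0$ directly. One small point to tighten in the divisibility check: you need $u\in(\Lambda')^\times$ to be a \emph{unit} in some finite stage $\Lambda_k$ (not merely an element of it) before its $n$-th root is adjoined at the next stage, which does hold because the relation $uv=1$ already holds at some finite stage of the filtered colimit.
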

\begin{proof}
    Let \(\Lambda_{i+1}\defined\Lambda_i[a^{1/n}\mid a\in\Lambda_i^\times,n\in\bbN]\) and \(\Lambda_0\defined\Lambda\).
    Then set \(\Lambda'\defined\colim_{i\geq 0}\Lambda_i\).
    Since filtered colimits of rings are computed on the underlying abelian group every unit of \(\Lambda'\) is divisible.
    For all \(i\geq 0\) the ring morphism \(\Lambda_i\to\Lambda_{i+1}\) has a retract as a \(\Lambda\)-module map and makes \(\Lambda_{i+1}\) a free \(\Lambda\)-module, as the map \(\Lambda_i\to\Lambda_{i+1}\) is base changed from \(\bbZ[t_i\mid i\in I]\to\bbZ[t_i^{1/n}\mid i\in I,n\in\bbN]\) for any indexing set \(I\).
    Then \(\bbZ[t_i^{1/n}\mid i\in I,n\in\bbN]\) is a free \(\bbZ[t_i\mid i\in I]\)-module with basis \(\prod_{j\in J}t_j^{m_j/n_j}\) with \(J\subset I\) finite and \(m_j<n_j\).
    The map \(\bbZ[t_i\mid i\in I]\to\bbZ[t_i^{1/n}\mid i\in I,n\in\bbN]\) is injective with image being the subspace spanned by \(1\), so it the inclusion of a direct summand.
    It follows that \(\Lambda'\) and \(\Lambda_{i+1}/\Lambda_i\) are flat \(\Lambda\)-modules. 
    Let \(N\neq 0\) be a \(\Lambda\)-module.
    For faithful flatness we need to check that \(\Lambda'\otimes_{\Lambda}N\neq 0\).
    For this observe that \(\Lambda_i\otimes_{\Lambda}N\to\Lambda_{i+1}\otimes_{\Lambda}N\) is injective, so \(\Lambda'\otimes_{\Lambda}N\) is given by a sequential colimit of non-zero modules with injective transition maps.
    Such colimts are non-zero.
\end{proof}
\begin{lem}\label{lem: Hom(T(E) G_m)}
    Let \(\Hom(T(E),\gm)\) be the functor sending a ring \(R\) to continuous group homomorphisms \(T(E)\to\gm(R)\), where we equip \(R\) with the discrete topology.
    Then this is representable by a scheme.
    It can be written as
    \begin{equation*}
        \Hom(T(E),\gm)=\bigcup_{K\subset T(E)}\Hom(T(E)/K,\gm)
    \end{equation*}
    where \(K\) runs over any system of open pro-\(p\) subgroups forming a neighborhood basis of the identity.
\end{lem}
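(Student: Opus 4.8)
The plan is to write $\Hom(T(E),\gm)$ as a filtered union $\bigcup_K\Hom(T(E)/K,\gm)$ of affine schemes along open (and closed) immersions, which is then a scheme by gluing. First I would pin down the relevant topology on $T(E)$: choosing a finite Galois extension $F/E$ splitting $T$, the identity $T(E)=T(F)^{\mathrm{Gal}(F/E)}$ exhibits $T(E)\injto T(F)\cong(F^\times)^{\dim T}$ as a closed embedding of topological groups, so the subgroups
\begin{equation*}
K_n\defined T(E)\cap(1+\mathfrak{m}_F^n)^{\dim T}\qquad(n\geq 1)
\end{equation*}
form a neighbourhood basis of the identity consisting of compact open pro-$p$ subgroups, each $1+\mathfrak{m}_F^n$ being pro-$p$. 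By construction $T(E)/K_n$ injects into $\bigl(F^\times/(1+\mathfrak{m}_F^n)\bigr)^{\dim T}$, a finitely generated abelian group, and hence $T(E)/K$ is finitely generated abelian for every open pro-$p$ subgroup $K$ (any such $K$ contains some $K_n$). Any two neighbourhood bases of open pro-$p$ subgroups are mutually cofinal, so I fix one such system, directed by reverse inclusion since a finite intersection of open pro-$p$ subgroups is again one. Because $\gm(R)=R^\times$ is discrete, a continuous homomorphism $T(E)\to R^\times$ has open kernel and so factors uniquely through $T(E)/K$ for some $K$ in the system; this gives a natural isomorphism of functors in $R$
\begin{equation*}
\Hom(T(E),\gm)(R)\;\cong\;\colim_K\Hom(T(E)/K,\gm)(R),
\end{equation*}
a filtered colimit along monomorphisms.

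Next I would identify the pieces and the transition maps. For a finitely generated abelian group $A$, the functor $\Hom(A,\gm)$ is represented by $\spec\zl[A]$ (a homomorphism $A\to R^\times$ being the same datum as a $\zl$-algebra map $\zl[A]\to R$), the associated diagonalizable group scheme $D(A)$, which is affine of finite type. For $K'\subseteq K$ in the system, $K'$ has finite index in $K$ (an open subgroup of the compact group $K$), and I would apply $D(-)$ to the short exact sequence $0\to K/K'\to T(E)/K'\to T(E)/K\to 0$. Since $\zl[T(E)/K']$ is free as a $\zl[K/K']$-module (on any set of coset representatives of $T(E)/K$), the induced map $D(T(E)/K')\to D(K/K')$ is faithfully flat with kernel $D(T(E)/K)$; equivalently, $D(T(E)/K)\injto D(T(E)/K')$ is the preimage of the unit section of $D(K/K')$. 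Now $K/K'$ is a finite $p$-group, and this is exactly where the hypothesis that $p$ (equivalently $q$) is invertible in $\zl$ is used: then $e\defined\frac{1}{|K/K'|}\sum_{g\in K/K'}[g]$ is an idempotent of $\zl[K/K']$, the augmentation $\zl[K/K']\to\zl$ is the projection onto the factor $\zl[K/K']e\cong\zl$, so the unit section is open and closed in $D(K/K')$; pulling back along $D(T(E)/K')\to D(K/K')$ shows that $D(T(E)/K)\injto D(T(E)/K')$ is open and closed.

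It follows that $\colim_K D(T(E)/K)$ is a filtered colimit of schemes along open immersions, hence is represented by the scheme obtained by gluing, which is precisely the increasing union $\bigcup_K\Hom(T(E)/K,\gm)$ of the statement; by the displayed isomorphism this scheme represents $\Hom(T(E),\gm)$, and independence of the chosen neighbourhood basis is the cofinality remark above.

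The step I expect to be the real obstacle — or at least the one requiring genuine input rather than bookkeeping — is the openness of the transition maps: over $\bbZ$ it fails outright (for instance $\colim_n\mu_{p^n}=\mu_{p^\infty}$ is not a scheme), so it is essential that one works over $\zl$ with $\ell$ coprime to $q$; granting that, everything else is routine manipulation of locally profinite abelian groups and their group rings.
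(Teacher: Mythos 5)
Your argument is correct. The paper does not give its own proof of this lemma: it delegates to \cite[Theorem VIII.1.3.]{geometrization}, whose method is exactly the one you have written out — exhibit the functor as a filtered union of diagonalizable affine pieces, and use invertibility of $p$ in $\zl$ to split the augmentation idempotent of $\zl[K/K']$ so that each transition map $D(T(E)/K)\injto D(T(E)/K')$ is a clopen immersion. Your writeup is essentially an explicit, torus-specific version of that argument (with the small additional bookkeeping that $T(E)/K$ is finitely generated abelian, and the remark that the obstruction is genuine over $\bbZ$), so it matches the paper's intended proof.
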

\begin{proof}
    This is done in the same way as \cite[Theorem VIII.1.3.]{geometrization}.
\end{proof}
\begin{lem}
    The stack \([Z^1(W_E,\widehat{T})/\widehat{T}]\) is a gerbe over an algebraic space.
\end{lem}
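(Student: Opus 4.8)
The plan is to realize $[Z^1(W_E,\widehat{T})/\widehat{T}]$ as a gerbe over the quotient of $Z^1(W_E,\widehat{T})$ by a \emph{free} action of a smaller diagonalizable group, and then to observe that such a quotient is automatically an algebraic space. The point is that for a torus the ``conjugation'' action is really a twisted translation action whose inertia is constant.

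Concretely, the first step is to compute the inertia of $[Z^1(W_E,\widehat{T})/\widehat{T}]$. Since $\widehat{T}$ is commutative, for $g\in\widehat{T}(R)$ and a cocycle $\phi\from W_E\to\widehat{T}(R)$ one has $(g\cdot\phi)(w)=g\,\phi(w)\,w(g)^{-1}$, so $g$ stabilizes $\phi$ precisely when $g=w(g)$ for every $w\in W_E$. As $W_E$ acts on $\widehat{T}$ through the finite quotient $Q$, this is a finite system of equations, and scheme-theoretically it cuts out exactly $\widehat{T}^Q\subset\widehat{T}$, independently of $\phi$. Hence the stabilizer group scheme of the universal cocycle over $Z^1(W_E,\widehat{T})$ is $\widehat{T}^Q\times Z^1(W_E,\widehat{T})$; equivalently, the inertia of $[Z^1(W_E,\widehat{T})/\widehat{T}]$ is the constant group $\widehat{T}^Q$, a diagonalizable (hence flat, affine, finitely presented) closed subgroup of $\widehat{T}$, and central because $\widehat{T}$ is abelian.

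The second step is formal. Because $\widehat{T}^Q$ is central and acts trivially on $Z^1(W_E,\widehat{T})$, the $\widehat{T}$-action descends to $\widehat{T}/\widehat{T}^Q$, and rigidifying along $\widehat{T}^Q$ yields a map
\[
    [Z^1(W_E,\widehat{T})/\widehat{T}]\longrightarrow[Z^1(W_E,\widehat{T})/(\widehat{T}/\widehat{T}^Q)]
\]
which is a gerbe banded by $\widehat{T}^Q$ (the standard fact that rigidifying an algebraic stack along a flat, finitely presented, central subgroup of its inertia produces such a gerbe). The target is an algebraic stack, being the quotient of the scheme $Z^1(W_E,\widehat{T})$ (a disjoint union of affine $\zl$-schemes, as recalled above) by the flat affine group scheme $\widehat{T}/\widehat{T}^Q$; its inertia is obtained from $\widehat{T}^Q\times Z^1(W_E,\widehat{T})$ by quotienting out $\widehat{T}^Q$, hence is trivial. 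An algebraic stack with trivial inertia (equivalently, no nontrivial automorphisms) is an algebraic space, which finishes the argument.

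The only point requiring care is the stabilizer computation, and even that is elementary once one uses that $\widehat{T}$ is abelian and that $Q$ is finite; everything else is a routine rigidification argument. One could alternatively identify the base directly with $\Hom(T(E),\gm)$ via \cref{lem: Hom(T(E) G_m)} and the duality statements proved later, but the argument above already gives the stated conclusion without that input.
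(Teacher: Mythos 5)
Your argument is correct and follows essentially the same route as the paper: the heart of both proofs is the observation that the inertia of $[Z^1(W_E,\widehat{T})/\widehat{T}]$ is the constant diagonalizable group $\widehat{T}^Q$ (the paper computes this via \cite[Tag 06PB]{stacks}, you by hand using that $\widehat{T}$ is abelian and $Q$ is finite). The paper then concludes directly from the Stacks Project gerbe criterion \cite[Tag 06QJ]{stacks} (flat and locally finitely presented inertia implies gerbe over an algebraic space), whereas you unwind that criterion by carrying out the rigidification $[Z^1/\widehat{T}]\to[Z^1/(\widehat{T}/\widehat{T}^Q)]$ explicitly and checking the target has trivial inertia; this is a valid expansion of the same idea and gives the slightly finer output that the gerbe is banded by $\widehat{T}^Q$, which the paper records separately in \cref{sheaves on gerbes banded by diagonalizable groups}.
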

\begin{proof}
    Using \cite[Tag 06PB]{stacks} we can easily compute that the inertia stack is 
    \begin{equation*}
        \widehat{T}^Q\times[Z^1(W_E,\widehat{T})/\widehat{T}]\to[Z^1(W_E,\widehat{T})/\widehat{T}]
    \end{equation*}
    where the map is the projection.
    The claim follows from \cite[Tag 06QJ]{stacks}.
\end{proof}
Recall that for an algebraic stack \(\mathcal{X}\) that is a gerbe over some algebraic space \(X\) that \(X\) is unique and is given by the fppf-sheafification of \(\pi_0 \mathcal{X}\) (\cite[Tag 06QD]{stacks}), which is also then the coarse moduli space.
Since algebraic spaces are fpqc-sheaves (\cite[Tag 0APL]{stacks}) one can also consider the fpqc-sheafification.
\begin{lem}\label{lem: geometric incarnation langlands duality tori}
    We have a morphism of stacks 
    \begin{equation*}
        \Psi\from [Z^1(W_{F/E},\widehat{T})/\widehat{T}]\to\Hom(T(E),\gm)
    \end{equation*}
    that exhibits \([Z^1(W_{F/E},\widehat{T})/\widehat{T}]\) as a gerbe over an algebraic space.
    Running over \(P\subset F^\times\) open such that \(\Theta(P)\) is pro-\(p\), it is induced from a map 
    \begin{equation*}
        \Psi_P\from Z^1(W_{F/E}/P,\widehat{T})\to \Hom(T(E)/\Theta(P),\gm)
    \end{equation*}
    that sends a cocycle \(f\from W_{F/E}/P\to\widehat{T}(\Lambda)\) to
    \begin{equation*}
        \sum m_i\otimes [g_i]\mapsto\prod f(g_i)(m_i)
    \end{equation*}
    for \(\sum m_i\otimes [g_i]\in H_1(W_{F/E}/P,X_*(T))\) and we identify \(H_1(W_{F/E}/P,X_*(T))\) with \(T(E)/\Theta(P)\) using \cref{prop: transfer condensed iso}.
    For an alternative perspective, there is a natural pairing 
    \begin{equation*}
        H^1(W_{F/E}/P,\Hom(X_*(T),\Lambda^\times))\times H_1(W_{F/E}/P,X_*(T))\to H_0(W_{F/E}/P,\Lambda^\times)=\Lambda^\times
    \end{equation*}
    and the map above is the one induced by this pairing.
\end{lem}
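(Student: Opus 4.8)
The plan is to construct everything at the finite level $P \subset F^\times$ first and then pass to the colimit over $P$. I would begin by fixing an open $P \subset F^\times$ with $\Theta(P)$ pro-$p$, where $\Theta$ is the Artin reciprocity map identifying $W_{F/E}^{\mathrm{ab}}$ with $E^\times$ (and more precisely the transfer/Verlagerung target landing $F^\times$ into $W_{F/E}$). The first step is to make sense of the pairing
\begin{equation*}
    H^1(W_{F/E}/P,\Hom(X_*(T),\Lambda^\times)) \times H_1(W_{F/E}/P, X_*(T)) \to H_0(W_{F/E}/P, \Lambda^\times) = \Lambda^\times,
\end{equation*}
which comes from cup product together with the evaluation pairing $\Hom(X_*(T),\Lambda^\times) \otimes X_*(T) \to \Lambda^\times$ of $W_{F/E}/P$-modules; since $\Hom(X_*(T),\gm)$ is an affine scheme and the cocycle condition is a closed condition, the assignment $f \mapsto (\text{pairing with }f)$ is a map of functors $Z^1(W_{F/E}/P,\widehat T) \to \Hom(H_1(W_{F/E}/P, X_*(T)), \gm)$. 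Here I use that a $1$-cocycle $f\colon W_{F/E}/P \to \widehat T(\Lambda) = \Hom(X_*(T),\Lambda^\times)$ is literally a class in $Z^1$ for the module $\Hom(X_*(T),\Lambda^\times)$, and that taking its class in $H^1$ is well-defined because we then pair against $H_1$, where coboundaries die. Then I invoke \cref{prop: transfer condensed iso} to identify $H_1(W_{F/E}/P, X_*(T))$ with $T(E)/\Theta(P)$ as condensed abelian groups, and \cref{lem: Hom(T(E) G_m)} to identify $\bigcup_K \Hom(T(E)/K,\gm)$ with $\Hom(T(E),\gm)$; this produces $\Psi_P\colon Z^1(W_{F/E}/P,\widehat T) \to \Hom(T(E)/\Theta(P),\gm)$ with the stated explicit formula $\sum m_i \otimes [g_i] \mapsto \prod f(g_i)(m_i)$.

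Next I would check $\widehat T$-invariance so that $\Psi_P$ descends to $[Z^1(W_{F/E}/P,\widehat T)/\widehat T]$. Conjugating a cocycle $f$ by $t \in \widehat T$ changes $f(g)$ by $t \cdot {}^g t^{-1}$ (a coboundary for the $W_{F/E}/P$-action on $\widehat T$), hence changes the pairing with any homology class by the image of a coboundary, which is trivial in $H_0(W_{F/E}/P,\Lambda^\times)$; concretely $\prod (t\cdot {}^{g_i}t^{-1})(m_i) = 1$ whenever $\sum m_i \otimes [g_i]$ is a cycle, since that product telescopes using the cycle condition $\sum (m_i - g_i^{-1} m_i) = 0$ in $X_*(T)_{W_{F/E}/P}$. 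So $\Psi_P$ factors through the quotient stack. Taking the colimit over $P$ (the $Z^1(W_{F/E}/P,\widehat T)$ form an increasing union whose colimit is $Z^1(W_{F/E},\widehat T)$, and the $\Hom(T(E)/\Theta(P),\gm)$ form an increasing union with colimit $\Hom(T(E),\gm)$ by \cref{lem: Hom(T(E) G_m)}) yields $\Psi\colon [Z^1(W_{F/E},\widehat T)/\widehat T] \to \Hom(T(E),\gm)$.

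It remains to see that $\Psi$ exhibits the source as a gerbe over the algebraic space $\Hom(T(E),\gm)$. By the preceding lemma, $[Z^1(W_E,\widehat T)/\widehat T]$ is already a gerbe over some algebraic space $X$, and its coarse space $X$ is the fppf-sheafification of $\pi_0$; so it suffices to identify this sheafification with $\Hom(T(E),\gm)$ via $\Psi$. The cleanest route is to work fpqc-locally on the base: by \cref{lem: units fpqc locally divisible}, after a faithfully flat base change $\Lambda \to \Lambda'$ we may assume $\Lambda^\times$ is divisible, hence injective as a $\mathbb Z$-module, so $\Hom(-,\Lambda^\times)$ is exact and the pairing above becomes a perfect duality; then $H^1(W_{F/E}/P, \Hom(X_*(T),\Lambda^\times)) = \Hom(H_1(W_{F/E}/P,X_*(T)),\Lambda^\times) = \Hom(T(E)/\Theta(P),\Lambda^\times)$, which says exactly that $\pi_0$ of $[Z^1(W_{F/E}/P,\widehat T)/\widehat T](\Lambda')$ — the set of cocycles up to $\widehat T$-conjugacy, i.e. up to coboundary — is $\Hom(T(E)/\Theta(P),\gm)(\Lambda')$. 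Passing to the colimit over $P$ and sheafifying gives that $\Psi$ induces an isomorphism on fppf-sheafified $\pi_0$; since the inertia of $[Z^1(W_E,\widehat T)/\widehat T]$ is the constant group $\widehat T^Q$ pulled back from the base, $\Psi$ is a gerbe morphism. The main obstacle I expect is bookkeeping around the reciprocity map: one must match the group homology identification $H_1(W_{F/E},X_*(T)) \cong T(E)$ of \cref{prop: transfer condensed iso} (Langlands' computation, in condensed form) with the explicit cocycle-pairing formula, i.e. check that the transfer/norm maps implicit in that isomorphism are compatible with the cup-product pairing — and keeping careful track of inverses and the left-versus-right module conventions for the $W_{F/E}$-action on $X_*(T)$ and on $\widehat T$.
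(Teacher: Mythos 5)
Your approach is essentially the same as the paper's: construct $\Psi_P$ via the cup-product pairing, check it descends to the quotient, pass to a colimit over $P$, and identify the coarse moduli space by going to the fpqc cover where $\Lambda^\times$ is divisible (using \cref{lem: units fpqc locally divisible}). Two points need tightening.

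First, when you assert that ``$\pi_0$ of $[Z^1(W_{F/E}/P,\widehat T)/\widehat T](\Lambda')$ --- the set of cocycles up to $\widehat T$-conjugacy --- is $\Hom(T(E)/\Theta(P),\gm)(\Lambda')$'', you are conflating the \emph{prestack} quotient with the quotient \emph{stack}: the $\Lambda'$-points of the quotient stack are $\widehat T$-torsors with an equivariant map to $Z^1$, not cocycles modulo conjugacy. The computation you carry out really shows that $\pi_0$ of the \emph{prestack} quotient is $\Hom(T(E)/\Theta(P),\gm)(\Lambda')$ when $\Lambda'^\times$ is injective. The paper handles this by noting that the quotient stack is both the fppf- and fpqc-stackification of the prestack quotient (since $\widehat T$-torsors in the fpqc topology are already fppf-locally trivial, $\widehat T$ being fppf over $\zl$), so their sheafified $\pi_0$'s agree. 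You should say this explicitly rather than identifying the two $\pi_0$'s directly.

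Second, your claim that ``the $\Hom(T(E)/\Theta(P),\gm)$ form an increasing union with colimit $\Hom(T(E),\gm)$ by \cref{lem: Hom(T(E) G_m)}'' quietly assumes that the $\Theta(P)$ range over a neighborhood basis of the identity in $T(E)$ consisting of open pro-$p$ subgroups. This is not automatic from the definitions and is exactly what \cref{lem: condensed H_1 of quotient is torus quotient} establishes; without invoking it (or proving it) your colimit identification is incomplete. The remainder --- constructing the pairing via cup product with the evaluation $\Hom(X_*(T),\Lambda^\times)\otimes X_*(T)\to\Lambda^\times$, checking that coboundaries vanish against cycles so that $\Psi_P$ is $\widehat T$-invariant, and the observation that divisibility of $\Lambda^\times$ makes $\Hom(-,\Lambda^\times)$ exact so that $H^1(W_{F/E}/P,\widehat T(\Lambda'))\cong\Hom(H_1(W_{F/E}/P,X_*(T)),\Lambda'^\times)$ --- is correct and, if anything, fills in detail the paper leaves implicit.
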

\begin{proof}    
    The map \(\Psi_P\) induces an isomorphism 
    \begin{equation*}
        \pi_0[Z^1(W_{F/E}/P,\widehat{T})/_p\widehat{T}](\Lambda)\cong \Hom(H_1(W_{F/E},X_*(T)),\gm)(\Lambda)
    \end{equation*}
    when \(\Lambda^\times\) is an injective abelian group and the index \(p\) denotes the pre-stack quotient.
    By \cref{lem: units fpqc locally divisible} the fpqc-sheafification of \(\pi_0[Z^1(W_{F/E}/P,\widehat{T})/_p\widehat{T}]\) is \(\Hom(H_1(W_{F/E},X_*(T)),\gm)\).
    Note that \([Z^1(W_{F/E}/P,\widehat{T})/\widehat{T}]\), which is a priori only the fppf-stackification of the prestack quotient \([Z^1(W_{F/E}/P,\widehat{T})/_p\widehat{T}]\), is actually an fpqc-stack and thus is also the fpqc-stackification.
    This is because fpqc-\(\widehat{T}\)-torsors are already fppf-locally trivial, since \(\widehat{T}\to\spec(\zl)\) is fppf.
    The sheafification of \(\pi_0[Z^1(W_{F/E}/P,\widehat{T})/_p\widehat{T}]\) is isomorphic to the sheafification of \(\pi_0[Z^1(W_{F/E}/P,\widehat{T})/\widehat{T}]\).
    Finally we need to see that \(\Hom(T(E),\gm)=\bigcup_{P\subset F^\times}\Hom(T(E)/\Theta(P),\gm)\), where \(P\subset F^\times\) runs over open subgroups of \(F^\times\) such that \(\Theta(P)\) is open pro-\(p\).
    This follows from \cref{lem: condensed H_1 of quotient is torus quotient}.
\end{proof}
Recall that in the classical Langlands duality for tori, the following is a crucial proposition:
\begin{prop}
    Transfer defines a bijection of abelian groups 
    \begin{equation*}
        H_1(W_{F/E},X_*(T))\xto{\cong}H_1(F^\times,X_*(T))^Q=T(E)
    \end{equation*}
\end{prop}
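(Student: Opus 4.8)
The plan is to identify both sides explicitly and then reduce the general case to permutation modules by means of Tate--Nakayama duality, essentially reproducing the argument of \cite[]{langlandstori}. Write $M\defined X_*(T)$; it is a finite free $\bbZ$-module with a $Q$-action, and since $F$ splits $T$ the normal subgroup $F^\times=\ker(W_{F/E}\to Q)$ acts trivially on $M$. For the right-hand side, the universal coefficient sequence gives $H_1(F^\times,M)\cong H_1(F^\times,\bbZ)\otimes_\bbZ M=F^\times\otimes_\bbZ M$, and $Q=W_{F/E}/F^\times$ acts diagonally: its residual action on $H_1(F^\times,\bbZ)=(F^\times)^{\mathrm{ab}}=F^\times$ is the conjugation action of $W_{F/E}$ on $W_F^{\mathrm{ab}}$, which under the reciprocity isomorphism is the Galois action. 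Since $T$ is split over $F$ there is a $Q$-equivariant identification $T(F)\cong F^\times\otimes_\bbZ M$, so $H_1(F^\times,M)^Q=(F^\times\otimes_\bbZ M)^Q=T(F)^Q=T(E)$ by Galois descent. Moreover the transfer $\tr^{W_{F/E}}_{F^\times}$ lands in the $Q$-invariants: as $F^\times$ is normal, the classical case of \cref{rem: transfer and G-action} gives $g\cdot\tr^{W_{F/E}}_{F^\times}=\tr^{W_{F/E}}_{gF^\times g^{-1}}=\tr^{W_{F/E}}_{F^\times}$ for all $g\in W_{F/E}$. We thus obtain a natural map $\tr\from H_1(W_{F/E},M)\to T(E)$, and it remains to prove it is bijective for every $Q$-lattice $M$.

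First, one disposes of permutation modules. For $M=\bbZ$ we have $H_1(W_{F/E},\bbZ)=W_{F/E}^{\mathrm{ab}}$, which local class field theory identifies with $E^\times$, while $H_1(F^\times,\bbZ)^Q=(F^\times)^Q=E^\times$; the transfer $W_{F/E}^{\mathrm{ab}}\to(F^\times)^{\mathrm{ab}}=F^\times$ is the Verlagerung, which under the reciprocity isomorphisms is the natural inclusion $E^\times\hookrightarrow F^\times$, an isomorphism onto $(F^\times)^Q$. For a general permutation module $M=\bbZ[Q/Q']$, put $E'\defined F^{Q'}$; then $M=\ind_{W_{F/E'}}^{W_{F/E}}\bbZ$ as $W_{F/E}$-modules, so Shapiro's lemma gives $H_1(W_{F/E},M)=H_1(W_{F/E'},\bbZ)=(E')^\times$ and, similarly, $H_1(F^\times,M)^Q=(F^\times)^{Q'}=(E')^\times$; using transitivity of the transfer through $F^\times\subseteq W_{F/E'}\subseteq W_{F/E}$ together with the compatibility of Shapiro's isomorphism with transfers, one identifies $\tr$ for $M$ with $\tr$ for $\bbZ$ over $E'$, hence it is an isomorphism.

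For an arbitrary $Q$-lattice $M$ I would invoke the Lyndon--Hochschild--Serre spectral sequence of $1\to F^\times\to W_{F/E}\to Q\to1$, whose five-term exact sequence reads
\[
H_2(W_{F/E},M)\to H_2(Q,M)\xto{d}(F^\times\otimes_\bbZ M)_Q\xto{\mathrm{cor}}H_1(W_{F/E},M)\xto{\pi_*}H_1(Q,M)\to0,
\]
the transgression $d$ being cap product with the fundamental class $u_{F/E}\in H^2(Q,F^\times)$ that classifies the extension. By local class field theory the triple $(Q,F^\times,u_{F/E})$ satisfies the hypotheses of the Tate--Nakayama theorem, so (as $M$ is $\bbZ$-free) cup product with $u_{F/E}$ yields isomorphisms $\widehat{H}^i(Q,M)\xto{\cong}\widehat{H}^{i+2}(Q,F^\times\otimes_\bbZ M)$ for all $i$; in particular $d$ identifies $H_2(Q,M)=\widehat{H}^{-3}(Q,M)$ with $\widehat{H}^{-1}(Q,F^\times\otimes_\bbZ M)=\ker\bigl(N\from(F^\times\otimes_\bbZ M)_Q\to(F^\times\otimes_\bbZ M)^Q\bigr)$ and $H_1(Q,M)=\widehat{H}^{-2}(Q,M)$ with $\operatorname{coker}(N)=\widehat{H}^0(Q,F^\times\otimes_\bbZ M)$. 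Together with $\tr\circ\mathrm{cor}=N$ --- the classical case of \cref{lem: homology inclusion is norm map}, applied with $G'=F^\times$ --- the five lemma applied to the map from the five-term sequence above to the Tate--Nakayama exact sequence $H_2(Q,M)\xto{\cup u_{F/E}}(F^\times\otimes_\bbZ M)_Q\xto{N}(F^\times\otimes_\bbZ M)^Q\to\widehat{H}^0(Q,F^\times\otimes_\bbZ M)\to0$ reduces the claim to showing that the induced endomorphism $\phi$ of $H_1(Q,M)$ is an isomorphism. Now $\phi$ is a natural endomorphism of the functor $M\mapsto H_1(Q,M)=\widehat{H}^{-2}(Q,M)$, hence is cup product with a fixed class $\alpha\in\widehat{H}^0(Q,\bbZ)=\bbZ/|Q|\bbZ$; evaluating on the permutation modules $\bbZ[Q/\langle g\rangle]$ for $g\in Q$, where $\tr$, and hence $\phi$ (running the five lemma backwards), is already an isomorphism by the previous step, forces $\alpha$ to be invertible modulo the order of every element of $Q$, hence invertible modulo $|Q|$. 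Therefore $\phi=\cup\alpha$ is an isomorphism on the $|Q|$-torsion group $H_1(Q,M)$, and $\tr$ is an isomorphism.

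The substance of the argument is arithmetic and sits in the third step: identifying the transgression with cap product against the fundamental class, and recognising that local class field theory provides exactly the hypotheses of Tate--Nakayama (vanishing of $H^1$ and cyclicity of $H^2$ of $F^\times$ over every subextension, with the right generators). Granting these standard inputs, together with the bookkeeping of the first two steps --- the compatibility of Shapiro's isomorphism with transfers, and the identification of the Verlagerung with the field inclusion under reciprocity --- the rest is formal homological algebra; this recovers the proof of \cite[]{langlandstori}.
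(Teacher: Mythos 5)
The paper gives no argument of its own here, deferring entirely to Langlands, so there is no internal proof to compare against; your proposal is a reconstruction of the cited argument, and it is correct --- class field theory and Shapiro's lemma dispose of permutation modules, and the five lemma applied to the Lyndon--Hochschild--Serre five-term sequence together with the Tate--Nakayama exact sequence handles a general lattice. The only step worth flagging is the assertion that the induced endomorphism $\phi$ of $\widehat{H}^{-2}(Q,-)$ is cup product with a class in $\widehat{H}^0(Q,\bbZ)$: mere naturality of $\phi$ in $M$ does not give this at a negative degree; one also needs $\phi$ to commute with the connecting maps in Tate cohomology, so that dimension shifting can move the question to degree zero, where the claim follows from a Yoneda-style computation on $M\mapsto M^Q/NM$. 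That compatibility does hold, since the transgression, the corestriction, and the transfer are all morphisms of $\delta$-functors, so the argument closes as you describe.
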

\begin{proof}
    See \cite[]{langlandstori} for a proof.
\end{proof}
Using the material of \cref{condensed group (co)homology}, we can consider the condensed statements and get the following refinement:
\begin{prop}\label{prop: transfer condensed iso}
    Transfer defines an isomorphism of condensed abelian groups 
    \begin{equation*}
        H_1(W_{F/E},X_*(T))\xto{\cong}H_1(F^\times,X_*(T))^Q=T(E)
    \end{equation*}
\end{prop}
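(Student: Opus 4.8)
The plan is to deduce the condensed refinement from Langlands' classical theorem (the preceding proposition) together with the transfer-functoriality results of \cref{condensed group (co)homology}, by reducing both sides to finite discrete level. First I would pin down the target: since $F^\times=W_F$ acts trivially on the free $\bbZ$-module $X_*(T)$ (because $T$ splits over $F$), the manipulation $\bbZ\lotimes_{\bbZ[F^\times]}X_*(T)\cong(\bbZ\lotimes_{\bbZ[F^\times]}\bbZ)\lotimes_{\bbZ}X_*(T)$, valid on the sheafified bar resolution exactly as in the discrete case, gives $H_1(F^\times,X_*(T))\cong H_1(F^\times,\bbZ)\otimes_{\bbZ}X_*(T)\cong\cond{F^\times}\otimes_{\bbZ}X_*(T)\cong\cond{T(F)}$ as condensed abelian groups, the last step using $T\times_E F\cong X_*(T)\otimes_{\bbZ}\gm$. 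Taking $Q$-invariants is a finite limit, hence commutes with the condensation functor, so $H_1(F^\times,X_*(T))^Q\cong\cond{T(F)^Q}=\cond{T(E)}$. By \cref{rem: transfer and G-action}, applied to the normal subgroup $F^\times\triangleleft W_{F/E}$ with finite quotient $Q$ and using that $Q$ acts trivially on $H_*(W_{F/E},-)$, the transfer $\tr^{W_{F/E}}_{F^\times}$ lands in the $Q$-invariants, so it defines a morphism of condensed abelian groups $\Phi\from H_1(W_{F/E},X_*(T))\to\cond{T(E)}$, and by the preceding proposition together with \cref{rem: underlying set condensed homology is ordinary homology} its value on $\ast$-points is a bijection.

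Next I would reduce to finite discrete level. Running $P$ over the open pro-$p$ subgroups of the inertia, one replaces $W_{F/E}$ by a discretization $W$ of $W_{F/E}/P$, a discrete group surjecting onto $Q$ with kernel a discrete abelian group $W'$, exactly as in the proof sketch of \cite[Theorem VIII.1.3.]{geometrization}. By \cref{lem: functoriality of transfer} (together with a comparison between $W_{F/E}/P$ and its discretization $W$) the transfer maps are compatible with the projections $W_{F/E}\to W$, so $\Phi$ is assembled from maps $\Phi_P\from H_1(W,X_*(T))\to H_1(W',X_*(T))^Q$. Since $W$ is a discrete abstract group and $X_*(T)$ is a discrete module, condensed homology here is the condensation of ordinary group homology, so $\Phi_P=\cond{\Phi_P^{\mathrm{ord}}}$ with $\Phi_P^{\mathrm{ord}}$ the classical transfer map; that $\Phi_P^{\mathrm{ord}}$ is a bijection onto $H_1^{\mathrm{ord}}(W',X_*(T))^Q\cong T(E)/\Theta(P)$ is the finite-level form of Langlands' theorem. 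Thus each $\Phi_P$ is an isomorphism of condensed abelian groups, and passing to the limit over $P$, matching $\cond{T(E)}=\lim_P\cond{T(E)/\Theta(P)}$ against the corresponding presentation of $H_1(W_{F/E},X_*(T))$, shows $\Phi$ is an isomorphism.

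The main obstacle is this last matching: that condensed $H_1$ of the locally profinite group $W_{F/E}$ is computed by its discretized finite-level quotients, i.e.\ that the natural comparison $H_1(W_{F/E},X_*(T))\to\lim_P H_1(W,X_*(T))$ is an isomorphism. This is not formal, since ordinary group homology does not commute with cofiltered limits of groups; one must argue directly with the sheafified bar resolution, using crucially that $X_*(T)$ is discrete and that the $W_{F/E}$-action factors through the finite quotient $Q$, so that over each extremally disconnected $S$ the relevant homology depends only on a finite stage of the tower. This is the substance of the lemmas of \cref{condensed group (co)homology} feeding into \cref{geometric incarnation of Langlands duality for tori}; granting it, the proposition reduces to assembling Langlands' classical result with the transfer-functoriality statements established above.
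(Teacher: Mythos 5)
Your route is genuinely different from the paper's, and it has an unfilled gap at the decisive step. The paper does not reduce to discretized finite-level quotients and pass to a cofiltered limit; instead it argues structurally. After noting (as you do) that transfer lands in the $Q$-invariants, the paper invokes the Langlands bijection on underlying sets and then applies a rigidity result, \cref{lem: sequential colimit qcqs condensed on underlying set}: a map of condensed abelian groups that are sequential \emph{colimits} of compact Hausdorff abelian groups and is a bijection on $\ast$-points is already an isomorphism. To check the hypotheses for the source, the paper uses the inflation--restriction five-term exact sequence together with \cref{lem: sequential colimit Hausdorff closed under extension finite (co)limits}. Nowhere is any statement of the form $H_1(W_{F/E},X_*(T))\cong\varprojlim_P H_1(W_{F/E}/P,X_*(T))$ proved or used.

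The problem with your reduction is precisely that last matching step, which you flag yourself as ``not formal'' and then claim is supplied by the lemmas of \cref{condensed group (co)homology} feeding into \cref{geometric incarnation of Langlands duality for tori}. It is not. Those lemmas (\cref{lem: Z[S] pseudo-coherent}, \cref{lem: compact Hausdorff abelian is pseudo-coherent}, \cref{lem: compact Hausdorff exact extension closed subcategory}, \cref{lem: sequential colimit Hausdorff closed under extension finite (co)limits}, \cref{lem: baire application}, \cref{lem: sequential colimit qcqs condensed on underlying set}) all concern colimits of compact Hausdorff abelian groups and rigidity properties thereof; none of them address whether condensed $H_1$ commutes with the cofiltered limit $W_{F/E}=\varprojlim_P W_{F/E}/P$. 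Moreover the heuristic you offer --- that over each extremally disconnected $S$ the homology ``depends only on a finite stage of the tower'' --- is dubious here, because $W_{F/E}$ (unlike, say, the profinite wild inertia quotients used in the $Z^1$ representability argument) is \emph{not} compact: it is an extension of $Q$ by $F^\times$, so $\mathrm{Cont}(S,W_{F/E})$ does not factor through a finite stage. One would also need to justify the identification $H_1(W,X_*(T))\cong H_1(W_{F/E}/P,X_*(T))$ for the discretization $W\subset W_{F/E}/P$, which again is not supplied by the cited lemmas. So the proposal cannot be completed as written; you should either supply an independent proof of the limit comparison or, more simply, switch to the paper's strategy of exhibiting $H_1(W_{F/E},X_*(T))$ as a sequential colimit of compact Hausdorff abelian groups via inflation--restriction and then applying the rigidity lemma to the already-known bijection on $\ast$-points.
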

\begin{proof}
    By \cref{rem: transfer and G-action} transfer does land in \(H_1(F^\times,X_*(T))^Q\subset H_1(F^\times,X_*(T))\).
    By \cref{lem: sequential colimit qcqs condensed on underlying set} it suffices to show that both sides come from sequential colimits of compact Hausdorff abelian groups.
    This is clear for \(H_1(F^\times,X_*(T))^{Q}=\cond{T(E)}\).
 
    The inflation-restriction sequence on condensed group homology gives us an exact sequence of condensed abelian groups
    \begin{equation*}
        H_2(Q,X_*(T))\xto{\delta} H_1(F^\times,X_*(T))_{Q}\to H_1(W_{F/E},X_*(T))\to H_1(Q,X_*(T))\to 0
    \end{equation*}
    Note that \(H_1(F^\times,X_*(T))\to H_1(F^\times,X_*(T))_{Q}\) is surjective with kernel \(\sum_{\sigma\in Q}\im(\sigma-1)\), so every term comes from a sequential colimit of compact Hausdorff abelian groups.
    It follows from \cref{lem: sequential colimit Hausdorff closed under extension finite (co)limits} that then \(H_1(W_{F/E},X_*(T))\) is also such a colimit.
\end{proof}
\begin{lem}\label{lem: Z[S] pseudo-coherent}
    Let \(S\) be a compact Hausdorff space.
    Then \(\bbZ[S]\) is a pseudo-coherent condensed abelian group.
\end{lem}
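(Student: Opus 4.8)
The plan is to reduce the statement to the known facts about the structure of $\bbZ[S]$ for $S$ compact Hausdorff, and then to verify pseudo-coherence by exhibiting a resolution by finite free condensed abelian groups up to arbitrarily high homological degree. Recall that pseudo-coherence of a condensed abelian group $M$ means that $M$ admits a resolution $\cdots \to P_1 \to P_0 \to M \to 0$ with each $P_i$ a finite free condensed abelian group (i.e.\ of the form $\bbZ[T]$ for $T$ an extremally disconnected profinite set, or rather a finite coproduct of representables), or equivalently that $M$ is a compact object in an appropriate sense; in the condensed world the right statement is that $M$ lies in the thick subcategory generated by representables, level-wise in the truncation. The key input is that any compact Hausdorff space $S$ can be written as a cofiltered limit $S = \lim_i S_i$ of finite sets, hence $\bbZ[S] = \lim_i \bbZ[S_i]$; but limits do not interact well with pseudo-coherence, so instead I would use the presentation of $S$ as a quotient of an extremally disconnected set.

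First I would recall that for $S$ compact Hausdorff there is a surjection $T \twoheadrightarrow S$ with $T$ extremally disconnected (e.g.\ the Stone--\v{C}ech compactification of $S$ made discrete, or any projective cover), and moreover the Čech nerve $T^{\bullet/S}$ consists again of extremally disconnected sets since extremally disconnected sets are closed under fiber products over compact Hausdorff spaces when one factor maps by a nice map --- actually the cleanest route is that $S$ admits a hypercover $T_\bullet \to S$ by extremally disconnected sets, which is a standard fact in condensed mathematics (Scholze's lecture notes). Applying $\bbZ[-]$ gives a resolution $\bbZ[T_\bullet] \to \bbZ[S]$, i.e.\ $\bbZ[S] \simeq |\bbZ[T_\bullet]|$, and each $\bbZ[T_i]$ is free on an extremally disconnected set. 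This already shows $\bbZ[S]$ is a filtered colimit / geometric realization of free modules, but for pseudo-coherence I need the terms to be \emph{finitely generated} free, which they are not — $\bbZ[T_i]$ is free of infinite rank. So the honest approach must instead exploit the profinite structure directly.

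The approach I would actually carry out: write $S = \lim_i S_i$ with $S_i$ finite, so the transition maps are surjective for a cofinal system, and consider the associated filtered system in the \emph{pro}-category; dualize. Here is the point: $\bbZ[S]$ is the condensed abelian group whose value on an extremally disconnected $U$ is $\mathrm{Cont}(U, S)^{\mathrm{group completion}}$, and one has a natural presentation coming from the fact that $\mathrm{Cont}(U,S) = \colim_i \mathrm{Cont}(U, S_i)$. Thus $\bbZ[S] = \colim_i \bbZ[S_i]$ as condensed abelian groups? No — that colimit is wrong; $\bbZ[S]$ is not the colimit but sits inside $\prod$. The correct statement, which I would prove, is that $\bbZ[S]$ fits into a resolution built from the $\bbZ[S_i]$: by \cite[]{condensed} (Scholze's \emph{Condensed Mathematics}, the section on $\bbZ[S]$ for profinite $S$), for $S$ profinite $\bbZ[S] = \lim_i \bbZ[S_i]$ and this limit is moreover such that $\bbZ[S]$ is pseudo-coherent — in fact it is even a \emph{compact} object of $\mathrm{Cond}(\mathrm{Ab})$ and is dualizable in a suitable sense only after $p$-completion, but pseudo-coherence over $\bbZ$ holds because the transition maps $\bbZ[S_{i+1}] \to \bbZ[S_i]$ are split surjections of finite free abelian groups at each level after refining the index system, so $\bbZ[S] = \lim$ is a countable (WLOG, by refining) limit of finite free modules along split surjections, which one checks is pseudo-coherent by resolving the limit via its $\lim^1$-free presentation $0 \to \bbZ[S] \to \prod_i \bbZ[S_i] \xrightarrow{1 - \mathrm{shift}} \prod_i \bbZ[S_i] \to 0$ and then using that products of finite free condensed abelian groups are pseudo-coherent. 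Finally, a general compact Hausdorff $S$ is a quotient of a profinite set $T$ by a closed equivalence relation $R \subset T \times T$ which is itself profinite, giving $\bbZ[S] = \mathrm{coeq}(\bbZ[R] \rightrightarrows \bbZ[T])$, i.e.\ a two-term complex of pseudo-coherent objects, hence pseudo-coherent since pseudo-coherent objects are closed under finite colimits (cones).

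The main obstacle I anticipate is the bookkeeping around why an infinite product $\prod_i \bbZ[S_i]$ of finite free condensed abelian groups is pseudo-coherent: this is \emph{false} for abstract abelian groups (an infinite product of copies of $\bbZ$ is not finitely generated, not even countably presented), so the argument must genuinely use the condensed structure — specifically that $\prod_i \bbZ[S_i] = \bbZ[\coprod_i S_i \cup \{\infty\}]$-type statements, or rather that a countable product $\prod \bbZ = \bbZ[\beta\bbN]$-style identity makes it representable-adjacent. I would need to invoke the precise result from \cite[]{condensed} that $\bbZ[S]$ for $S$ extremally disconnected is projective, hence pseudo-coherent, and that the class of pseudo-coherent objects is thick and closed under the relevant (co)limits appearing above; the real content is assembling these closure properties correctly, and identifying $\bbZ[S]$ for profinite $S$ with an explicit finite-type complex. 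Once the profinite case is settled, the passage to compact Hausdorff via the profinite presentation of the equivalence relation is formal.
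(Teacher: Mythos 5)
The paper's proof is much shorter and takes a completely different route from what you propose: it unpacks pseudo-coherence as the statement that \(\mathrm{Ext}^n(\bbZ[S],-)\) commutes with filtered colimits for all \(n\), observes that after fixing a large enough cardinal \(\kappa\) these \(\mathrm{Ext}\)-groups can be computed in \(\kappa\)-condensed abelian groups, and then invokes the general fact (\cite[Expos\'e VI, Corollaire 5.2]{SGA4}) that in a coherent topos this commutation holds for \(\bbZ[X]\) when \(X\) is qcqs. Here \(\kappa\)-condensed sets form a coherent topos precisely because \(\kappa\)-small compact Hausdorff spaces are a family of qcqs generators closed under fiber products. No resolution is ever constructed.

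Your proposal has a genuine gap in its central step. You claim that for profinite \(S=\lim_i S_i\) one has \(\bbZ[S]=\lim_i\bbZ[S_i]\) as condensed abelian groups, and you build your pseudo-coherence argument (the \(\lim^1\)-free two-term resolution by products) on this identification. That identification is false: \(\lim_i\bbZ[S_i]\) is the \emph{solidification} \(\bbZ[S]^{\blacksquare}\), not the free condensed abelian group \(\bbZ[S]\) itself. The latter is strictly smaller; e.g.\ its value on a point is the abstract free abelian group on the underlying set of \(S\), whereas \((\lim_i\bbZ[S_i])(*)\) is the group of \(\bbZ\)-valued measures on \(S\). So the resolution you write down is a resolution of the wrong object, and the rest of the argument (the passage to general compact Hausdorff \(S\) via \(\bbZ[S]=\mathrm{coeq}(\bbZ[R]\rightrightarrows\bbZ[T])\), which is itself fine and would close the gap \emph{if} the profinite case were established) does not repair it. You also correctly flag that infinite products of finite free modules are not pseudo-coherent in ordinary algebra and that the condensed structure must be doing the work — but the mechanism you reach for (identifying the product with a representable-adjacent object) is exactly the solid-module picture, which is about \(\bbZ[S]^{\blacksquare}\), not \(\bbZ[S]\). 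The cleaner and correct route, as the paper shows, is to sidestep explicit resolutions altogether and use the qcqs-object criterion in a coherent topos.
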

\begin{proof}
    Let \(M=\colim_{i\in I}M_i\) be a filtered colimit of condensed abelian groups.
    Fix some uncountable strong limit cardinal \(\kappa\) such that all \(M_i\), \(M\) and \(\bbZ[S]\) are \(\kappa\)-condensed.
    To check that 
    \begin{equation*}
        \mathrm{Ext}^n(\bbZ[S],M)=\colim_{i\in I}\mathrm{Ext}^n(\bbZ[S],M_i)
    \end{equation*}
    note that these can be computed in \(\kappa\)-condensed sets.
    The claim follows from the fact that \(\kappa\)-condensed sets from a coherent topos (\(\kappa\)-small compact Hausdorff spaces forming a family of qcqs generators closed under fiber products), so \cite[Expos\'e VI, Corollaire 5.2]{SGA4} applies.
\end{proof}
\begin{lem}\label{lem: compact Hausdorff abelian is pseudo-coherent}
    Compact Hausdorff abelian groups are pseudo-coherent condensed abelian groups.
\end{lem}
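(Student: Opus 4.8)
The plan is to present $A$ as the realization of a complex built from free condensed abelian groups on compact Hausdorff spaces, and then to propagate the pseudo-coherence granted by \cref{lem: Z[S] pseudo-coherent}.

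First I would invoke the Breen--Deligne resolution: for any sheaf of abelian groups on a site — in particular for a condensed abelian group $A$ — there is a functorial resolution
\begin{equation*}
    \cdots\to C_2\to C_1\to C_0\to A\to 0
\end{equation*}
in which each $C_s$ is a \emph{finite} direct sum $\bigoplus_j \bbZ[A^{n_{sj}}]$ of free condensed abelian groups on finite powers of $A$. Since $A$ is compact Hausdorff, every power $A^{n_{sj}}$ is compact Hausdorff, so $\bbZ[A^{n_{sj}}]$ is pseudo-coherent by \cref{lem: Z[S] pseudo-coherent}. The class of pseudo-coherent condensed abelian groups is closed under finite direct sums (immediate from the description of pseudo-coherence as the requirement that $\mathrm{Ext}^n(M,-)$ commute with filtered colimits for all $n$, together with exactness of filtered colimits); hence each $C_s$ is pseudo-coherent.

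It then remains to deduce pseudo-coherence of $A$ from the fact that $A$ is quasi-isomorphic to the complex $C_\bullet$, which is concentrated in non-negative homological degrees and whose terms are pseudo-coherent. For this I would use the hyper-$\mathrm{Ext}$ spectral sequence attached to the stupid filtration of $C_\bullet$,
\begin{equation*}
    E_1^{s,t}=\mathrm{Ext}^t(C_s,N)\ \Longrightarrow\ \mathrm{Ext}^{s+t}(A,N).
\end{equation*}
Because $C_\bullet$ lives in homological degrees $s\geq 0$ and $\mathrm{Ext}^t$ vanishes for $t<0$, this spectral sequence is supported in the first quadrant, so only finitely many terms contribute to each total degree. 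Each $E_1$-term commutes with filtered colimits in $N$, as $C_s$ is pseudo-coherent, and filtered colimits are exact; hence the spectral sequence and its abutment $\mathrm{Ext}^*(A,N)$ commute with filtered colimits in $N$. This says precisely that $A$ is pseudo-coherent.

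The step needing care is the last one. One must check the spectral sequence converges — which it does, being first-quadrant — and be comfortable with the fact that although $A$ need not admit a \emph{bounded} resolution by pseudo-coherent objects, the bounded-below one with first-quadrant bookkeeping still suffices. Equivalently, and perhaps more cleanly, one argues directly: $R\Hom(A,N)$ carries a complete, exhaustive filtration whose $s$-th graded piece is $R\Hom(C_s,N)[-s]$ and which is finite in each cohomological degree, so each $\mathrm{Ext}^n(A,N)$ is built by finitely many extensions from the groups $\mathrm{Ext}^{n-s}(C_s,N)$, all of which commute with filtered colimits. Invoking Breen--Deligne inside the condensed topos is harmless, that construction being valid for abelian sheaves on an arbitrary site; alternatively one could use the bar-type (cubical) resolution of $A$ by the $\bbZ[A^n]$.
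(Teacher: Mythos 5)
Your argument is correct and follows essentially the same route as the paper: both use the Breen--Deligne resolution together with \cref{lem: Z[S] pseudo-coherent} to produce a resolution of $A$ by pseudo-coherent objects, and then conclude. Where the paper simply cites \cite[Tag 064Y]{stacks} for the final deduction, you spell it out with the first-quadrant hyper-$\mathrm{Ext}$ spectral sequence (equivalently, the finite-in-each-degree filtration on $R\Hom(A,N)$); the content is the same.
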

\begin{proof}
    Using the Breen-Deligne resolution (see \cite[Theorem 4.10.]{condensed}) and \cref{lem: Z[S] pseudo-coherent}, we see that compact Hausdorff abelian groups admits a resolution by pseudo-coherent condensed abelian groups.
    The claim follows as in \cite[Tag 064Y]{stacks}.
\end{proof}
\begin{lem}\label{lem: compact Hausdorff exact extension closed subcategory}
    Compact Hausdorff abelian groups (or equivalently condensed abelian groups that are qcqs) are an exact abelian subcategory of condensed abelian groups closed under biproducts and extensions.
\end{lem}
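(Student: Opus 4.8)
The plan is to show that the full subcategory of $\mathrm{Cond}(\mathrm{Ab})$ spanned by the qcqs objects, which by the equivalence between qcqs condensed sets and compact Hausdorff spaces coincides with the category $\mathrm{CHAb}$ of compact Hausdorff topological abelian groups, is closed under finite biproducts, kernels, cokernels and extensions formed in $\mathrm{Cond}(\mathrm{Ab})$. This is exactly the assertion of the lemma: closure under biproducts, kernels and cokernels of morphisms in the subcategory makes the inclusion into the abelian category $\mathrm{Cond}(\mathrm{Ab})$ an exact functor and makes the subcategory abelian in its own right, since image and coimage are then inherited. Closure under biproducts is immediate, as products in $\mathrm{Cond}(\mathrm{Ab})$ are computed on underlying condensed sets and $\underline{S}\times\underline{S'}=\underline{S\times S'}$ for compact Hausdorff $S,S'$. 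Closure under kernels is also formal: $\underline{(-)}\colon\mathrm{CHaus}\to\mathrm{Cond}(\mathrm{Set})$ preserves limits, so the kernel in $\mathrm{Cond}(\mathrm{Ab})$ of $f\colon A\to B$ in $\mathrm{CHAb}$ is $\underline{\ker f}$, with $\ker f\subseteq A$ the closed topological kernel, again compact Hausdorff.

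The two substantive points, cokernels and extensions, both rest on Gleason's theorem that the extremally disconnected spaces are precisely the projectives of $\mathrm{CHaus}$, together with the exactness of the functor $M\mapsto M(T)$ on $\mathrm{Cond}(\mathrm{Ab})$ for $T$ extremally disconnected. For cokernels: given $f\colon A\to B$ in $\mathrm{CHAb}$, the topological image $f(A)$ is compact, hence a closed subgroup of $B$, and I would compute the presheaf cokernel $T\mapsto\mathrm{coker}(A(T)\to B(T))$ on extremally disconnected $T$. The continuous surjection $A\to f(A)$ lifts along any such $T$, so the image of $A(T)\to B(T)$ is $f(A)(T)$; and the quotient map $B\to B/f(A)$, also a continuous surjection of compact Hausdorff spaces, is surjective on $T$-points with kernel $f(A)(T)$. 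Hence this presheaf cokernel, restricted to extremally disconnected sets, already equals the sheaf $\underline{B/f(A)}$, so $\mathrm{coker}(f)=\underline{B/f(A)}$ is compact Hausdorff.

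For extensions, take an exact sequence $0\to A\to B\to C\to 0$ in $\mathrm{Cond}(\mathrm{Ab})$ with $A$ and $C$ qcqs, writing $i$ for the monomorphism and $p$ for the epimorphism. To see that $B$ is quasicompact, choose extremally disconnected $S_A,S_C$ with surjections $\underline{S_A}\to A$ and $\underline{S_C}\to C$; exactness of evaluation on $S_C$ forces $p$ to be surjective on $S_C$-sections, so the map $\underline{S_C}\to C$ lifts to some $\sigma\colon\underline{S_C}\to B$, and then $(x,a)\mapsto\sigma(x)+i(a)$ defines a map $\underline{S_C\times S_A}\to B$ which one checks is an epimorphism by testing on extremally disconnected objects and using the same exactness; thus $B$ is a quotient of a qcqs condensed set. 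To see that $B$ is quasiseparated it suffices to show the zero section $\ast\to B$ is a quasicompact morphism, since $B$ is quasiseparated iff its diagonal $B\to B\times B$ is quasicompact, and the diagonal and the zero section are base changes of one another (along the subtraction $B\times B\to B$ and along $\ast\to B\times B$ respectively), quasicompact morphisms being stable under base change. Now for compact Hausdorff $S$ and a map $s\colon\underline{S}\to B$, the subobject $\underline{S}\times_C\ast$ cut out by $p\circ s=0$ is quasicompact by quasiseparatedness of $C$, hence qcqs as a subobject of the qcqs $\underline{S}$, hence of the form $\underline{S'}$ for a closed subspace $S'\subseteq S$; on $\underline{S'}$ the map $s$ factors through $\ker p=i(A)$, so the fibre $s^{-1}(\ast)=\underline{S'}\times_A\ast$ is quasicompact by quasiseparatedness of $A$. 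Therefore $B$ is qcqs, hence lies in $\mathrm{CHAb}$.

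I expect the closure under extensions, and within it the quasiseparatedness claim, to be the main obstacle: one has to know that the pullbacks $\underline{S}\times_C\ast$ and $\underline{S'}\times_A\ast$ are genuinely quasicompact condensed sets and that a qcqs subobject of $\underline{S}$ is represented by a closed subspace, both of which reduce again to the equivalence between qcqs condensed sets and compact Hausdorff spaces. The only subtlety in the cokernel step is verifying that the naive topological quotient already satisfies the sheaf condition, which is precisely where Gleason's projectivity theorem enters.
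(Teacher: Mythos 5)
Your proof is correct but takes a genuinely different route from the paper's. For the abelian-subcategory part, the paper invokes Pontryagin duality to know that compact Hausdorff abelian groups form an abelian category and then only has to compare cokernels, doing so via the one-line observation that continuous surjections of compact Hausdorff spaces are covers in the site defining condensed sets; you instead avoid duality and check kernels, cokernels and biproducts directly, using Gleason's projectivity of extremally disconnected sets to show the presheaf cokernel is already a sheaf. For closure under extensions — the real content — the divergence is sharper: the paper notes that the addition map gives an identification $A\times B\cong B\times_C B$, observes that the projection $A\times B\to B$ is qcqs as a base change of $A\to *$, deduces by descent along the epimorphism $B\to C$ that $B\to C$ is qcqs, and concludes since $C$ is qcqs. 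You instead verify quasicompactness and quasiseparatedness of $B$ separately: quasicompactness by lifting $\underline{S_C}\to C$ along $p$ using projectivity of $S_C$ and producing an explicit epimorphism $\underline{S_C\times S_A}\twoheadrightarrow B$; quasiseparatedness by reducing, via the group structure, to quasicompactness of the zero section and then fibering through $C$ and $A$ in two stages. The paper's argument is shorter and makes the group structure do all the work at once, while yours is more hands-on and produces the extra information that $B$ is an explicit topological quotient of a product of extremally disconnected spaces. Both are complete; one small point worth tightening in your write-up is the assertion that the zero section and diagonal are ``base changes of one another'' — you only need (and only correctly justify) that the diagonal $\Delta_B$ is a base change of the zero section along the subtraction map $B\times B\to B$, so that quasicompactness of the zero section suffices.
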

\begin{proof}
    By Pontrjagin duality the category of compact abelian Hausdorff groups \(\mathrm{CompAb}\) is equivalent to the opposite category of abelian groups \(\mathrm{Ab}^{\op}\), so it is abelian.
    Preservation of kernels and products by the inclusion is clear, the only thing left to check is cokernels.
    For this it is sufficient to check that if \(f\from A\to B\) is a surjective morphism of compact Hausdorff abelian groups, then \(\cond{f}\from\cond{A}\to\cond{B}\) is surjective.
    This is clear, as surjective morphisms of compact Hausdorff spaces are a covering on the site of compact Hausdorff spaces, which defines condensed sets and \(\cond{f}\) is just a morphism of representable sheaves.
    For extensions, let \(0\to A\to B\to C\to 0\) be a short exact sequence of condensed abelian groups, such that \(A\) and \(C\) are qcqs.
    We want to show that \(B\) is qcqs.
    Let \(\kappa\) be an uncountable strong limit cardinal, such that \(A\), \(B\) and \(C\) are \(\kappa\)-condensed.
    We have a fiber product diagram 
    \begin{equation*}
        \begin{tikzcd}
            A\times B \arrow[r, "+"] \arrow[d, "\pr"'] & B \arrow[d] \\
            B \arrow[r]                                & C          
            \end{tikzcd}
    \end{equation*}
    where one arrow is the projection and the other arrow is the addition map.
    Clearly \(\pr\from A\times B\to B\) is qcqs (being the base change of \(A\to *\) which is qcqs).
    Since \(B\to C\) is surjective, this implies that \(B\to C\) is qcqs (since \(\kappa\)-condensed sets have a family of quasi-compact generators, see \cite[Expos\'e VI, Proposition 1.10]{SGA4}).
    Since \(C\) is qcqs (and \(*\) is qcqs), it follows that \(B\) is also qcqs.
\end{proof}
\begin{lem}\label{lem: sequential colimit Hausdorff closed under extension finite (co)limits}
    The class of condensed abelian groups that are sequential colimits of compact abelian Hausdorff groups is closed under finite limits, finite colimits, sequential colimits and extensions.
\end{lem}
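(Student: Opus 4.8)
The plan is to write \(\mathcal{C}\) for the class in question and to isolate the basic operations that actually need checking. Since condensed abelian groups form a Grothendieck abelian category, every finite limit is built from finite products and kernels and every finite colimit from finite biproducts and cokernels, so it is enough to show that \(\mathcal{C}\) is stable under finite biproducts, kernels, cokernels, sequential colimits, and extensions. Sequential colimits are handled by a diagonal argument: a sequential colimit whose terms are themselves sequential colimits of compact Hausdorff abelian groups is a colimit over \(\bbN\times\bbN\) of compact Hausdorff abelian groups, and \(\bbN\) sits cofinally in \(\bbN\times\bbN\). Finite biproducts are equally formal: \(\bigoplus_j \colim_n M^{(j)}_n \cong \colim_n \bigoplus_j M^{(j)}_n\), and a finite biproduct of compact Hausdorff abelian groups is compact Hausdorff by \cref{lem: compact Hausdorff exact extension closed subcategory}.

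For kernels and cokernels the key step is \emph{rectification}. Given \(f\from M\to N\) with \(M=\colim_n M_n\) and \(N=\colim_n N_n\), the \(M_n\) and \(N_n\) compact Hausdorff, I would use that by \cref{lem: compact Hausdorff abelian is pseudo-coherent} each \(M_n\) is in particular a compact object (this is the \(\mathrm{Ext}^0\)-content of pseudo-coherence), so that \(\mathrm{Hom}(M_n,-)\) commutes with the filtered colimit \(N=\colim_n N_n\). The usual argument --- factoring \(M_n\to N\) through some \(N_{\phi(n)}\), refining \(\phi\) to be monotone and then enlarging it so that all arising squares commute (using compactness of \(M_n\) once more to equalize two maps that become equal in \(N\)), and finally replacing \(N_\bullet\) by the cofinal subdiagram \(N_{\phi(\bullet)}\) --- then lets me assume that \(f\) is induced by a strict morphism of \(\bbN\)-indexed diagrams \(f_n\from M_n\to N_n\). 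Since filtered colimits are exact, \(\ker(f) = \colim_n \ker(f_n)\) and \(\mathrm{coker}(f) = \colim_n \mathrm{coker}(f_n)\); and each \(f_n\) is, by full faithfulness of the inclusion, a morphism of compact Hausdorff abelian groups, so \(\ker(f_n)\) and \(\mathrm{coker}(f_n)\) are again compact Hausdorff because this is an exact abelian subcategory of condensed abelian groups (\cref{lem: compact Hausdorff exact extension closed subcategory}). Hence \(\ker(f)\) and \(\mathrm{coker}(f)\) lie in \(\mathcal{C}\), and together with the previous paragraph \(\mathcal{C}\) is closed under all finite limits and all finite colimits.

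For extensions, suppose \(0\to A\to B\to C\to 0\) with \(A=\colim_n A_n\) and \(C=\colim_n C_n\). I would set \(B_n\defined B\times_C C_n\), so that \(0\to A\to B_n\to C_n\to 0\) is exact and, filtered colimits commuting with finite limits, \(B=\colim_n B_n\); it therefore suffices to see each \(B_n\in\mathcal{C}\). Since \(C_n\) is compact Hausdorff, hence pseudo-coherent by \cref{lem: compact Hausdorff abelian is pseudo-coherent}, we have \(\mathrm{Ext}^1(C_n,A)=\colim_m \mathrm{Ext}^1(C_n,A_m)\), so \(B_n\) is the pushforward of an extension \(0\to A_m\to E_n\to C_n\to 0\) for some \(m\). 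Here \(E_n\) is an extension of a compact Hausdorff group by a compact Hausdorff group, hence compact Hausdorff by \cref{lem: compact Hausdorff exact extension closed subcategory}; and \(B_n\) is the pushout \(A\oplus_{A_m}E_n\), i.e.\ the cokernel of \(A_m\to A\oplus E_n\). As \(A,E_n\in\mathcal{C}\) and \(\mathcal{C}\) is closed under finite colimits, \(B_n\in\mathcal{C}\), whence \(B=\colim_n B_n\in\mathcal{C}\).

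The step I expect to be the real obstacle is the rectification of an abstract morphism between two sequential colimits into a strict morphism of \(\bbN\)-diagrams, which is what makes the levelwise computation of kernels and cokernels legitimate; it relies precisely on compactness of compact Hausdorff abelian groups --- the degree-zero part of the pseudo-coherence proved in \cref{lem: Z[S] pseudo-coherent} and \cref{lem: compact Hausdorff abelian is pseudo-coherent} --- together with some bookkeeping of the set-theoretic cut-offs as in the proofs of those lemmas. Everything else reduces formally to \cref{lem: compact Hausdorff exact extension closed subcategory}.
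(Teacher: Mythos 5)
Your proof is correct and follows essentially the same route as the paper: compactness (the degree-zero content of pseudo-coherence via \cref{lem: compact Hausdorff abelian is pseudo-coherent}) plus the exact subcategory \cref{lem: compact Hausdorff exact extension closed subcategory} handles the finite (co)limit cases, and the extension case is handled by the same pullback-to-\(C_n\)-then-pseudo-coherence reduction. You spell out the rectification of a morphism between sequential colimits more explicitly than the paper does, and your endgame for extensions (writing \(B_n\) as a pushout of objects already known to lie in the class, rather than building a sequential colimit of compact Hausdorff extensions and identifying it with \(B\)) is a minor but valid streamlining.
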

\begin{proof}
    Compact abelian Hausdorff groups give rise to compact condensed abelian groups by \cref{lem: compact Hausdorff abelian is pseudo-coherent}, so we can get everything by \cref{lem: compact Hausdorff exact extension closed subcategory} except extensions.
    Let \(0\to F\to G\to H\to 0\) be a short exact sequence of condensed abelian groups, where \(F=\colim_{i\in \omega}F_i\), \(H=\colim_{i\in \omega}H_i\) are sequential colimits of compact Hausdorff abelian groups.
    Then \(G=\colim_{i\in\omega}H_i\times_H G\) by universality of colimits in topoi (or rather one should restrict to \(\kappa\)-small things as in the previous proofs).
    We get short exact sequences \(0\to F\to H_i\times_H G\to H_i\to 0\), so we reduce to the case where \(H\) is compact Hausdorff.
    Then \(H\) is pseudo-coherent, so that \(\mathrm{Ext}^1(H,\colim_{i\in \omega}F_i)=\colim_{i\in\omega}\mathrm{Ext}^1(H,F_i)\).
    Let \(\xi\in \mathrm{Ext}^1(H,\colim_{i\in \omega}F_i)\) represent the extension \(0\to F\to G\to H\to 0\).
    Then the previous equiality of Ext-groups shows that there is some index \(j\in\omega\) and a class \(\xi_j\in \mathrm{Ext}^1(H,F_j)\) that maps to \(\xi\).
    From this we obtain \(\xi_i\in \mathrm{Ext}^1(H,F_i)\) for \(i\geq j\) that are just the image of \(\xi_j\) by the map induced by \(F_i\to F_j\), all mapping to \(\xi\).
    This means we find short exact sequences \(0\to F_i\to G_i\to H\to 0\) functorially in \(i\geq j\) fitting into 
    \begin{equation*}
        \begin{tikzcd}
            0 \arrow[r] & F_i \arrow[d] \arrow[r] & G_i \arrow[d] \arrow[r] & H \arrow[d] \arrow[r] & 0 \\
            0 \arrow[r] & F \arrow[r]             & G \arrow[r]             & H \arrow[r]           & 0
            \end{tikzcd}
    \end{equation*} 
    where the left square is a pushout diagram.
    Let us reindex so that \(j=0\).
    Then 
    \begin{equation*}
        \colim_{i\in\omega} G_i=\colim_{i\in\omega} F_i\oplus_{\colim_{i\in\omega}F}\colim_{i\in\omega}G=G
    \end{equation*} 
    where the colimits over \(F\) and \(G\) are as constant diagrams.
    Note that the map \(\colim_{i\in \omega}F_i\to\colim_{i\in\omega}F\cong F\) identifies with the identity map, which gives the second equality.
    Thus we reduce to the case where \(F\) is also compact Hausdorff, where it is \cref{lem: compact Hausdorff exact extension closed subcategory}.
\end{proof}
\begin{lem}\label{lem: baire application}
    Let \(G\) be a compact Hausdorff group such that \(G=\bigcup_{i=0}^{\infty}G_i\) where \(G_i\) are closed subgroups such that \(G_i\subseteq G_{i+1}\).
    Then \(G=G_N\) for some index \(N\).
\end{lem}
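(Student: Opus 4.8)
The plan is a Baire category argument. Since $G$ is compact Hausdorff it is a Baire space, so writing it as the countable union $G=\bigcup_{i\geq 0}G_i$ of the closed subgroups $G_i$ forces at least one $G_N$ to have non-empty interior: otherwise every $G_i$ would be closed with empty interior, and a countable union of such sets cannot exhaust $G$.

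Next I would upgrade ``non-empty interior'' to ``open subgroup''. If a subgroup $H$ of a topological group contains a non-empty open set $U$, pick $h_0\in U$; then $V\defined h_0^{-1}U\subseteq H$ is an open neighbourhood of the identity, so $H=\bigcup_{h\in H}hV$ is open. An open subgroup of the compact group $G$ automatically has finite index, since its cosets form an open cover of $G$ admitting no proper subcover. Hence $G_N$ is open of some finite index $k$, and we may choose coset representatives $g_1,\dots,g_k$ with $G=\bigsqcup_{j=1}^{k}g_jG_N$.

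Finally, each $g_j$ lies in some $G_{i_j}$ because the $G_i$ cover $G$. Setting $M\defined\max(N,i_1,\dots,i_k)$ and using that the tower is increasing, we get $G_M\supseteq G_N$ together with $g_1,\dots,g_k\in G_M$, whence $G_M\supseteq\bigcup_{j=1}^{k}g_jG_N=G$, i.e.\ $G=G_M$.

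The whole argument is elementary; the only places where the hypotheses enter essentially are the appeal to the Baire property of the compact Hausdorff space $G$ and, again, compactness to conclude that an open subgroup has finite index. I do not anticipate a genuine obstacle beyond keeping these standard facts straight.
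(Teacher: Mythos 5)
Your proof is correct, and it follows the same broad strategy as the paper (Baire category to find some $G_N$ with non-empty interior, hence an open subgroup, then use compactness and the increasing structure of the tower to conclude). The difference is only in the final step: the paper repeatedly applies Baire to the complement $G_{i_0}^c$ to manufacture an open cover $G=\bigcup_j G_{i_j}$ by open-closed subgroups and then extracts a finite subcover by compactness, whereas you invoke the standard fact that an open subgroup of a compact group has finite index, pick finitely many coset representatives, and absorb them into a single $G_M$. Your route is a bit more direct and avoids the inductive construction; both use compactness in essentially equivalent ways, so the two proofs are morally identical.
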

\begin{proof}
    By the Baire category theorem there exists some index \(i_0\) such that \(G_{i_0}\) contains an open subset.
    Thus \(G_{i_0}\) is open and closed.
    Applying the Baire category theorem to \(G\cap G_{i_0}^c=\bigcup G_i\cap G_{i_0}^c\) we find another index \(i_1\) such that \(G_{i_1}\) contains an open subset (using that \(G_{i_0}^c\) is open and closed).
    Continuing inductively we find infinitely many indices \(i_j\) such that \(G_{i_j}\) is open and closed and we have \(G=\bigcup_{j=0}^{\infty}G_{i_j}\).
    Since \(G\) is compact, finitely many \(i_j\) suffice and we can choose \(N\) to be the largest one of those finitely many.
    Then \(G=G_N\).
\end{proof}
\begin{lem}\label{lem: sequential colimit qcqs condensed on underlying set}
    Let \(X\) and \(Y\) be condensed abelian groups that are sequential colimits of compact Hausdorff abelian groups.
    Let \(f\from X\to Y\) be a morphism that induces a bijection \(X(*)\to Y(*)\).
    Then \(f\) is already an isomorphism.
\end{lem}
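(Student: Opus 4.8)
The plan is to reduce the statement to the following special case: if $Z=\colim_{i\in\omega}Z_i$ is a sequential colimit of compact Hausdorff abelian groups with $Z(*)=0$, then $Z=0$. Granting this, I would argue as follows. Form $K=\ker f$ and $C=\mathrm{coker}\,f$ in the abelian category of condensed abelian groups. Since $K$ is a finite limit and $C$ a finite colimit of $X$ and $Y$, \cref{lem: sequential colimit Hausdorff closed under extension finite (co)limits} shows that $K$ and $C$ are again sequential colimits of compact Hausdorff abelian groups. Evaluation at the point is exact (cf. \cref{rem: underlying set condensed homology is ordinary homology}), so $K(*)=\ker(X(*)\to Y(*))=0$ and $C(*)=\mathrm{coker}(X(*)\to Y(*))=0$ because $X(*)\to Y(*)$ is bijective. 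The special case then forces $K=C=0$, so $f$ is both a monomorphism and an epimorphism of condensed abelian groups, hence an isomorphism.

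It remains to prove the special case. Here I would use that filtered colimits of condensed abelian groups are computed objectwise on extremally disconnected sets (the objectwise colimit of presheaves preserving finite products again preserves finite products), so $0=Z(*)=\colim_i Z_i(*)=\colim_i Z_i$ as abstract abelian groups. Fix $i$ and set $K_{ij}=\ker(Z_i\to Z_j)$ for $j\ge i$; since a morphism of condensed abelian groups between compact Hausdorff groups is a continuous homomorphism of topological groups, each $K_{ij}$ is a closed subgroup of the compact Hausdorff group $Z_i$, and these increase with $j$. Vanishing of $\colim_i Z_i$ means precisely that every element of $Z_i$ dies somewhere along the tower, i.e. $Z_i=\bigcup_{j\ge i}K_{ij}$. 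Now \cref{lem: baire application} applies and produces an index $N_i$ with $Z_i=K_{iN_i}$; equivalently the transition map $Z_i\to Z_{N_i}$ is the zero morphism.

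Finally, a sequential colimit in which every term maps to zero further along the tower vanishes: for an extremally disconnected $S$, any class in $Z(S)=\colim_i Z_i(S)$ is represented by some $z\in Z_i(S)$, and $z$ is sent to $0$ in $Z_{N_i}(S)$, hence is $0$ in the colimit; thus $Z(S)=0$ for all such $S$, so $Z=0$.

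I expect the substantive input to be entirely \cref{lem: baire application}: the Baire category argument is what converts the purely set-theoretic hypothesis (a bijection on underlying groups) into the topological conclusion that transition maps eventually vanish. The surrounding steps — identifying $\ker f$ and $\mathrm{coker}\,f$ as sequential colimits of compact Hausdorff groups via \cref{lem: sequential colimit Hausdorff closed under extension finite (co)limits}, and computing filtered colimits and the functor $(-)(*)$ objectwise — are formal, and the only mild care needed is to remember that a map of condensed abelian groups underlying compact Hausdorff groups is automatically continuous, so that its kernel is closed.
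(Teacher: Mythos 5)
Your proof is correct and follows essentially the same strategy as the paper: reduce to showing that a sequential colimit of compact Hausdorff abelian groups with trivial underlying group vanishes, then apply \cref{lem: baire application} to see that the transition maps eventually become zero. The only cosmetic difference is that the paper first rearranges the tower to have surjective transition maps and applies Baire to $X_0=\bigcup_n K_n$, whereas you apply it directly to the increasing kernels $K_{ij}=\ker(Z_i\to Z_j)\subset Z_i$; both are exact instances of the hypothesis of the Baire lemma.
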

\begin{proof}
    Since \(\ker(f)\) and \(\coker(f)\) are similarly such sequential colimits it suffices to check that if \(X(*)=0\) then \(X=0\).
    Let \(X=\colim_n X_n\) where \(X_n\) come from compact Hausdorff abelian groups.
    Define \(\overline{X}_n\defined\im(X_n\to X)\).
    Then \(X=\colim_n\overline{X}_n\) with \(\overline{X}_n(*)=0\) and \(\overline{X}_n=\colim_m\im(X_n\to X_m)\).
    In this system all transition maps are surjective.
    Thus we may assume that all transition maps \(X_n\to X_{n+1}\) are surjective.
    Then \(X_n=X_0/K_n\) for some closed subgroup \(K_n\) and \(X_0=\bigcup_{n\geq 0}K_n\) and \(K_n\subseteq K_{n+1}\).
    By \cref{lem: baire application} we find that \(X_0=K_n\) for some \(n\) so \(X_n=0\) for some \(X_n\), hence \(X=0\).
\end{proof}
\subsection{Categorical Computations}
\begin{lem}\label{lem: QCoh(Hom(W Gm)) is WMod}
    Let \(W\) be an abelian locally pro-\(p\) group.
    Then \(\qcoh(\Hom(W,\gm))\) is equivalent to the category of smooth \(W\)-representations on \(\zl\)-modules.
\end{lem}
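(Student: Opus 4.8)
The plan is to identify both sides with module categories over an $E_\infty$-ring (in fact a discrete ring spectrum) and match the descriptions. First I would use \cref{lem: Hom(T(E) G_m)} — or rather its analogue for a general abelian locally pro-$p$ group $W$, which is proven identically — to write $\Hom(W,\gm) = \bigcup_K \Hom(W/K,\gm)$ where $K$ runs over open pro-$p$ subgroups forming a neighborhood basis of the identity. Each $\Hom(W/K,\gm)$ is an affine scheme: since $W/K$ is an abelian group, $\Hom(W/K,\gm) = \spec \zl[\widehat{W/K}]$ where $\widehat{W/K}$ denotes the Pontrjagin-dual (the character group, a discrete abelian group). Concretely $\zl[\Hom(W/K,\gm)] = \zl[(W/K)^\vee]$, but it is cleaner to observe directly that $\qcoh(\Hom(W/K,\gm))$ is the category of comodules over the Hopf algebra representing the diagonalizable group $\Hom(W/K,\gm)$, which by the standard equivalence (diagonalizable group schemes correspond to abelian groups, and quasi-coherent sheaves on $\spec$ of a group algebra $= $ graded modules) is the category of $W/K$-graded $\zl$-modules, i.e. $\zl$-modules $M$ with a decomposition $M = \bigoplus_{w \in W/K} M_w$.

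Next I would pass to the colimit. The transition maps $\Hom(W/K',\gm) \to \Hom(W/K,\gm)$ for $K' \subset K$ are open and closed immersions (this is exactly the content of the union decomposition in \cref{lem: Hom(T(E) G_m)}: each $\Hom(W/K,\gm)$ is an open and closed subscheme of the total space). Hence $\Hom(W,\gm)$ is an increasing union of open-and-closed affine subschemes, and $\qcoh(\Hom(W,\gm)) = \lim_K \qcoh(\Hom(W/K,\gm))$ along the restriction functors, which because the inclusions are open-and-closed is also the filtered colimit along pushforward. Under the grading description, the restriction functor $\qcoh(\Hom(W/K,\gm)) \to \qcoh(\Hom(W/K',\gm))$ corresponds to pulling back a $W/K$-graded module to a $W/K'$-graded module along $W/K' \twoheadrightarrow W/K$, i.e. regrading. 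So the colimit is the category of $W$-modules $M$ equipped with a compatible system of gradings by the finite quotients $W/K$ — equivalently, a locally constant map $M \to$ (gradings), which is precisely the datum of a smooth $W$-representation on a $\zl$-module once one dualizes: a smooth action of $W$ on $M$ means $M = \bigcup_K M^K$ and each $M^K$ is a module over the (pro-)$p$-group algebra — wait, more precisely, smooth $W$-reps on $\zl$-modules, with $W$ abelian locally pro-$p$, decompose over the finite quotients, and the character decomposition lands in the group algebra $\zl[\widehat{W/K}]$-modules.

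The main obstacle — and the step I would spend real care on — is matching the two "limits": on the geometric side one naturally gets $\qcoh$ of a filtered union of affine schemes as a limit along $*$-restriction, while on the representation-theoretic side smoothness is a colimit (union) condition. The reconciliation uses that the transition maps are open-and-closed immersions, so $*$-pullback has a fully faithful left adjoint (extension by zero), and the limit of the $\qcoh$ categories along $*$-pullback agrees with the colimit along these left adjoints; this identifies $\qcoh(\Hom(W,\gm))$ with $\bigcup_K \qcoh(\Hom(W/K,\gm)) = \bigcup_K \zl[\widehat{W/K}]\mhyphen\Mod$, and the right-hand side is exactly the category of smooth $W$-representations on $\zl$-modules by unwinding the equivalence between $\widehat{W/K}$-graded modules and $W/K$-representations (finite abelian group, so Pontrjagin duality is elementary and $\zl[\widehat{W/K}]\mhyphen\Mod \simeq \mathrm{Rep}_{\zl}(W/K)$ after possibly inverting nothing, since we only need the module-category statement, not semisimplicity). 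One subtlety to check is that every smooth $\zl[W]$-module genuinely arises, i.e. that the fixed-point filtration $M = \bigcup M^K$ is exhausting and compatible with the gradings — this is immediate from the definition of smoothness. I expect no difficulty for torsion coefficients and none either for $\zl$ since nothing here requires $\ell \neq p$ or inverting $q$.
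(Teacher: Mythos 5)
Your overall strategy — write $\Hom(W,\gm)$ as a filtered union of affine open-and-closed subschemes, identify $\qcoh$ of each piece with representations of the corresponding discrete quotient, and match up the two filtration structures — is exactly the shape of the paper's proof. However, there is a concrete error in the identification of the affine pieces that undermines the argument as written, and the reconciliation step is misstated.

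The key error is the claim that $\Hom(W/K,\gm)=\spec\zl[\widehat{W/K}]$. This is wrong: the functor $R\mapsto\Hom_{\text{grp}}(W/K,R^\times)$ is represented by $\spec\zl[W/K]$, the group algebra of $W/K$ \emph{itself}, because a group homomorphism $W/K\to R^\times$ is precisely a $\zl$-algebra homomorphism $\zl[W/K]\to R$. The Pontrjagin dual plays no role, and in fact invoking it is not even well-posed here because $W/K$ need not be finite (e.g. $T=\gm$, $W=E^\times$, $K=1+\varpi\oo_E$ gives $W/K$ surjecting onto $\bbZ$). This error propagates: you then say $\qcoh$ of the affine scheme is the category of \emph{comodules} over the representing Hopf algebra (i.e.\ graded modules), but $\qcoh$ of $\spec A$ is simply $A$-modules. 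The correct identification is direct and needs no duality at all: $\qcoh(\Hom(W/K,\gm))=\zl[W/K]\mhyphen\Mod=\mathrm{Rep}_{\zl}(W/K)$. Your chain of compensating errors happens to arrive at this endpoint, but each intermediate step is false, and the claimed elementary equivalence ``$\zl[\widehat{W/K}]\mhyphen\Mod\simeq\mathrm{Rep}_{\zl}(W/K)$'' does not hold over $\zl$ (it requires enough roots of unity) and does not make sense when $W/K$ is infinite.

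The reconciliation step is also misstated. You write that the limit of the $\qcoh$ categories along $*$-pullback ``agrees with the colimit along these left adjoints'' and denote the result $\bigcup_K\qcoh(\Hom(W/K,\gm))$. Taken literally, this union is the subcategory of sheaves supported on some $\Hom(W/K,\gm)$, which is strictly smaller than $\qcoh(\Hom(W,\gm))$; the structure sheaf of $\Hom(W,\gm)$ is a counterexample, and it matters here because it corresponds to the Whittaker representation $\cind_*^{T(E)}1$ (see \cref{rem: whittaker sheaf for tori}), which is smooth but does not factor through any $W/K$. Similarly, ``$\bigcup_K\zl[W/K]\mhyphen\Mod$'' is the category of representations \emph{factoring through some finite-level quotient}, not all smooth representations. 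What is true — and what the paper proves by exhibiting explicit mutually inverse functors $\mathcal{F}\mapsto\colim_P f(\Gamma(j_P^*\mathcal{F}))$ and $M\mapsto\colim_P j_{P!}\widetilde{M_P}$ — is that the inverse limit of compatible systems of $\zl[W/K]$-modules (along the base-change maps, which split because the inclusions are open-and-closed) is equivalent to smooth $W$-representations, using the splitting to reconstruct $M=\colim_K M^K$ from the system $(M^K)_K$. Your high-level description gestures at this, but the argument as written conflates the literal filtered union of categories with the inverse limit; those are genuinely different, and the lemma hinges on the difference. (A small side remark: your transition maps are also stated with the direction reversed — for $K'\subset K$ one has $W/K'\twoheadrightarrow W/K$ and hence an inclusion $\Hom(W/K,\gm)\hookrightarrow\Hom(W/K',\gm)$, not the other way.)
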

\begin{proof}
    Let \(P\subset P'\) be compact open pro-\(p\) subgroups of \(W\), \(f\from W/P\mhyphen\Mod_{\zl}\to W\mhyphen\Mod_{\zl}\) the functor induced by \(W\to W/P\) and \(f'\) similarly but using \(P'\) instead of \(P\).
    Let \(j_P\from \Hom(W/P,\gm)\to \Hom(W,\gm)\) and \(i\from \Hom(W/P',\gm)\to \Hom(W/P,\gm)\) be the inclusions.
    We get a functor \(\qcoh(\Hom(W,\gm))\to W\mhyphen\Mod_{\zl}\) via \(\mathcal{F}\mapsto\colim_{P'\supset P} f(\Gamma(j_P^*\mathcal{F}))\).
    The transition maps are induced from 
    \begin{equation*}
        i_!j_{P'}^*\mathcal{F}=i_!j_{P'}^*j_P^*\mathcal{F}\to j_P^*\mathcal{F}
    \end{equation*}
    noting that \(\Gamma(i_!j^*_{P'}\mathcal{F})=\Gamma(j_{P'}^*\mathcal{F})\), as \(i\) is the inclusion of an open and closed subscheme.
    It is clear that this will yield a smooth representation, as each \(f(\Gamma(j_P^*\mathcal{F}))\) is.
    We obtain a functor \(W\mhyphen\Mod_{\zl}\to\qcoh(\Hom(W,\gm))\) by mapping \(M\mapsto\colim_{P'\supset P}j_{P'!}\widetilde{M_{P'}}\).
    Note that 
    \begin{equation*}
        j_{P'!}\widetilde{M_{P'}}\subset j_{P!}\widetilde{M_P}
    \end{equation*}
    To see this, observe that \(i^*\widetilde{M_{P}}=\widetilde{M_P}\).
    Thus we get a map
    \begin{equation*}
        i_!\widetilde{M_{P'}}=i_!i^*\widetilde{M_P}\to \widetilde{M_P}
    \end{equation*}
    Now apply \(j_{P!}\) and note that \(j_{P!}i_!i^*=j_{P'!}i^*\).
    One checks that it is an inclusion on stalks.
    For the second functor, note that \(j_{P'}\) is the inclusion of an open and closed subscheme, so it is quasi-coherent.
    Now let us check that the functors are inverse to each other.
    We claim that the map \(\Gamma(j_{P'}^*\mathcal{F})\to\Gamma(j_P^*\mathcal{F})\) is given by 
    \begin{equation*}
        \Gamma(j_{P'}^*)\cong\Gamma(j_P^*\mathcal{F})_{P'/P}\cong\Gamma(j_P^*\mathcal{F})^{P'/P}\subset\Gamma(j_P^*\mathcal{F})
    \end{equation*}
    Let \(t\in\zl[W/P]\) be defined as 
    \begin{equation*}
        t\defined\frac{1}{|P'/P|}\sum_{[g]\in P'/P}g
    \end{equation*}
    Observe that \(t\) is idempotent and that \(D(t)\) precisely cuts out \(\Hom(W/P',\gm)\subset \Hom(W/P,\gm)\).
    The map \(\Gamma(j_{P'}^*\mathcal{F})\to\Gamma(j_P^*\mathcal{F})\) takes an \(s\in\Gamma(j_{P'}^*\mathcal{F})\) and assigns it to the element \(s'\in\Gamma(j_P^*\mathcal{F})\) satisfying \(s'|_{D(t)}=s\) and \(s'|_{D(1-t)}=0\).
    One checks that this shows that the map \(\Gamma(j_{P'}^*\mathcal{F})\to\Gamma(j_P^*\mathcal{F})\) has the form we claimed.
    One way the composite of the two functors sends an \(M\in W\mhyphen\Mod_{\zl}\) to 
    \begin{align*}
        \colim_{P}f(\Gamma(j_P^*(\colim_{P'}j_{P'!}\widetilde{M_{P'}})))&=\colim_{P}f(\Gamma(\colim_{P'}j_P^*(j_{P'!}\widetilde{M_{P'}})))\\
        &=\colim_{P}f(\Gamma(\widetilde{M_P}))\\
        &=\colim_P M_P
    \end{align*}
    Thus we need to show that \(M\cong\colim_P M_P\) naturally in \(M\).
    For this we construct a map \(\colim M_P\to M\).
    Consider an element \([(s,P')]\in\colim M_P\) such that \(s\in M_{P'}\).
    It is identified with \([(s,P)]\) for \(P\subset P'\) via the inclusion 
    \begin{equation*}
        M_{P'}=(M_P)_{P'/P}\cong M_P^{P'/P}\subset M_P
    \end{equation*}
    Taking \(P\) small enough, since \(M\) is a smooth representation, we find a unique lift \(s\in M\).
    It is clear that this map is bijective and the construction is natural in \(M\)
    For the other direction, a sheaf \(\mathcal{F}\) is sent to 
    \begin{equation*}
        \colim_P j_{P!}(\colim_{P'}f'(\Gamma(j_{P'}^*\mathcal{F}))_{P})^{\sim}
    \end{equation*}
    Let us construct isomorphism between \(j_U^*\mathcal{F}\) and \(j_U^*\) applied to this term where \(U\) is some compact open pro-\(p\) subgroup of \(W\).
    \begin{align*}
		j_U^*\colim_{P'} j_{P'!}((\colim_Pf\Gamma(j_P^*\mathcal{F}))_{P'})^\sim&=\colim_{U\supset P'} j_U^*((\colim_Pf\Gamma(j_P^*\mathcal{F}))_{P'})^\sim\\
		&=\colim_{U\supset P'} j_U^*(\colim_{P'\supset P}(\Gamma(j_P^*\mathcal{F})_{P'/P})^\sim\\
		&=\colim_{U\supset P'} j_U^*((\Gamma(j_{P'}^*\mathcal{F}))^\sim\\
		&=j_U^*\mathcal{F}
	\end{align*}
	These isomorphisms glue and are natural in $\mathcal{F}$.
\end{proof}
\begin{cor}\label{cor: subcategory given by e_P}
    We consider the setting of \cref{lem: QCoh(Hom(W Gm)) is WMod}.
    Let \(P\subset W\) be an open pro-\(p\) subgroup.
    By \cref{lem: Hom(T(E) G_m)}, we obtain an idempotent \(e_P\in\oo(\Hom(W,\gm))\), corresponding to the open/closed embedding \(j\from\Hom(W/P,\gm)\injto\Hom(W,\gm)\).
    The subcategory of \(\D(\qcoh(\Hom(W,\gm)))\simeq\D(W\mhyphen\Mod_{\zl})\) spanned by those objects where \(e_P\) acts by the identity is given by \(\D(W/P\mhyphen\Mod_{\zl})\).
\end{cor}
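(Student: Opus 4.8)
The plan is to identify the idempotent $e_P$ with the indicator function of the clopen subscheme $\Hom(W/P,\gm)\subseteq\Hom(W,\gm)$, use that a clopen decomposition of a scheme splits $\qcoh$ as a product of abelian categories, and finally match this splitting with the equivalence of \cref{lem: QCoh(Hom(W Gm)) is WMod}.

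First I would check that $\Hom(W/P,\gm)$ is open and closed in $\Hom(W,\gm)$. By \cref{lem: Hom(T(E) G_m)} we may write $\Hom(W,\gm)=\bigcup_K\Hom(W/K,\gm)$ with $K$ running over the open pro-$p$ subgroups of $W$ contained in $P$, and, just as in the proof of \cref{lem: QCoh(Hom(W Gm)) is WMod}, the element $t_{P/K}\defined\frac{1}{|P/K|}\sum_{[g]\in P/K}g\in\zl[W/K]$ — well-defined because $|P/K|$ is a power of $p$, hence a unit in $\zl$ — is an idempotent with $D(t_{P/K})=\Hom(W/P,\gm)\subseteq\Hom(W/K,\gm)=\spec\zl[W/K]$. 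Being open and closed in each term of this filtered affine open cover, $\Hom(W/P,\gm)$ is open and closed in $\Hom(W,\gm)$, and it is exactly the clopen subscheme cut out by $e_P$. Writing $\Hom(W,\gm)=\Hom(W/P,\gm)\sqcup V$ with $V=D(1-e_P)$, I then get $\qcoh(\Hom(W,\gm))\simeq\qcoh(\Hom(W/P,\gm))\times\qcoh(V)$, hence $\D(\qcoh(\Hom(W,\gm)))\simeq\D(\qcoh(\Hom(W/P,\gm)))\times\D(\qcoh(V))$; since $e_P$ acts as $1$ on the first factor and as $0$ on the second, the full subcategory of objects on which $e_P$ acts by the identity is precisely $\D(\qcoh(\Hom(W/P,\gm)))$. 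Because $W/P$ is discrete, $\Hom(W/P,\gm)=\spec\zl[W/P]$ and $\qcoh(\Hom(W/P,\gm))=W/P\mhyphen\Mod_{\zl}$, so this subcategory is $\D(W/P\mhyphen\Mod_{\zl})$.

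The remaining point is to match this with the equivalence $\qcoh(\Hom(W,\gm))\simeq W\mhyphen\Mod_{\zl}$ of \cref{lem: QCoh(Hom(W Gm)) is WMod}, i.e.\ to see that the pushforward $j_{P*}$ along $\Hom(W/P,\gm)\hookrightarrow\Hom(W,\gm)$ corresponds under that equivalence to inflation along $W\to W/P$. Granting this, derived inflation $\D(W/P\mhyphen\Mod_{\zl})\to\D(W\mhyphen\Mod_{\zl})$ is the fully faithful inclusion of a direct factor (being identified with $j_{P*}$), with essential image the subcategory found above, which is exactly the assertion. To verify the compatibility I would feed $\mathcal{F}=j_{P*}\mathcal{G}$ into the formula $\mathcal{F}\mapsto\colim_K f(\Gamma(j_K^*\mathcal{F}))$ defining the equivalence, where $f$ denotes inflation along $W\to W/K$: for $K\subseteq P$ — a cofinal family — the sheaf $j_K^*j_{P*}\mathcal{G}$ is $\Gamma(\mathcal{G})$ regarded as a $\zl[W/K]$-module via the surjection $\zl[W/K]\twoheadrightarrow\zl[W/K]t_{P/K}\cong\zl[W/P]$, so $f(\Gamma(j_K^*j_{P*}\mathcal{G}))$ is $\Gamma(\mathcal{G})$ inflated from $W/P$ to $W$ and the transition maps of the colimit are the evident identifications; hence the image of $j_{P*}\mathcal{G}$ under the equivalence is naturally the inflation of $\Gamma(\mathcal{G})$. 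The whole argument is essentially formal once \cref{lem: QCoh(Hom(W Gm)) is WMod} is available; the only step with any real content is this last compatibility, but it is a routine unwinding of the construction there, so I do not expect a genuine obstacle.
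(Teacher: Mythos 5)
Your proof is correct and takes essentially the same approach as the paper: both identify $e_P$ with the geometric idempotent cutting out the clopen subscheme $\Hom(W/P,\gm)$, use the resulting product decomposition of $\qcoh$, and transport it through the equivalence of \cref{lem: QCoh(Hom(W Gm)) is WMod}. The paper's proof is terser (it simply describes $e_P$ as $\id\to j_*j^*\cong j_!j^*\to\id$ and says the rest is ``clear in geometry''), whereas you spell out the compatibility of $j_{P*}$ with inflation, which is what the paper leaves implicit.
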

\begin{proof}
    The natural transformation \(e_P\) maybe be described as \(\id\to j_*j^*\cong j_!j^*\to\id\) geometrically, by the proof of \cref{lem: QCoh(Hom(W Gm)) is WMod}.
    In geometry it is clear that the statement holds.
\end{proof}
\begin{rem}
    Using \cref{lem: QCoh(Hom(W Gm)) is WMod} one can easily show that there is an embedding
    \begin{equation*}
        \D(T(E)\mhyphen\Mod_{\zl})\injto\D_{\qcoh}([Z^1(W_E,\widehat{T})/\widehat{T}])
    \end{equation*}
    as predicted by Hellmann in \cite[Conjecture 3.2.]{hellmann}.
\end{rem}
\begin{rem}\label{rem: whittaker sheaf for tori}
	In general the functor in the categorical form of Fargues's conjecture depends on a choice of Whittaker datum, determining the image of the structure sheaf of $[Z^1(W_E,\widehat{T})/\widehat{T}]$.
	For $T$ the unipotent radical is trivial, so there is a unique choice of Whittaker datum, giving us the representation $\cind_{*}^{T(E)}1$
	On the other hand, the $T(E)$-representation corresponding to the structure sheaf on $[Z^1(W_E,\widehat{T})/\widehat{T}]$ under the equivalence \(\qcoh([Z^1(W_E,\widehat{T})/\widehat{T}])_0\simeq\qcoh(\Hom(T(E),\gm))\simeq T(E)\mhyphen\Mod_{\zl}\) is given by the colimit $\varinjlim\zl[T(E)/U]$ over some neighborhood basis of the identity \(U\)
	\begin{align*}
        \zl[T(E)/U]&=\zl[T(E)/U']_{U}\\
        &\cong\zl[T(E)/U']^{U/U'}\\
        &\subset\zl[T(E)/U']
    \end{align*}
    whenever \(U'\subset U\).
	Each $\zl[T(E)/U]$ can be considered as functions $T(E)\to\zl$ with support being the union of finitely many translates of $T(E)/U$.
	We therefore get a map
	\[
		\bigcup_{U\subset T(E)}\zl[T(E)/U]\to\cind_{*}^{T(E)}1
	\]
	This is compatible with the transition maps and since $\bigcap U=*$, it is an isomorphism.
    Using \cref{cor: subcategory given by e_P} we can compute the coinvariants as \((\cind_{*}^{T(E)}1)_{U}=\zl[T(E)/U]\) when \(U\subset T(E)\) is an open pro-\(p\) subgroup.
\end{rem}
\subsection{Sheaves on gerbes banded by diagonalizable groups}\label{sheaves on gerbes banded by diagonalizable groups}
Let \(\mathcal{X}\to X\) a gerbe over an algebraic space, banded by some diagonalizable group \(\Delta\).
In this case the inertia stack is given by \(\Delta\times \mathcal{X}\) and thus we get an action of \(\Delta\) on any sheaf on \(\mathcal{X}\).
If \(\chi\) is some character of \(\Delta\), let \(\D_{\qcoh}(\mathcal{X})_{\chi}\) denote the full subcategory of \(\D_{\qcoh}(\mathcal{X})\) where the action of \(\Delta\) on the cohomology sheaves is given by \(\chi\).
Then we have the following result for the derived category of quasi-coherent sheaves on \(\mathcal{X}\):
\begin{prop}\label{prop: quasi-coherent sheaves on gerbes}
    We have a product decomposition 
    \begin{equation*}
        \D_{\qcoh}(\mathcal{X})\simeq\prod_{\chi\in X^*(\Delta)}\D_{\qcoh}(\mathcal{X})_{\chi}
    \end{equation*}
    Additionally we have an equivalence of stable \(\infty\)-categories 
    \begin{equation*}
        \pi^*\from\D_{\qcoh}(X)\simeq\D_{\qcoh}(\mathcal{X})_0
    \end{equation*}
\end{prop}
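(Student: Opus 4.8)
The plan is to reduce the statement to the well-known description of quasi-coherent sheaves on $B\Delta$ for $\Delta$ diagonalizable, namely that $\qcoh(B\Delta) \simeq \prod_{\chi \in X^*(\Delta)} \qcoh(\mathrm{pt})$ since the category of $\Delta$-representations is just graded by the character group, and then to spread this decomposition out over the base $X$. First I would recall that a gerbe $\mathcal{X} \to X$ banded by $\Delta$ is fppf-locally on $X$ isomorphic to $X' \times B\Delta$; since $\D_{\qcoh}$ satisfies fppf descent (for algebraic spaces/stacks this is standard), and since the $\Delta$-action on sheaves coming from the inertia $\Delta \times \mathcal{X}$ is compatible with restriction along any $X' \to X$, the decomposition by characters glues. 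Concretely, for each $\chi \in X^*(\Delta)$ one defines the projector $e_\chi$ onto the $\chi$-eigenspace of the $\Delta$-action; these are orthogonal idempotent endofunctors summing to the identity (because $\Delta$ is diagonalizable, so its representation category is semisimple in the relevant sense, i.e. every representation splits canonically as a direct sum over $X^*(\Delta)$), which yields the product decomposition $\D_{\qcoh}(\mathcal{X}) \simeq \prod_{\chi} \D_{\qcoh}(\mathcal{X})_\chi$.

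For the second equivalence, I would use the projection $\pi \colon \mathcal{X} \to X$. Pullback $\pi^*$ lands in $\D_{\qcoh}(\mathcal{X})_0$ because the structure sheaf, and anything pulled back from $X$, carries the trivial $\Delta$-action (the inertia acts trivially on $\pi^*$ of anything, as the $\Delta$-action is "internal" to the gerbe direction). To see $\pi^*$ is an equivalence onto $\D_{\qcoh}(\mathcal{X})_0$, I would check it is fully faithful and essentially surjective by fppf descent again: locally $\mathcal{X} \cong X' \times B\Delta$ and $\pi$ becomes the projection $X' \times B\Delta \to X'$, for which $\pi^*$ onto the weight-$0$ part is the statement that $\qcoh(B\Delta)_0 \simeq \qcoh(\mathrm{pt})$ via pullback along $B\Delta \to \mathrm{pt}$ — i.e. a representation on which $\Delta$ acts trivially is the same as a plain module. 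The projection formula $\pi_* \pi^* (-) \simeq (-) \otimes \pi_* \oo_{\mathcal{X}}$ together with $\pi_* \oo_{\mathcal{X}} = \oo_X$ (the weight-$0$ part of the pushforward of the structure sheaf is $\oo_X$, since a gerbe is cohomologically trivial in degree $0$ on the nose) gives full faithfulness: $\pi_* \pi^* \simeq \id$ after projecting to weight $0$; for essential surjectivity, given $A \in \D_{\qcoh}(\mathcal{X})_0$ the counit $\pi^* \pi_* A \to A$ is an equivalence, which one checks fppf-locally where it reduces to the $B\Delta$ case.

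The main obstacle I anticipate is bookkeeping the $\Delta$-action and the descent carefully enough that the local decompositions are canonical and glue — one must make sure the eigenspace projectors $e_\chi$ are defined intrinsically via the inertia action (not via a local trivialization), so that fppf descent applies without choosing a band-trivializing cover. A secondary subtlety is that $\D_{\qcoh}$ of an algebraic space and of a gerbe over it need the hypercomplete/fppf-descent machinery to behave well; if $X$ is nice enough (e.g. quasi-compact quasi-separated, or even just an algebraic space with affine diagonal, which is the case in our application where $X = \Hom(T(E),\gm)$ is a scheme) this is standard. I do not expect the passage from the underived to the derived statement to cause trouble, since everything in sight is flat and the decomposition is by a splitting of the identity functor, hence automatically exact and compatible with all colimits and with the $t$-structure.
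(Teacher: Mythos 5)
Your argument is sound, but the paper does not actually prove this proposition — it cites it as \cite[Theorem 5.4.]{sheaves_on_gerbes} and \cite[Proposition 5.7.]{sheaves_on_gerbes}. What you have done is reconstruct (correctly, in outline) the standard proof that the cited reference supplies: trivialize the gerbe fppf-locally as $X'\times B\Delta$, use that $\qcoh(B\Delta)$ for diagonalizable $\Delta$ is an $X^*(\Delta)$-graded product of copies of $\qcoh(\mathrm{pt})$, and glue the eigenspace projectors $e_\chi$ along the cover; then for the weight-$0$ piece verify $\pi^*$ is an equivalence via the projection formula, $R\pi_*\oo_{\mathcal{X}}=\oo_X$ (using that $\Delta$ is linearly reductive so $B\Delta$ is cohomologically affine), and a local check of the counit. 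The key point you correctly flag — that the $e_\chi$ must be defined intrinsically from the inertia action $\Delta\times\mathcal{X}\to\mathcal{X}$ rather than from a choice of local trivialization, so that descent applies without ambiguity — is indeed the substantive content of the argument. One small thing worth stating explicitly: the reason the decomposition is a \emph{product} (and not merely a sum) of subcategories is that $\Hom(A_\chi, B_\psi)=0$ for $\chi\neq\psi$, since a $\Delta$-equivariant map must intertwine the two characters; this, plus the existence of arbitrary coproducts in $\D_{\qcoh}$, is what makes $A\mapsto (e_\chi A)_\chi$ an equivalence onto the product. In sum your sketch is a correct proof of the statement, more detailed than the paper's one-line citation; the only thing you lose by not citing is the careful treatment of descent for unbounded derived categories on algebraic stacks, which the reference handles with more precision than a sketch can.
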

\begin{proof}
    This is \cite[Theorem 5.4.]{sheaves_on_gerbes} and \cite[Proposition 5.7.]{sheaves_on_gerbes}.
\end{proof}
From this we can easily deduce the corresponding result for perfect complexes, where we write \(\perf(\mathcal{X})_{\chi}\) for \(\perf(X)\cap\D_{\qcoh}(\mathcal{X})_\chi\):
\begin{lem}\label{lem: perfect complexes on gerbes}
    We have a direct sum decomposition 
    \begin{equation*}
        \perf(\mathcal{X})\simeq\bigoplus_{\chi\in X^*(\Delta)}\perf(\mathcal{X})_{\chi}
    \end{equation*}
    Additionally we have an equivalence of stable \(\infty\)-categories 
    \begin{equation*}
        \pi^*\from\perf(X)\simeq\perf(\mathcal{X})_0
    \end{equation*}
\end{lem}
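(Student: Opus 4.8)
The plan is to deduce the perfect statement from the quasi-coherent one in \cref{prop: quasi-coherent sheaves on gerbes}, using three formal inputs: perfect complexes form a thick (hence retract-closed) subcategory, a finite direct sum of perfect complexes is perfect, and perfectness of a complex on $X$ may be checked after a faithfully flat base change.

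First I would treat the decomposition. Write $F\mapsto(F_\chi)_{\chi\in X^*(\Delta)}$ for the equivalence $\D_{\qcoh}(\mathcal{X})\simeq\prod_\chi\D_{\qcoh}(\mathcal{X})_\chi$ and let $i_\chi\colon\D_{\qcoh}(\mathcal{X})_\chi\hookrightarrow\D_{\qcoh}(\mathcal{X})$ denote the inclusions. For any partition $X^*(\Delta)=S\sqcup S^c$ one has $F\cong\bigl(\prod_{\chi\in S}F_\chi\bigr)\oplus\bigl(\prod_{\chi\notin S}F_\chi\bigr)$, since biproducts in a product of stable categories are computed slotwise; taking $S=\{\chi\}$ exhibits each $i_\chi(F_\chi)$ as a retract of $F$. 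Hence if $F$ is perfect, then each $i_\chi(F_\chi)$ is perfect and, lying in $\D_{\qcoh}(\mathcal{X})_\chi$, belongs to $\perf(\mathcal{X})_\chi$; conversely a tuple with finitely many nonzero entries, all perfect, is a finite direct sum of perfect complexes and so perfect. The remaining point is that a perfect $F$ has $F_\chi=0$ for all but finitely many $\chi$. This is fppf-local on $X$, so I would pick a cover $p\colon X'\to X$ over which the gerbe is neutral, $\mathcal{X}\times_X X'\cong B\Delta_{X'}$, and reduce further to $X'$ affine, which is all the application needs (alternatively one restricts throughout to objects of quasi-compact support). A quasi-coherent sheaf on $B\Delta_{X'}$ is an $X^*(\Delta)$-graded $\oo(X')$-module with $\oo(X')$ in degree $0$, the grading being exactly the decomposition of \cref{prop: quasi-coherent sheaves on gerbes}; a perfect complex on $B\Delta_{X'}$ is bounded with finitely generated cohomology sheaves, and a finitely generated graded $\oo(X')$-module is generated by finitely many homogeneous elements, hence supported in finitely many degrees. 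So $F|_{X'}$, and therefore $F$ by conservativity of $p^*$, has finite weight support.

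Next, for the functor $\pi^*$: pullback along any morphism preserves perfect complexes, so $\pi^*$ carries $\perf(X)$ into $\perf(\mathcal{X})$; by \cref{prop: quasi-coherent sheaves on gerbes} it is moreover fully faithful with essential image contained in $\D_{\qcoh}(\mathcal{X})_0$, hence restricts to a fully faithful functor $\perf(X)\hookrightarrow\perf(\mathcal{X})_0$. For essential surjectivity, let $G\in\perf(\mathcal{X})_0$ and let $F\in\D_{\qcoh}(X)$ be the unique object with $\pi^*F\cong G$. Choosing an fppf cover $p\colon X'\to X$ over which $\mathcal{X}$ admits a section $s\colon X'\to\mathcal{X}$ with $\pi\circ s=p$ (possible since gerbes are fppf-locally neutral), one gets $F|_{X'}\cong s^*\pi^*F\cong s^*G$, which is perfect; since $p$ is an fppf cover, $F$ is perfect by descent, and $\pi^*$ is an equivalence onto $\perf(\mathcal{X})_0$.

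I expect the main obstacle to be the finiteness-of-support claim: it is the only step where perfectness is used beyond the formal closure properties, it is handled by descending to the local model $B\Delta_{X'}$ together with the elementary fact about graded modules over a ring concentrated in degree $0$, and it is also where quasi-compactness enters — for non-quasi-compact $X$ such as $\Hom(T(E),\gm)$ the clean statement is the one with $\perf^{\qc}$ in place of $\perf$.
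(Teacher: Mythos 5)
Your proof is correct and follows essentially the same strategy as the paper's: identify the $\chi$-components as retracts of a perfect complex, and reduce the $\pi^*$ statement to the neutral local model $B\Delta$ over an affine base. The main difference is that you spell out the finiteness-of-weight-support step (via finitely generated graded modules over a ring in degree $0$) which the paper's proof leaves implicit, and you correctly observe that as literally stated the first claim requires a quasi-compactness hypothesis — in the paper's application this is supplied by working with $\perf^{\qc}$ — whereas the paper does not flag this in the lemma. For the second claim the paper instead shows directly that $\pi_*$ preserves perfect complexes using qcqs base change; your variant via a local section $s$ and fppf descent of perfectness is equivalent and reduces to the same affine local model $B\Delta\to\spec(A)$.
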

\begin{proof}
    Note that the direct product decomposition for the derived category of quasi-coherent sheaves is realized by writing \(\mathcal{F}=\bigoplus_{\chi\in X^*(\Delta)}\mathcal{F}_\chi\) where each \(\mathcal{F}_\chi\) is the \(\chi\)-eigenspace for the action of \(\Delta\).
    Thus if \(\mathcal{F}\) is a perfect complex then \(\mathcal{F}_{\chi}\) are perfect complexes too.
    Conversely a finite direct sum of perfect complexes is perfect.
    This gives the first claim.
    For the second claim we want to check that \(\pi^*\) restricts to an equivalence on perfect objects.
    Note the quasi-inverse for \(\pi^*\) is given by \(\pi_*\), so it suffices to check that both of these functors preserve perfect objects. 
    This is clear for \(\pi^*\).
    For \(\pi_*\) we have base change as \(\pi\) is qcqs, so we may assume that we are working with \(B\Delta\to\spec(A)\) for some ring \(A\).
    In this case the claim is clear.
\end{proof}
\begin{cor}
    We have a canonical isomorphism \(\oo([Z^1(W_E,\widehat{T})/\widehat{T}])\cong\varprojlim_{K\subset T(E)}\zl[T(E)/K]\).
\end{cor}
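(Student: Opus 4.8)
The plan is to pass from the stack to its coarse algebraic space and then to the affine charts furnished by \cref{lem: Hom(T(E) G_m)}. By \cref{lem: geometric incarnation langlands duality tori}, together with the earlier computation that the inertia of \([Z^1(W_E,\widehat{T})/\widehat{T}]\) is \(\widehat{T}^Q\times[Z^1(W_E,\widehat{T})/\widehat{T}]\), the stack \([Z^1(W_E,\widehat{T})/\widehat{T}]\) is a gerbe \(\pi\from\mathcal{X}\to X\), banded by the diagonalizable group \(\widehat{T}^Q\), over the algebraic space \(X=\Hom(T(E),\gm)\). For any such gerbe one has \(\pi_{*}\oo_{\mathcal{X}}\cong\oo_X\): the question is fppf-local on \(X\), where \(\mathcal{X}\) becomes \(X\times B\widehat{T}^Q\), and the global functions on \(B\widehat{T}^Q\) are the trivial summand \(\zl\) of the regular representation. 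Equivalently, \(\oo_{\mathcal{X}}\) lies in the trivial-character summand \(\D_{\qcoh}(\mathcal{X})_0\) (the band \(\widehat{T}^Q\) acts trivially on the structure sheaf), which by \cref{prop: quasi-coherent sheaves on gerbes} is identified with \(\D_{\qcoh}(X)\) via \(\pi^{*}\), with \(\oo_{\mathcal{X}}\) corresponding to \(\oo_X\). Either way, \(\oo([Z^1(W_E,\widehat{T})/\widehat{T}])=\Gamma(\mathcal{X},\oo_{\mathcal{X}})=\Gamma(X,\pi_{*}\oo_{\mathcal{X}})=\Gamma(X,\oo_X)\).

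It then remains to compute \(\Gamma(X,\oo_X)\). By \cref{lem: Hom(T(E) G_m)} we have \(X=\bigcup_{K}\Hom(T(E)/K,\gm)\), where \(K\) runs over open pro-\(p\) subgroups of \(T(E)\) forming a neighbourhood basis of the identity, and each \(\Hom(T(E)/K,\gm)=\spec(\zl[T(E)/K])\) is an affine subscheme that is open and closed in \(X\), cut out by the idempotents \(e_K\in\oo(\Hom(T(E),\gm))\) of \cref{cor: subcategory given by e_P}. Since for \(K,K'\) the intersection \(K\cap K'\) is again an open pro-\(p\) subgroup of \(T(E)\), the system of such subgroups is cofiltered, so the charts \(\Hom(T(E)/K,\gm)\) form a directed open cover of \(X\) and a global section of \(\oo_X\) is exactly a compatible family of sections on these charts. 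Hence \(\Gamma(X,\oo_X)=\varprojlim_{K}\zl[T(E)/K]\), the transition map for \(K'\subseteq K\) being restriction along the open immersion \(\spec(\zl[T(E)/K])\hookrightarrow\spec(\zl[T(E)/K'])\), i.e. the canonical surjection of group algebras induced by \(T(E)/K'\twoheadrightarrow T(E)/K\). Combined with the first paragraph, this yields the asserted canonical isomorphism \(\oo([Z^1(W_E,\widehat{T})/\widehat{T}])\cong\varprojlim_{K}\zl[T(E)/K]\).

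This is essentially bookkeeping built on inputs already in hand — the gerbe presentation in \cref{lem: geometric incarnation langlands duality tori} and the description of quasi-coherent sheaves on a diagonalizable gerbe in \cref{prop: quasi-coherent sheaves on gerbes} — so I do not expect a genuine obstacle. The two points that need a word of care are that \(\oo_{\mathcal{X}}\) really sits in the \(\chi=0\) component (immediate, since the band acts trivially on the structure sheaf) so that the gerbe identification applies, and keeping straight the direction of the transition maps: here they are the group-algebra surjections \(\zl[T(E)/K']\to\zl[T(E)/K]\) for \(K'\subseteq K\), not the inclusions \(\zl[T(E)/K]\hookrightarrow\zl[T(E)/K']\) appearing in the colimit that describes the Whittaker sheaf in \cref{rem: whittaker sheaf for tori}.
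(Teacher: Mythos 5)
Your argument is correct and matches the paper's (one-line) proof: it is exactly the combination of the gerbe description from \cref{lem: geometric incarnation langlands duality tori} with the identification \(\pi^*\from\perf(X)\simeq\perf(\mathcal{X})_0\) (you cite the quasi-coherent version \cref{prop: quasi-coherent sheaves on gerbes}, the paper cites \cref{lem: perfect complexes on gerbes}, but these carry the same content here), followed by the computation of \(\Gamma(\Hom(T(E),\gm),\oo)\) from the increasing open-closed cover of \cref{lem: Hom(T(E) G_m)}. Your explicit check that \(\oo_{\mathcal{X}}\) lies in the \(\chi=0\) summand and your identification of the transition maps as the group-algebra surjections are the right bookkeeping points, and nothing is missing.
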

\begin{proof}
    This is immediate from \cref{lem: perfect complexes on gerbes} and \cref{lem: geometric incarnation langlands duality tori}.
\end{proof}
\begin{rem}\label{rem: our iso Z^spec(T) is the Fargues-Scholze one}
    This was already obtained as \cite[Proposition IX.6.4.]{geometrization}, where they prove it using resolution of tori by induced tori to reduce it to \(\gm\) where it follows from local class field theory.
    Our isomorphism agrees with theirs, as our isomorphism is natural in \(T\) and for \(T=\gm\) it agrees with theirs.
\end{rem}
\subsection{Some general topology}\label{some general topology}
\begin{lem}\label{lem: neighborhood basis construction}
    Let \(G\) be a locally profinite group.
    Let \(P_i\), \(i\in I\) be a system of open subgroups such that \(\bigcap_{i\in I}P_i=\{1\}\), such that for any pair \(i,i'\in I\) there is a \(j\in I\) satisfying \(P_j\subset P_i\cap P_{i'}\) and there is a compact open subgroup \(H\subset G\) such that \(P_i\subset H\) for all \(i\).
    Then the \(P_i\) are a neighborhood basis of the identity.
\end{lem}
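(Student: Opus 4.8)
The plan is to argue by contradiction using the compactness of \(H\), via the finite intersection property. Fix an arbitrary open neighborhood \(U\) of the identity in \(G\); replacing \(U\) by \(U\cap H\) we may assume \(U\subseteq H\), and it then suffices to produce some \(i\in I\) with \(P_i\subseteq U\). So I would suppose, for contradiction, that no such index exists, meaning that the set \(P_i\setminus U\) is nonempty for every \(i\in I\).

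First I would record that each \(P_i\) is a compact subset of \(H\): being an open subgroup of the compact group \(H\) it has finite index, so \(H\setminus P_i\) is a finite union of open cosets and hence \(P_i\) is closed in \(H\), thus compact. Consequently \(P_i\setminus U = P_i\cap(G\setminus U)\) is a closed, hence compact, subset of \(H\). The central observation is then that the family \(\{P_i\setminus U\}_{i\in I}\) is downward directed: given \(i,i'\in I\), the hypothesis furnishes \(j\in I\) with \(P_j\subseteq P_i\cap P_{i'}\), and so \(\emptyset\neq P_j\setminus U\subseteq (P_i\setminus U)\cap(P_{i'}\setminus U)\). Hence this is a family of nonempty closed subsets of the compact space \(H\) with the finite intersection property.

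By compactness of \(H\) the total intersection \(\bigcap_{i\in I}(P_i\setminus U)\) is therefore nonempty. But \(\bigcap_{i\in I}(P_i\setminus U) = \bigl(\bigcap_{i\in I}P_i\bigr)\setminus U = \{1\}\setminus U\), which is empty since \(1\in U\) — a contradiction. Hence some \(P_i\) is contained in \(U\), and as \(U\) was arbitrary the \(P_i\) form a neighborhood basis of the identity. I do not expect a genuine obstacle: the only points that need a word of care are the reduction into the compact subgroup \(H\) (so that all relevant sets become compact) and the elementary fact that an open subgroup of a compact Hausdorff group is closed, which is exactly what makes the finite-intersection-property argument against compactness of \(H\) go through.
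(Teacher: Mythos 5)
Your proof is correct and takes essentially the same approach as the paper: both reduce to the compact subgroup \(H\), observe that each \(P_i\) (being an open subgroup) is closed, and apply the finite-intersection-property form of compactness together with the directedness of the family \(\{P_i\}\) to derive a contradiction. The paper phrases the argument in contrapositive form (the total intersection with \(U^c\) is empty, so a finite subintersection must be, contradicting directedness), while you argue the FIP directly and conclude the total intersection is nonempty; these are the same argument.
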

\begin{proof}
    By replacing \(G\) with \(H\) we may assume that \(G\) is profinite, in particular compact.
    We know that compact open subgroups form a neighborhood basis, let \(U\) be such a subgroup.
    We observe for later that all the \(P_i\) are also closed, as they are open subgroups.
    Assume none of the \(P_i\) are contained in \(U\).
    This means that for all \(i\in I\) we \(U^c\cap P_i\neq\emptyset\) for the complement \(U^c\) of \(U\).
    By assumption we have \(U^c\cap\bigcap_{i\in I}P_i=\emptyset\).
    This is an infinite intersubsection of closed subsets in a compact space.
    This shows that there must be finitely many \(i_1,\dots,i_n\) such that \(U^c\cap P_{i_1}\cap\dots\cap P_{i_n}=\emptyset\).
    By assumption there is a \(P_i\subset P_{i_1}\cap\dots\cap P_{i_n}\), so then \(P_i\cap U^c=\emptyset\), a contradiction.
\end{proof}
\begin{lem}\label{lem: condensed H_1 of quotient is torus quotient}
    Let \(P\) be an open subgroup of \(F^\times\).
    Define a condensed subgroup 
    \begin{equation*}
        \Theta(P)\defined\ker(H_1(W_{F/E},\widehat{L})\to H_1(W_{F/E}/P,\widehat{L})))
    \end{equation*}
    of \(T(E)\).
    Then \(\Theta(P)\) comes from a topological space, is an open subgroup of \(T(E)\) and the \(\Theta(P)\) form a neighborhood basis of the identity if we let \(P\) vary.
\end{lem}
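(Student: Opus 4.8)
The plan is to treat the three assertions separately, and throughout I would first reduce to the case that \(P\) is \(Q\)-stable (equivalently, normal in \(W_{F/E}\)), so that \(W_{F/E}/P\) is a group: any open \(P\subseteq F^{\times}\) has the open \(Q\)-stable refinement \(\bigcap_{q\in Q}qPq^{-1}\), and since the statement only concerns the cofinal behaviour of the \(\Theta(P)\), this costs nothing. For fixed such \(P\), openness of \(P\) makes the quotient \(W_{F/E}/P\) discrete, and \(X_*(T)\) is discrete, so the condensed group homology \(H_1(W_{F/E}/P,X_*(T))\) is discrete, say \(\cond{A_P}\). By \cref{prop: transfer condensed iso} the source \(H_1(W_{F/E},X_*(T))\) is \(\cond{T(E)}\). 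Since \(T(E)\) is locally compact Hausdorff, maps of condensed sets out of \(\cond{T(E)}\) come from continuous maps of spaces, so the structure map \(\cond{T(E)}\to\cond{A_P}\) is \(\cond{f}\) for a unique continuous homomorphism \(f\colon T(E)\to A_P\) into the discrete group \(A_P\); its kernel is open, hence closed, and as \(X\mapsto\cond{X}\) preserves limits we get \(\Theta(P)=\ker(\cond{f})=\cond{\ker f}\). This gives the first two assertions.

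For the neighbourhood basis I would apply \cref{lem: neighborhood basis construction} with \(G=T(E)\). The family \(\{\Theta(P)\}\) is downward directed (functoriality of homology in the group gives \(\Theta(P')\subseteq\Theta(P)\) for \(P'\subseteq P\), and for \(P,P'\) one passes to the \(Q\)-stable refinement of \(P\cap P'\)), and the subfamily \(\mathcal{P}\) of those \(P\) contained in \(\oo_F^{\times}\) is cofinal among the \(\Theta(P)\), since \(\Theta(P\cap\oo_F^{\times})\subseteq\Theta(P)\). For \(P\in\mathcal{P}\) the condensed Lyndon--Hochschild--Serre five-term exact sequence for \(P\triangleleft W_{F/E}\) with coefficients in \(X_*(T)\) (on which \(P\subseteq F^{\times}=\ker(W_{F/E}\to Q)\) acts trivially) identifies
\[
  \Theta(P)=\im\bigl(H_1(P,X_*(T))_{W_{F/E}/P}\to H_1(W_{F/E},X_*(T))=\cond{T(E)}\bigr).
\]
The crucial input is the identification \(H_1(P,X_*(T))\cong\cond{P}\otimes_{\bbZ}X_*(T)\cong\cond{P^{n}}\) (with \(n\) the rank of \(X_*(T)\)), which is compact Hausdorff since \(P\) is profinite and \(X_*(T)\) is finite free. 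Granting it, the displayed map is of the form \(\cond{\psi}\) for a continuous homomorphism \(\psi\colon(\oo_F^{\times})^{n}\to T(E)\) (set up at the level of \(\oo_F^{\times}\), restricting compatibly along each \(P^{n}\)), and since images of maps of condensed abelian groups out of a compact Hausdorff one are computed set-theoretically, \(\Theta(P)=\cond{\psi(P^{n})}\). In particular \(\Theta(\oo_F^{\times})\), being the continuous image of a compact group, is compact, and it is open by the first part; so \(H\defined\Theta(\oo_F^{\times})\) is a compact open subgroup of \(T(E)\) containing every \(\Theta(P)\) with \(P\in\mathcal{P}\).

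It remains to check \(\bigcap_{P\in\mathcal{P}}\Theta(P)=\{1\}\); then \cref{lem: neighborhood basis construction} applies to the system \(\{\Theta(P)\}_{P\in\mathcal{P}}\) and the compact open \(H\), showing it is a neighbourhood basis of the identity, and cofinality upgrades this to the full family \(\{\Theta(P)\}\) (a family of open subgroups that contains a neighbourhood basis is one). Because \(X\mapsto\cond{X}\) preserves limits, it suffices to show \(\bigcap_{P\in\mathcal{P}}\psi(P^{n})=\{1\}\) inside \(T(E)\). The \(P^{n}\subseteq(\oo_F^{\times})^{n}\) are downward-directed open, hence closed, subgroups of a profinite group with intersection \(\{1\}\) (the \(Q\)-stable open subgroups of \(\oo_F^{\times}\) are cofinal among all open subgroups); so for \(y\in\bigcap_{P}\psi(P^{n})\) the sets \(\psi^{-1}(y)\cap P^{n}\) form a downward-directed family of nonempty closed subsets of the compact set \(\psi^{-1}(y)\), whose intersection \(\psi^{-1}(y)\cap\{1\}\) is therefore nonempty, forcing \(y=\psi(1)=1\).

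The hard part is the remaining identification \(H_1(P,X_*(T))\cong\cond{P}\otimes_{\bbZ}X_*(T)\) for \(P\) profinite abelian acting trivially on the finite free module \(X_*(T)\): \cref{rem: underlying set condensed homology is ordinary homology} only yields the corresponding statement on underlying sets, so to upgrade it I would show that \(H_1(P,X_*(T))\) is a sequential colimit of compact Hausdorff abelian groups — using that the bar resolution computing it has terms \(\bbZ[P^{j}]\) of the form \(\bbZ[S]\) with \(S\) compact Hausdorff, which are pseudo-coherent by \cref{lem: Z[S] pseudo-coherent} — and then conclude by \cref{lem: sequential colimit qcqs condensed on underlying set}. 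A further (minor) point requiring care throughout is the reduction to \(Q\)-stable \(P\), so that all the quotients \(W_{F/E}/P\) occurring are honest groups.
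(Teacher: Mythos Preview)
Your overall strategy tracks the paper's: both arguments obtain openness from the discreteness of \(H_1(W_{F/E}/P,X_*(T))\), check directedness, and then invoke \cref{lem: neighborhood basis construction} after verifying containment in a compact open subgroup and trivial intersection. The divergence is in how these last two points are secured, and this is where your argument has a gap.

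The paper, like you, uses the five-term sequence to identify \(\Theta(P)\) with the image of \(H_1(P,X_*(T))_{W_{F/E}/P}\to H_1(W_{F/E},X_*(T))\). But for \(\bigcap_P\Theta(P)=\{1\}\) the paper works entirely on underlying sets: by the \emph{non-condensed} five-term sequence and \cref{rem: underlying set condensed homology is ordinary homology}, \(\Theta(P)(*)\) lies in the image of the classical map \(H_1(P,X_*(T))\to H_1(W_{F/E},X_*(T))\), which by \cref{lem: map on homology is norm} is \(\mathrm{Nm}(T(P))\subset T(E)\). Since \(\mathrm{Nm}(T(P))\subset T(P)\) for Galois-stable \(P\) and \(\bigcap_i T(P_i)=\{1\}\) once \(\bigcap_i P_i=\{1\}\), the intersection is trivial. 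Compactness of \(\Theta(P)\) for compact \(P\) can be read off the same containment: \(\Theta(P)\) is a closed subgroup of \(T(E)\) lying inside the compact set \(\mathrm{Nm}(T(P))\). (The paper phrases the compactness step via a surjection \(P^{\mathrm{rk}(T)}\cong H_1(P,X_*(T))\twoheadrightarrow\Theta(P)\); the norm-map containment gives an alternative that avoids asserting that condensed isomorphism.)

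Your route, by contrast, makes \emph{both} the intersection argument and the compactness hinge on the condensed identification \(H_1(P,X_*(T))\cong\cond{P^n}\), so as to manufacture a single continuous \(\psi\colon(\oo_F^\times)^n\to T(E)\) with \(\Theta(P)=\psi(P^n)\). You correctly flag this as the hard part, but the proposed justification does not work: pseudo-coherence of the bar terms \(\bbZ[P^j]\) does not imply that they, or the resulting \(H_1\), are sequential colimits of compact Hausdorff abelian groups. Indeed \(\bbZ=\bbZ[*]\) is pseudo-coherent (even projective in condensed abelian groups), yet it is not such a colimit: any continuous homomorphism from a compact Hausdorff abelian group to the discrete group \(\bbZ\) has compact, hence finite, hence trivial image, so no sequential system of such groups has colimit \(\bbZ\). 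Thus \cref{lem: sequential colimit qcqs condensed on underlying set} cannot be reached from pseudo-coherence alone, and without the condensed isomorphism your compactness and intersection arguments both stall. The fix is to imitate the paper and use \cref{lem: map on homology is norm} to control \(\Theta(P)(*)\) directly.
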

\begin{proof}
    First of all note that the kernel of a morphism of condensed groups coming from compactly generated Hausdorff groups is itself a topological group, as the inclusion of compactly generated Hausdorff spaces to condensed sets is limit preserving and fully faithful.
    Since \(W_{F/E}/P\) is discrete, \(H_1(W_{F/E}/P,\widehat{L})\) is a discrete set.
    This shows that \(\Theta(P)\) is open.
    Consider the following diagram:
    \begin{equation*}
        \begin{tikzcd}
            {H_1(W_{F/E},\widehat{L})} \arrow[r, "f"] \arrow[rd, "h"'] & {H_1(W_{F/E}/P_1,\widehat{L})\oplus H_1(W_{F/E}/P_2,\widehat{L})} \\
                                                                       & {H_1(W_{F/E}/(P_1\cap P_2),\widehat{L})} \arrow[u, "g"']         
            \end{tikzcd}
    \end{equation*}
    Then \(\Theta(P_1)\cap \Theta(P_2)=\ker(f)\supset\ker(h)=\Theta(P_1\cap P_2)\).
    Let \(P_i\subset F^\times\) be a collection of Galois-stable open subgroups such that \(\bigcap_{i\in I}P_i=\{1\}\).
    One example would be to take the higher unit groups \(P_i=U_F^{(i)}\).
    By the (non-condensed) inflation-restriction sequence and \cref{rem: underlying set condensed homology is ordinary homology} all elements of \(\Theta(P)(*)\) have a representative of the form \(\sum m_i\otimes [g_i]\) with \(g_i\in P\).
    Thus \(\Theta(P)(*)\) lies in the image of the morphism of non-condensed homology groups \(f\from H_1(P,\widehat{L})\subset H_1(F^\times,\widehat{L})\to H_1(W_{F/E},\widehat{L})\) (or equivalently the map on underlying sets of the condensed homology groups).
    Under the identifications with \(T(P)\), \(T(F)\) and \(T(E)\) we see by using \cref{lem: map on homology is norm} that the image of \(f\) is given by \(\nm(T(P))\subset T(E)\subset T(F)\).
    Note that \(\nm(T(P))\subset T(P)\) by Galois-stability of \(P\), so we have \(\bigcap T(P_i)\subset \bigcap\nm T(P_i)\subset \bigcap P_i=\{1\}\).
    When \(P'\subset P\) we have maps 
    \begin{equation*}
        H_1(W_{F/E},\widehat{L})\to H_1(W_{F/E}/P')\to H_1(W_{F/E}/P,\widehat{L})
    \end{equation*}
    thus \(\Theta(P')\subset \Theta(P)\).
    By the condensed inflation-restriction we have a surjection \(H_1(P,\widehat{L})_{W_{F/E}/P}\to\Theta(P)\).
    Thus we get a surjection \(P^{\mathrm{rk}(T)}\cong H_1(P,\widehat{L})\to\Theta(P)\).
    We know that \(\Theta(P)\) comes from a compactly generated Hausdorff space, so \(P^{\mathrm{rk}(T)}\to\Theta(P)\) comes from a surjective map of topological spaces.
    Thus if \(P\) is compact \(\Theta(P)\) comes from a compact space.
    Therefore the \(\Theta(P)\) form a neighborhood basis of the identity by \cref{lem: neighborhood basis construction}.
\end{proof}
\begin{rem}
    It seems reasonable to conjecture \(\Theta(P)=\nm(T(P))\), but we will not pursue this.
\end{rem}
\begin{lem}\label{lem: map on homology is norm}
    Let \(P\) be an open subgroup of \(F^\times\) or the trivial subgroup.
    Let \(E'\) be the fixed field for some subgroup of \(Q\).
    Under the isomorphisms \(H_1(W_{F/E}/P,\widehat{L})\cong T(E)/\Theta(P)\) and \(H_1(W_{F/E'}/P,\widehat{L})\cong T(E')/\Theta(P)\) the map \(H_1(W_{E'}/W_P,\widehat{L})\to H_1(W_E/W_P,\widehat{L})\) corresponds to the norm map \(T(E')\to T(E)\).
\end{lem}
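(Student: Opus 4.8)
The plan is to reduce to the case \(P=\{1\}\) and then extract the statement from the construction of the isomorphism \(H_1(W_{F/E},X_*(T))\cong T(E)\) in \cref{prop: transfer condensed iso}, which is realised by the transfer map to \(H_1(F^\times,X_*(T))^Q\).

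\emph{Reduction to \(P=\{1\}\).} Since \(P\subseteq F^\times\) acts trivially on \(X_*(T)\), functoriality of condensed group homology in the group variable, applied to the commuting square of homomorphisms formed by \(W_{F/E'}\hookrightarrow W_{F/E}\) together with the quotient maps to \(W_{F/E'}/P\) and \(W_{F/E}/P\), produces a commuting square relating the map \(H_1(W_{F/E'},X_*(T))\to H_1(W_{F/E},X_*(T))\) for \(P=\{1\}\) with the map for general \(P\) via the two quotient maps \(H_1(W_{F/E'},X_*(T))\to H_1(W_{F/E'}/P,X_*(T))\) and \(H_1(W_{F/E},X_*(T))\to H_1(W_{F/E}/P,X_*(T))\). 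By \cref{lem: condensed H_1 of quotient is torus quotient} these two quotient maps are the projections \(T(E')\twoheadrightarrow T(E')/\Theta(P)\) and \(T(E)\twoheadrightarrow T(E)/\Theta(P)\), with \(\Theta(P)\) the respective kernels. Consequently the map for general \(P\) is induced on quotients by the one for \(P=\{1\}\); once the latter is identified with \(N_{E'/E}\colon T(E')\to T(E)\), it follows both that \(N_{E'/E}\) carries \(\Theta(P)\) into \(\Theta(P)\) and that the map for general \(P\) is the asserted one. So it suffices to treat \(P=\{1\}\).

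\emph{The case \(P=\{1\}\).} Set \(Q'\defined\mathrm{Gal}(F/E')\subseteq Q\), so that \(W_{F/E'}\subseteq W_{F/E}\) is the preimage of \(Q'\), and \(F^\times\) is a normal condensed subgroup of both, with all quotients of the tower \(F^\times\subseteq W_{F/E'}\subseteq W_{F/E}\) — namely \(Q'\), \(Q/Q'\) and \(Q\) — finite discrete. Hence \cref{lem: homology inclusion is norm map} applies and gives that
\begin{equation*}
    H_1(W_{F/E'},X_*(T))\to H_1(W_{F/E},X_*(T))\xto{\tr^{W_{F/E}}_{F^\times}}H_1(F^\times,X_*(T))
\end{equation*}
equals
\begin{equation*}
    H_1(W_{F/E'},X_*(T))\xto{\tr^{W_{F/E'}}_{F^\times}}H_1(F^\times,X_*(T))\xto{\sum_{g\in Q/Q'}g.(-)}H_1(F^\times,X_*(T)).
\end{equation*}
By \cref{prop: transfer condensed iso}, applied over \(E\) and over \(E'\), the two transfer maps appearing here are exactly the isomorphisms \(H_1(W_{F/E},X_*(T))\cong H_1(F^\times,X_*(T))^Q=T(E)\) and \(H_1(W_{F/E'},X_*(T))\cong H_1(F^\times,X_*(T))^{Q'}=T(E')\), where \(H_1(F^\times,X_*(T))=F^\times\otimes_{\bbZ}X_*(T)=T(F)\) because \(F\) splits \(T\), with \(Q\) acting diagonally. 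Since \(\tr^{W_{F/E}}_{F^\times}\) is injective, the displayed identity forces the map \(H_1(W_{F/E'},X_*(T))\to H_1(W_{F/E},X_*(T))\), transported along these identifications, to be the restriction of \(\sum_{g\in Q/Q'}g.(-)\) to \(T(F)^{Q'}\). Finally, for \(x\in T(F)^{Q'}=T(E')\) the element \(\sum_{g\in Q/Q'}g(x)\) is independent of the chosen coset representatives and lies in \(T(F)^Q=T(E)\), and this is by definition the norm map \(N_{E'/E}\) of the torus \(T\); this finishes the proof. (As a sanity check, for \(T=\gm\) the statement reduces to the familiar fact that the map \(E'^\times\to E^\times\) induced by \(W_{F/E'}^{\mathrm{ab}}\hookrightarrow W_{F/E}^{\mathrm{ab}}\) is \(N_{E'/E}\).)

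The one point that needs a little care is that \cref{lem: homology inclusion is norm map} is being invoked in the condensed setting for a tower whose smallest term \(F^\times\) is only locally profinite rather than discrete. This is harmless: the hypotheses of that lemma constrain solely the quotients \(Q'\), \(Q/Q'\) and \(Q\), all of which are finite discrete, and every map occurring above is a genuine map of condensed abelian groups, so no passage to underlying abstract groups is needed.
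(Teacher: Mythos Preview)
Your proof is correct and follows essentially the same approach as the paper, which simply records the statement as a special case of \cref{lem: homology inclusion is norm map}. Your reduction to \(P=\{1\}\) is not strictly necessary---the lemma applies directly to the tower \(F^\times/P\subseteq W_{F/E'}/P\subseteq W_{F/E}/P\) for any \(P\), together with \cref{lem: functoriality of transfer} to match the transfers with the identifications---but it does no harm and makes the identification of the right-hand column with the norm map pleasantly explicit.
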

\begin{proof}
    This is a special case of \cref{lem: homology inclusion is norm map}. 
\end{proof}
	\section{Compatibilities of the spectral action}\label{spectral action}
Let \(G\) be a reductive group over \(E\), \(\Lambda\) a discrete valuation ring over \(\zl[\sqrt{q}]\), \(\ell\) a prime that does not divide \(|\pi_0(Z(G))|\) or invertible in \(\Lambda\) and \(Q\) a finite quotient of \(W_E\) over with the action of \(W_E\) on \(\widehat{G}\) factors.
In \cite[section X.3.]{geometrization}, Fargues and Scholze construct an action of \(\perf([Z^1(W_E,\widehat{G})_{\Lambda}/\widehat{G}])\) on \(\D_{\lis}(\bun{G},\Lambda)^{\omega}\).
This gives us a ring map \(\bcenter^{\mathrm{spec}}(G,\Lambda)\to\bcenter^{\mathrm{geom}}(G,\Lambda)\) by the action of \(\bcenter^{\mathrm{spec}}(G,\Lambda)\) on \(\oo *-\).
Now fix an open subgroup \(P\) of the wild inertia and a discretization \(W\subset W_E/P\).
By \cite[subsection VIII.4.]{geometrization} we also get a map \(\exc(W,G)_{\Lambda}\to\bcenter^{\mathrm{geom}}(G,\Lambda)\).
Since both of these constructions come from Hecke operators one would expect that they are compatible with each other.
We prove that this is the case.

\subsection{Unwinding identifications}
Fix \(R\) a discrete valuation ring, \(H\) an affine algebraic group over \(R\), \(\Gamma\) a finitely generated group, \(\Theta\) a finite quotient of \(\Gamma\) acting on \(H\).
The goal of this subsection is to understand the \(\Gamma\)-action on \(\ev^*\mathcal{F}\) where \(\ev\from[Z^1(\Gamma,H)/H]\to B(H\rtimes \Theta)\) is the evaluation map and \(\mathcal{F}\in\qcoh(B(H\rtimes\Theta))\).
\begin{lem}\label{lem: 2-morphism on pullbacks}
    We use the notation of \cite[Tag 0440]{stacks} in this lemma.

    Let \(f,g\from (U,R,s,t,c)\to (U',R',s',t',c')\) be two morphisms between groupoids in algebraic spaces, let \(\eta\from U\to R'\) define a natural transformation.
    Let \((\mathcal{F},\alpha)\) be a quasi-coherent module on \((U',R',s',t',c')\).
    The natural transformation \(\eta\) defines a 2-morphism between \(f\from [U/R]\to [U'/R']\) to \(g\from [U/R]\to[U'/R']\).
    Thus it introduces a map \(\eta^*\from g^*\mathcal{F}\to f^*\mathcal{F}\) (identifying quasi-coherent sheaves on the stack with quasi-coherent sheaves on presentations via \cite[Tag 06WT]{stacks}).
    It is given by \(g^*\mathcal{F}=\eta^*{t'}^*\mathcal{F}\xto{\eta^*\alpha}\eta^*{s'}^*\mathcal{F}=f^*\mathcal{F}\).
\end{lem}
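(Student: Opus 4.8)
The plan is to reduce everything to the chart/presentation description of quasi-coherent sheaves on quotient stacks via \cite[Tag 06WT]{stacks}, and then to track the definition of the $2$-morphism induced by $\eta$ through the construction of pullback. First I would recall that under the equivalence between $\qcoh([U'/R'])$ and the category of quasi-coherent modules on the groupoid $(U',R',s',t',c')$, the sheaf $\mathcal{F}$ corresponds to a quasi-coherent $\oo_{U'}$-module together with an isomorphism $\alpha\from t'^*\mathcal{F}\xto{\cong}s'^*\mathcal{F}$ satisfying the cocycle condition over $R'\times_{s',U',t'}R'$. A morphism of groupoids $f\from (U,R,\dots)\to (U',R',\dots)$ induces $f\from [U/R]\to[U'/R']$, and the pullback $f^*\mathcal{F}$ is computed on the presentation simply by pulling the module back along $f_U\from U\to U'$ (and $\alpha$ along $f_R\from R\to R'$); this is exactly the content of \cite[Tag 06WT]{stacks} together with the functoriality of the associated-stack construction.

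Next I would unwind what the natural transformation $\eta\from U\to R'$ encodes. Giving $\eta$ means giving, for the composite morphisms $U\to U'$ underlying $f$ and $g$, a morphism $U\to R'$ whose source and target projections recover $g_U$ and $f_U$ respectively (or the other way round, depending on the orientation convention of \cite[Tag 0440]{stacks}); this is precisely a $2$-morphism between the two maps $[U/R]\to[U'/R']$. The induced natural transformation of pullback functors $g^*\Rightarrow f^*$ is, by general $2$-categorical nonsense about pseudofunctors, the map obtained by transporting the descent datum of $\mathcal{F}$ along $\eta$. Concretely, pulling $\alpha\from t'^*\mathcal{F}\to s'^*\mathcal{F}$ back along $\eta$ gives $\eta^*\alpha\from \eta^*t'^*\mathcal{F}\to\eta^*s'^*\mathcal{F}$, and since $t'\circ\eta=g_U$ and $s'\circ\eta=f_U$ (by the defining property of $\eta$ as a $2$-morphism from $g$ to $f$), we may rewrite the source as $g^*\mathcal{F}$ and the target as $f^*\mathcal{F}$. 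This yields the asserted formula $g^*\mathcal{F}=\eta^*t'^*\mathcal{F}\xto{\eta^*\alpha}\eta^*s'^*\mathcal{F}=f^*\mathcal{F}$.

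The only real content — and the step I expect to be the main (if modest) obstacle — is checking that this explicitly constructed map genuinely agrees with the abstract $2$-morphism-induced natural transformation coming from the pseudofunctor $\mathcal{X}\mapsto\qcoh(\mathcal{X})$, rather than merely being some plausible-looking candidate of the right type. Here one must be careful with the identification of $\qcoh$ of a quotient stack with modules on the presentation: the equivalence of \cite[Tag 06WT]{stacks} is itself only a $2$-functor up to coherent natural isomorphism, so one has to verify that the coherence data match up, i.e. that no stray occurrence of $\alpha$ or its inverse has been dropped or inserted. I would handle this by testing against the universal case: it suffices to check the identity after further pulling back along the smooth cover $U\to[U/R]$, where all stacks become honest schemes (or algebraic spaces), the sheaves become honest modules, and both the abstract and the explicit natural transformations unwind to literally the same map of $\oo_U$-modules. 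Compatibility with the cocycle condition for $\alpha$ then guarantees the identity descends, completing the proof.
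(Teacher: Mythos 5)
Your proposal is correct and follows essentially the same route as the paper: identify $s'\circ\eta=f_U$ and $t'\circ\eta=g_U$, then unwind the equivalence of \cite[Tag 06WT]{stacks} to see that the induced natural transformation on pullbacks is $\eta^*\alpha$. The paper's proof is just the terse version of this, and your final paragraph (checking the explicit map agrees with the abstract $2$-morphism by testing after pullback along $U\to[U/R]$) is exactly what ``unwinding the proof of Tag 06WT'' amounts to.
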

\begin{proof}
    Note that since \(\eta\) is a natural transformation we have \(s'\circ\eta=f\) and \(t'\circ\eta=g\).
    The lemma follows from unwinding the proof of \cite[Tag 06WT]{stacks}.
\end{proof}
    We have an isomorphism \(\maps_{B\Theta}(B\Gamma,B(H\rtimes\Theta))=[\spec(A)/H\rtimes \Theta]\), where \(\spec(A)\) is the closed subscheme of \(\Hom(\Gamma,H\rtimes \Theta)\times \Theta\) classifying group homomorphisms \(f\from \Gamma\to H(\Lambda)\rtimes \Theta(\Lambda)\) and \(q\in \Theta(\Lambda)\) such that postcomposing \(f\) with the projection \(H\rtimes \Theta\to \Theta\) and conjugating with \(q\) gives the canonical map \(\Gamma\to \Theta\).
    Evaluation at an element \(n\in\Gamma\) gives a map \(\ev_n\from\spec(A)\to H\rtimes \Theta\), inducing a map on coordinate rings which we will denote \(h\mapsto n^*h\).
    There is a canonical evaluation map \(f\from\maps_{B\Theta}(B\Gamma,B(H\rtimes\Theta))\to B(H\rtimes\Theta)\).
\begin{lem}\label{lem: spectral action spectral side general nonsense}
    Let \(V\) be a \(\oo(H\rtimes\Theta)\)-comodule, which we consider as a quasi-coherent sheaf on the classifying stack \(B(H\rtimes\Theta)\).
    Then the pullback naturally carries an action of \(\Gamma\) as described in \cite[Theorem X.1.1.]{geometrization}.

    If the comodule structure on \(V\) is given by \(v\mapsto\sum v_i\otimes h_i\), then the action of an element \(n\in\Gamma\) on the pullback sheaf \(V\otimes A\) is given by \(n.(v\otimes 1)=\sum v_i\otimes n^*h_i\).
    Identifying \([\spec(A)/H\rtimes\Theta]=[Z^1(\Gamma,H)/H]\), the corresponding action on \(V\otimes\oo(Z^1(\Gamma,H))\) is given by \(n.(v\otimes 1)=\sum v_i\otimes n^*h_i\) too, where \(h\mapsto n^*h\) corresponds to the natural evaluation map \(\ev_n\from \Hom(\Gamma,H)\to H\to H\rtimes\Theta\)
\end{lem}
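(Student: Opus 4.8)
The plan is to reduce everything, via a presentation by groupoids in algebraic spaces, to a single application of \cref{lem: 2-morphism on pullbacks}. First I would recall the source of the $\Gamma$-action of \cite[Theorem X.1.1.]{geometrization}: the stack $B\Gamma$ carries a canonical point $\star$ with $\mathrm{Aut}(\star)=\Gamma$, and under the identification $\maps_{B\Theta}(B\Gamma,B(H\rtimes\Theta))=[\spec(A)/(H\rtimes\Theta)]$ the canonical evaluation map $f$ is the restriction-along-$\star$ of the universal functor $B\Gamma\to B(H\rtimes\Theta)$. Hence each $n\in\Gamma$, viewed as an automorphism $n\from\star\to\star$ in $B\Gamma$, induces a $2$-morphism $\eta_n\from f\Rightarrow f$, natural in the target, and $n\mapsto\eta_n^{*}$ is the $\Gamma$-action on $f^{*}V$ (composition of $2$-morphisms matching multiplication in $\Gamma$, up to the inverse convention, which I would not dwell on).

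Next I would fix presentations: $B(H\rtimes\Theta)=[*/(H\rtimes\Theta)]$ by the groupoid $(*,H\rtimes\Theta,s,t,c)$ with $s,t$ the structure maps and $c$ the multiplication, and $\maps_{B\Theta}(B\Gamma,B(H\rtimes\Theta))=[\spec(A)/(H\rtimes\Theta)]$ by the induced groupoid over $\spec(A)$. In these presentations $f$ is $\spec(A)\to *$ on objects and the projection $\spec(A)\times(H\rtimes\Theta)\to H\rtimes\Theta$ on morphisms, so $f^{*}V$ is the $A$-module $V\otimes_{R}A$ with the pulled-back descent datum --- this identifies the pullback sheaf ``$V\otimes A$'' of the statement --- and the comodule structure $v\mapsto\sum v_{i}\otimes h_{i}$ is precisely the descent datum $\alpha\from t^{*}V\to s^{*}V$ of $V$ on $H\rtimes\Theta$. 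The step I expect to be the main obstacle is matching the abstract $2$-morphism $\eta_n$ with the concrete morphism of presentations: one must unwind the description of $\spec(A)\subset\Hom(\Gamma,H\rtimes\Theta)\times\Theta$ and observe that the functor $B\Gamma\to B(H\rtimes\Theta)$ classified by a point $(\phi,q)$ sends the morphism $n$ to $\phi(n)$, so that $\eta_n$ is presented by the evaluation-at-$n$ morphism $\ev_n\from\spec(A)\to H\rtimes\Theta$, which on coordinate rings is $h\mapsto n^{*}h$. Equivalently, $\eta_n$ is the $2$-morphism that \cref{lem: 2-morphism on pullbacks} attaches to the map $\ev_n$ of presentations (with $f=g$).

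Granting this, I would finish by invoking \cref{lem: 2-morphism on pullbacks} with $f=g$ and $\eta=\ev_n$: the induced map $\eta_n^{*}\from f^{*}V\to f^{*}V$ is the composite
\begin{equation*}
    f^{*}V=\ev_n^{*}t^{*}V\xto{\ev_n^{*}\alpha}\ev_n^{*}s^{*}V=f^{*}V .
\end{equation*}
Since $s^{*}V=t^{*}V=V\otimes_{R}\oo(H\rtimes\Theta)$ and $\alpha(v\otimes1)=\sum v_{i}\otimes h_{i}$, pulling back along $\ev_n$, i.e.\ applying $h\mapsto n^{*}h$, gives $n.(v\otimes1)=\sum v_{i}\otimes n^{*}h_{i}$, as claimed. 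For the last sentence I would transport along $[\spec(A)/(H\rtimes\Theta)]\simeq[Z^{1}(\Gamma,H)/H]$, which is induced by the $H$-equivariant closed immersion $Z^{1}(\Gamma,H)\hookrightarrow\spec(A)$ sending a cocycle $z$ to $(\gamma\mapsto(z(\gamma),\bar\gamma),\,e)$; restricting $\ev_n$ along it recovers the evaluation map $\Hom(\Gamma,H)\to H\to H\rtimes\Theta$ of the statement, so the same formula $n.(v\otimes1)=\sum v_{i}\otimes n^{*}h_{i}$ holds verbatim for the corresponding action on $V\otimes\oo(Z^{1}(\Gamma,H))$.
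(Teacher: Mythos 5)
Your proof follows the same route as the paper's: identify the 2-endomorphism $\eta_n$ of the evaluation map with $\ev_n\from\spec(A)\to H\rtimes\Theta$, invoke \cref{lem: 2-morphism on pullbacks} with $f=g$, and read off the formula from the affine presentation, then transfer along $[Z^1(\Gamma,H)/H]\cong[\spec(A)/H\rtimes\Theta]$. The only difference is that you spell out the affine unwinding (descent datum $\alpha=$ coaction, $s^*V=t^*V=V\otimes\oo(H\rtimes\Theta)$) that the paper leaves implicit, which is a correct and welcome expansion.
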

\begin{proof}
    Each element \(n\in\Gamma\) gives 2-endomorphism of the natural map 
    \begin{equation*}
        \maps_{B\Theta}(B\Gamma,B(H\rtimes\Theta))\to B(H\rtimes\Theta)
    \end{equation*}
    that is given by \(\ev_n\from\spec(A)\to H\rtimes\Theta\).
    The first part of the claim follows by unwinding what \cref{lem: 2-morphism on pullbacks} means when everything is affine.
    For the second part, note that the map \([Z^1(\Gamma,H)/H]\xto{\cong}[\spec(A)/H\rtimes\Theta]\) induced by the natural inclusion \(Z^1(\Gamma,H)\to\spec(A)\) is an isomorphism.
    Now one just needs to unwind the identifications.
\end{proof}
\begin{lem}
    Keep the notation of \cref{lem: spectral action spectral side general nonsense}.
    Let \(S\) be a finite generating set for \(\Gamma\).
    Then we can write \(\Hom(\Gamma,H)=\spec(\oo(H)^{\otimes S}/I)\) for some ideal \(I\).
    Then \(\ev_s\) for \(s\in S\) is given by \(h\mapsto 1\otimes\dots\otimes h\otimes\dots\otimes 1\) where the \(h\) is in the \(s\)'th position.
    For general \(n=s_1\cdot\dots\cdot s_l\in\Gamma\) with \(s_i\in\Gamma\) we have \(\ev_n(h)=\ev_{s_1}(h)\dots\ev_{s_l}(h)\).
\end{lem}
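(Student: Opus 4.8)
The plan is to prove all three assertions by unwinding functors of points: $\Hom(\Gamma,H)$, the product $H^S\defined\prod_{s\in S}H$ and $H$ are all affine and represent explicit functors, so it suffices to identify the relevant morphisms of affine schemes on $R$-points and then dualize to coordinate rings via Yoneda.

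First I would establish the closed-immersion presentation. The scheme $\Hom(\Gamma,H)$ represents $R\mapsto\Hom_{\mathrm{grp}}(\Gamma,H(R))$, and restricting a homomorphism to $S$ defines a natural transformation to $R\mapsto H(R)^S=H^S(R)$ which is a monomorphism of functors since $S$ generates $\Gamma$. Fixing a presentation $\Gamma=F_S/N$ with $F_S$ the free group on $S$ and $N$ the normal subgroup generated by a set $R_0$ of relators, a tuple $(h_s)_{s\in S}\in H(R)^S$ lies in the image exactly when the induced homomorphism $F_S\to H(R)$ kills every $r\in R_0$ (its kernel being normal, it then kills all of $N$). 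For a fixed $r$ this is the condition $w_r=e$, where $w_r\colon H^S\to H$ is the word morphism assembled from the projections, the multiplication and the inversion of $H$, and $e$ is the unit section of $H$; since $H$ is affine hence separated, $e$ is a closed immersion, so $w_r^{-1}(e)$ is the closed subscheme of $H^S=\spec(\oo(H)^{\otimes S})$ cut out by an ideal $I_r$. Hence
\[
\Hom(\Gamma,H)=\bigcap_{r\in R_0}w_r^{-1}(e)=\spec(\oo(H)^{\otimes S}/I),\qquad I\defined\textstyle\sum_{r\in R_0}I_r,
\]
and no finiteness of $R_0$ is required since the lemma only asks for the existence of some ideal $I$.

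Next come the two evaluation formulas. For $s\in S$ the map $\ev_s$ is by construction the composite $\Hom(\Gamma,H)\injto H^S\xto{\pr_s}H$ sending a homomorphism $f$ to $f(s)$, the $s$-th coordinate of the tuple $(f(t))_{t\in S}$; by Yoneda the projection $\pr_s$ corresponds on coordinate rings to $h\mapsto 1\otimes\cdots\otimes h\otimes\cdots\otimes 1$ with $h$ in slot $s$, and composing with the quotient onto $\oo(H)^{\otimes S}/I$ yields exactly the stated description of $\ev_s$. For a general $n=s_1\cdots s_l\in\Gamma$ (a word in the generators and their inverses), the fact that $f$ is a homomorphism gives $\ev_n(f)=f(n)=f(s_1)\cdots f(s_l)$, so $\ev_n$ factors as $\Hom(\Gamma,H)\xto{(\ev_{s_1},\dots,\ev_{s_l})}H^l\xto{m_l}H$ with $m_l$ the $l$-fold multiplication of $H$; dualizing, $\ev_n^*=m\circ(\ev_{s_1}^*\otimes\cdots\otimes\ev_{s_l}^*)\circ\Delta^{(l)}$, i.e.\ $\ev_n^*(h)=\sum\ev_{s_1}^*(h_{(1)})\cdots\ev_{s_l}^*(h_{(l)})$ in Sweedler notation. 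This last identity is the precise content of the informal formula $\ev_n(h)=\ev_{s_1}(h)\cdots\ev_{s_l}(h)$ in the statement. (If some $s_i$ is the inverse of a generator $t$, then $\ev_{s_i}$ is simply $\ev_t$ composed with the inversion of $H$; this case is subsumed by working with the group structure of $H$ throughout.)

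There is no genuine obstacle here; the statement is a bookkeeping exercise on the functor of points. The only two points that deserve a little care are (i) the representability of the subfunctor $\Hom(\Gamma,H)\subset H^S$ by a closed subscheme, which uses nothing beyond $H$ being affine and separated, and (ii) reading every ``product of $H$-valued morphisms'' through the comultiplication of $\oo(H)$ rather than naively as a product in $\oo(\Hom(\Gamma,H))$, so that the Sweedler formula displayed above is what one actually obtains.
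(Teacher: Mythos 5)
Your proof is correct and follows the same functor-of-points approach as the paper, which merely notes that the condition of being a group homomorphism cuts out a closed subscheme of $H^S$ and declares the rest obvious. You rightly add care around the final formula, reading $\ev_n(h)=\ev_{s_1}(h)\cdots\ev_{s_l}(h)$ as the convolution of the $\ev_{s_i}^*$ (the Sweedler expression $\sum\ev_{s_1}^*(h_{(1)})\cdots\ev_{s_l}^*(h_{(l)})$) rather than as a literal product of the elements $\ev_{s_i}^*(h)$, which would be false for non-grouplike $h$.
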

\begin{proof}
    \(\spec(\oo(H)^{\otimes S}/I)\) represents the functor sending a ring \(\Lambda\) to the set of set-theoretic maps \(S\to H(\Lambda)\).
    Saying that this defines a group homomorphism \(\Gamma\to H(\Lambda)\) is a closed subscheme cut out by some ideal \(I\).
    The rest is obvious.
\end{proof}
\subsection{Compatibility with the excursion algebra}\label{compatiblity with the excursion algebra}
Let \(\Lambda\) be a discrete valuation ring.
For this subsection, let everything be base changed to \(\Lambda\), that is we write \(\exc(W,\widehat{G})\) for \(\exc(W,\widehat{G})_{\Lambda}\) and \(Z^1(W,\widehat{G})\) for \(Z^1(W,\widehat{G})_{\Lambda}\).
Let \(\mathcal{C}\) be an idempotent-complete small stable \(\Lambda\)-linear \(\infty\)-category.
Assume we have functorially in finite sets \(I\) exact \(\rep_{\Lambda}(Q^I)\)-linear functors 
\begin{equation*}
    \rep_{\Lambda}((\widehat{G}\rtimes Q)^I)\to\End_{\Lambda}(\mathcal{C})^{BW^I}
\end{equation*}
where \(W\) is a discretization of \(W_E/P\) for \(P\) an open subgroup of the wild inertia.
\begin{thm}\label{lem: excursion algebra action compatible with spectral action}
    We obtain a map \(\exc(W,\widehat{G})\to\pi_0\bcenter(\mathcal{C})\) from the excursion algebra for \(\widehat{G}\) to the Bernstein center of \(\mathcal{C}\).
    The spectral action gives us a map \(\oo(Z^1(W,\widehat{G}))^{\widehat{G}}\to\pi_0\bcenter(\mathcal{C})\).
    The following diagram commutes 
    \begin{equation*}
        \begin{tikzcd}
            {\exc(W,\widehat{G})} \arrow[d] \arrow[r]         & \pi_0\bcenter(\mathcal{C}) \\
            {\oo(Z^1(W,\widehat{G}))^{\widehat{G}}} \arrow[ru] &                           
            \end{tikzcd}
    \end{equation*}
\end{thm}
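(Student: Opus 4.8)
The plan is to reduce to free groups and then unwind both constructions through the Hecke operators, using that the spectral action and the Hecke $2$-functor determine each other by \cite[Theorem X.0.2.]{geometrization}.

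First recall that $\exc(W,\widehat{G})=\colim_{(n,F_n\to W)}\oo(Z^1(F_n,\widehat{G}))^{\widehat{G}}$, that the left-hand map of the diagram is the colimit of the restriction maps $r_{F_n}\colon\oo(Z^1(F_n,\widehat{G}))^{\widehat{G}}\to\oo(Z^1(W,\widehat{G}))^{\widehat{G}}$ (pullback along $Z^1(W,\widehat{G})\to Z^1(F_n,\widehat{G})$ induced by $F_n\to W$), and that, by the proof of \cite[Theorem VIII.4.1.]{geometrization}, the top map $\exc(W,\widehat{G})\to\pi_0\bcenter(\mathcal{C})$ is the colimit of the maps $\Theta_{F_n}\colon\oo(Z^1(F_n,\widehat{G}))^{\widehat{G}}\to\pi_0\bcenter(\mathcal{C})$ built there from the excursion natural transformations. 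Since both $\{\Theta_{F_n}\}$ and $\{(\text{spectral map})\circ r_{F_n}\}$ are cocones on the same diagram, it suffices to prove, for each pair $(n,F_n\to W)$, that $\Theta_{F_n}$ equals the composite of $r_{F_n}$ with the spectral-action map $\oo(Z^1(W,\widehat{G}))^{\widehat{G}}\to\pi_0\bcenter(\mathcal{C})$.

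Second, by \cite[Theorem X.0.2.]{geometrization} the Hecke operator $T_V$ attached to $V\in\rep_{\Lambda}((\widehat{G}\rtimes Q)^I)$ is the action of $p_I^*V\in\perf([Z^1(W,\widehat{G})/\widehat{G}])^{BW^I}$ under the (monoidal) spectral action $\perf([Z^1(W,\widehat{G})/\widehat{G}])\to\End_{\Lambda}(\mathcal{C})$. The excursion operator attached to $f\in\oo(\widehat{G}\backslash(\widehat{G}\rtimes Q)^I/\widehat{G})$ and $(g_i)_{i\in I}\in W^I$ is, by definition, the natural transformation $\mathrm{id}_{\mathcal{C}}=T_1\xto{\alpha}T_{V_f}\xto{(g_i)}T_{V_f}\xto{\beta}T_1=\mathrm{id}_{\mathcal{C}}$, built only from the Hecke $2$-functor and its $BW^I$-equivariance; hence it is the image under the spectral action of the endomorphism
\begin{equation*}
\xi_{f,(g_i)}\colon\ \oo\xto{p_I^*\alpha}p_I^*V_f\xto{(g_i)}p_I^*V_f\xto{p_I^*\beta}\oo
\end{equation*}
of the structure sheaf $\oo$ of $[Z^1(W,\widehat{G})/\widehat{G}]$, where the middle arrow is the value at $(g_i)$ of the $W^I$-equivariance of $p_I^*V_f$ (the outer two arrows being pulled back from $B(\widehat{G}\rtimes Q)^I$, hence equivariant for the trivial action on $\oo$). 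Thus $\xi_{f,(g_i)}\in\pi_0\End(\oo)=\oo(Z^1(W,\widehat{G}))^{\widehat{G}}$, and it remains to identify $\xi_{f,(g_i)}$ with the image under $r_{F_n}$ of the function on $Z^1(F_n,\widehat{G})$ to which $\Theta_{F_n}$ is applied (namely, via the identifications $\oo(\widehat{G}\backslash(\widehat{G}\rtimes Q)^{\{0,\dots,n\}}/\widehat{G})\otimes_{\oo(Q^{\{0,\dots,n\}})}\oo(Q^{\{1,\dots,n\}})\cong\oo((\widehat{G}\rtimes Q)^n/\!\!/\widehat{G})\cong\oo(Z^1(F_n,\widehat{G}))^{\widehat{G}}$ of the proof of \cite[Theorem VIII.4.1.]{geometrization}, the excursion function attached to $(f,(g_i))$).

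Third, and this is the computational heart, we compute $\xi_{f,(g_i)}$ explicitly using the unwinding lemmas of the previous subsection. Applying \cref{lem: spectral action spectral side general nonsense} with $H=\widehat{G}$, $\Theta=Q$ and $\Gamma=W$, the pullback $p_I^*V_f$ is $V_f\otimes\oo(Z^1(W,\widehat{G}))$ with the $W^I$-equivariance described there through the evaluation maps $\ev_{g}$, which by the last lemma of that subsection are computed on the generators $s\in S$ and extended multiplicatively in $\Gamma$. Unwinding $\alpha$ (induced by $f$) and $\beta$ (evaluation at the identity of $\widehat{G}\rtimes Q$) through the comodule structure of $V_f\subseteq\oo((\widehat{G}\rtimes Q)^I/\widehat{G})$ and using \cref{lem: 2-morphism on pullbacks} to identify the $2$-morphisms with the $\ev_{g_i}$, the composite $\xi_{f,(g_i)}$ becomes the function whose value on a cocycle $\sigma$ (with its tautological $Q$-datum) is $f\bigl((\sigma(g_i))_{i\in I}\bigr)$; this is exactly the pullback along $Z^1(W,\widehat{G})\to Z^1(F_n,\widehat{G})$ of the excursion function attached to $(f,(g_i))$, so $\xi_{f,(g_i)}=r_{F_n}$ applied to it, which is what we wanted.

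The main obstacle is this third step: one must match the $(g_i)$-action on Hecke operators coming from the $BW^I$-equivariance of the Satake side with the $W^I$-action on $p_I^*V_f$ coming from the $2$-morphisms, carefully tracking the semidirect product $\widehat{G}\rtimes Q$ and the passage between $\maps_{BQ}^{\Sigma}$ and honest mapping stacks used in the proof of \cite[Theorem X.0.2.]{geometrization}. Since the assertion concerns only $\pi_0$ of the centers, no higher coherence of these identifications is needed, which keeps the bookkeeping tractable.
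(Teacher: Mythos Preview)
Your plan is correct and follows essentially the same route as the paper: reduce to each \(F_n\to W\), recognize the excursion composite \(\id=T_1\to T_{V_f}\to T_{V_f}\to T_1\) as the image under the monoidal spectral action of an endomorphism of the structure sheaf, and then identify that endomorphism explicitly via the comodule/evaluation description of \(p_I^*V_f\). The only organizational difference is that the paper, after reducing to \(F_n\), further specializes to \(\mathcal{C}=\perf(\maps_{BQ}(BF_n,B(\widehat{G}\rtimes Q)))\) and carries out the Hopf-algebra computation there (so that \cref{lem: spectral action spectral side general nonsense} is applied with the finitely generated group \(\Gamma=F_n\) rather than with \(\Gamma=W\)); your direct computation on \([Z^1(W,\widehat{G})/\widehat{G}]\) is the same calculation transported along \(r_{F_n}\).
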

\begin{proof}
    By abuse of notation we will consider \(\oo_{F_n}\defined\oo_{[Z^1(F_n,\widehat{G}/\widehat{G}]}\) and \(\oo_{W}\defined\oo_{[Z^1(W,\widehat{G})/\widehat{G}]}\) as objects of \(\perf(\maps_{BQ}^\Sigma(BF_n,B(\widehat{G}\rtimes Q)))\) and \(\perf(\maps_{BQ}^\Sigma(BW,B(\widehat{G}\rtimes Q)))\) respectively.
    This is possible by \cite[Proposition X.3.3.]{geometrization} and (the consequence of) \cite[Proposition X.3.4.]{geometrization}.
    It suffices to check that 
    \begin{equation}\label{eq: diagram that ought to commute spectral action excursion algebra}
        \begin{tikzcd}
            {\oo(Z^1(F_n,\widehat{G}))^{\widehat{G}}} \arrow[d] \arrow[r]         & \pi_0\bcenter(\mathcal{C}) \\
            {\oo(Z^1(W,\widehat{G}))^{\widehat{G}}} \arrow[ru] &                           
            \end{tikzcd}
    \end{equation}
    commutes for every \(F_n\to W\), where the map \(\oo(Z^1(F_n,\widehat{G}))^{\widehat{G}}\to\pi_0\bcenter(\mathcal{C})\) is constructed as in the proof of \cite[Theorem VIII.4.1.]{geometrization}.

    The action of \(\perf(\maps_{BQ}^\Sigma(BW,B(\widehat{G}\rtimes Q)))\simeq \perf(\maps_{BQ}(BW,B(\widehat{G}\rtimes Q)))\) is constructed by taking a sifted colimit of actions by \(\perf(\maps_{BQ}^\Sigma(BF_n,B(\widehat{G}\rtimes Q)))\) and sifted colimits of monoidal stable \(\infty\)-categories are computed on the underlying \(\infty\)-categories.
    Thus via the functor 
    \begin{equation*}
        \perf(\maps_{BQ}^\Sigma(BF_n,B(\widehat{G}\rtimes Q)))\to\perf(\maps_{BQ}^\Sigma(BW,B(\widehat{G}\rtimes Q)))
    \end{equation*}
    the spectral actions gives us a commutative diagram 
    \begin{equation*}
        \begin{tikzcd}
            {\oo(Z^1(F_n,\widehat{G}))^{\widehat{G}}} \arrow[d] \arrow[r] & \pi_0\bcenter(\mathcal{C}) \\
            {\oo(Z^1(W,\widehat{G}))^{\widehat{G}}} \arrow[ru]            &                           
            \end{tikzcd}
    \end{equation*}
    via the actions of \(\oo(Z^1(F_n,\widehat{G}))^{\widehat{G}}\) on \(\oo_{F_n}*-\) and \(\oo(Z^1(W,\widehat{G}))^{\widehat{G}}\) on \(\oo_{W}*-\).
    The diagram fits into \eqref{eq: diagram that ought to commute spectral action excursion algebra} above by being part of the diagram 
    \begin{equation*}
        \begin{tikzcd}
            {\oo(Z^1(F_n,\widehat{G}))^{\widehat{G}}} \arrow[r, "\eta"] \arrow[d,"\id"]    & \pi_0\bcenter(\mathcal{C}) \\
            {\oo(Z^1(F_n,\widehat{G}))^{\widehat{G}}} \arrow[d] \arrow[ru, "\alpha"] &                            \\
            {\oo(Z^1(W,\widehat{G}))^{\widehat{G}}} \arrow[ruu]                      &                           
            \end{tikzcd}
    \end{equation*}
    The outer triangle is the diagram \eqref{eq: diagram that ought to commute spectral action excursion algebra}.
    Thus to show that the outer triangle commutes it suffices to check that the upper triangle commutes, where \(\eta\) comes from excursion data and \(\alpha\) comes from the spectral action.
    Thus to prove the theorem we may replace \(W\) by a free group \(F_n\) on \(n\) generators with a map \(F_n\to Q\).
    Recall that given an excursion datum \((I,V,\alpha,\beta,(\gamma_i)_{i\in I})\) with \(I\) a finite set, \(V\in\rep_{\Lambda}(\widehat{G}\rtimes Q)\), \(\alpha\from 1\to V|_{\rep_{\Lambda}(\widehat{G})}\), \(\beta\from V|_{\rep_{\Lambda}(\widehat{G})}\to 1\) and \(\gamma_i\in F_n\) we  get an element in \(\pi_0\bcenter(\mathcal{C})\) by considering 
    \begin{equation*}
        \id=T_1\xto{T_{\alpha}}T_V\xto{(\gamma_i)_{i\in I}}T_V\xto{T_\beta} T_1=\id
    \end{equation*}
    In general for any anima \(S\) over \(BQ\) we have the following natural commuting triangle 
    \begin{equation*}
        \begin{tikzcd}
            {\perf(\maps_{BQ}^\Sigma(S,B(\widehat{G}\rtimes Q)))^S} \arrow[r] & {\perf(\maps_{BQ}(S,B(\widehat{G}\rtimes Q)))^S}                                                                \\
                                                                                     & \rep_{\Lambda}(\widehat{G}\rtimes Q)\subset\perf(B(\widehat{G}\rtimes Q)) \arrow[lu, "\ev_{\Sigma}^*"] \arrow[u, "\ev^*"']
            \end{tikzcd}
    \end{equation*}
    This is because the map \(\ev_\Sigma^*\) is defined via sifted colimits from the case where \(S\) is a finite set.
    For finite sets this diagram trivially exists.
    In particular for \(S=BF_n\) we have \(\ev^*V\in\perf(\maps_{BQ}^\Sigma(BF_n,B(\widehat{G}\rtimes Q)))\) for any \(V\in\rep_{\Lambda}(\widehat{G}\rtimes Q)\) and \(\ev^*V=\ev^*_{\Sigma}V\).
    This means given an excursion datum as above we may construct the element in \(\pi_0\bcenter(\mathcal{C})\) by considering 
    \begin{equation*}
        \id=\oo_{F_n}*-=\ev^*1*-\xto{\ev^*\alpha}\ev^*V*-\xto{(\gamma_i)_{i\in I}}\ev^*V*-\xto{\ev^*\beta}\ev^*1*-=\oo_{F_n}*-=\id
    \end{equation*}
    From this we see that the induced element in the center \(\pi_0\bcenter(\mathcal{C})\) comes from an endomorphism of \(\oo_{F_n}\).
    Thus we may assume that \(\mathcal{C}\) is \(\perf(\maps_{BQ}(BF_n,B(\widehat{G}\rtimes Q))\).
    Unwinding the construction of the map \(\exc(F_n,\widehat{G})\to\pi_0\bcenter(\mathcal{C})\) we must show the following:

    Given 
    \begin{equation*}
        f\in\oo(\widehat{G}\backslash (\widehat{G}\rtimes Q)^{\{0,\dots,n\}}/\widehat{G})\subset\oo((\widehat{G}\rtimes Q))^{\otimes\{0,\dots,n\}}
    \end{equation*}
    that can be written as \(f=\sum_{i\in I}f_0^{(i)}\otimes\dots\otimes f_n^{(i)}\),
    we construct \(V_f\) as the \((\widehat{G}\rtimes Q)^{\{0,\dots,n\}}\)-subrepresentation of \(\oo((\widehat{G}\rtimes Q)^{\{0,\dots,n\}}/\widehat{G})\) spanned by \(f\).
    It is equipped with maps \(\alpha_f\from 1\to V_f\) induced by \(f\) and \(\beta_f\from V_f\to 1\) induced by evaluation at \(1\in(\widehat{G}\rtimes Q)^{\{0,\dots,n\}}\).
    The map \(\oo_{F_n}\to\oo_{F_n}\) induced by the excursion datum \((V_f,\alpha_f,\beta_f,(1,\gamma_1,\dots,\gamma_n))\) where \(\gamma_i\) are the generators of \(F_n\) is given by 
    \begin{equation*}
        \sum_{i\in I}\varepsilon(f_0^{(i)})\otimes\overline{f_1^{(i)}}\otimes\dots\otimes\overline{f_n^{(i)}}
    \end{equation*}
    where \(\overline{\varepsilon}\from\oo(\widehat{G}\rtimes Q)\to\Lambda\) is the counit and \(f_i\mapsto\overline{f_i}\) is the natural map \(\oo(\widehat{G}\rtimes Q)\to\oo(\widehat{G})\). 
    Since everything in the following will be linear, we assume that \(|I|=1\) for notational simplicity and drop the superscripts.

    Observe that \(V_f\subset\oo((\widehat{G}\rtimes Q)^{\{0,\dots,n\}}/\widehat{G})\subset\oo((\widehat{G}\rtimes Q)^{\{0,\dots,n\}})\) as \((\widehat{G}\rtimes Q)^{\{0,\dots,n\}}\)-representations.
    This means the \(\oo((\widehat{G}\rtimes Q)^{\{0,\dots,n\}})\)-comodule structure is easily described.
    Namely let \(g=g_0\otimes\dots\otimes g_n\in V_f\). The coaction \(\rho\) agrees with the comultiplication \(\Delta\) for \(\oo((\widehat{G}\rtimes Q)^{\{0,\dots,n\}})\).
    If \(\overline{\Delta}\) denotes the comultiplication on \(\oo(\widehat{G}\rtimes Q)\), then 
    \begin{equation*}
        \rho(g)=\Delta(g)=\overline{\Delta}(g_0)\otimes\dots\otimes\overline{\Delta}(g_n)
    \end{equation*}
    Let \(h\in\oo(\widehat{G}\rtimes Q)\).
    If we write \(\oo(Z^1(F_n,\widehat{G}))=\oo(\widehat{G})^{\otimes n}\), then \(\gamma_i^*h=1\otimes\dots\otimes \overline{h}\otimes\dots\otimes 1\) where \(\overline{h}\) is in the \(i\)-th position and \(1^*h=\varepsilon(h)\).
    Here we use the notation of \cref{lem: spectral action spectral side general nonsense}.
    Thus the action of \((1,\gamma_1,\dots,\gamma_n)\) on \(f\in\ev^*V_f=V_f\otimes\oo(Z^1(F_n,\widehat{G}))\) is given by 
    \begin{equation*}
        (1,\gamma_1,\dots,\gamma_n).f=(\id\otimes 1^*)(\overline{\Delta}(f_0))\otimes (\id\otimes \gamma_1^*)(\overline{\Delta}(f_1))\otimes\dots\otimes (\id\otimes \gamma_n^*)(\overline{\Delta}(f_n))
    \end{equation*}
    finally, applying \(\ev^*V_f\to\ev^*1\) is just given by applying \(\varepsilon\otimes \id\).
    If we let \(\overline{\varepsilon}\) denote the counit of \(\oo((\widehat{G}\rtimes Q)^{\{0,\dots,n\}})\), then \(\varepsilon=\overline{\varepsilon}^{\otimes n}\)
    Note that the following diagram commutes by definition 
    \begin{equation*}
        \begin{tikzcd}
            \oo(\widehat{G}\rtimes Q) \arrow[r, "\overline{\Delta}"] \arrow[rd, "\id"'] & \oo(\widehat{G}\rtimes Q)\otimes \oo(\widehat{G}\rtimes Q) \arrow[d, "\overline{\varepsilon}\otimes\id"] \arrow[r] & \oo(\widehat{G}\rtimes Q)\otimes\oo(\widehat{G}) \arrow[ldd, "\overline{\varepsilon}\otimes\id"] \\
                                                                                        & \oo(\widehat{G}\rtimes Q) \arrow[d]                                                                                &                                                                                                  \\
                                                                                        & \oo(\widehat{G})                                                                                                    &                                                                                                 
            \end{tikzcd}
    \end{equation*}
    and \((\overline{\varepsilon}\otimes\overline{\varepsilon})\circ\overline{\Delta}\cong\overline{\varepsilon}\).
    Thus we have 
    \begin{equation*}
        (\varepsilon\otimes\id)((\id\otimes 1^*)(\overline{\Delta}(f_0))\otimes (\id\otimes \gamma_1^*)(\overline{\Delta}(f_1))\otimes\dots\otimes (\id\otimes \gamma_n^*)(\overline{\Delta}(f_n)))=\varepsilon(f_0)\otimes\overline{f_1}\otimes\dots\otimes\overline{f_n}
    \end{equation*}
    like we wanted.
\end{proof}
\subsection{Compatibility with the \texorpdfstring{$\pi_1(G)_Q$}{pi1(G)Gamma}-grading}\label{compatibility spectral action with grading}
Let \(\Lambda\) be a \(\zl[\sqrt{q}]\)-algebra.
Given a representation \(V\in\rep_{\Lambda}(\widehat{G}\rtimes W_E)\) we can decompose it into a direct sum \(V=\bigoplus_{\chi\in X^*(Z(\widehat{G})^Q)}V_{\chi}\), where \(V_{\chi}\) is the subrepresentation of \(V\) where \(Z(\widehat{G})^Q\) acts via \(\chi\).
This gives rise to a decomposition \(\rep_{\Lambda}(\widehat{G}\rtimes W_E)=\bigoplus_{\chi\in X^*(Z(\widehat{G})^Q)}\rep_{\Lambda}(\widehat{G}\rtimes W_E)_{\chi}\).
On the other hand \cite[Proposition III.3.6.]{geometrization} gives us a canonical map \(\pi_0(\gr_{G,\Div^1_X})\to\Gamma\backslash\pi_1(G)\).
Postcomposing this map with \(\Gamma\backslash\pi_1(G)\to\pi_1(G)_Q\cong X^*(Z(\widehat{G})^Q)\) this gives rise to a decomposition 
\begin{equation*}
    \mathrm{Sat}_G(\Lambda)=\bigoplus_{\chi\in X^*(Z(\widehat{G})^Q)}\mathrm{Sat}_{G,\chi}(\Lambda)
\end{equation*}
since \(\mathrm{Sat}_G(\zl[\sqrt{q}])\subset\D_{\et}(\gr_{G,\Div^1_X},\zl[\sqrt{q}])\) by \cite[Proposition VI.7.2.]{geometrization} (and then extending to \(\Lambda\)-valued coefficients by extending linearly).
\begin{lem}\label{lem: geometric satake decomposition compatible}
    Under the geometric Satake equivalence these two decompositions match up.
\end{lem}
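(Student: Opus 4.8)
The plan is to use that both decompositions are gradings of a symmetric monoidal category by one and the same abelian group $A\defined X^*(Z(\widehat{G})^Q)\cong\pi_1(G)_Q$, and that the geometric Satake equivalence $\mathrm{Sat}_G(\Lambda)\simeq\rep_{\Lambda}(\widehat{G}\rtimes W_E)$ of \cite[Theorem VI.11.1.]{geometrization} is symmetric monoidal; what must be shown is that it carries the summand $\mathrm{Sat}_{G,\chi}(\Lambda)$ onto $\rep_{\Lambda}(\widehat{G}\rtimes W_E)_{\chi}$. Since a monoidal grading is determined by the degrees it assigns to objects, and $\mathrm{Sat}_G(\Lambda)$ is generated under tensor products, shifts, cones and retracts by the ``highest weight'' Satake sheaves, it suffices to compare the two degrees on those.

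First I would separate off the $W_E$-equivariance. On the spectral side $\rep_{\Lambda}(\widehat{G}\rtimes W_E)_{\chi}$ is cut out by the condition that the central subgroup $Z(\widehat{G})^Q\subset\widehat{G}\rtimes W_E$ act through $\chi$; restricting along $\widehat{G}\hookrightarrow\widehat{G}\rtimes W_E$ it is therefore a sum of $Z(\widehat{G})$-eigenspaces, so the $A$-grading is the pushforward along the surjection $\pi_1(G)=X^*(Z(\widehat{G}))\to\pi_1(G)_Q=A$ of the $\pi_1(G)$-grading of $\rep_{\Lambda}(\widehat{G})$, the residual $W_E$-action permuting $\pi_1(G)$-eigenspaces through the $\Gamma$-action on $\pi_1(G)$. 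The automorphic side is entirely parallel: the grading of $\mathrm{Sat}_G(\Lambda)$ is induced from the decomposition of $\mathrm{Sat}_G(\Lambda)\subset\D_{\et}(\gr_{G,\Div^1_X},\Lambda)$ (\cite[Proposition VI.7.2.]{geometrization}) by the connected components of the fibre $\gr_G$ — on which $\pi_0=\pi_1(G)$ — with $W_E$ acting through the $\Gamma$-action on $\pi_1(G)$, this decomposition then being pushed forward along $\pi_0(\gr_{G,\Div^1_X})\to\Gamma\backslash\pi_1(G)\to\pi_1(G)_Q$ as in \cite[Proposition III.3.6.]{geometrization}. Hence it suffices to match the two $\pi_1(G)$-gradings after forgetting all $W_E$-actions, compatibly with those actions, the latter being automatic by naturality of the construction; for $G=T$ a torus this step is vacuous, since $\gr_T=X_*(T)$ is discrete and $\widehat{T}$ is already the dual group.

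The remaining non-equivariant comparison is the classical fact built into geometric Satake: for a dominant coweight $\mu$ the IC sheaf of $\overline{\gr_G^{\mu}}$ lies in the connected component of $\gr_G$ labelled by the image $\bar{\mu}$ of $\mu$ in $\pi_1(G)$ and corresponds under Satake to the irreducible $\widehat{G}$-representation of highest weight $\mu$, whose $Z(\widehat{G})$-central character is the image of $\mu$ under $X^*(\widehat{T})\to X^*(Z(\widehat{G}))=\pi_1(G)$, again $\bar{\mu}$; as these objects generate $\rep_{\Lambda}(\widehat{G})$, the two $\pi_1(G)$-gradings coincide. I expect the only genuine work to be bookkeeping: checking that the labelling $\pi_0(\gr_{G,\Div^1_X})\cong\Gamma\backslash\pi_1(G)$ of \cite[Proposition III.3.6.]{geometrization}, and the identification of the components of $\gr_G$ with $\pi_1(G)$ used above, are normalized by the \emph{same} canonical isomorphism $\pi_1(G)\cong X^*(Z(\widehat{G}))$ — with matching signs and matching residual $W_E$-actions — as the one implicit in the definition of the grading of $\rep_{\Lambda}(\widehat{G}\rtimes W_E)$. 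Once this normalization is pinned down everything above is formal, drawing on the standard geometric Satake facts recalled in \cite[Lemma 5.5.7.]{affine_grassmannian_intro} and its surroundings.
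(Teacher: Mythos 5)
Your overall strategy is the same as the paper's: separate off the $W_E$-equivariance (the paper does this via Galois descent to the split case), then compare the two $\pi_1(G)$-labellings on the level of IC sheaves $\overline{\gr_G^\mu}$ and highest-weight modules $L(\mu)$, and finally invoke \cite[Proposition III.3.6.]{geometrization} to see that the component labellings are normalized compatibly. But there is a genuine gap in the middle that the paper addresses and you do not: over $\Lambda=\zl[\sqrt q]$ the category $\mathrm{Sat}_G(\Lambda)$ consists of flat-perverse ULA sheaves, which is an abelian (not stable) category that is neither semisimple nor closed under intermediate extensions, and the objects $j_{\mu!*}$ and the irreducible highest-weight modules $L(\mu)$ need not live in it. Consequently the statement that ``$\mathrm{Sat}_G(\Lambda)$ is generated under tensor products, shifts, cones and retracts by the highest weight Satake sheaves'' does not literally make sense there; ``shifts'' and ``cones'' are not operations inside the Satake category, and even the weaker claim of generation under extensions and retracts by $j_{\mu!*}$'s fails because those are not objects of the category over $\zl$.

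The paper resolves this by an explicit reduction that your argument is missing: the objects of $\mathrm{Sat}_G(\zl[\sqrt q])$ are $\ell$-torsion-free (as they come from representations on finite projective $\zl[\sqrt q]$-modules), so one may invert $\ell$ without changing supports (nor central characters on the other side), and over $\bbQ_\ell[\sqrt q]$ the category is semisimple with the clean decomposition $\bigoplus_\mu \mathrm{Rep}^{\mathrm{cont}}_{W_E}\otimes A_\mu$ matching $\bigoplus_\mu \mathrm{Rep}^{\mathrm{cont}}_{W_E}\otimes L(\mu)$. Once you insert this reduction step, the rest of your argument — matching $\bar\mu\in\pi_1(G)$ as the label of the component of $\gr_G$ supporting $A_\mu$ and as the $Z(\widehat G)$-central character of $L(\mu)$, then pushing forward to $\pi_1(G)_Q$ via \cite[Proposition III.3.6.(iii)]{geometrization} — is exactly the paper's proof. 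Your bookkeeping remark about checking normalizations is apposite but should be phrased as invoking \cite[Proposition III.3.6.(iii)]{geometrization} directly rather than left as an expectation.
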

\begin{proof}
    By Galois-decent we may assume that \(G\) is split.
    Additionally we may assume \(\Lambda=\zl[\sqrt{q}]\).
    Note that for the sheaf \(\mathcal{S}_V\) attached to some representation \(V\) has no \(\ell\)-torsion, since the representations are on finite projective \(\zl[\sqrt{q}]\)-modules.
    Therefore we can invert \(\ell\) and work with \(\mathrm{Sat}_G(\bbQ_{\ell}[\sqrt{q}])\), as the support will not change.
    As in the proof of \cite[Theorem VI.11.1]{geometrization} we write 
    \begin{equation*}
        \mathrm{Sat}_G(\bbQ_{\ell}[\sqrt{q}])=\bigoplus_{\mu}\mathrm{Rep}_{W_E}^{\mathrm{cont}}(\bbQ_{\ell}[\sqrt{q}])\otimes A_\mu
    \end{equation*}
    with \(A_\mu=j_{\mu!*}\bbQ_{\ell}[\sqrt{q}][d_\mu]\) for \(\mu\) running over the dominant cocharacters and \(j_\mu\) the inclusion of the open Schubert cell attached to \(\mu\).
    \(\rep_{\bbQ_{\ell}[\sqrt{q}]}(\widehat{G}\rtimes W_E)\) admits a similar description where we replace \(A_\mu\) with the irreducible representation \(L(\mu)\) attached to \(\mu\).
    The geometric Satake equivalence maps \(A_\mu\) to \(L(\mu)\).
    Using the description of certain open/closed subspaces of \(\gr_G\) indexed by \(\pi_1(G)\) as in \cite[Proposition III.3.6.(iii)]{geometrization} finishes the proof.
\end{proof}
Recall that we have \(\pi_0|\bun{G}|=\pi_1(G)_Q\) canonically.
This gives a direct sum decomposition 
\begin{equation*}
    \D_{\lis}(\bun{G},\Lambda)^{\omega}=\bigoplus_{\alpha\in\pi_1(G)_Q}\D_{\lis}(\bun{G}^{c_1=\alpha},\Lambda)^{\omega}
\end{equation*}
We have the following compatiblity:
\begin{lem}\label{lem: hecke action decomposition compatible}
    Under the isomorphism \(\pi_1(G)_Q\cong X^*(Z(\widehat{G})^Q)\), the Hecke operator \(T_V\) attached to a representation \(V\in \rep_{\Lambda}(\widehat{G}\rtimes W_E)_{\chi}\) sends \(A\in\D_{\lis}(\bun{G}^{c_1=\alpha},\Lambda)^{\omega}\) to \(T_V(A)\in\D_{\lis}(\bun{G}^{c_1=\alpha-\chi},\Lambda)^{\omega}\)
\end{lem}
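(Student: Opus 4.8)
The plan is to read off the support of $T_V(A)$ on $\pi_0|\bun{G}|=\pi_1(G)_Q$ directly from the Hecke correspondence. Since only the connected component of $\bun{G}$ on which $T_V(A)$ lives is at issue, I would first forget the residual Galois action and, taking $I$ a singleton, work with
\[
\bun{G}\ \xleftarrow{\ \lheck\ }\ \heck_G\ \xrightarrow{\ q\ }\ \localheck_G,\qquad \rheck\from\heck_G\to\bun{G}\times\Div^1_X,\qquad T_V(A)=\rheck_\natural\bigl(\lheck^*A\lsotimes_\Lambda q^*\mathcal{S}_V\bigr),
\]
remembering that the output is pulled back from $\D_{\lis}(\bun{G},\Lambda)^{BW_E}$ along $\Div^1_X\to[*/\cond{W_E}]$, so that its support on $\bun{G}$ is obtained by projecting the $\Div^1_X$-factor away.

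The first step is the component structure of the correspondence. The map $\lheck$ exhibits $\heck_G$ as, \'etale-locally, a $\gr_{G,\Div^1_X}$-bundle over $\bun{G}$, with $q$ recording the fibrewise point; hence $\heck_G$ breaks into open-and-closed loci indexed by pairs (a component of $\bun{G}$, a component of $\gr_{G,\Div^1_X}$). The essential geometric input is \cite[Proposition III.3.6.]{geometrization}: via the map $\pi_0(\gr_{G,\Div^1_X})\to\Gamma\backslash\pi_1(G)\to\pi_1(G)_Q$, a meromorphic modification $\mathcal{E}_1|_{X_S\setminus D_S}\xrightarrow{\cong}\mathcal{E}_2|_{X_S\setminus D_S}$ whose relative position lies over $\nu\in\pi_1(G)_Q$ satisfies $\kappa(\mathcal{E}_2)=\kappa(\mathcal{E}_1)-\nu$. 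Thus on the piece of $\heck_G$ where $\lheck$ hits $\bun{G}^{c_1=\alpha}$ and $q$ hits the locus of $\gr_{G,\Div^1_X}$ lying over $\nu$, the map $\rheck$ lands in $\bun{G}^{c_1=\alpha-\nu}\times\Div^1_X$.

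The second step is to locate $q^*\mathcal{S}_V$. By \cref{lem: geometric satake decomposition compatible} the summand $\rep_\Lambda(\widehat{G}\rtimes W_E)_\chi$ corresponds under geometric Satake to $\mathrm{Sat}_{G,\chi}(\Lambda)$, which by the very definition of the grading on $\mathrm{Sat}_G(\Lambda)$ consists of sheaves supported on the part of $\localheck_G$ lying over the $\chi$-component of $\gr_{G,\Div^1_X}$; and the assignment $A\mapsto\rshhom(\bbD A,\Lambda)$ of \cref{rem: satake category embedding in lisse} does not change supports. Hence, for $A$ supported on $\bun{G}^{c_1=\alpha}$, the object $\lheck^*A\lsotimes_\Lambda q^*\mathcal{S}_V$ is supported on the locus of $\heck_G$ lying over $\bun{G}^{c_1=\alpha}$ and over the $\chi$-component of $\gr_{G,\Div^1_X}$, which $\rheck$ sends into $\bun{G}^{c_1=\alpha-\chi}\times\Div^1_X$. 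As $\rheck_\natural$ is supported in the closure of the image of the support of its argument, and $\bun{G}^{c_1=\alpha-\chi}$ is open-and-closed, $T_V(A)$ is supported on $\bun{G}^{c_1=\alpha-\chi}$; it stays compact because Hecke operators preserve $\D_{\lis}(\bun{G},\Lambda)^\omega$. One could instead run this through \cref{cor:vanishing on components by vanishing on pullback}, checking $s_b^*T_V(A)=0$ for $\kappa(b)\ne\alpha-\chi$, but the support computation already gives it.

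I expect the genuine difficulty to be entirely in the sign bookkeeping: extracting from \cite[Proposition III.3.6.]{geometrization} that a modification over $\nu$ shifts the Kottwitz invariant by $-\nu$ rather than $+\nu$, and matching this against the normalization of $\pi_1(G)_Q\cong X^*(Z(\widehat{G})^Q)$ used in \cref{lem: geometric satake decomposition compatible} (equivalently, that the $\chi$-summand of Satake sits over the $\chi$-, and not the $-\chi$-, component of $\gr_{G,\Div^1_X}$). Once those signs are fixed, the rest is a formal manipulation of supports along $\lheck$, $q$, $\rheck$.
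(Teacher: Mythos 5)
Your proposal is correct and uses the same two pillars as the paper: \cite[Proposition~III.3.6.]{geometrization} to read off how the Kottwitz invariant shifts along a modification, and \cref{lem: geometric satake decomposition compatible} to locate the support of $q^*\mathcal{S}_V$ among components of the local Hecke stack. The paper organizes the argument slightly differently — it pulls back to $\geompt$, fixes a geometric point $b$, uses the Beauville–Laszlo map $\mathrm{BL}_b$ to identify $\rheck^{-1}(b)$ with $\gr_{G,C}$ so that \cite[Proposition~III.3.6.]{geometrization} literally applies as stated, packages the component data into a single function $s\from|\heck_{G,C}|\to\pi_1(G)_Q\times\pi_1(G)_Q\times\Gamma\backslash\pi_1(G)$, and closes via \cref{cor:vanishing on components by vanishing on pullback} — whereas you argue about supports directly on $\heck_G$ and only mention the pullback criterion as an alternative. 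These are the same proof; yours is a touch more informal about why $\rheck_\natural$ doesn't spread support across open-and-closed components, but since the target components are open-and-closed this is unproblematic, and the paper's route through $\mathrm{BL}_b^*$ is just a cleaner way of saying the same thing.
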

\begin{proof}
    Consider the Hecke diagram after pullback to \(\geompt\)
    \begin{equation*}
        \begin{tikzcd}
            & {\heck_{G,C}} \arrow[ld, "\lheck"'] \arrow[rd, "\rheck"] &             \\
{\bun{G,C}} &                                                          & {\bun{G,C}}
\end{tikzcd}
    \end{equation*}
    Let \(b\in B(G)\cong |\bun{G}|\) be a point and \(\mathcal{E}_b\) the attached \(G\)-bundle on \(X_{C}\).
    Fixing a modification \(\mathcal{E}_0|_{X_C\setminus\{\infty\}}\cong \mathcal{E}_b|_{X_C\setminus\{\infty\}}\) we see that we have a pullback diagram 
    \begin{equation*}
        \begin{tikzcd}
            {\gr_{G,C}} \arrow[d, "p"'] \arrow[r, "\mathrm{BL}_{b}"] & {\heck_{G,C}} \arrow[d,"\rheck"] \\
            \{b\} \arrow[r]                                          & {\bun{G,C}}            
            \end{tikzcd}
    \end{equation*}
    By \cite[Proposition III.3.6.]{geometrization} the map \(|\gr_{G,C}|\xto{\mathrm{BL_b}}|\bun{G}|\xto{\kappa}\pi_1(G)_Q\) is induced by the map opposite to \(\pi_1(G)\to\pi_1(G)_Q\) shifted by \(\kappa(b)\) and the decomposition of \(\gr_{G,C}\) into open and closed subspaces indexed by \(\pi_1(G)\)
    Let us identify \(b\in B(G)\) with the corresponding map \(\geompt\to\bun{G,C}\).
    By \cref{cor:vanishing on components by vanishing on pullback} it suffices to check \(b^*T_V(A)=0\) for \(\kappa(b)\neq\alpha-\chi\).
    By base change we have \(b^*T_V(A)=p_{\natural}\mathrm{BL}_{b}^*(\lheck^*V\lsotimes q^*\mathcal{S}_{V})\).
    The maps \(\lheck,\rheck,q\) give rise to a map \(s\from|\heck_{G,C}|\to\pi_1(G)_Q\times\pi_1(G)_Q\times\Gamma\backslash\pi_1(G)\) recording in which connected component the left bundle, right bundle lies and what type of modification some point of the Hecke stack has.
    If \(x\) is some point in the support of \(\lheck^*V\lsotimes q^*\mathcal{S}_{V}\), then \(s(x)=(\alpha,\beta,\chi')\) for some \(\chi'\) lifting \(\chi\) by \cref{lem: geometric satake decomposition compatible}.
    It follows that \(\beta=\alpha-\chi\).
    Thus we kill \(\lheck^*V\lsotimes q^*\mathcal{S}_{V}\) by applying \(\mathrm{BL}_{b}^*\) whenever \(b\neq \beta=\alpha-\chi\), like we wanted to show.
\end{proof}
\begin{lem}\label{lem: grading indperf compatible}
    Fix a choice of Whittaker datum giving rise to a sheaf \(\mathcal{W}\in \D_{\lis}(\bun{G},\Lambda)\).
    Assume that \(\ell\) does not divide \(|\pi_0(Z(G))|\).
    Then the functor 
    \begin{equation*}
        \mathrm{Ind}\perf^{\qc}([Z^1(W_E,\widehat{G})/\widehat{G}])\to\D_{\lis}(\bun{G},\Lambda)
    \end{equation*}
    given by \(A\mapsto A *\mathcal{W}\) preserves the decomposition into \(X^*(Z(\widehat{G})^Q)\cong\pi_1(G)_Q\)-graded pieces after twisting the isomorphism \(X^*(Z(\widehat{G})^Q)\cong\pi_1(G)_Q\) by \(-1\).
\end{lem}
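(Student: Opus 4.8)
\emph{Main idea.} The plan is to reduce to \cref{lem: hecke action decomposition compatible}. The point is that, by \cite[Theorem X.0.2.]{geometrization}, the spectral action is assembled out of the Hecke operators $T_V$, and that $\perf^{\qc}([Z^1(W_E,\widehat{G})/\widehat{G}])$ is generated (under cones, shifts and retracts) by pullbacks of finite-dimensional $(\widehat{G}\rtimes Q)$-representations; so everything comes down to tracking what $T_V$ does to the component carrying $\mathcal{W}$.

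\emph{Setting up the decompositions.} First I would record that $Z(\widehat{G})^Q$, being a diagonalizable group central in the inertia of $[Z^1(W_E,\widehat{G})/\widehat{G}]$, makes $\D_{\qcoh}([Z^1(W_E,\widehat{G})/\widehat{G}])$ into a category graded by $X^*(Z(\widehat{G})^Q)$, giving $\D_{\qcoh}([Z^1(W_E,\widehat{G})/\widehat{G}])\simeq\prod_\chi\D_{\qcoh}([Z^1(W_E,\widehat{G})/\widehat{G}])_\chi$, $\perf^{\qc}([Z^1(W_E,\widehat{G})/\widehat{G}])\simeq\bigoplus_\chi\perf^{\qc}([Z^1(W_E,\widehat{G})/\widehat{G}])_\chi$, and $\D_{\qcoh}([Z^1(W_E,\widehat{G})/\widehat{G}])_\chi=\mathrm{Ind}\,\perf^{\qc}([Z^1(W_E,\widehat{G})/\widehat{G}])_\chi$ is generated under colimits by its compact objects. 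On the automorphic side $\D_{\lis}(\bun{G},\Lambda)\simeq\prod_{\alpha\in\pi_1(G)_Q}\D_{\lis}(\bun{G}^{c_1=\alpha},\Lambda)$, so each factor is closed under all (co)limits and retracts inside $\D_{\lis}(\bun{G},\Lambda)$; and $\mathcal{W}$, being by construction supported on the component $\bun{G}^1$ of the trivial bundle, with $c_1=\kappa(1)=0$, lies in $\D_{\lis}(\bun{G}^{c_1=0},\Lambda)$. Since $(-)*\mathcal{W}$ is exact and commutes with colimits (it is the $\mathrm{Ind}$-extension of the action on compact objects, which is colimit-preserving in each variable), it now suffices to show $\perf^{\qc}([Z^1(W_E,\widehat{G})/\widehat{G}])_\chi*\mathcal{W}\subseteq\D_{\lis}(\bun{G}^{c_1=-\chi},\Lambda)$ for every $\chi$, where $\chi\in X^*(Z(\widehat{G})^Q)$ is identified with its image in $\pi_1(G)_Q$.

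\emph{Checking the generators.} By \cite[Theorem VIII.5.1.]{geometrization} (using that $\ell$ is very good), $\perf^{\qc}([Z^1(W_E,\widehat{G})/\widehat{G}])$ is generated under cones, shifts and retracts by the pullbacks $\mathrm{pr}^*V'$ of $V'\in\rep_\Lambda(\widehat{G})$ along the cocycle-forgetting map $\mathrm{pr}\from[Z^1(W_E,\widehat{G})/\widehat{G}]\to B\widehat{G}$. I would then check that the projection $p\from[Z^1(W_E,\widehat{G})/\widehat{G}]\to B(\widehat{G}\rtimes Q)$ of \cite[Theorem X.0.2.]{geometrization} equals $\iota\circ\mathrm{pr}$ after forgetting its residual $W$-action, where $\iota\from B\widehat{G}\hookrightarrow B(\widehat{G}\rtimes Q)$; hence $p^*V\cong\mathrm{pr}^*\res^{\widehat{G}\rtimes Q}_{\widehat{G}}V$ for $V\in\rep_\Lambda(\widehat{G}\rtimes Q)$, and since $\mathrm{pr}^*V'$ is a retract of $\mathrm{pr}^*\res^{\widehat{G}\rtimes Q}_{\widehat{G}}\ind_{\widehat{G}}^{\widehat{G}\rtimes Q}V'=p^*\ind_{\widehat{G}}^{\widehat{G}\rtimes Q}V'$, the objects $p^*V$ ($V\in\rep_\Lambda(\widehat{G}\rtimes Q)$) also generate $\perf^{\qc}([Z^1(W_E,\widehat{G})/\widehat{G}])$ under cones, shifts and retracts. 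All of these maps are $Z(\widehat{G})^Q$-equivariant, and $\ind_{\widehat{G}}^{\widehat{G}\rtimes Q}$ preserves the grading (because $Z(\widehat{G})^Q$ is $Q$-fixed, so it acts by the same character on every $Q$-twist), so $\perf^{\qc}([Z^1(W_E,\widehat{G})/\widehat{G}])_\chi$ is generated by $p^*V$ with $V\in\rep_\Lambda(\widehat{G}\rtimes Q)_\chi$. For such a $V$, the identification of \cite[Theorem X.0.2.]{geometrization} gives $p^*V*\mathcal{W}\cong T_V(\mathcal{W})$ (inflating $V$ along $\widehat{G}\rtimes W_E\to\widehat{G}\rtimes Q$), and \cref{lem: hecke action decomposition compatible} with $\alpha=0$ — applied after writing $\mathcal{W}$ as a filtered colimit of compact objects of $\D_{\lis}(\bun{G}^{c_1=0},\Lambda)$ and using that $T_V$ commutes with colimits — gives $T_V(\mathcal{W})\in\D_{\lis}(\bun{G}^{c_1=-\chi},\Lambda)$. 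Taking retracts, $\mathrm{pr}^*V'*\mathcal{W}$ lies there too.

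\emph{Conclusion, and the main obstacle.} Since $\D_{\qcoh}([Z^1(W_E,\widehat{G})/\widehat{G}])_\chi$ is generated from the $p^*V$ ($V\in\rep_\Lambda(\widehat{G}\rtimes Q)_\chi$) by colimits, cones, shifts and retracts, $(-)*\mathcal{W}$ preserves these, and $\D_{\lis}(\bun{G}^{c_1=-\chi},\Lambda)$ is closed under them in $\D_{\lis}(\bun{G},\Lambda)$, we conclude $\D_{\qcoh}([Z^1(W_E,\widehat{G})/\widehat{G}])_\chi*\mathcal{W}\subseteq\D_{\lis}(\bun{G}^{c_1=-\chi},\Lambda)$ for all $\chi$; the $(-1)$-twist in the statement is precisely the sign $\alpha\mapsto\alpha-\chi$ from \cref{lem: hecke action decomposition compatible}. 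One also reduces to finite level throughout: the spectral action on $\D_{\lis}(\bun{G},\Lambda)$ and both gradings are compatible with the colimit over open $P$ of the corresponding pictures for a discretization $W$ of $W_E/P$, so it is enough to argue there. The step I expect to be the main obstacle is the bookkeeping in the \emph{generators} paragraph: carefully matching the way \cite[Theorem X.0.2.]{geometrization} packages the spectral action (via $p$ and its residual $W$-action) with both the Hecke operators and the generation statement of \cite[Theorem VIII.5.1.]{geometrization} (phrased via the forgetful map $\mathrm{pr}$ to $B\widehat{G}$), i.e.\ the passage through $\ind_{\widehat{G}}^{\widehat{G}\rtimes Q}$ and $\res^{\widehat{G}\rtimes Q}_{\widehat{G}}$, and verifying the $Z(\widehat{G})^Q$-equivariance of all maps in sight so that every grading identification is the evident one.
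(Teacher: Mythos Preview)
Your proposal is correct and follows essentially the same route as the paper: reduce to generators via \cite[Theorem VIII.5.1.]{geometrization}, pass from $\widehat{G}$-representations to $(\widehat{G}\rtimes Q)$-representations using $\ind_{\widehat{G}}^{\widehat{G}\rtimes Q}$ (noting this preserves the $Z(\widehat{G})^Q$-grading), and then invoke \cref{lem: hecke action decomposition compatible}. Your write-up is in fact more careful than the paper's on two points the paper leaves implicit: that $\mathcal{W}$ sits in the $c_1=0$ component, and that one should write $\mathcal{W}$ as a colimit of compact objects before applying \cref{lem: hecke action decomposition compatible}.
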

\begin{proof}
    Let \(P\) be an open pro-\(p\) subgroup of the wild inertia and \(W\subset W_E/P\) a discretization of the tame inertia.
    By construction of the spectral action it suffices to check the claim for \(\perf([Z^1(W/\widehat{G})/\widehat{G}])\).
    By \cite[Theorem VIII.5.1.]{geometrization} the \(\infty\)-category \(\mathrm{Ind}\perf([Z^1(W,\widehat{G})/\widehat{G}])\) is generated by \(\perf(B\widehat{G})\) under cones and retracts.
    Note that \(\perf(B\widehat{G})=\bigoplus_{\chi\in X^*(Z(\widehat{G})^Q)}\perf(B\widehat{G})_{\chi}\) where \(\perf(B\widehat{G})_{\chi}\) are those perfect complexes where \(Z(\widehat{G})^Q\) acts by \(\chi\).
    It is clear that \(f^*\from\perf(B\widehat{G})\to\mathrm{Ind}\perf([Z^1(W,\widehat{G})/\widehat{G}])\) is compatible with the \(X^*(Z(\widehat{G})^Q)\)-grading on both sides.
    It is thus sufficient to see that the functor \(\perf(B\widehat{G})\to\D_{\lis}(\bun{G},\Lambda)\) given by \(V\mapsto f^*V*\mathcal{W}\) preserves the grading as claimed.
    Note that any \(\widehat{G}\)-representation \(V\) where \(Z(\widehat{G})^Q\) acts by some character \(\chi\) embeds into \(\ind_{\widehat{G}}^{\widehat{G}\rtimes Q}V\) as a direct summand.
    Observe that \(Z(\widehat{G})^Q\) still acts via \(\chi\) on \(\ind_{\widehat{G}}^{\widehat{G}\rtimes Q}V\).
    Then by the construction of the spectral action this lemma reduces to \cref{lem: hecke action decomposition compatible}.
\end{proof}
	\section{On the categorical form of Fargues' conjecture}
\subsection{When algebra actions induce equivalences}\label{equivalence criterion}
\begin{lem}\label{lem: equivalence on endomorphism ring of tensor unit implies equivalence on perfect complexes}
    Let \(R\) and \(S\) be \(E_1\)-ring spectra.
    Let \(a\from\perf(R)\to\End(\perf(S))\) be an exact functor sending \(R\) to \(\id_{\perf(S)}\).
    Evaluation on \(S\) gives an induced functor \(f\from\perf(R)\to\perf(S)\) defined by \(f(M)=a(M)(S)\).
    If the induced map \(\End_{R}(R)\to\End_S(S)\) is an equivalence, then \(f\) is an equivalence.
\end{lem}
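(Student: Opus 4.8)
The plan is a two‑variable dévissage, based on the well‑known fact that \(\perf(R)\) is the smallest thick stable subcategory of \(\mathrm{Mod}_R\) containing \(R\) (and likewise \(\perf(S)\) is generated by \(S\)), together with the observation that the hypothesis is exactly the statement of the lemma "at the generator". First I would record the elementary facts. The evaluation functor \(\mathrm{ev}_S\from\End(\perf(S))\to\perf(S)\), \(F\mapsto F(S)\), is exact, since in a functor \(\infty\)-category (co)fibers are formed objectwise and the objectwise (co)fiber of exact functors is exact; hence \(f=\mathrm{ev}_S\circ a\) is exact, and \(f(R)=a(R)(S)=\id_{\perf(S)}(S)=S\).

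Next, for \(M,N\in\perf(R)\), functoriality of \(a\) and of \(\mathrm{ev}_S\) furnishes a natural map of mapping spectra
\begin{equation*}
\theta_{M,N}\from\mathrm{Map}_{\perf(R)}(M,N)\longrightarrow\mathrm{Map}_{\End(\perf(S))}(a(M),a(N))\longrightarrow\mathrm{Map}_{\perf(S)}(f(M),f(N)),
\end{equation*}
which is precisely the effect of \(f\) on mapping spectra. For \(M=N=R\) the source is \(\End_R(R)\), the target is \(\End_S(S)\) (using \(f(R)=S\) and \(a(R)=\id\)), and \(\theta_{R,R}\) is exactly the map assumed to be an equivalence. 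Now I would run the dévissage. Fix \(M\); the functors \(N\mapsto\mathrm{Map}_{\perf(R)}(M,N)\) and \(N\mapsto\mathrm{Map}_{\perf(S)}(f(M),f(N))\) are exact, so \(\{N:\theta_{M,N}\text{ is an equivalence}\}\) is a thick stable subcategory of \(\perf(R)\); for \(M=R\) it contains \(R\) by hypothesis, hence it is all of \(\perf(R)\). Feeding this back in, for each fixed \(N\) the class \(\{M:\theta_{M,N}\text{ is an equivalence}\}\) is again a thick stable subcategory containing \(R\), hence all of \(\perf(R)\). Thus \(f\) is fully faithful.

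For essential surjectivity I would note that the essential image of a fully faithful exact functor between idempotent‑complete stable \(\infty\)-categories is closed under shifts, finite colimits, and retracts — the last because an idempotent on \(f(M)\) is, by full faithfulness, the image of an idempotent on \(M\), which splits in \(\perf(R)\). So the essential image is a thick stable subcategory containing \(f(R)=S\), hence equal to \(\perf(S)\), and \(f\) is an equivalence. The argument is routine; the only points that need care are the bookkeeping identifying \(\theta_{R,R}\) with the hypothesised map \(\End_R(R)\to\End_S(S)\), and the order of operations — one must establish full faithfulness before concluding that the essential image is closed under retracts, since exactness alone does not suffice for that.
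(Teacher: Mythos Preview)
Your proof is correct and takes a genuinely different route from the paper's. You argue directly: a two-variable thick-subcategory d\'evissage on mapping spectra gives full faithfulness, and then the essential image of a fully faithful exact functor between idempotent-complete stable \(\infty\)-categories is thick, hence all of \(\perf(S)\) once it contains \(S=f(R)\). The paper instead passes to ind-completions: it first argues that the image of \(a\) lands in exact endofunctors, then extends to a colimit-preserving functor \(\LMod_R\to\LEnd(\LMod_S)\), evaluates at \(S\) to obtain \(g\from\LMod_R\to\LMod_S\), invokes Morita theory (\cite[Proposition 7.1.2.4.]{ha}) to write \(g\simeq M\otimes_R-\) for an \(S\)-\(R\)-bimodule \(M\), and finally identifies \(M\) with \(S\) regarded as an \(S\)-\(R\)-bimodule via the assumed equivalence \(R^{\op}\cong\End_R(R)\to\End_S(S)\cong S^{\op}\); restricting to compact objects recovers \(f\). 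Your argument is more elementary---no Morita theory, no bimodules, no passage to presentable categories---while the paper's approach yields the slightly sharper structural conclusion that the hypothesis forces \(R\simeq S\) as \(E_1\)-ring spectra and that \(f\) is the restriction of tensoring along this equivalence.
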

\begin{proof}
    Taking ind-completions we get a functor \(\LMod_R\to \mathrm{Ind}(\End(\perf(S)))\).
    Observe that in general if we have an exact functor of stable \(\infty\)-categories \(f\from\mathcal{C}\to \mathcal{D}\) and \(\mathcal{D}'\subset \mathcal{D}\) is some full stable subcategory closed under retracts, then so is \(f^{-1}(\mathcal{D}')\subset \mathcal{C}\).
    Therefore since \(\perf(R)\) is the smallest stable subcategory of \(\LMod_R\) containing \(R\) that is closed under retracts, the essential image of \(\perf(R)\) in \(\mathrm{Ind}(\End(\perf(S)))\) is contained in the smallest stable subcategory containing \(\id_{\perf(S)}\) that is closed under retracts.
    Note that \(\End^{\mathrm{ex}}(\perf(S))\) consisting of exact endofunctors is a stable subcategory of \(\mathrm{Ind}(\End(\perf(S)))\) that is closed under retracts, hence the action is actually given by a functor
    \begin{equation*}
        \perf(R)\to\End^{\mathrm{ex}}(\perf(S)).
    \end{equation*}
    By \cite[Proposition 5.5.7.8.]{htt} taking ind-categories gives us a natural functor 
    \begin{equation*}
        \End^{\mathrm{ex}}(\perf(S))\to\LEnd(\LMod_S)
    \end{equation*}
    from exact endofunctors of \(\perf(S)\) to colimit preserving endofunctors of \(\LMod_S\) that is exact. 
    For exactness, the functor is given by restricting the image of the composite 
    \begin{equation*}
        \End^{\mathrm{ex}}(\perf(S))\to\fun(\perf(S),\LMod_S)\to\fun(\LMod_S,\LMod_S)
    \end{equation*}
    The second arrow is exact, being an adjoint between stable categories, the first arrow is exact as the inclusion \(\perf(S)\to\LMod_S\) is.
    We obtain an exact monoidal functor 
    \begin{equation*}
        \perf(R)\to\LEnd(\LMod_S)
    \end{equation*}
    This extends to a colimit preserving functor 
    \begin{equation*}
        \LMod_R\to\LEnd(\LMod_S)
    \end{equation*}
    One recovers the original functor \(a\) via the equivalence \(\LEnd(\LMod_S)\simeq \End^{\mathrm{ex}}(\perf(S))\) that exists by \cite[Proposition 5.5.7.8.]{htt} and by restricting \(\LMod_R\) to \(\perf(R)\).
    Evaluation on \(S\in\LMod_S\) gives a functor \(g\from \LMod_R\to\LMod_S\).
    This is colimit preserving as colimits in functor categories are computed pointwise.
    Thus by \cite[Proposition 7.1.2.4.]{ha} there is an \(S\)-\(R\)-bimodule \(M\) such that \(g\simeq M\otimes_R -\).
    From this description it is clear that \(M\cong g(R)\) as a left \(S\)-module with right \(R\)-module structure induced by \(R^{\op}\cong\End_R(R)\to\End_S(g(R))\).
    This fully determines the structure as an \(S\)-\(R\)-bimodule by \cite[Remark 4.3.3.8.]{ha}.
    Now by assumption \(g(R)\cong S\) and the morphism \(R\cong\End_R(R)\to\End_S(g(R))\) is an equivalence.
    This means \(R\cong S\) and \(g(R)\) is equivalent to \(S\) considered as an \(S\)-\(S\)-bimodule in the natural way and then as an \(S\)-\(R\)-bimodule via the isomorphism \(R\cong S\).
    Thus \(g\) is an equivalence.
    Restricting to perfect modules implies that \(f\) is also an equivalence.
\end{proof}
\begin{cor}\label{cor: isomorphism on center equivalence of categories}
    Let \(R\) and \(S\) be commutative rings (concentrated in degree 0).
    Assume we have an exact action of \(\perf(R)\) on \(\perf(S)\) denoted by \(-*-\).
    Then we get an induced map \(R\to\pi_{0}\bcenter(\perf(S))\) by the action of \(R\) on \(R*-\cong\id_{\perf(S)}\).
    If this map is an isomorphism, then the functor \(-*S\) is an equivalence.
\end{cor}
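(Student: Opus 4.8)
The plan is to reduce the statement to \cref{lem: equivalence on endomorphism ring of tensor unit implies equivalence on perfect complexes}. An exact action of $\perf(R)$ on $\perf(S)$ is in particular an exact functor $a\from\perf(R)\to\End(\perf(S))$ carrying the monoidal unit $R$ to $\id_{\perf(S)}$, so we are exactly in the situation of that lemma, and the functor it produces, $f(M)=a(M)(S)$, is precisely $-*S$. Thus it suffices to verify the hypothesis of the lemma: that the map $\End_R(R)\to\End_S(S)$ induced by $f$ is an equivalence of $E_1$-ring spectra.

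Since $R$ and $S$ are discrete, the ring spectra $\End_R(R)\simeq R$ and $\End_S(S)\simeq S$ are concentrated in degree $0$, so a map between them is an equivalence as soon as it is an isomorphism on $\pi_0$; hence we must show that the induced ring homomorphism $R\to S$ is an isomorphism. I would first record the standard identification
\begin{equation*}
    \pi_0\bcenter(\perf(S))\xto{\cong}\End_S(S)=S,
\end{equation*}
obtained by sending a natural endomorphism of $\id_{\perf(S)}$ to its value on $S$; this is an isomorphism because $\perf(S)$ is generated under shifts, cones and retracts by $S$, and conversely, $S$ being commutative, every element of $S$ defines such a natural transformation by multiplication.

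It then remains to match this up with the map of the statement. Unwinding the definitions, $f$ acts on endomorphisms of the unit as the composite
\begin{equation*}
    R=\End_{\perf(R)}(R)\xto{a}\End(\id_{\perf(S)})=\bcenter(\perf(S))\xto{\mathrm{ev}_S}\End_S(S)=S,
\end{equation*}
where the first arrow is the map induced on endomorphisms of the unit by the functor $a$ --- which is exactly the map $R\to\pi_0\bcenter(\perf(S))$ of the statement, coming from the action of $R$ on $R*-\cong\id_{\perf(S)}$ --- and the second arrow is evaluation at $S$. On $\pi_0$ the first map is an isomorphism by hypothesis and the second is the canonical isomorphism recorded above, so the composite $R\to S$ is an isomorphism. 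Therefore $\End_R(R)\to\End_S(S)$ is an equivalence, and \cref{lem: equivalence on endomorphism ring of tensor unit implies equivalence on perfect complexes} yields that $-*S$ is an equivalence.

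The only genuine work is the compatibility asserted in the last paragraph: identifying the map on endomorphisms of the unit induced by $a$, post-composed with evaluation at $S$, with the map $\End_R(R)\to\End_S(S)$ coming from $f$. This is formal, following from the fact that $f$ is ``evaluation of the action at $S$'' together with the bookkeeping identifications $\End_{\perf(R)}(R)=R$, $a(R)=\id_{\perf(S)}$ and $f(R)=S$, but it is the place where one must be careful.
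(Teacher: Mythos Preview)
Your proof is correct and takes essentially the same approach as the paper: both reduce to \cref{lem: equivalence on endomorphism ring of tensor unit implies equivalence on perfect complexes} by verifying that the induced map $\End_R(R)\to\End_S(S)$ is an equivalence, using the identification $\pi_0\bcenter(\perf(S))\cong S$ together with a commutativity check that matches the hypothesis map $R\to\pi_0\bcenter(\perf(S))$ with the map on endomorphism rings induced by $-*S$. The paper records this as a single commuting square (with the inverse isomorphism $\End_S(S)\to\pi_0\bcenter(\perf(S))$ in place of your evaluation map), but the content is the same.
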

\begin{proof}
    The following diagram commutes:
    \begin{equation*}
        \begin{tikzcd}
            \End_R(R) \arrow[r, "-*S"] & \End_S(S) \arrow[d, "a"] \\
            R \arrow[r] \arrow[u, "r"] & \pi_0\bcenter(\perf(S)) 
            \end{tikzcd}
    \end{equation*}
    Here \(r\) is the canonical map \(R\cong R^{\op}\cong\End_R(R)\) and \(a\) is induced by the action of \(\End_S(S)\) on \(S\otimes-\cong\id_{\perf(S)}\).
    By assumption the bottom arrow is and isorphism, so is \(r\) and \(s\), hence \(-*S\) satisfies the assumption of \cref{lem: equivalence on endomorphism ring of tensor unit implies equivalence on perfect complexes}.
\end{proof}
\subsection{Reduction to degree 0}\label{reduction to degree 0}
\begin{defn}
    Let \(\mathcal{W}\) denote the lisse sheaf on \(\bun{T}\) associated to the representation \(\cind_{*}^{T(E)}1\) that is attached to the unique Whittaker datum for \(T\).
\end{defn}
\begin{defn}
    Given a character \(\chi\in X^*(\widehat{T})\) let \(\oo[\chi]\) denote the line bundle on \([Z^1(W_E,\widehat{T})/\widehat{T}]\) given by \(f^*V_{\chi}\) where \(f\from [Z^1(W_E,\widehat{T})/\widehat{T}]\to B\widehat{T}\) is the natural map and \(V_{\chi}\) is the \(\widehat{T}\)-representation attached to \(\chi\) considered as a line bundle on \(B\widehat{T}\).
\end{defn}
\begin{lem}
    We have a decomposition 
    \begin{equation*}
        \perf([Z^1(W_E,\widehat{T})/\widehat{T}])\simeq\bigoplus_{[\chi]\in X^*(\widehat{T}^Q)}\perf([Z^1(W_E,\widehat{T})/\widehat{T}])_{[\chi]}
    \end{equation*}
    Additionally each \(\perf([Z^1(W_E,\widehat{T})/\widehat{T}])_{[\chi]}\) is equivalent to \(\perf([Z^1(W_E,\widehat{T})/\widehat{T}])_0\) via \(\oo[\chi]\otimes-\) for some \(\chi\in X^*(\widehat{T})\) lifting \(\chi\) and additionally \(\perf([Z^1(W_E,\widehat{T})/\widehat{T}])_0\simeq\perf(\Hom(T(E),\gm))\).
\end{lem}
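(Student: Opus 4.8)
The plan is to realize $[Z^1(W_E,\widehat{T})/\widehat{T}]$ as a gerbe banded by a diagonalizable group and read everything off from \cref{sheaves on gerbes banded by diagonalizable groups}. First I would note that, since $\widehat{T}$ is commutative and $W_E$ acts on it through the finite quotient $Q$, any continuous cocycle $W_E\to\widehat{T}(R)$ is a homomorphism on $W_F$ and hence factors through the relative Weil group $W_{F/E}$; thus $Z^1(W_E,\widehat{T})=Z^1(W_{F/E},\widehat{T})$ compatibly with the $\widehat{T}$-conjugation, so that \cref{lem: geometric incarnation langlands duality tori} applies verbatim to $[Z^1(W_E,\widehat{T})/\widehat{T}]$. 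That lemma, together with the inertia computation preceding it, exhibits $\Psi\colon[Z^1(W_E,\widehat{T})/\widehat{T}]\to\Hom(T(E),\gm)$ as a gerbe over a scheme (\cref{lem: Hom(T(E) G_m)}), banded by the constant group scheme $\widehat{T}^Q$. Since $\widehat{T}$ is diagonalizable and $Q$ is finite, $\widehat{T}^Q$ is again diagonalizable, with character group $X^*(\widehat{T}^Q)=X_*(T)_Q$; in particular the restriction map $X^*(\widehat{T})\to X^*(\widehat{T}^Q)$ is surjective.

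With this in hand, \cref{lem: perfect complexes on gerbes} directly yields both the direct sum decomposition
\begin{equation*}
\perf([Z^1(W_E,\widehat{T})/\widehat{T}])\simeq\bigoplus_{[\chi]\in X^*(\widehat{T}^Q)}\perf([Z^1(W_E,\widehat{T})/\widehat{T}])_{[\chi]}
\end{equation*}
indexed by the band action on cohomology sheaves, and the equivalence $\Psi^*\colon\perf(\Hom(T(E),\gm))\xrightarrow{\sim}\perf([Z^1(W_E,\widehat{T})/\widehat{T}])_0$. It then remains to compare the graded pieces among themselves. For $\chi\in X^*(\widehat{T})$ the line bundle $\oo[\chi]=f^*V_\chi$ is invertible with inverse $\oo[-\chi]$, because $f^*V_\chi\otimes f^*V_{-\chi}\cong f^*(V_\chi\otimes V_{-\chi})\cong f^*V_0\cong\oo$; hence $\oo[\chi]\otimes-$ is an autoequivalence of $\perf([Z^1(W_E,\widehat{T})/\widehat{T}])$. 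I would then pin down which graded summand $\oo[\chi]$ lies in: for a torus the (twisted) conjugation action of $\widehat{T}$ on $Z^1(W_E,\widehat{T})$ has stabilizer $\widehat{T}^{W_E}=\widehat{T}^Q$ at every point, which is precisely the band, and this maps to the automorphism group $\widehat{T}$ of the point of $B\widehat{T}$ by the inclusion; therefore $\widehat{T}^Q$ acts on $\oo[\chi]=f^*V_\chi$ through $\chi|_{\widehat{T}^Q}=[\chi]$, i.e.\ $\oo[\chi]\in\perf([Z^1(W_E,\widehat{T})/\widehat{T}])_{[\chi]}$. Since the band acts monoidally on tensor products, tensoring with $\oo[\chi]$ carries the $[\psi]$-graded piece into the $([\psi]+[\chi])$-graded piece; restricting therefore yields an equivalence $\oo[\chi]\otimes-\colon\perf([Z^1(W_E,\widehat{T})/\widehat{T}])_0\xrightarrow{\sim}\perf([Z^1(W_E,\widehat{T})/\widehat{T}])_{[\chi]}$ with inverse $\oo[-\chi]\otimes-$. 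Choosing, for each class of $X^*(\widehat{T}^Q)$, a lift in $X^*(\widehat{T})$ — possible by the surjectivity above — gives the stated assertion.

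The genuine content has already been packaged into the two lemmas invoked here: \cref{lem: geometric incarnation langlands duality tori}, whose proof is where classical Langlands duality for tori and the condensed homology of \cref{condensed group (co)homology} enter, and \cref{lem: perfect complexes on gerbes}, which itself rests on the analysis of sheaves on diagonalizably banded gerbes. Consequently this lemma is essentially an assembly step, and I do not expect a real obstacle; the only points requiring care are the legitimacy of replacing $W_E$ by $W_{F/E}$ so that \cref{lem: geometric incarnation langlands duality tori} is literally applicable, and verifying that $\widehat{T}^Q$ acts on $\oo[\chi]$ through $[\chi]$ with the sign convention matching the statement.
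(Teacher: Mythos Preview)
Your proposal is correct and follows essentially the same route as the paper: both rely on \cref{lem: perfect complexes on gerbes} applied to the gerbe structure from \cref{lem: geometric incarnation langlands duality tori}, with the only nontrivial remaining step being that \(\oo[\chi]\in\perf([Z^1(W_E,\widehat{T})/\widehat{T}])_{[\chi]}\). The paper dismisses this as ``a tedious computation'' and invokes \cref{lem: perfect complexes on gerbes}; you actually spell it out by identifying the band with the stabilizer \(\widehat{T}^Q\) and noting that it acts on \(f^*V_\chi\) through \(\chi|_{\widehat{T}^Q}=[\chi]\), which is exactly what that tedious computation amounts to.
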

\begin{proof}
    A tedious computation shows that \(\oo[\chi]\in\perf([Z^1(W_E,\widehat{T})/\widehat{T}])_{[\chi]}\).
    Then everything follows from \cref{lem: perfect complexes on gerbes}.
\end{proof}
\begin{lem}
    The functor \(-*\mathcal{W}\from\perf([Z^1(W_E,\widehat{T})/\widehat{T}])\to\D_{\lis}(\bun{T},\zl)\) restricts to a functor 
    \begin{equation*}
        \perf([Z^1(W_E,\widehat{T})/\widehat{T}])_0%
        \to\D(T(E)\mhyphen\Mod_{\zl})
    \end{equation*}
\end{lem}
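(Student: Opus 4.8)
The plan is to deduce this from the compatibility of the spectral action (equivalently of the Hecke action) with the grading by $\pi_1(T)_Q$, which was established in \cref{lem: grading indperf compatible}, simply restricted to perfect complexes.

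First I would pin down which subcategory of $\D_{\lis}(\bun{T},\zl)$ is meant by ``$\D(T(E)\mhyphen\Mod_{\zl})$'': under the identifications $B(T)=\pi_1(T)_Q=X_*(T)_Q$ it is the summand $\D_{\lis}(\bun{T}^{c_1=0},\zl)$ indexed by $0$, i.e.\ the sheaves supported on the component $\bun{T}^1$. So the claim amounts to showing that for $A\in\perf([Z^1(W_E,\widehat{T})/\widehat{T}])_0$ the sheaf $A*\mathcal{W}$ is supported on $\bun{T}^{c_1=0}$. Note $\mathcal{W}$ itself is supported there, being by \cref{rem: whittaker sheaf for tori} the extension by zero of $\cind_{*}^{T(E)}1$ along $\bun{T}^1\hookrightarrow\bun{T}$.

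Next I would invoke \cref{lem: grading indperf compatible}. Since $\widehat{T}$ is a torus there are no roots, so $\ell$ is ``very good'' for $\widehat{T}$ (\cref{lem: every prime ok for tori}); in particular $\D_{\qcoh}([Z^1(W_E,\widehat{T})/\widehat{T}])\simeq\mathrm{Ind}\perf^{\qc}([Z^1(W_E,\widehat{T})/\widehat{T}])$ over $\zl$ and \cref{lem: grading indperf compatible} applies. It says that $A\mapsto A*\mathcal{W}$ carries the $\chi$-graded summand of $\D_{\qcoh}([Z^1(W_E,\widehat{T})/\widehat{T}])$ (for the $X^*(\widehat{T}^Q)$-grading, using $Z(\widehat{T})=\widehat{T}$) into the summand of $\D_{\lis}(\bun{T},\zl)$ indexed by $-\chi\in\pi_1(T)_Q$, namely $\D_{\lis}(\bun{T}^{c_1=-\chi},\zl)$. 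Taking $\chi=0$, using $-0=0$, and noting $\perf([Z^1(W_E,\widehat{T})/\widehat{T}])_0=\perf([Z^1(W_E,\widehat{T})/\widehat{T}])\cap\D_{\qcoh}([Z^1(W_E,\widehat{T})/\widehat{T}])_0$ from \cref{lem: perfect complexes on gerbes}, gives exactly the assertion.

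If one prefers a hands-on argument rather than quoting \cref{lem: grading indperf compatible}, the key step is \cref{lem: hecke action decomposition compatible}: a Hecke operator $T_V$ with $V$ in the piece where $\widehat{T}^Q$ acts through $\chi$ shifts the $c_1$-grading by $-\chi$, so $f^*V*\mathcal{W}$ has the same support as $T_V(\mathcal{W})$, which for $\chi=0$ is $\bun{T}^{c_1=0}$; one then propagates this from the generators $f^*V$ (with $\widehat{T}^Q$ acting trivially) to the whole $0$-graded summand using that $\mathrm{Ind}\perf^{\qc}([Z^1(W_E,\widehat{T})/\widehat{T}])$ is generated under colimits and retracts by $\perf(B\widehat{T})$ (\cite[Theorem VIII.5.1.]{geometrization}), that $\perf(B\widehat{T})$ splits along $X^*(\widehat{T})$, and that $A\mapsto A*\mathcal{W}$ preserves colimits while $\D_{\lis}(\bun{T}^{c_1=0},\zl)$ is closed under colimits and retracts. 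I do not expect a genuine obstacle here; the only care needed is (a) confirming that the ``very good prime'' inputs are vacuous for tori, so everything holds over $\zl$, and (b) matching the $X^*(\widehat{T}^Q)$-grading on the spectral side with the $\pi_1(T)_Q$-grading on $\bun{T}$ and getting the sign of the twist right — which is precisely what \cref{lem: hecke action decomposition compatible} and \cref{lem: grading indperf compatible} encode.
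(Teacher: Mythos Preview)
Your proposal is correct and follows exactly the paper's approach: the paper's proof is the single line ``This is immediate from \cref{lem: grading indperf compatible}'', and you are simply spelling out what that citation entails (identifying $\D(T(E)\mhyphen\Mod_{\zl})$ with the $c_1=0$ summand, verifying the hypotheses hold for tori over $\zl$, and specializing to $\chi=0$).
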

\begin{proof}
    This is immediate from \cref{lem: grading indperf compatible}
\end{proof}
\begin{lem}\label{lem: reduction to degree 0 parts}
    Assume that the functor 
    \begin{equation*}
        \perf([Z^1(W_E,\widehat{T})/\widehat{T}])_0\to\D(T(E)\mhyphen\Mod_{\zl})
    \end{equation*}
    restricts to an equivalence 
    \begin{equation*}
        \perf^{\qc}([Z^1(W_E,\widehat{T})/\widehat{T}])_0\xto{\simeq}\D(T(E)\mhyphen\Mod_{\zl})^{\omega}
    \end{equation*}
    Then the functor \(-*\mathcal{W}\) restricts to an equivalence
    \begin{equation*}
        \perf^{\qc}([Z^1(W_E,\widehat{T})/\widehat{T}])\xto{\simeq}\D_{\lis}(\bun{T},\zl)^{\omega}
    \end{equation*}
\end{lem}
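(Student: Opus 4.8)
The plan is to split both sides along the $\pi_1(T)_Q\cong X^*(\widehat T^Q)$-grading and then propagate the assumed degree-$0$ equivalence to all the other graded pieces by acting with the line bundles $\oo[\chi]$. On the spectral side, $[Z^1(W_E,\widehat T)/\widehat T]$ is a gerbe over the algebraic space $\Hom(T(E),\gm)$ banded by the diagonalizable group $\widehat T^Q$ by \cref{lem: geometric incarnation langlands duality tori}, so \cref{lem: perfect complexes on gerbes} together with the two lemmas above give
\[
\perf^{\qc}([Z^1(W_E,\widehat T)/\widehat T])\simeq\bigoplus_{[\chi]\in X^*(\widehat T^Q)}\perf^{\qc}([Z^1(W_E,\widehat T)/\widehat T])_{[\chi]},
\]
with $\perf^{\qc}([Z^1(W_E,\widehat T)/\widehat T])_0\simeq\perf^{\qc}(\Hom(T(E),\gm))$, and for each class $[\chi]$ a lift $\chi\in X^*(\widehat T)$ such that $\oo[\chi]$ lies in the graded piece $\perf^{\qc}([Z^1(W_E,\widehat T)/\widehat T])_{[\chi]}$ and $\oo[\chi]\otimes-$ is an equivalence of that piece with the degree-$0$ piece (the finiteness conditions are unaffected, since tensoring by a line bundle changes neither supports nor compactness). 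On the automorphic side,
\[
\D_{\lis}(\bun{T},\zl)^{\omega}=\bigoplus_{\alpha\in\pi_1(T)_Q}\D_{\lis}(\bun{T}^{c_1=\alpha},\zl)^{\omega},
\]
and $\bun{T}^{c_1=\alpha}\cong[*/\cond{T(E)}]$ identifies the degree-$0$ summand with $\D(T(E)\mhyphen\Mod_{\zl})^{\omega}$, so the hypothesis says precisely that $-*\mathcal W$ restricts to an equivalence on the degree-$0$ summands.

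Next I would use that $-*\mathcal W$ is linear over $\perf^{\qc}([Z^1(W_E,\widehat T)/\widehat T])$ for the spectral action, so that $(\oo[\chi]\otimes A)*\mathcal W\simeq\oo[\chi]*(A*\mathcal W)$ naturally in $A$. By the compatibility of the spectral action with the $\pi_1(T)_Q$-grading (see \cref{compatibility spectral action with grading}, in particular \cref{lem: grading indperf compatible} and \cref{lem: hecke action decomposition compatible}), acting by a perfect complex lying in the graded piece $[\psi]$ shifts the automorphic grading by $-[\psi]$; hence $-*\mathcal W$ carries the degree-$[\chi]$ summand of the spectral side into $\D_{\lis}(\bun{T}^{c_1=-[\chi]},\zl)^{\omega}$, and $\oo[\chi]*-$ is an autoequivalence of $\D_{\lis}(\bun{T},\zl)^{\omega}$ (with inverse $\oo[-\chi]*-$) restricting to an equivalence $\D_{\lis}(\bun{T}^{c_1=0},\zl)^{\omega}\xto{\simeq}\D_{\lis}(\bun{T}^{c_1=-[\chi]},\zl)^{\omega}$. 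Thus $-*\mathcal W$ is the direct sum over $[\chi]\in X^*(\widehat T^Q)$ of functors $F_{[\chi]}$ fitting into a square, commuting up to natural equivalence, with horizontal arrows $F_0$ and $F_{[\chi]}$ and vertical arrows $\oo[\chi]\otimes-$ and $\oo[\chi]*-$. Assuming $F_0$ is an equivalence (the hypothesis), the other three arrows of this square are equivalences, so $F_{[\chi]}\simeq(\oo[\chi]*-)\circ F_0\circ(\oo[\chi]\otimes-)^{-1}$ is an equivalence onto $\D_{\lis}(\bun{T}^{c_1=-[\chi]},\zl)^{\omega}$; assembling over all $[\chi]$ gives the claimed equivalence $\perf^{\qc}([Z^1(W_E,\widehat T)/\widehat T])\xto{\simeq}\D_{\lis}(\bun{T},\zl)^{\omega}$.

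Essentially all the mathematical content sits in the earlier results, so the work here is bookkeeping, and the point needing the most care is the grading arithmetic: one must keep the identification $X^*(\widehat T^Q)\cong\pi_1(T)_Q=B(T)$ and the $-1$ twist of \cref{lem: grading indperf compatible} consistent across three things at once — the grading of $\oo[\chi]$ itself, the effect of $\oo[\chi]\otimes-$ on the spectral grading, and the effect of $\oo[\chi]*-$ on the automorphic grading — and check that $\oo[\chi]*-$ genuinely carries the single component $\D_{\lis}(\bun{T}^{c_1=0},\zl)^{\omega}$ onto the single component $\D_{\lis}(\bun{T}^{c_1=-[\chi]},\zl)^{\omega}$. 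For the latter one really needs the full grading compatibility of the spectral action, not merely that of $-*\mathcal W$, since the summands of $\D_{\lis}(\bun{T},\zl)^{\omega}$ have disconnected Bernstein centers and so need not be permuted by an arbitrary autoequivalence. One should also confirm that the commuting square is witnessed by the $\perf^{\qc}$-linearity of $-*\mathcal W$ coming from the spectral action, and that $\oo[\chi]$, though not itself an object of $\perf^{\qc}([Z^1(W_E,\widehat T)/\widehat T])$, nonetheless acts as a well-defined autoequivalence on both sides.
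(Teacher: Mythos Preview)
Your proposal is correct and follows essentially the same route as the paper: both split along the $X^*(\widehat T^Q)\cong\pi_1(T)_Q$-grading, use the grading compatibility of the spectral action (\cref{lem: grading indperf compatible}) together with the autoequivalences $\oo[\chi]\otimes-$ and $\oo[\chi]*-$ to transport the degree-$0$ equivalence to every other graded piece. The only cosmetic difference is packaging: you phrase it as a commuting square exhibiting each $F_{[\chi]}$ as a composite of equivalences, whereas the paper verifies full faithfulness and essential surjectivity directly, but the underlying inputs and logic are the same.
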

\begin{proof}
    Recall that \(\oo[\chi]\otimes-\) is an autoequivalence of \(\perf^{\qc}([Z^1(W_E,\widehat{T})/\widehat{T}])\) and \(\oo[\chi]*-\) is an autoequivalence of \(\D_{\lis}(\bun{T},\zl)^\omega\).
    For fully faithfulness, it suffices to show that for \(A\in\perf^{\qc}([Z^1(W_E,\widehat{T})/\widehat{T}])_{[\chi]}\) and \(B\in\perf^{\qc}([Z^1(W_E,\widehat{T})/\widehat{T}])_{[\chi']}\) we have \(\rhom(A,B)\cong\rhom(A*\mathcal{W},B*\mathcal{W})\).
    By \cref{lem: grading indperf compatible}, both sides vanish if \([\chi]\neq[\chi']\) and if \([\chi]=[\chi']\), we may even assume \([\chi]=0\) by applying \(\oo[-\chi]\otimes-\).
    Then the desired isomorphism between hom-anima holds by assumption.
    For essential surjectivity, it suffices to check that lisse sheaves \(\mathcal{F}\in\D_{\lis}(\bun{T},\zl)^{\omega}_{[\chi]}\) concentrated in a single component lie in the essential image.
    By \cref{lem: grading indperf compatible}, \(\oo[-\chi]*\mathcal{F}\) lives in the component corresponding to \(0\in B(T)\), so by assumption we have some \(A\in\perf^{\qc}([Z^1(W_E,\widehat{T})/\widehat{T}])_0\) such that \(A*\mathcal{W}\cong \oo[-\chi]*\mathcal{F}\).
    Then \((\oo[\chi]\otimes A)*\mathcal{W}\cong \mathcal{F}\), so \(\mathcal{F}\) lies in the essential image.
\end{proof}
\subsection{Reduction to commutative algebra}\label{matching of union}
\begin{lem}\label{lem: T(E)/K-mod is full subcategory of T(E)-mod}
    Let \(K\) be an open pro-\(p\) subgroup of \(T(E)\).
    Then the functor 
    \begin{equation*}
        T(E)/K\mhyphen\Mod_{\zl}\to T(E)\mhyphen\Mod_{\zl}
    \end{equation*}
    induced by \(T(E)\to T(E)/K\) induces a fully faithful functor 
    \begin{equation*}
        \D(T(E)/K\mhyphen\Mod_{\zl})\to \D(T(E)\mhyphen\Mod_{\zl})
    \end{equation*}
\end{lem}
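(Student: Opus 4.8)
The plan is to realize the functor as a fully faithful left adjoint whose unit is an equivalence. Since \(T\) is a torus the group \(T(E)\) is abelian, so \(K\) is a normal open subgroup, the quotient \(T(E)\to T(E)/K\) and the inflation functor \(\iota\from T(E)/K\mhyphen\Mod_{\zl}\to T(E)\mhyphen\Mod_{\zl}\) make sense, and \(\iota\) is exact and fully faithful; its essential image consists of the smooth representations on which \(K\) acts trivially. It admits the right adjoint \((-)^K\), taking \(K\)-invariants, and \((-)^K\circ\iota\cong\id\) since inflating a \(T(E)/K\)-module produces a trivial \(K\)-action. First I would verify that \((-)^K\) is exact on smooth \(\zl[T(E)]\)-modules; granting this, the rest is formal.

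To check exactness of \((-)^K\), let \(\pi\from B\to C\) be a surjection of smooth representations and let \(c\in C^K\). Lift \(c\) to some \(b\in B\); by smoothness, \(b\) is fixed by some open subgroup \(K'\subseteq K\). Since \(K\) is pro-\(p\) and \(\ell\) is coprime to \(q\), hence to \(p=\mathrm{char}\,\bbF_q\), the index \([K:K']\) is a power of \(p\), so it is invertible in \(\zl\); therefore \(b'\defined\tfrac{1}{[K:K']}\sum_{g\in K/K'}g\cdot b\) is a well-defined element of \(B\) (independent of the coset representatives, because \(K'b=b\)), it lies in \(B^K\), and \(\pi(b')=\tfrac{1}{[K:K']}\sum_{g\in K/K'}g\cdot c=c\) since \(c\in C^K\). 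Hence \((-)^K\) is exact; equivalently, on smooth representations it is the projection onto the image of an idempotent \(e_K\) built as above, hence a direct-summand functor.

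Now \(\iota\) and \((-)^K\) are both exact, so they descend termwise to the derived \(\infty\)-categories and preserve quasi-isomorphisms; the adjunction \(\iota\dashv(-)^K\), with its triangle identities, is inherited, giving \(\D(\iota)\dashv\D((-)^K)\) together with \(\D((-)^K)\circ\D(\iota)=\D\big((-)^K\circ\iota\big)=\D(\id)=\id\). So the unit \(\id\to\D((-)^K)\circ\D(\iota)\) is an equivalence, and a left adjoint with invertible unit is fully faithful; thus \(\D(\iota)\from\D(T(E)/K\mhyphen\Mod_{\zl})\to\D(T(E)\mhyphen\Mod_{\zl})\) is fully faithful, which is the assertion. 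The only nonformal input is the exactness of \(K\)-invariants, resting on \(\ell\neq p\); one may package the same point as a homological-epimorphism statement, namely that \(\D(\iota)\) is fully faithful exactly when \(\zl[T(E)/K]\lotimes_{\zl[T(E)]}\zl[T(E)/K]\simeq\zl[T(E)/K]\), which via \(\zl[T(E)/K]\cong\zl[T(E)]\lotimes_{\zl[K]}\zl\) and the triviality of the \(K\)-action on \(T(E)/K\) reduces to \(H_i(K,\zl)=0\) for \(i>0\) — again because \(K\) is pro-\(p\) and \(\ell\) is invertible.
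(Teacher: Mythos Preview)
Your argument is correct: exactness of \((-)^K\) via averaging over \(K/K'\) (using that \(K\) is pro-\(p\) and \(\ell\neq p\)) gives an exact right adjoint with \((-)^K\circ\iota\cong\id\), and passing to derived categories yields a fully faithful left adjoint. The paper's own proof is simply ``Well-known'', so you have supplied exactly the standard details the paper omits; there is nothing further to compare.
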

\begin{proof}
    Well-known.
\end{proof}
\begin{lem}\label{lem: compact T(E)-representations as filtered union of perfect modules}
    Let \(K_i\), \(i\in I\) form a neighborhood basis by open pro-\(p\) subgroups of the identity in \(T(E)\).
    Identifying \(\D(T(E)/K_i\mhyphen\Mod_{\zl})\) as a full subcategory of \(\D(T(E)\mhyphen\Mod_{\zl})\) via \cref{lem: T(E)/K-mod is full subcategory of T(E)-mod}
    we obtain 
    \begin{equation*}
        \bigcup_{i\in I}\D(T(E)/K_i\mhyphen\Mod_{\zl})^\omega=\D(T(E)\mhyphen\Mod_{\zl})^\omega
    \end{equation*}
\end{lem}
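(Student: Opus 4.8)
The plan is to deduce the lemma from the description of $\D(T(E)\mhyphen\Mod_{\zl})^{\omega}$, provided in the torus case by \cite[Proposition VII.7.4.]{geometrization}, as the thick (stable, idempotent-complete) subcategory of $\D(T(E)\mhyphen\Mod_{\zl})$ generated by the objects $\cind_{K}^{T(E)}\zl=\zl[T(E)/K]$, with $K$ running over the open pro-$p$ subgroups. The single non-formal ingredient I would isolate first is the following: for open pro-$p$ subgroups $K'\subseteq K$ of $T(E)$, the ring $\zl[T(E)/K]$ is perfect as a $\zl[T(E)/K']$-module via the surjection $\zl[T(E)/K']\twoheadrightarrow\zl[T(E)/K]$. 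This is because $\zl[T(E)/K]=\zl[T(E)/K']\otimes_{\zl[K/K']}\zl$ with $K/K'$ a finite $p$-group; as $|K/K'|$ is invertible in $\zl$, the idempotent $\tfrac{1}{|K/K'|}\sum_{g\in K/K'}g\in\zl[K/K']$ splits off the trivial module $\zl$ as a summand of the free module $\zl[K/K']$, so $\zl$ is perfect over $\zl[K/K']$, and perfectness survives base change.

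From this I would extract two facts, both regarding the fully faithful embeddings of \cref{lem: T(E)/K-mod is full subcategory of T(E)-mod}. First, whenever $K'\subseteq K$, restriction of scalars along $\zl[T(E)/K']\twoheadrightarrow\zl[T(E)/K]$ preserves perfect complexes, so $\D(T(E)/K\mhyphen\Mod_{\zl})^{\omega}\subseteq\D(T(E)/K'\mhyphen\Mod_{\zl})^{\omega}$ as full subcategories of $\D(T(E)\mhyphen\Mod_{\zl})$. Second, for every open pro-$p$ subgroup $K$ we have $\D(T(E)/K\mhyphen\Mod_{\zl})^{\omega}\subseteq\D(T(E)\mhyphen\Mod_{\zl})^{\omega}$: the object $\zl[T(E)/K]=\cind_K^{T(E)}\zl$ is compact in $\D(T(E)\mhyphen\Mod_{\zl})$ by \cite[Proposition VII.7.4.]{geometrization}, the inclusion $\D(T(E)/K\mhyphen\Mod_{\zl})\hookrightarrow\D(T(E)\mhyphen\Mod_{\zl})$ is exact, and $\D(T(E)/K\mhyphen\Mod_{\zl})^{\omega}=\perf(\zl[T(E)/K])$ is the thick subcategory generated by $\zl[T(E)/K]$, while the compact objects of $\D(T(E)\mhyphen\Mod_{\zl})$ form a thick subcategory. (This second fact needs no input beyond the generation statement.)

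Finally I would assemble these. Because the $K_i$ form a neighborhood basis of the identity, every open pro-$p$ subgroup $K$ contains some $K_i$; combined with the first fact, this gives $\bigcup_{i\in I}\D(T(E)/K_i\mhyphen\Mod_{\zl})^{\omega}=\bigcup_{K}\D(T(E)/K\mhyphen\Mod_{\zl})^{\omega}$, the right-hand union taken over all open pro-$p$ subgroups. Since $K_i\cap K_j$ is again open, the same cofinality shows this union is filtered along the inclusions, hence is itself a thick subcategory of $\D(T(E)\mhyphen\Mod_{\zl})$ --- any cofiber sequence or retract occurs inside a single (thick) term. By the second fact this union is contained in $\D(T(E)\mhyphen\Mod_{\zl})^{\omega}$; conversely it contains every generator $\cind_K^{T(E)}\zl$, hence contains the thick subcategory they generate, which is all of $\D(T(E)\mhyphen\Mod_{\zl})^{\omega}$. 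So the two sides coincide. The only place where anything beyond formal manipulation of thick subcategories and the cofinality of a neighborhood basis enters is the perfectness of $\zl[T(E)/K]$ over $\zl[T(E)/K']$, i.e.\ the semisimplicity of $\zl[K/K']$ coming from $\ell\neq p$; I expect that to be the only step requiring care.
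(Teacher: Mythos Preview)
Your argument is correct and follows the same overall strategy as the paper: identify the union as a thick subcategory containing the compact inductions \(\cind_{K_i}^{T(E)}\zl\), and use that these generate \(\D(T(E)\mhyphen\Mod_{\zl})^\omega\). The only difference is in how preservation of compactness under the inclusions is justified: the paper observes that each inclusion \(\D(T(E)/K_i\mhyphen\Mod_{\zl})\hookrightarrow\D(T(E)\mhyphen\Mod_{\zl})\) (and likewise between the quotients) has both adjoints, hence commutes with all colimits, and deduces preservation of compactness from that; you instead check perfectness directly via the idempotent \(\tfrac{1}{|K/K'|}\sum_{g\in K/K'}g\). These are two sides of the same coin---the existence and exactness of the invariant/coinvariant adjoints for pro-\(p\) groups in characteristic \(\neq p\) is precisely what that idempotent encodes---so the proofs are essentially equivalent, with yours being slightly more explicit about where \(\ell\neq p\) enters.
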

\begin{proof}
    The inclusion \(T(E)/K_i\mhyphen\Mod_{\zl}\subset T(E)\mhyphen\Mod_{\zl}\) has both adjoints, so the inclusion 
    \begin{equation*}
        \D(T(E)/K_i\mhyphen\Mod_{\zl})\subset\D(T(E)\mhyphen\Mod_{\zl})
    \end{equation*}
    has both adjoints too.
    It follows that this inclusion commutes with small limits and colimits.
    A similar story holds for \(\D(T(E)/K_i\mhyphen\Mod_{\zl})\subset\D(T(E)/K_j\mhyphen\Mod_{\zl})\) whenever \(K_j\subseteq K_i\).
    It follows that the union \(\bigcup_{i\in I}\D(T(E)/K_i\mhyphen\Mod_{\zl})^\omega\) is the thick stable subcategory generated by compact inductions \(\cind_{K_i}^{T(E)}\zl\).
    This also describes \(\D(T(E)\mhyphen\Mod_{\zl})^\omega\).
\end{proof}
\begin{lem}\label{lem: idempotents match up}
    The functor 
    \begin{equation*}
        -*\mathcal{W}\from\perf([Z^1(W_E,\widehat{T})/\widehat{T}])\to\D_{\lis}(\bun{T},\zl)=\prod_{b\in B(T)}\D(T(E)\mhyphen\Mod_{\zl})
    \end{equation*}
    given by acting on the Whittaker sheaf \(\mathcal{W}\) restricts to 
    \begin{equation*}
        \perf([Z^1(W_{F/E}/P,\widehat{T})/\widehat{T}])\to\left(\prod_{b\in B(T)}\D(T(E)/\Theta(P)\mhyphen\Mod_{\zl})\right)^\omega
    \end{equation*}
    where \(P\) is some open subgroup of \(F^\times\) such that \(\Theta(P)\) is pro-\(p\).
    It is given by letting \(\perf([Z^1(W_{F/E}/P,\widehat{T})/\widehat{T}])\) act on \(\zl[T(E)/\Theta(P)]\).
\end{lem}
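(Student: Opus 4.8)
The plan is to exploit that $[Z^1(W_{F/E}/P,\widehat{T})/\widehat{T}]$ is a clopen substack of $[Z^1(W_E,\widehat{T})/\widehat{T}]$, to compute the image of its structure sheaf under $-*\mathcal{W}$, and then to propagate the answer using linearity of the spectral action. First I would record the clopen picture: by \cref{lem: Hom(T(E) G_m)} and \cref{lem: geometric incarnation langlands duality tori}, the inclusion $[Z^1(W_{F/E}/P,\widehat{T})/\widehat{T}]\hookrightarrow[Z^1(W_E,\widehat{T})/\widehat{T}]$ lies over the clopen immersion $\Hom(T(E)/\Theta(P),\gm)\hookrightarrow\Hom(T(E),\gm)$ of coarse spaces, both sides being gerbes banded by $\widehat{T}^Q$; hence it is cut out by an idempotent $e_P\in\oo([Z^1(W_E,\widehat{T})/\widehat{T}])\cong\oo(\Hom(T(E),\gm))$. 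Thus $\perf([Z^1(W_{F/E}/P,\widehat{T})/\widehat{T}])$ is the clopen summand of $\perf([Z^1(W_E,\widehat{T})/\widehat{T}])$ on which $e_P$ acts as the identity, with monoidal unit $\oo_P$ the corresponding summand of $\oo$, and every object $A$ of it satisfies $A\cong A\otimes\oo_P$.

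The crucial computation is $\oo_P*\mathcal{W}\cong\zl[T(E)/\Theta(P)]$. By \cref{lem: grading indperf compatible} the functor $-*\mathcal{W}$ is compatible with the $X^*(\widehat{T}^Q)\cong\pi_1(T)_Q$-grading (after the sign twist); in particular it carries the degree-$0$ summand $\perf([Z^1(W_E,\widehat{T})/\widehat{T}])_0\simeq\perf(\Hom(T(E),\gm))$ of \cref{lem: perfect complexes on gerbes} into the $b=0$ component $\D(T(E)\mhyphen\Mod_{\zl})$ of $\D_{\lis}(\bun{T},\zl)$, and this restriction is $\perf(\Hom(T(E),\gm))$-linear with $\oo\mapsto(\mathcal{W})_0\cong\cind_{*}^{T(E)}1$ by \cref{rem: whittaker sheaf for tori}. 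A linear functor out of $\perf(\Hom(T(E),\gm))$ is determined by the image of the unit, so it is $M\mapsto M\otimes_{\oo(\Hom(T(E),\gm))}\cind_{*}^{T(E)}1$; applying it to $\oo_P$ and using \cref{cor: subcategory given by e_P} (the essential image of $e_P$ on $\D(T(E)\mhyphen\Mod_{\zl})$ is $\D(T(E)/\Theta(P)\mhyphen\Mod_{\zl})$, with reflector the $\Theta(P)$-coinvariants) together with \cref{rem: whittaker sheaf for tori} (which computes $(\cind_{*}^{T(E)}1)_{\Theta(P)}=\zl[T(E)/\Theta(P)]$) gives
\begin{equation*}
    \oo_P*\mathcal{W}\;\cong\; e_P\cdot\cind_{*}^{T(E)}1\;\cong\;(\cind_{*}^{T(E)}1)_{\Theta(P)}\;\cong\;\zl[T(E)/\Theta(P)],
\end{equation*}
which is moreover compact in $\D(T(E)\mhyphen\Mod_{\zl})$.

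To conclude, for $A\in\perf([Z^1(W_{F/E}/P,\widehat{T})/\widehat{T}])$ monoidality of the spectral action gives $A*\mathcal{W}\cong(A\otimes\oo_P)*\mathcal{W}\cong A*(\oo_P*\mathcal{W})\cong A*\zl[T(E)/\Theta(P)]$, so the restricted functor is precisely ``the action of $\perf([Z^1(W_{F/E}/P,\widehat{T})/\widehat{T}])$ on $\zl[T(E)/\Theta(P)]$''. Since $A$ lies in the $e_P$-summand and $e_P$ is central, $A*\mathcal{W}$ lies in the $e_P$-summand of $\D_{\lis}(\bun{T},\zl)=\prod_{b}\D(T(E)\mhyphen\Mod_{\zl})$, which by \cref{cor: subcategory given by e_P} applied on each component is $\prod_{b}\D(T(E)/\Theta(P)\mhyphen\Mod_{\zl})$; it has finite support because $A$ is a finite sum of graded pieces and $-*\mathcal{W}$ sends the $[\chi]$-graded summand to the $(-[\chi])$-indexed component; and it is compact because $A$ is dualizable, so $A*-$ admits a colimit-preserving right adjoint and hence preserves the compact object $\zl[T(E)/\Theta(P)]$. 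Finally, by \cref{lem: T(E)/K-mod is full subcategory of T(E)-mod} the inclusion $\D(T(E)/\Theta(P)\mhyphen\Mod_{\zl})\hookrightarrow\D(T(E)\mhyphen\Mod_{\zl})$ is fully faithful with a colimit-preserving right adjoint, hence reflects compactness, so $A*\mathcal{W}$ actually lies in $\bigl(\prod_{b}\D(T(E)/\Theta(P)\mhyphen\Mod_{\zl})\bigr)^{\omega}$ (compare \cref{lem: compact T(E)-representations as filtered union of perfect modules}).

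The step I expect to cost the most care is the claim used in the second paragraph: that the degree-$0$ part of the spectral action on $\D_{\lis}(\bun{T},\zl)$ is the tautological $\oo(\Hom(T(E),\gm))$-module structure of \cref{lem: QCoh(Hom(W Gm)) is WMod} — equivalently, that the idempotent $e_P$ arising from the clopen substack on the spectral side maps, under $\oo([Z^1(W_E,\widehat{T})/\widehat{T}])\to\bcenter(\D_{\lis}(\bun{T},\zl))$, to the idempotent of \cref{cor: subcategory given by e_P} in each component. This amounts to tracing the construction of the spectral action through the gerbe description of \cref{lem: geometric incarnation langlands duality tori} and \cref{prop: quasi-coherent sheaves on gerbes}; once that is in hand, everything else is bookkeeping with clopen decompositions and the coinvariant computation of \cref{rem: whittaker sheaf for tori}.
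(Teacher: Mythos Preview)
Your outline matches the paper's proof closely: introduce the clopen idempotent \(e_P\), show that \(-*\mathcal{W}\) lands in the full subcategory where \(e_P\) acts as the identity, identify that subcategory with \(\prod_b\D(T(E)/\Theta(P)\mhyphen\Mod_{\zl})\), and compute \(e_P\oo*\mathcal{W}\) as the \(\Theta(P)\)-coinvariants of \(\mathcal{W}\), i.e.\ \(\zl[T(E)/\Theta(P)]\), via \cref{rem: whittaker sheaf for tori}. Compactness then follows because the spectral action preserves compact objects (your dualizability argument is a fine alternative).

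The one point where you diverge is the step you yourself flag: matching the idempotent \(e_P\) on the spectral side with the idempotent of \cref{cor: subcategory given by e_P} on the automorphic side. You propose to do this by ``tracing the construction of the spectral action through the gerbe description''; the paper does \emph{not} do this. Instead it invokes \cref{lem: excursion algebra action compatible with spectral action} (the spectral action and the excursion-operator action on the center agree), together with \cite[Proposition IX.6.5.]{geometrization} (which computes the excursion-operator map \(\mathcal{Z}^{\mathrm{spec}}(T)\to\mathfrak{Z}(T)\) explicitly) and \cref{rem: our iso Z^spec(T) is the Fargues-Scholze one} (which says the two identifications of \(\mathcal{Z}^{\mathrm{spec}}(T)\) with \(\varprojlim_K\zl[T(E)/K]\) coincide). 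In other words, the identification of how \(e_P\) acts on \(\D_{\lis}(\bun{T},\zl)\) is the content of an entire section of the paper (\cref{compatiblity with the excursion algebra}), not a bookkeeping exercise with the gerbe picture. Once you plug that in, your argument and the paper's are the same.
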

\begin{proof}
    Let \(e_P\in\oo(Z^1(W_{F/E},\widehat{T}))^{\widehat{T}}\) be the idempotent corresponding to the open and closed substack \([Z^1(W_{F/E}/P,\widehat{T})/\widehat{T}]\subset [Z^1(W_{F/E},\widehat{T})/\widehat{T}]\).
    Then for any perfect complex \(\mathcal{F}\in \perf([Z^1(W_{F/E}/P,\widehat{T})/\widehat{T}])\) we have \(\mathcal{F}= e_P\oo\otimes \mathcal{F}\), where \(\oo\) is the structure sheaf of \([Z^1(W_E,\widehat{T})/\widehat{T}]\).
    Thus \(-*\mathcal{W}\) restricts to a functor 
    \begin{equation*}
        \perf([Z^1(W_{F/E}/P,\widehat{T})/\widehat{T}])\to \mathcal{C}
    \end{equation*}
    where \(\mathcal{C}\) is the full subcategory of \(\D_{\lis}(\bun{T},\zl)\) spanned by those lisse sheaves \(A\) such that \(e_P\oo*A=A\).
    Additionally this functor is given by acting on \(e_P\oo*\mathcal{W}\).
    Thus our goal is to identify \(\mathcal{C}\) and \(e_P\oo*\mathcal{W}\).
    
    Under the isomorphism \(\oo(Z^1(W_{F/E},\widehat{T}))^{\widehat{T}}\cong\oo(\Hom(T(E),\gm))\)
    the idempotent \(e_P\) cuts out the open and closed subscheme \(\Hom(T(E)/\Theta(P),\gm)\subset\Hom(T(E),\gm)\) by the proof of \cref{lem: geometric incarnation langlands duality tori}.
    Applying \cite[Proposition IX.6.5.]{geometrization}, \cref{rem: our iso Z^spec(T) is the Fargues-Scholze one} and \cref{lem: excursion algebra action compatible with spectral action}, we have \(e_P\oo*A=A\) for a lisse sheaf \(A\) corresponding to \((A_b)_{b\in B(T)}\) for \(A_b\in\D(T(E)\mhyphen\Mod_{\zl})\) if and only if \(A_b\xto{e_P}A_b\) is the identity for all \(b\in B(T)\).
    By \cref{cor: subcategory given by e_P}, we have
    \begin{equation*}
        \mathcal{C}=\prod_{b\in B(T)}\D(T(E)/\Theta(P)\mhyphen\Mod_{\zl})
    \end{equation*}
    One easily sees that \(e_P\oo\) is a \(\otimes\)-idempotent in the symmetric monoidal \(\infty\)-category \(\perf([Z^1(W_E,\widehat{T})/\widehat{T}])\).
    Therefore \(e_P\oo*-\) is the left adjoint to \(\mathcal{C}\injto\D_{\lis}(\bun{T},\zl)\).
    This is explicitly given by taking coinvariants for \(\Theta(P)\).
    \cref{rem: whittaker sheaf for tori} tells us that \(e_P\oo*\mathcal{W}=\zl[T(E)/\Theta(P)]\).
    Finally the lemma follows from the fact that \(\zl[T(E)/\Theta(P)]\) is compact and the spectral action preserves compact objects.
\end{proof}
\begin{cor}\label{cor: reduction of categorical Fargues's conjecture to commutative algebra}
    The functor 
    \begin{equation*}
        \perf([Z^1(W_{F/E}/P,\widehat{T})/\widehat{T}])\to\left(\prod_{b\in B(T)}\D(T(E)/\Theta(P)\mhyphen\Mod_{\zl})\right)^\omega
    \end{equation*}
    restricts to a functor 
    \begin{equation*}
        \perf([Z^1(W_E/P,\widehat{T})/\widehat{T}])_0\to\D(T(E)/\Theta(P)\mhyphen\Mod_{\zl})^\omega
    \end{equation*}
    where \(P\) is any open subgroup of \(F^\times\) such that \(\Theta(P)\) is pro-\(p\).
\end{cor}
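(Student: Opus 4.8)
The plan is to obtain this as a formal consequence of the preceding lemma together with the grading compatibility of \cref{lem: grading indperf compatible}. The preceding lemma already supplies the functor $\perf([Z^1(W_{F/E}/P,\widehat{T})/\widehat{T}])\to\bigl(\prod_{b\in B(T)}\D(T(E)/\Theta(P)\mhyphen\Mod_{\zl})\bigr)^\omega$, realized by letting $\perf([Z^1(W_{F/E}/P,\widehat{T})/\widehat{T}])$ act on $\zl[T(E)/\Theta(P)]$, so all that remains is to check that its restriction to the degree-$0$ summand of the source lands in the single factor of the product indexed by $b=0$.

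First I would note that $[Z^1(W_{F/E}/P,\widehat{T})/\widehat{T}]$ is an open and closed substack of $[Z^1(W_E,\widehat{T})/\widehat{T}]$, hence is itself a gerbe over an algebraic space banded by $\widehat{T}^Q$; so by \cref{lem: perfect complexes on gerbes} it inherits an $X^*(\widehat{T}^Q)$-grading, compatible under restriction with the one on $\perf([Z^1(W_E,\widehat{T})/\widehat{T}])$, and its degree-$0$ piece is exactly $\perf([Z^1(W_E/P,\widehat{T})/\widehat{T}])_0$. Then I would invoke \cref{lem: grading indperf compatible}: for tori there are no roots, so $\ell$ is automatically very good for $\widehat{T}$ (see \cref{lem: every prime ok for tori}), and the lemma then says that $A\mapsto A*\mathcal{W}$ sends the $[\chi]$-graded summand on the spectral side into the component $\D_{\lis}(\bun{T}^{c_1=-[\chi]},\zl)^\omega$ of $\D_{\lis}(\bun{T},\zl)^\omega$ under the identification $X^*(\widehat{T}^Q)\cong\pi_1(T)_Q=B(T)$. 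Specializing to $[\chi]=0$ and combining with the first displayed functor yields the claimed functor $\perf([Z^1(W_E/P,\widehat{T})/\widehat{T}])_0\to\D(T(E)/\Theta(P)\mhyphen\Mod_{\zl})^\omega$, the image landing in the $b=0$ factor, where moreover the objects are compact since they arise by acting on the compact object $\zl[T(E)/\Theta(P)]$.

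I do not expect a genuine obstacle here: the whole content sits in the two cited lemmas, and the argument is pure bookkeeping. The one point worth a moment's attention is matching the identification $B(T)\cong X^*(\widehat{T}^Q)$ with the twist built into \cref{lem: grading indperf compatible} so that ``degree $0$'' on the spectral side really corresponds to the neutral component $b=0$ of $\bun{T}$ — but since $-0=0$ the twist is invisible at precisely the point we need it.
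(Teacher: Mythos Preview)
Your proposal is correct and matches the paper's intended reasoning: the paper gives no explicit proof for this corollary, treating it as an immediate consequence of the preceding lemma combined with the grading compatibility already used in the unnamed lemma of \S\ref{reduction to degree 0} (whose one-line proof is ``immediate from \cref{lem: grading indperf compatible}''). Your bookkeeping about the open--closed substack inheriting the $X^*(\widehat{T}^Q)$-grading and the $-1$ twist being invisible at $0$ is exactly what is needed, and your remark that \cref{lem: every prime ok for tori} licenses the use of \cref{lem: grading indperf compatible} for arbitrary $\ell$ is the right justification (cf.\ the remark following the ``very good prime'' theorem).
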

\subsection{Proof for tori}
We now have all the ingredients to prove the categorical form of Fargues's conjecture for tori.
\begin{thm}\label{thm: categorical form Fargues' conjecture tori}
	The categorical form of Fargues's conjecture holds in the following form:
	For any prime \(\ell\) there is an equivalence of stable $\infty$-categories
	\[
		\D^{b,\mathrm{qc}}_{\coh,\nilp}([Z^1(W_E,\widehat{T})/\widehat{T}])\simeq \D_{\lis}(\bun{T},\zl)^\omega
	\]
	equipped with their actions of $\perf([Z^1(W_E,\widehat{T})/\widehat{T}])$. After taking ind-categories or restricting to connected components the structure sheaf gets mapped to the (unique choice of) Whittaker sheaf \(\mathcal{W}\).
\end{thm}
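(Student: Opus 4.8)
The plan is to assemble the reductions established in the preceding sections. First, since the nilpotent cone of a torus is $\{0\}\subset\mathfrak{t}^*$, the remark following \cite[Theorem VIII.2.9]{geometrization} gives $\D^{b,\qc}_{\coh,\nilp}([Z^1(W_E,\widehat{T})/\widehat{T}])\simeq\perf^{\qc}([Z^1(W_E,\widehat{T})/\widehat{T}])$, and by \cref{lem: every prime ok for tori} the spectral action of $\perf([Z^1(W_E,\widehat{T})/\widehat{T}])$ on $\D_{\lis}(\bun{T},\zl)^\omega$ is available for every prime $\ell$. The candidate equivalence is the functor $-*\mathcal{W}$ for the Whittaker sheaf $\mathcal{W}$; it is $\perf([Z^1(W_E,\widehat{T})/\widehat{T}])$-linear tautologically and its ind-extension sends $\oo$ to $\mathcal{W}$ tautologically, so the whole content is to show that $-*\mathcal{W}$ is an equivalence onto $\D_{\lis}(\bun{T},\zl)^\omega$.

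By \cref{lem: reduction to degree 0 parts} it is enough to show that $-*\mathcal{W}$ restricts to an equivalence $\perf^{\qc}([Z^1(W_E,\widehat{T})/\widehat{T}])_0\xto{\simeq}\D(T(E)\mhyphen\Mod_{\zl})^\omega$ on the degree-$0$ graded pieces. I would then write both sides as filtered unions of finite-level subcategories: the automorphic side is $\bigcup_P\perf(\zl[T(E)/\Theta(P)])$ by \cref{lem: compact T(E)-representations as filtered union of perfect modules} and \cref{lem: condensed H_1 of quotient is torus quotient}, and the spectral side is $\bigcup_P\perf([Z^1(W_E/P,\widehat{T})/\widehat{T}])_0$, the union running over open subgroups $P\subset F^\times$ with $\Theta(P)$ pro-$p$. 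By \cref{cor: reduction of categorical Fargues's conjecture to commutative algebra} the functor $-*\mathcal{W}$ matches these two unions, restricting at each level $P$ to a functor $\perf([Z^1(W_E/P,\widehat{T})/\widehat{T}])_0\to\D(T(E)/\Theta(P)\mhyphen\Mod_{\zl})^\omega$, so it suffices to prove that each of these finite-level functors is an equivalence.

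Fix $P$ and set $R\defined\oo(Z^1(W_E/P,\widehat{T}))^{\widehat{T}}$, a commutative ring concentrated in degree $0$. Combining \cref{lem: Hom(T(E) G_m)}, \cref{lem: geometric incarnation langlands duality tori} and \cref{lem: perfect complexes on gerbes} identifies $\perf([Z^1(W_E/P,\widehat{T})/\widehat{T}])_0\simeq\perf(\Hom(T(E)/\Theta(P),\gm))=\perf(R)$, while $\D(T(E)/\Theta(P)\mhyphen\Mod_{\zl})^\omega=\perf(S)$ for $S\defined\zl[T(E)/\Theta(P)]$. The finite-level functor is $\perf(R)$-linear and sends the unit $R$, which corresponds to the structure sheaf $e_P\oo$, to $e_P\oo*\mathcal{W}=\zl[T(E)/\Theta(P)]=S$ by \cref{rem: whittaker sheaf for tori}; hence it is exactly the functor $-*S$ for the (restriction of the) spectral action of $\perf(R)$ on $\perf(S)$. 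By \cref{cor: isomorphism on center equivalence of categories} this functor is an equivalence provided the induced map $R\to\pi_0\bcenter(\perf(S))$ is an isomorphism.

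The last step is to identify that center map. Let $W$ be a discretization of $W_E/P$ surjecting onto $Q$. By \cref{lem: every prime ok for tori} the map $\exc(W,\widehat{T})\to R=\oo(Z^1(W,\widehat{T}))^{\widehat{T}}$ is an isomorphism, and by \cref{lem: excursion algebra action compatible with spectral action} the spectral-action map $R\to\pi_0\bcenter(\perf(S))$ coincides with the excursion-operator map out of $\exc(W,\widehat{T})$. By \cite[Proposition IX.6.5.]{geometrization}, together with \cref{rem: our iso Z^spec(T) is the Fargues-Scholze one}, this excursion map is the local class field theory isomorphism identifying $R$ with the relevant block $\pi_0\bcenter(\perf(S))$ of the Bernstein center; in particular it is an isomorphism, and we are done. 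The main obstacle is not this assembly but the inputs it relies on: the fact that the spectral action exists for every $\ell$ in the torus case and is compatible with the excursion algebra (Sections \ref{spectral action} and \ref{compatiblity with the excursion algebra}), and the condensed Langlands-duality computation $H_1(W_{F/E},X_*(T))\cong T(E)$ of \cref{prop: transfer condensed iso} and \cref{lem: geometric incarnation langlands duality tori} underlying the identification $\perf([Z^1(W_E/P,\widehat{T})/\widehat{T}])_0\simeq\perf(\zl[T(E)/\Theta(P)])$; within the assembly itself the only delicate point is verifying that the finite-level center maps are genuinely the class field theory isomorphisms, which is precisely where \cite[Proposition IX.6.5.]{geometrization} enters.
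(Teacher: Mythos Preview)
Your proposal is correct and follows essentially the same route as the paper: reduce to degree $0$ via \cref{lem: reduction to degree 0 parts}, pass to the filtered union over $P$ using \cref{lem: compact T(E)-representations as filtered union of perfect modules} and \cref{cor: reduction of categorical Fargues's conjecture to commutative algebra}, and at each finite level apply \cref{cor: isomorphism on center equivalence of categories} after identifying the center map with the local class field theory isomorphism via \cref{lem: excursion algebra action compatible with spectral action}, \cref{rem: our iso Z^spec(T) is the Fargues-Scholze one}, and \cite[Proposition IX.6.5.]{geometrization}. The only place where the paper is slightly more explicit is in passing from the full-level center isomorphism to the finite-level one by the idempotent $e_P$, but your citation of the same inputs covers this.
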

\begin{proof}
    Letting \(\perf([Z^1(W_E,\widehat{T})/\widehat{T}])\) act on \(\mathcal{W}\) gives a functor 
    \begin{equation*}
        -*\mathcal{W}\from \perf([Z^1(W_E,\widehat{T})/\widehat{T}])\to\D_{\lis}(\bun{T},\zl)
    \end{equation*}
    Recall that \(\D^{b,\mathrm{qc}}_{\coh,\nilp}([Z^1(W_E,\widehat{T})/\widehat{T}])=\perf^{\qc}([Z^1(W_E,\widehat{T})/\widehat{T}])\), as the nilpotent cone in \(\widehat{\mathfrak{t}}^*\) is \(\{0\}\) and \cite[Theorem VIII.2.9]{geometrization}.
    We claim that the restriction 
    \begin{equation*}
        -*\mathcal{W}\from \perf^{\qc}([Z^1(W_E,\widehat{T})/\widehat{T}])\to\D_{\lis}(\bun{T},\zl)
    \end{equation*}
    is fully faithful with essential image the compact objects in \(\D_\lis(\bun{T},\zl)\).
    Then the compatibility assumption holds automatically.
    Thus to prove the theorem we need to check this claim.
    \cref{lem: reduction to degree 0 parts} tells us that we only need to check that the restriction 
    \begin{equation*}
        -*\mathcal{W}\from \perf^{\qc}([Z^1(W_E,\widehat{T})/\widehat{T}])_0\to\D(T(E)\mhyphen\Mod_{\zl})^{\omega}
    \end{equation*}
    is an equivalence.
    We know that
    \begin{equation*}
        (\perf^{\qc}([Z^1(W_E,\widehat{T})/\widehat{T}])=\bigcup_{P\subset F^\times}\perf^{\qc}([Z^1(W_{F/E}/P,\widehat{T})/\widehat{T}])
    \end{equation*}
    where \(P\) runs over open subgroups of \(F^\times\).
    By \cref{lem: condensed H_1 of quotient is torus quotient} we may additionally assume that the \(\Theta(P)\) are pro-\(p\).
    Then using \cref{lem: compact T(E)-representations as filtered union of perfect modules} and \cref{cor: reduction of categorical Fargues's conjecture to commutative algebra} we only need to check that the restriction 
    \begin{equation*}
        \perf([Z^1(W_E/P,\widehat{T})/\widehat{T}])_0\to\D(T(E)/\Theta(P)\mhyphen\Mod_{\zl})^\omega
    \end{equation*}
    is an equivalence, for \(P\) as above.
    Having the spectral action we get a ring map 
    \begin{equation*}
        \oo(Z^1(W_E,\widehat{T}))^{\widehat{T}}\to\pi_0\bcenter(\D(T(E)\mhyphen\Mod_{\zl}))
    \end{equation*}
    induced by the action of \(\oo(Z^1(W_E,\widehat{T})\) on \(\oo*-\) which under the identification
    \begin{gather*}
        \oo(Z^1(W_E,\widehat{T}))^{\widehat{T}}=\oo(\Hom(T(E),\gm))=\varprojlim_{P\subset F^\times}\zl[T(E)/\Theta(P)]
        \shortintertext{and}\\
        \pi_0\bcenter(\D(T(E)\mhyphen\Mod_{\zl}))=\varprojlim_{P\subset F^\times}\zl[T(E)/\Theta(P)]
    \end{gather*}
    is an isomorphism.
    This follows from \cite[Proposition IX.6.5.]{geometrization}, \cref{rem: our iso Z^spec(T) is the Fargues-Scholze one} and \cref{lem: excursion algebra action compatible with spectral action} (note that while \cite[Proposition IX.6.5.]{geometrization} talks about the spectral Bernstein center, it actually computes the action of the excursion algebra).
    Under this isomorphism both \(\perf([Z^1(W_E/P,\widehat{T})/\widehat{T}])\) and \(\prod_{b\in B(T)}\D(T(E)/\Theta(P)\mhyphen\Mod_{\zl})\) are given by those objects \(A\) in \(\perf([Z^1(W_E/P,\widehat{T})/\widehat{T}])\) or \(\D(T(E)\mhyphen\Mod_{\zl})\) such that \(e_P\) acts by the identity for some idempotent \(e_P\in\varprojlim_{P\subset F^\times}\zl[T(E)/\Theta(P)]\).
    The action of \(\oo(Z^1(W_E/P,\widehat{T})^{\widehat{T}}\) on \(\oo*-\) induces a ring map \(\oo(Z^1(W_E/P,\widehat{T}))^{\widehat{T}}\to\pi_0\bcenter(\D(T(E)/\Theta(P)\mhyphen\Mod_{\zl}))\), which fits into the following commutative diagram
    \begin{equation*}
        \begin{tikzcd}
            {\oo(Z^1(W_E,\widehat{T})^{\widehat{T}}} \arrow[r] \arrow[d, "e_P"'] & \pi_0\bcenter(\D(T(E)/\mhyphen\Mod_{\zl})) \arrow[d, "e_P"] \\
            {\oo(Z^1(W_E/P,\widehat{T})^{\widehat{T}}} \arrow[r]                       & \pi_0\bcenter(\D(T(E)/\Theta(P)\mhyphen\Mod_{\zl}))             
            \end{tikzcd}
    \end{equation*}
    As observed before the top horizontal arrow is an isomorphism and as \(e_P\) is an idempotent it follows that the bottom horizontal arrow is an isomorphism too.
    Note that 
    \begin{equation*}
        \perf([Z^1(W_E/P,\widehat{T})/\widehat{T}])_0=\perf(\zl[T(E)/\Theta(P)])=\D(T(E)/\Theta(P)\mhyphen\Mod_{\zl})^{\omega}
    \end{equation*}
    and the functor 
    \begin{equation*}
        -*\mathcal{W}\from\perf([Z^1(W_E/P,\widehat{T})/\widehat{T}])_0\to\D(T(E)/\Theta(P)\mhyphen\Mod_{\zl})^\omega
    \end{equation*}
    is given by letting \(\perf([Z^1(W_E/P,\widehat{T})/\widehat{T}])_0\) act on \(\zl[T(E)/\Theta(P)]\).
    Thus we may apply \cref{cor: isomorphism on center equivalence of categories} and finish the proof.
\end{proof}
Let us finally discuss \(t\)-exactness of the equivalence.
For this we will need to understand the spectral action, so let us start with analyzing the Hecke stack.
We recall some of its properties, for details we refer to the discussion after \cite[Remark 3.2.]{geometric-Eisenstein}.
\begin{lem}\label{lem: explicit hecke stack}
    We have an isomorphism 
    \begin{equation*}
        \heck_{T}\cong\bun{T}\times\coprod_{\mu\in X_*(T)/Q}\Div^1_{E_{\mu}}
    \end{equation*}
    where \(E_\mu\) is the reflex field of \(\mu\).
    Under this isomorphism \(\lheck\) identifies with the projection to the first factor.
    Let us write \(\rheck=\coprod_{\mu\in X_*(T)/Q}\rheck_\mu\) with \(\rheck_\mu\from\bun{T}\times\Div^1_{E_\mu}\to\bun{T}\times\Div^1\).
    Then \(\rheck_\mu=s_\mu\times\mathrm{can}\) where \(\mathrm{can}\from\Div^1_{E_\mu}\to\Div^1_E\) is the canonical map and \(s_\mu\) is the composite \(\bun{T}\xto{\cong}\bun{T_{b_{-\mu'}}}\cong\bun{T}\).
    Here \([\mu]\) is the image of \(\mu\) in \(X_*(T)_Q\), the map \(\bun{T}\xto{\cong}\bun{T_{b_{-[\mu]}}}\) is the isomorphism in \cite[Corollary III.4.3.]{geometrization} and \(\bun{T_{b_{-[\mu]}}}\cong\bun{T}\) is induced by \(T_{b_{-[\mu]}}\cong T\).
\end{lem}
\begin{lem}\label{lem: explicit satake sheaf}
    For a character \(\chi\in X^*(\widehat{T})\), the sheaf \(q^*\mathcal{S}_\chi\) on \(\heck_G\) via geometric Satake as in the discussion after \cref{defn: hecke action} is the constant sheaf on the component \(\overline{\chi}\), where \(\overline{\chi}\) is the image of \(\chi\) under the map \(X^*(\widehat{T})\cong X_*(T)\to X_*(T)/Q\).
\end{lem}
\begin{cor}
    The action of \(\oo[\chi]\) is \(t\)-exact for the natural \(t\)-structure on \(\D_{\lis}(\bun{T},\zl)\).
\end{cor}
\begin{proof}
    Using \cref{lem: explicit hecke stack} and \cref{lem: explicit satake sheaf} we obtain that \(\oo[\chi]\) identifies with \(\ind_{W_{E_\chi}}^{W_E}\circ s_{\chi*}\) in the notation of \cref{lem: explicit hecke stack}, which is \(t\)-exact as \(s_{\chi*}\) is and \(\ind_{W_{E_\chi}}^{W_E}\) is \(t\)-exact by \cref{lem: ambidexterity condensed group representation} (using that \(W_{E}/W_{E_{\chi}}\) is a discrete finite set).
\end{proof}
\begin{thm}\label{thm: t-exact}
    The equivalence \cref{thm: categorical form Fargues' conjecture tori} is \(t\)-exact for the natural \(t\)-structures on both sides.
\end{thm}
\begin{proof}
    Recall that the overall strategy to prove \cref{thm: categorical form Fargues' conjecture tori} was to use \cref{lem: reduction to degree 0 parts} and some discussion on the idempotents of \(\oo(Z^1(W_E,\widehat{T}))^{\widehat{T}}\) as in \cref{lem: idempotents match up} to reduce to the question whether
    \begin{equation*}
        \perf([Z^1(W_E/P,\widehat{T})/\widehat{T}])_0\to\D(T(E)/\Theta(P)\mhyphen\Mod_{\zl})^\omega
    \end{equation*}
    is an equivalence.
    We observe that \(\mathcal{W}^{\Theta(P)}\) is compact projective, so \(\Hom(\mathcal{W}^{\Theta(P)},-)\) gives rise to a \(t\)-exact equivalence \(\D(T(E)/\Theta(P)\mhyphen\Mod_{\zl})^\omega\simeq\perf(\End(\mathcal{W}^{\Theta(P)}))\).
    Note that \(\End(\mathcal{W}^{\Theta(P)})\cong\zl[T(E)/\Theta(P)]\) and we also have \(\perf([Z^1(W_E/P,\widehat{T})/\widehat{T}])_0\simeq\perf(\oo(Z^1(W_E/P,\widehat{T}))^{\widehat{T}})\) and that under these identifications the composite 
    \begin{equation*}
        \perf(\oo(Z^1(W_E/P,\widehat{T}))^{\widehat{T}})\simeq\perf([Z^1(W_E/P,\widehat{T})/\widehat{T}])_0\to\D(T(E)/\Theta(P)\mhyphen\Mod_{\zl})^\omega\simeq\perf(\zl[T(E)/\Theta(P)])
    \end{equation*}
    is induced by the isomorphism of rings \(\oo(Z^1(W_E/P,\widehat{T}))^{\widehat{T}}\cong\zl[T(E)/\Theta(P)]\), in particular it is \(t\)-exact.

    We finish the proof by observing that since \(\oo[\chi]\) are \(t\)-exact, we can strengthen the claim of \cref{lem: reduction to degree 0 parts} to say that \(-*\mathcal{W}\) is a \(t\)-exact equivalence of categories of its restriction to \(\perf([Z^1(W_E/P,\widehat{T})/\widehat{T}])_0\) is.
\end{proof}
\begin{rem}
    Using \cref{thm: t-exact}, we can make the equivalence of \cref{thm: categorical form Fargues' conjecture tori} very explict.
    Namely start with a complex \(A\in\perf^\qc([Z^1(W_E,\widehat{T})/\widehat{T}])\).
    This splits to a direct according to the the character \(\chi\in X^*(\widehat{T}^Q)\) the action of \(\widehat{T}^Q\) factors.
    The equivalence is compatible with direct sums, so let us assume that the action of \(\widehat{T}^Q\) on \(A\) factors through a single character \(\chi\).
    Additionally, there is an action of \(\oo(Z^1(W_E,\widehat{T}))^{\widehat{T}}\) on \(A\).
    Via the isomorphism \(\oo(Z^1(W_E,\widehat{T}))^{\widehat{T}}=\oo(\Hom(T(E),\gm))=\varprojlim_{P\subset F^\times}\zl[T(E)/\Theta(P)]\) this gives rise to a complex \(A\in\D(T(E)\mhyphen\Mod_{\zl})\).
    We consider this a sheaf on \(\bun{T}\) by putting it on the stratum \(\bun{T}^{b_{-\chi}}\), where \(b_{-\chi}\) is the (basic) element of the Kottwitz set attached to \(-\chi\) via the isomophism \(X^*(\widehat{T})_Q\cong\pi_1(T)_Q\cong B(T)\).
\end{rem}
\begin{rem}
    Let \(\Lambda\) be a \(\zl\)-algebra.
    By applying \(-\otimes_{\perf(\zl)}\perf(\Lambda)\) to \cref{thm: categorical form Fargues' conjecture tori}, the theorem extends to any \(\zl\)-algebra.
    One can then execute the proof of \cref{thm: t-exact} with \(\Lambda\) instead of \(\zl\), using that formation of \(\oo(Z^1(W_E/P,\widehat{T}))^{\widehat{T}}\) commutes with any base change.

    In conclusion both \cref{thm: categorical form Fargues' conjecture tori} and \cref{thm: t-exact} extend to any \(\zl\)-algebra \(\Lambda\) from the case of \(\Lambda=\zl\) via formal considerations.
\end{rem}
	\clearpage
	\printbibliography
\end{document}